\renewcommand{\le}{\leqslant}  
\newcommand\m[1] {\begin{pmatrix}#1\end{pmatrix}} 
\newcommand{\indep}{\perp \!\!\! \perp} 
\newcommand{\bigO}{O}
\newcommand{\littleO}{o}
\newcommand{\iidsim}{\stackrel{{iid}}{\sim}}
\newcommand{\norm}[1]{\left\lVert #1 \right\rVert}
\newcommand{\indistribution}{\rightsquigarrow}
\newcommand{\inprobability}{\xrightarrow[]{P}}
\def\*#1{\mathbf{#1}} 
\def\set#1{\left\{ {#1} \right\}} 
\def\bG{{\mathbb G}}
\def\bQ{{\mathbb Q}}
\def\bR{{\mathbb R}}
\def\bE{{\mathbb E}}
\def\bP{{\mathbb P}}
\def\sA{{\mathcal A}}
\def\sC{{\mathcal C}}
\def\sF{{\mathcal F}}
\def\sG{{\mathcal G}}
\def\sH{{\mathcal H}}
\def\sL{{\mathcal L}}
\def\sM{{\mathcal M}}
\def\sO{{\mathcal O}}
\def\sP{{\mathcal P}}
\def\sQ{{\mathcal Q}}
\def\sS{{\mathcal S}}
\def\sT{{\mathcal T}}
\def\sU{{\mathcal U}}
\def\sX{{\mathcal X}}
\def\sY{{\mathcal Y}}
\def\sZ{{\mathcal Z}}
\def\Var{\mathrm{Var}}
\def\Cov{\mathrm{Cov}}
\theoremstyle{plain}
\newtheorem{theorem}{Theorem}[section]
\newtheorem{corollary}[theorem]{Corollary}
\newtheorem{lemma}[theorem]{Lemma}
\newtheorem{proposition}[theorem]{Proposition}
\theoremstyle{remark}
\newtheorem{remark}{Remark}[section]
\newtheorem{definition}{Definition}[section]
\newtheorem{assumption}{Assumption}[section]
\newtheorem{exmp}{Example}[section]
\def\vp{{\varphi}}
\def\tr{{\text{tr}}}
\def\dim{{\text{dim}}}
\def\labeled{{\sL_n}}
\def\unlabeled{{\sU_N}}
\def\thetafunctional{{\theta(\cdot)}}
\def\IF{{\vp_{\eta^*}}}
\def\IFplugin{{\vp_{\cetaest}}}
\def\cIF{{\phi_{\eta^*}}}
\def\EIF{{\vp^*_{\eta^*}}}
\def\cEIF{{\phi^*_{\eta^*}}}
\def\cIFest{{\hat{\phi}_n}}
\def\IFa{{\vp_{\eta^*}^{(1)}}}
\def\IFb{{\vp_{\eta^*}^{(2)}}}
\def\jointmodel{{\sP}}
\def\reducedmodel{{\{\bP_X^*\} \otimes \sP_{Y\mid X}}}
\def\marginalmodel{{\sP_{X}}}
\def\conditionalmodel{{\sP_{Y\mid X}}}
\def\joint{{\bP^*}}
\def\marginal{{\bP^*_{X}}}
\def\conditional{{\bP^*_{Y\mid X}}}
\def\lpx{{\sL^2_{p,0}(\marginal)}}
\def\lpyx{{\sL^2_{p,0}(\conditional)}}
\def\lp{{\sL^2_{p,0}(\joint)}}
\def\lpxone{{\sL^2_{1,0}(\marginal)}}
\def\lpyxone{{\sL^2_{1,0}(\conditional)}}
\def\lpone{{\sL^2_{1,0}(\joint)}}
\def\thetaest{\hat{\theta}_n}
\def\thetaestnN{\hat{\theta}_{n,N}}
\def\issthetaest{\hat{\theta}_{n,\marginal}^{\text{safe}}}
\def\issthetanpest{\hat{\theta}_{n,\marginal}^{\text{eff.}}}
\def\ossthetaest{\hat{\theta}_{n,N}^{\text{safe}}}
\def\ossthetaestppi{\hat{\theta}_{n,N}^{\text{PPI}}}
\def\ossthetanpest{\hat{\theta}_{n,N}^{\text{eff.}}}
\def\cthetaest{\Tilde{\theta}_n}
\def\cetaest{\Tilde{\eta}_n}
\def\iOLS{\hat{\*B}_n^g}
\def\npiOLS{\hat{\*B}_{K_n}}
\def\oOLS{\hat{\*B}_{n,N}^g}
\def\npoOLS{\hat{\*B}_{K_n,N}}
\def\OLS{\*B^g}
\def\npOLS{\*B_{K_n}}
\def\GKn{{G_{K_n}}}
\def\OLSppi{\*B^{g_{\eta^*}}}
\def\oOLSppi{\hat{\*B}_{n,N}^{g_{\cetaest}}}
\def\centeredGKn{{G_{K_n}^0}}
\def\ecenteredGKn{{\hat{G}_{K_n}^0}}
\def\centeredg{{g^0}}
\def\ecenteredg{{\hat{g}^0}}
\def\Sigmasafe{{\*\Sigma^{\text{safe}}}}
\def\Sigmaeff{{\*\Sigma^{\text{eff}}}}
\def\msecondderivative{{\*V_{\theta^*}}}
\def\msecondderivativeinv{{\*V_{\theta^*}^{-1}}}
\def\md{{\nabla m_{\theta^*}}}
\def\mdf{{\nabla m^f_{\theta^*}}}
\def\cmd{{\nabla \Tilde{m}_{\theta^*}}}
\def\tg{{g_{\eta^*}}}
\def\tcenteredg{{g^0_{\eta^*}}}
\def\tecenteredg{{\hat{g}^0_{\eta^*}}}
\def\cg{{g_{\cetaest}}}
\def\ccenteredg{{g^0_{\cetaest}}}
\def\cecenteredg{{\hat{g}^0_{\cetaest}}}
\def\empiricalPn{{\bP_n}}
\def\empiricalPN{{\bP_N}}
\def\empiricalPXN{{\bP_{N, X}}}
\def\empiricalPnN{{\bP_{n+N}}}
\def\tangentmarginal{{\sT_\marginalmodel(\marginal)}}
\def\tangentconditional{{\sT_\conditionalmodel(\conditional)}}
\def\tangent{{\sT_\jointmodel(\joint)}}
\def\tangentmissing{{\sT_\sQ(\joint \times \Pdelta)}}
\def\gradient{{D_\joint}}
\def\cgradient{{D^*_\joint}}
\def\ossjoint{{\bQ(\joint)}}
\def\ossempirical{{\bQ_{n,N}}}
\def\ossgradient{{D_{\ossjoint}}}
\def\cossgradient{{D^*_{\ossjoint}}}
\def\osstangent{{\sT_\jointmodel(\ossjoint)}}
\def\ts{{s_{\eta^*}}}
\def\cs{{s_{\Tilde{\eta}_n}}}
\def\Pdelta{{\bP_W^*}}
\title{A Unified Framework for Semiparametrically Efficient Semi-Supervised Learning}
\author{Zichun Xu\footnote{Department of Biostatistics, University of Washington}, \; Daniela Witten\footnote{Department of Statistics, University of Washington} $^*$, \; Ali Shojaie$^{* \dagger }$}
\begin{document}
\maketitle

\begin{abstract}
We consider statistical inference under a semi-supervised setting where we have access to both a labeled dataset consisting of pairs $\{X_i, Y_i \}_{i=1}^n$ and an unlabeled dataset $\{ X_i \}_{i=n+1}^{n+N}$. We ask the question: under what circumstances, and by how much, can incorporating the unlabeled dataset improve upon inference using the labeled data? To answer this question, we investigate semi-supervised learning through the lens of semiparametric efficiency theory. We characterize the efficiency lower bound under the semi-supervised setting for an arbitrary inferential problem, and show that incorporating unlabeled data can potentially improve efficiency \emph{if the parameter is not well-specified}. We then propose two types of  semi-supervised estimators: a \emph{safe} estimator that imposes minimal assumptions, is simple to compute, and is guaranteed to be at least as efficient as the initial supervised estimator; and an \emph{efficient} estimator, which --- under stronger assumptions --- achieves the semiparametric efficiency bound. Our findings unify existing semiparametric efficiency results for particular special cases, and extend these results to a much more general class of problems.
Moreover, we show that our estimators can flexibly incorporate predicted outcomes arising from ``black-box" machine learning models, and thereby achieve the same goal as \emph{prediction-powered inference} (PPI), but with superior theoretical guarantees. We also provide a complete understanding of the theoretical basis for the existing set of PPI methods. Finally, we apply the theoretical framework developed to derive and analyze efficient semi-supervised estimators in a number of settings, including M-estimation, U-statistics, and average treatment effect estimation, and demonstrate the performance of the proposed estimators via simulations.
\end{abstract}



\section{Introduction}
\label{sec:intro}
Supervised learning involves modeling the association between a set of responses $Y \in \sY$ and a set of covariates $X \in \sX$ on the basis of $n$ \emph{labeled} realizations $\labeled = \{Z_i\}_{i=1}^n$, where $Z = (X, Y) \in \sZ$. In the semi-supervised setting, we also have access to $N$ additional \emph{unlabeled} realizations of $X$, $\unlabeled = \{X_i \}_{i=n+1}^{n+N}$, without associated realizations of $Y$. This situation could arise, for instance, when observing $Y$ is more expensive or time-consuming than observing $X$. 

Suppose that $X \sim \marginal$, and  $Y\mid X \sim \conditional$.The joint distribution of $Z$ can be expressed as $Z \sim \joint = \marginal\conditional$. Suppose that $\labeled$  consists of  $n$ independent and identically distributed  (i.i.d.) realizations of $\joint$, and $\unlabeled$ consists of $N$ i.i.d. realizations of $\marginal$ that are also independent of $\labeled$. 

We further assume that the marginal distribution and the conditional distribution are separately modeled. That is,  $\marginal$ belongs to a marginal model $\marginalmodel$, and $\conditional$ belongs to a conditional model $\conditionalmodel$. The model of the joint distribution can thus be expressed as
\begin{equation}
\label{eq:separable_model}
\jointmodel = \set{\bP = \bP_X\bP_{Y\mid X}:\bP_X \in \marginalmodel, \bP_{Y\mid X} \in \conditionalmodel}.
\end{equation}

Consider a general inferential problem where the parameter of interest is a Euclidean-valued functional $\theta(\cdot):\jointmodel \to \Theta \subseteq \bR^p$. The ground truth is $\theta(\joint)$, which we denote as $\theta^*$ for simplicity. This paper centers around the following question: \emph{how}, and \emph{by how much}, can inference using $(\labeled, \unlabeled)$ improve  upon inference on $\theta^*$ using only $\labeled$?

\subsection{The three settings}
\label{subsec:settings}
We now introduce three settings that will be used throughout this paper.
\begin{enumerate}
    \item In the \emph{supervised setting}, only labeled data $\labeled$ are available. Supervised estimators can be written as $\hat{\theta}_n(\labeled)$. 
\item In the \emph{ordinary semi-supervised} (OSS) \emph{setting}, both labeled data $\labeled$ and unlabeled data $\unlabeled$ are available. OSS estimators can be written as $\hat{\theta}_{n,N}(\labeled, \unlabeled)$.
\item In the \emph{ideal semi-supervised} (ISS) \emph{setting}  \citep{zhang2019semi, cheng2021robust},  we have access to labeled data $\labeled$, and furthermore \emph{the marginal distribution $\marginal$ is known}. ISS estimators can be written as $\hat{\theta}_n(\labeled, \marginal)$. 
\end{enumerate}
The ISS setting is primarily of theoretical interest: in reality, we rarely know $\marginal$. Analyzing the ISS setting will facilitate analysis of the OSS setting. 

\subsection{Main results}
\label{subsec:main_results}
 Our main contributions are as follows: 

\begin{enumerate}
    \item For an arbitrary inferential problem, we derive the semiparametric efficiency lower bound under the ISS setting. We show that knowledge of $\marginal$ can be used to  potentially improve upon a supervised estimator when the parameter of interest is not \emph{well-specified}, in the sense of  Definition~\ref{def:well_specification}. This generalizes  existing results  that semi-supervised learning cannot improve a correctly-specified linear model \citep{kawakita2013semi, buja2019models, song2023general, gan2023prediction}. 

\item In the OSS setting,  the data are non-i.i.d.,  and consequently classical semiparametric efficiency theory is not applicable. We establish the semiparametric efficiency lower bound in this setting: to our knowledge, this is the first such result in the literature.  As in the ISS setting, an efficiency gain over the supervised setting is possible when
the parameter of interest is not well-specified. 

\item To achieve efficiency gains over supervised estimation  when the parameter is not well-specified, we propose two types of semi-supervised estimators --- the \emph{safe estimator} and the \emph{efficient estimator} ---  both of which build upon an arbitrary initial supervised estimator. The safe estimator requires minimal assumptions, and is always at least as efficient as the initial supervised estimator. The efficient estimator imposes stronger assumptions, and can achieve the semi-parametric efficiency bound. 
 Compared to existing semi-supervised estimators, the proposed estimators are more general and simpler to compute, and enjoy optimality properties.

\item Suppose we have access to a ``black box" machine learning model, $f(\cdot): \mathcal{X} \rightarrow \mathcal{Y}$, that can be used to obtain predictions of $Y$ on unlabeled data. We show that our safe estimator can be adapted to make use of these predictions, thereby connecting the semi-supervised framework of this paper to    \emph{prediction-powered inference} (PPI) \citep{angelopoulos2023prediction,angelopoulos2023ppi++}, a setting of extensive recent interest. This contextualizes existing PPI proposals through the lens of semi-supervised learning, and directly leads to new PPI estimators with better theoretical guarantees and empirical performance. 
%
\end{enumerate}

\subsection{Related literature}
\label{subsec:literature}
While semi-supervised learning is not a new research area \citep{bennett1998semi, chapelle2006continuation, bair2013semi, van2020survey}, it has been the topic of extensive recent theoretical interest. 

Recently, \cite{zhang2019semi} studied semi-supervised mean estimation and proposed a semi-supervised regression-based estimator with reduced  asymptotic variance; \cite{zhang2022high} extended this result to the high-dimensional setting and developed an approach for bias-corrected inference.  Both \cite{chakrabortty2018efficient} and \cite{azriel2022semi}  considered semi-supervised linear regression, and proposed asymptotically normal estimators with improved efficiency over the supervised ordinary least squares estimator. 
\cite{deng2023optimal} further investigated semi-supervised linear regression under a high-dimensional setting and proposed a minimax optimal estimator. \cite{wang2023semi} and \cite{quan2024efficient} extended these findings to the setting of  semi-supervised generalized linear models. 
Some authors have considered semi-supervised learning for more general inferential tasks, such as M-estimation \citep{chakrabortty2016robust,song2023general, yuval2022semi} and U-statistics \citep{kim2024semi}, and in different sub-fields of statistics, such as causal inference \citep{cheng2021robust, chakrabortty2022general} and extreme-valued statistics \citep{ahmed2024extreme}. However,  there exists neither a unified theoretical framework for the semi-supervised setting, nor a unified approach to construct efficient estimators in this setting. 

\emph{Prediction-powered inference} (PPI) refers to a setting in which the data analyst has access to not only  $\labeled$ and $\unlabeled$, but also a ``black box" machine learning model $f(\cdot): \mathcal{X} \rightarrow \mathcal{Y}$ such that $Y \approx f(X)$.  The goal of PPI  is to conduct inference on the association between $Y$ and $X$ while making use of $\labeled$, $\unlabeled$, and the black box predictions given by $f(\cdot)$.  
To the best of our knowledge, despite extensive recent interest in the PPI setting \citep{angelopoulos2023prediction,angelopoulos2023ppi++,miao2023assumption,gan2023prediction, miao2024task, zrnic2024active,zrnic2024cross, gu2024local}, no prior work formally connects prediction-powered inference with the semi-supervised paradigm. 

\subsection{Notation}
For any natural number $K$,  $[K] := \set{1, \ldots, K}$. Let $\sO_1 \times \sO_2$ denote the Cartesian product of  two sets $\sO_1$ and $\sO_2$. Let $\lambda_{\min}(\*A)$ denote the smallest eigenvalue of a matrix $\*A$. For two symmetric matrices $\*A, \*B \in \bR^{p \times p}$, we write that $\*A \succeq \*B$ if and only if $\*A-\*B$ is positive semi-definite. We use uppercase letters $X, Y, Z$ to denote random variables and lowercase letters $x, y, z$ to denote their realizations. For a vector $x \in \bR^p$, let $\norm{x}$ be its Euclidean norm. The random variable $X$ takes values in the probability space $(\sX, \sF_X, \bP)$. For a measurable function $f:\sX \to \bR^p$, let $\bE[f(X)] = \int fd\bP \in \bR^p$ denote its expectation, and  $\Var[f(X)] = \bE[\set{f(X)-\bE[f(X)]}\set{f(X)-\bE[f(X)]}^\top] \in \bR^{p \times p}$  its variance-covariance matrix. For another measurable function $g:\sX \to \bR^q$, let $\Cov[f(X), g(X)] = \bE[\set{f(X)-\bE[f(X)]}\set{g(X)-\bE[g(X)]}^\top] \in \bR^{p \times q}$ denote the covariance matrix between $f(X)$ and $g(X)$. Let $\sL_p^2(\bP)$ be the space of all vector-valued measurable functions $f: \sX \to \bR^p$ such that $\bE[\norm{f(X)}^2] < \infty$, and let $\sL^2_{p, 0}(\bP) \subset \sL_p^2(\bP)$ denote the sub-space of functions with expectation 0, i.e. any $f \in \sL^2_{p, 0}(\bP)$ satisfies $\bE[f(Z)] = 0$.

\subsection{Organization of the paper}
The rest of our paper is organized as follows.  Section~\ref{sec:efficiency_theory} contains a very brief review of some concepts in  semiparametric efficiency theory. In Sections~\ref{sec:ISS} and \ref{sec:OSS}, we derive semi-parametric efficiency bounds and construct efficient estimators under the ISS and OSS settings. 
In Sections~\ref{sec:ppi} and~~\ref{sec:missing}, we connect the proposed framework to prediction-powered inference and missing data, respectively. In Section~\ref{sec:app}, we instantiate our theoretical and methodological findings in the context of specific examples: M-estimation, U-statistics, and the estimation of average treatment effect. Numerical experiments and concluding remarks are in Sections~\ref{sec:simu} and \ref{sec:dis}, respectively. Proofs of theoretical results can be found in the Supplement.

\section{Overview of semiparametric efficiency theory}
\label{sec:efficiency_theory}

We provide a brief introduction to concepts and results from semiparametric efficiency theory that are essential for reading later sections of the paper. See also Supplement~\ref{subsec:apdx_efficiency_iid}. A more comprehensive introduction can be found in Chapter 25 of \cite{van2000asymptotic} or in \cite{tsiatis2006semiparametric}. 

Consider i.i.d. data $\set{Z_i}_{i=1}^n$ sampled from  $\joint \in \jointmodel$,  and suppose there exists a dominating measure $\mu$ such that each element of $\jointmodel$ can be represented by its corresponding density with respect to $\mu$. Interest lies in a Euclidean functional $\theta(\cdot):\jointmodel \to \Theta \subseteq \bR^p$, and  $\theta^* := \theta(\joint)$. Consider a one-dimensional regular parametric sub-model $\sP_T = \set{\bP_t: t \in T \subset \bR} \subset \jointmodel$ such that $\bP_{t^*} = \joint$ for some $t^* \in T$. $\sP_T$ defines a score function at $\joint$. The set of score functions at $\joint$ from all such one-dimensional regular parametric sub-models of $\jointmodel$ forms the \emph{tangent set} at $\joint$ relative to $\jointmodel$, and the closed linear span of the tangent set is the \emph{tangent space} at $\joint$ relative to $\jointmodel$, which we denote as $\sT_\jointmodel(\joint)$. By definition, the tangent space $\sT_\jointmodel(\joint) \subseteq \lpone$. 

An estimator $\thetaest = \hat{\theta}_n(Z_1, \ldots, Z_n)$ of $\theta^*$ is \emph{regular} relative to $\jointmodel$ if, for any regular parametric sub-model $\sP_T = \set{\bP_t: t \in T \subset \bR} \subset \jointmodel$ such that $\joint = \bP_{t^*}$ for some $t^* \in T$,
$$\sqrt{n}\left[\thetaest-\theta\left(\bP_{t^*+\frac{h}{\sqrt{n}}}\right)\right] \indistribution V_\joint$$
for all $h \in \bR$ under $\bP_{t^*+\frac{h}{\sqrt{n}}}$, where $V_\joint$ is a random variable whose distribution depends on $\joint$ but not on $h$. $\thetaest$ is  \emph{asymptotically linear}  with \textit{influence function} $\IF(z) \in \lp$ if
$$\sqrt{n}\left(\thetaest-\theta^*\right) = \frac{1}{\sqrt{n}}\sum_{i=1}^n\IF(z) + \littleO_p(1).$$
The influence function $\IF(z)$ depends on $\joint$ through some functional $\eta: \jointmodel \to \Omega$, where $\Omega$ is a general metric space with metric $\rho$, and $\eta^* = \eta(\joint)$. The functional $\eta(\cdot)$ need not  be finite-dimensional. The functional of interest $\thetafunctional$ is typically involved in $\eta(\cdot)$, but they are usually not the same. 
 A regular and asymptotically linear estimator is \emph{efficient} for $\thetafunctional$ at $\joint$ relative to $\jointmodel$ if its influence function $\EIF(z)$ satisfies  $a^\top\EIF(z) \in \sT_\jointmodel(\joint)$ for any $a \in \bR^p$. If this holds, then  $\EIF(z)$ is the \emph{efficient influence function} of $\thetafunctional$ at $\joint$ relative to $\jointmodel$.

Semiparametric efficiency theory aims to provide a lower bound for the asymptotic variances of regular and asymptotically linear estimators. 
\begin{lemma}
\label{lem:classic_efficiency}
Suppose there exists a regular and asymptotically linear estimator of $\theta^*$. Let $\EIF(z)$ denote the efficient influence function of $\thetaest$ at $\joint$ relative to $\jointmodel$. Then, it follows that for any regular and asymptotically linear estimator $\thetaest$ of $\theta^*$ such that $\sqrt{n}(\thetaest-\theta^*) \indistribution N(0, \*\Sigma)$,
$$\*\Sigma \succeq \Var[\EIF(Z)].$$
\end{lemma}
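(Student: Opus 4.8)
The plan is to exploit the fact that the asymptotic variance of any regular, asymptotically linear estimator is exactly the variance of its influence function, and then to show via an orthogonality argument that among all admissible influence functions the efficient one is smallest in the positive semidefinite order. Concretely, since $\thetaest$ is asymptotically linear, $\sqrt{n}(\thetaest - \theta^*) = n^{-1/2}\sum_{i=1}^n \IF(Z_i) + \littleO_p(1)$, so the central limit theorem gives $\*\Sigma = \Var[\IF(Z)]$. The bound $\*\Sigma \succeq \Var[\EIF(Z)]$ will therefore follow once I establish that $\Var[a^\top\IF(Z)] \ge \Var[a^\top\EIF(Z)]$ for every $a \in \bR^p$.

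The crux, which I expect to be the main obstacle, is to show that every such influence function is a \emph{gradient} of $\thetafunctional$ at $\joint$: for each score $s \in \tangent$ arising from a one-dimensional regular submodel $\{\bP_t\}$ with $\bP_{t^*} = \joint$, one has $\bE[\IF(Z)\, s(Z)] = \dot\theta$, where $\dot\theta := \frac{d}{dt}\theta(\bP_t)\big|_{t=t^*}$ is the pathwise derivative. This is where the regularity assumption enters. Fixing a submodel, I would invoke local asymptotic normality: the log-likelihood ratio of $\bP_{t^*+h/\sqrt{n}}^n$ against $\joint^n$ admits the expansion $h\, n^{-1/2}\sum_i s(Z_i) - \tfrac{h^2}{2}\bE[s(Z)^2] + \littleO_p(1)$. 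The pair $\big(n^{-1/2}\sum_i \IF(Z_i),\ \text{log-likelihood ratio}\big)$ is jointly asymptotically Gaussian under $\joint$ with cross-covariance $\bE[\IF(Z) s(Z)]$, so Le Cam's third lemma yields, under the contiguous sequence $\bP_{t^*+h/\sqrt{n}}$, that $n^{-1/2}\sum_i\IF(Z_i) \indistribution N(h\,\bE[\IF(Z)s(Z)],\ \Var[\IF(Z)])$. Combined with $\theta(\bP_{t^*+h/\sqrt{n}}) = \theta^* + h\dot\theta + \littleO(n^{-1/2})$, this shows that the limit law of $\sqrt{n}(\thetaest - \theta(\bP_{t^*+h/\sqrt{n}}))$ has mean $h\big(\bE[\IF(Z)s(Z)] - \dot\theta\big)$; regularity forces this limit to be free of $h$, whence $\bE[\IF(Z)s(Z)] = \dot\theta$, the gradient identity.

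With the gradient property in hand, I would finish with a Hilbert-space projection argument in $\lpone$. Both $a^\top\IF$ and $a^\top\EIF$ are gradients of $a^\top\thetafunctional$, so their difference $a^\top(\IF - \EIF)$ satisfies $\bE[a^\top(\IF-\EIF)(Z)\, s(Z)] = 0$ for every $s \in \tangent$; that is, $a^\top(\IF - \EIF)$ is orthogonal to the tangent space. Since $a^\top\EIF \in \tangent$ by the efficiency hypothesis, orthogonality gives the Pythagorean decomposition $\Var[a^\top\IF(Z)] = \Var[a^\top\EIF(Z)] + \Var[a^\top(\IF - \EIF)(Z)] \ge \Var[a^\top\EIF(Z)]$. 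As $a \in \bR^p$ is arbitrary, this is precisely $\*\Sigma \succeq \Var[\EIF(Z)]$.

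Two minor technical points remain to be checked. First, the cross-covariance invoked in Le Cam's third lemma is well-defined because $\IF \in \lp$ and each score is square-integrable. Second, $\EIF$ is itself a genuine gradient: this follows by applying the very same regularity-plus-contiguity argument to the efficient estimator, whose influence function is $\EIF$, so that the orthogonality step above is valid.
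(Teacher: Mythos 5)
Your proof is correct and follows the same route the paper relies on: the paper defers Lemma~\ref{lem:classic_efficiency} to Theorem 3.2 of Bickel et al.\ (1993), but the argument you give --- regularity plus Le Cam's third lemma to establish the gradient identity $\bE[\IF(Z)s(Z)]=\dot\theta$, then orthogonality of $a^\top(\IF-\EIF)$ to the tangent space combined with $a^\top\EIF\in\tangent$ to get the Pythagorean inequality --- is exactly the argument the paper itself spells out for its non-i.i.d.\ analogues in Lemmas~\ref{lem:apdx_gradients_IF_OSS} and~\ref{lem:oss_efficiency}. No gaps.
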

Lemma~\ref{lem:classic_efficiency} states that any regular and asymptotically linear estimator of $\theta^*$ relative to $\jointmodel$ has asymptotic variance no smaller than the variance of the efficient influence function, $\Var[\EIF(Z)]$. This is referred to as the \emph{semiparametric efficiency lower bound}. For a proof of Lemma~\ref{lem:classic_efficiency}, see Theorem 3.2 of \cite{bickel1993efficient}.

\section{The ideal semi-supervised setting}
\label{sec:ISS}

\subsection{Semiparametric efficiency lower bound}
\label{subsec:ISS_efficiency}
Under the ISS setting, we have  access to labeled data $\labeled$, and  the marginal distribution $\marginal$ is known. Thus,  the  model $\jointmodel$ reduces from \eqref{eq:separable_model}  to
\begin{equation}
\label{eq:reduced_model}
\reducedmodel = \set{\bP= \marginal\bP_{Y\mid X}, \bP_{Y\mid X} \in \conditionalmodel}.
\end{equation}
A smaller model space leads to an easier inferential problem, and thus a lower efficiency bound. We let 
\begin{equation}
\label{eq:cEIF}
\cEIF(x): =\bE\left[\EIF(Z)\mid X = x\right]
\end{equation}
denote the \emph{conditional} efficient influence function. 
Our first result establishes the semiparametric efficiency bound under the ISS setting. 
\begin{theorem}
\label{thm:efficiency_ISS}
Suppose the efficient influence function of $\thetafunctional$ at $\joint$  relative to $\jointmodel$ defined in  \eqref{eq:separable_model}  is $\EIF(z)$, where $\eta: \jointmodel \to \Omega$ is a functional, $\Omega$ is a metric space equipped with metric $\rho(\cdot, \cdot)$, and $\eta^* = \eta(\joint)$. Then, the efficient influence function of $\thetafunctional$ at $\joint$ relative to $\reducedmodel$ defined as \eqref{eq:reduced_model} is 
\begin{equation}
\label{eq:EIF_ISS}
\EIF(z)-\cEIF(x).
\end{equation}
Furthermore, the semiparametric efficiency lower bound  satisfies
\begin{equation}
\label{eq:EIF_var_ISS}
\Var[\EIF(Z)-\cEIF(X)] =  \Var[\EIF(Z)]-\Var[\cEIF(X)] \preceq \Var[\EIF(Z)].
\end{equation}
\end{theorem}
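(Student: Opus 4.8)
The plan is to exploit the product structure of the model in \eqref{eq:separable_model} to obtain an orthogonal decomposition of the tangent space, and then read off the reduced-model efficient influence function as an orthogonal projection. First I would invoke the standard fact that, because $\marginal$ and $\conditional$ are modeled separately, the tangent space at $\joint$ relative to $\jointmodel$ splits as an orthogonal direct sum
\[
\tangent = \tangentmarginal \oplus \tangentconditional,
\]
where $\tangentmarginal \subseteq \set{a \in \lp : a(z) = a(x),\ \bE[a(X)] = 0}$ consists of mean-zero functions of $x$ alone, $\tangentconditional \subseteq \set{b \in \lp : \bE[b(Z)\mid X] = 0}$ consists of functions with vanishing conditional mean, and the two subspaces are orthogonal in $\lp$ since $\bE[a(X)^\top b(Z)] = \bE[a(X)^\top \bE[b(Z)\mid X]] = 0$. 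When $\marginal$ is known, the only admissible one-dimensional submodels vary $\bP_{Y\mid X}$, so the tangent space relative to $\reducedmodel$ is exactly $\tangentconditional$.

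Next I would establish the first claim by identifying $\EIF - \cEIF$ as the projection of $\EIF$ onto $\tangentconditional$. Writing the orthogonal decomposition $\EIF = \varphi_X + \varphi_{Y\mid X}$ with $\varphi_X \in \tangentmarginal$ and $\varphi_{Y\mid X} \in \tangentconditional$, I would take conditional expectations: since $\bE[\varphi_{Y\mid X}(Z)\mid X] = 0$ and $\varphi_X$ is a function of $x$ alone, the definition \eqref{eq:cEIF} gives $\cEIF(x) = \bE[\EIF(Z)\mid X = x] = \varphi_X(x)$, whence $\EIF - \cEIF = \varphi_{Y\mid X} \in \tangentconditional$. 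To confirm this residual is the efficient influence function relative to $\reducedmodel$, I would verify that it is simultaneously a gradient and an element of the reduced tangent space. It lies in $\tangentconditional$ by construction; and for every conditional score $b \in \tangentconditional$, because $\cEIF$ is a function of $x$ and is therefore orthogonal to $b$, we have $\inner{\EIF - \cEIF}{b} = \inner{\EIF}{b}$, which equals the pathwise derivative of $\thetafunctional$ along that path since $\EIF$ is a gradient relative to the larger model $\jointmodel$ and every path of $\reducedmodel$ is also a path of $\jointmodel$. A gradient lying in the tangent space is the efficient influence function, proving \eqref{eq:EIF_ISS}.

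The variance identity \eqref{eq:EIF_var_ISS} then follows from a short second-moment computation. Expanding,
\[
\Var[\EIF(Z)-\cEIF(X)] = \Var[\EIF(Z)] - 2\Cov[\EIF(Z), \cEIF(X)] + \Var[\cEIF(X)],
\]
and the cross term simplifies by the tower property, since $\bE[\EIF(Z)\cEIF(X)^\top] = \bE[\bE[\EIF(Z)\mid X]\,\cEIF(X)^\top] = \bE[\cEIF(X)\cEIF(X)^\top] = \Var[\cEIF(X)]$, using $\bE[\EIF(Z)] = \bE[\cEIF(X)] = 0$. Substituting yields $\Var[\EIF(Z)-\cEIF(X)] = \Var[\EIF(Z)] - \Var[\cEIF(X)]$. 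Finally, $\Var[\cEIF(X)]$ is a covariance matrix and hence positive semi-definite, so $\Var[\EIF(Z)] - \Var[\cEIF(X)] \preceq \Var[\EIF(Z)]$, completing the inequality.

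I expect the main obstacle to be rigorously justifying the orthogonal tangent-space decomposition and the identification of the reduced-model tangent space with $\tangentconditional$ --- in particular, checking that the marginal and conditional tangent sets are genuinely orthogonal and that the conditional component of $\EIF$ lands inside $\tangentconditional$ rather than merely inside the larger space of conditional-mean-zero functions. Once this geometry is in place, the remaining steps are routine orthogonal projection and iterated-expectation bookkeeping.
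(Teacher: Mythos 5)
Your proposal is correct and follows essentially the same route as the paper's proof: both rest on the decomposition $\tangent = \tangentmarginal \oplus \tangentconditional$ for separable models, identify the reduced-model tangent space with $\tangentconditional$, show that $\EIF - \cEIF$ is the component of $\EIF$ in $\tangentconditional$ (the paper argues this by contradiction via uniqueness of the direct sum $\lpone = \lpxone \oplus \lpyxone$, which is exactly the justification your conditional-expectation identification of $\varphi_X$ with $\cEIF$ needs), verify the gradient property against conditional scores, and finish with the same tower-property variance computation. The ``obstacle'' you flag at the end is precisely resolved by noting that $a^\top\EIF \in \tangent$ by definition of the efficient influence function, so its decomposition already has components in the two tangent spaces rather than merely in $\lpxone$ and $\lpyxone$.
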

Thus, knowledge of $\marginal$ can improve efficiency at $\theta^*$ if and only if $\Var[\cEIF(X)] \succ 0$, i.e. if and only if  $\cEIF(X)$ is not a constant almost surely. 

\subsection{Inference with a well-specified parameter}
\label{subsec:well_specification}
It is well-known that a correctly-specified regression model cannot be improved with additional unlabeled data \citep{kawakita2013semi, buja2019models, song2023general, gan2023prediction};  \cite{chakrabortty2016robust} further extended this result to the setting of M-estimation. In this section, we formally characterize this phenomenon, and generalize it to an arbitrary inferential problem, in the ISS setting. 

The next definition is motivated by \cite{buja2019modelsII}.  
\begin{definition}[Well-specified parameter]
\label{def:well_specification}
Let $\joint = \marginal\conditional$ be the data-generating distribution, and  $\marginalmodel$ a model of the marginal distribution such that $\bP_X^* \in \sP_X$. A functional $\thetafunctional: \marginalmodel \otimes \conditional \to \bR^p$ is \emph{well-specified} at $\conditional$ relative to $\marginalmodel$ if any $\bP_X \in \marginalmodel$ satisfies
$$\theta(\marginal\conditional) = \theta(\bP_X\conditional).$$
\end{definition}
We emphasize that well-specification is a joint property of the functional of interest  $\thetafunctional$, the conditional distribution $\conditional$, and the marginal model $\marginalmodel$. 
Definition ~\ref{def:well_specification} states that if the parameter is well-specified, then a change to the marginal model does not change $\theta(\cdot)$. Intuitively, if this is the case, then knowledge of $\marginal$ in the  ISS setting will not affect inference on $\theta^*$.

\begin{theorem}
\label{thm:no_improvement_efficiency}
Under the conditions of Theorem \ref{thm:efficiency_ISS}, let $\EIF(z)$ be the efficient influence function of $\thetafunctional$ at $\joint$ relative to $\jointmodel$. If $\thetafunctional$ is well-specified at $\conditional$ relative to $\marginalmodel$, then
\begin{equation}
\label{eq:ceif-zero}
\cEIF(X) = 0, \quad \marginal\text{-a.s.},
\end{equation}
where $\cEIF(x)$ is the conditional efficient influence function as in \eqref{eq:cEIF}. Moreover, if $\tangentmarginal = \lpxone$, then any influence function $\IF$ of a regular and asymptotically linear estimator of $\theta^*$ satisfies
\begin{equation}
\cIF(x) = 0, \quad \marginal\text{-a.s.},
\label{eq:cif-zero}
\end{equation}
where the notation $\cIF(x)$ denotes the conditional influence function, 
\begin{equation}
\label{eq:cIF}
\cIF(x) := \bE[\IF(Z)\mid X = x].
\end{equation}
\end{theorem}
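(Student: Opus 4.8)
The plan is to exploit the separable (product) structure of $\jointmodel$ in \eqref{eq:separable_model}, which induces the orthogonal decomposition $\tangent = \tangentmarginal \oplus \tangentconditional$, where every marginal score is a mean-zero function of $X$ lying in $\tangentmarginal \subseteq \lpxone$, and every conditional score $s$ satisfies $\bE[s(Z)\mid X]=0$. The orthogonality is immediate, since for $a \in \tangentmarginal$ and $s \in \tangentconditional$ we have $\bE[a(X)s(Z)] = \bE[a(X)\bE[s(Z)\mid X]] = 0$. I will combine this with two standard facts from semiparametric theory (see \cite{van2000asymptotic, bickel1993efficient}): first, the influence function of any regular and asymptotically linear estimator, including the efficient one, is a gradient of $\thetafunctional$, meaning that for every one-dimensional regular parametric submodel $\{\bP_t\} \subset \jointmodel$ with score $s$ at $\joint$ one has $\frac{\partial}{\partial t}\theta(\bP_t)\big|_{t=t^*} = \bE[\IF(Z)s(Z)]$; and second, by definition $c^\top\EIF \in \tangent$ for every $c \in \bR^p$.

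The key mechanism is a marginal-only perturbation. Because $\jointmodel$ is separable, for any $a \in \tangentmarginal$ I can take a regular parametric path $\bP_{X,t} \in \marginalmodel$ through $\marginal$ with score $a$, and form the path $\bP_t = \bP_{X,t}\conditional \subset \jointmodel$ that holds the conditional factor fixed; its score at $\joint$ is exactly $a(X)$. Since every $\bP_{X,t} \in \marginalmodel$, well-specification of $\thetafunctional$ at $\conditional$ relative to $\marginalmodel$ forces $\theta(\bP_t) = \theta(\bP_{X,t}\conditional) = \theta^*$ for all $t$, so the pathwise derivative vanishes. Applying the gradient identity to this path yields $\bE[\IF(Z)a(X)] = 0$, and taking conditional expectations gives $\bE[\cIF(X)a(X)] = 0$ for every $a \in \tangentmarginal$, i.e. $\cIF \perp \tangentmarginal$ componentwise. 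The identical argument applied to $\EIF$ gives $\bE[\cEIF(X)a(X)] = 0$ for all $a \in \tangentmarginal$.

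To finish \eqref{eq:ceif-zero}, I use that $\EIF$ lies in the tangent space: for each $c \in \bR^p$, decompose $c^\top\EIF = g(X) + h(Z)$ with $g \in \tangentmarginal$ and $h \in \tangentconditional$, so $\bE[h(Z)\mid X]=0$ and hence $c^\top\cEIF(X) = \bE[c^\top\EIF(Z)\mid X] = g(X) \in \tangentmarginal$. On the other hand, taking $c^\top$ of the vector identity $\bE[\cEIF(X)a(X)]=0$ shows $c^\top\cEIF \perp \tangentmarginal$; choosing $a = c^\top\cEIF = g$ yields $\bE[(c^\top\cEIF(X))^2]=0$, and since $c$ is arbitrary, $\cEIF = 0$ $\marginal$-a.s. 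For \eqref{eq:cif-zero}, the extra hypothesis $\tangentmarginal = \lpxone$ replaces this containment argument: each component of $\cIF(X) = \bE[\IF(Z)\mid X]$ is a mean-zero function of $X$ (as $\IF \in \lp$), hence lies in $\lpxone = \tangentmarginal$, while the marginal-perturbation step gave $\cIF \perp \tangentmarginal$ componentwise; a function both contained in and orthogonal to the same subspace vanishes, so $\cIF = 0$ $\marginal$-a.s.

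I expect the main obstacle to be the rigorous justification of the marginal-only submodel, namely verifying that $\bP_t = \bP_{X,t}\conditional$ is a valid regular parametric submodel of $\jointmodel$ with score $a(X)$ and that well-specification genuinely annihilates the pathwise derivative, together with the tangent-space decomposition that places $c^\top\cEIF$ inside $\tangentmarginal$. Once these structural points are settled, the conclusions are pure Hilbert-space statements. The contrast between the two parts is instructive: \eqref{eq:ceif-zero} needs no assumption on $\marginalmodel$ because membership of $\EIF$ in $\tangent$ automatically supplies $c^\top\cEIF \in \tangentmarginal$, whereas for a general influence function this containment is available only under the nonparametric marginal assumption $\tangentmarginal = \lpxone$.
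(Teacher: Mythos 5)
Your proposal is correct and follows essentially the same route as the paper: a marginal-only regular submodel $\bP_t=\bP_{X,t}\conditional$ whose pathwise derivative is annihilated by well-specification, giving $\bE[\gradient(Z)a(X)]=0$ for every gradient and every $a\in\tangentmarginal$, followed by the Hilbert-space observation that $c^\top\cEIF$ (resp.\ $c^\top\cIF$ when $\tangentmarginal=\lpxone$) lies in $\tangentmarginal$ while also being orthogonal to it. The only cosmetic difference is in the second part, where the paper routes through the characterization of the full set of gradients as $\EIF+h$ with $a^\top h\perp\tangent$, whereas you apply the orthogonality directly to the arbitrary influence function; both rest on the same fact that influence functions of regular asymptotically linear estimators are gradients, and the paper additionally notes the standard closure/continuity argument needed when an element of $\tangentmarginal$ is only a limit of scores.
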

As a direct corollary of Theorem~\ref{thm:no_improvement_efficiency}, if $\thetafunctional$ is well-specified, then knowledge of $\marginal$ does not improve the semiparametric efficiency lower bound.

\begin{corollary}
\label{cor:well_specified_ISS}
Under the conditions of Theorem~\ref{thm:efficiency_ISS}, suppose that $\thetafunctional$ is well-specified at $\conditional$ relative to $\marginalmodel$. Then, the semiparametric efficiency lower bound of 
$\joint$ relative to $\jointmodel$ is the same as the semiparametric efficiency lower bound of $\joint$ relative to $\reducedmodel$.
\end{corollary}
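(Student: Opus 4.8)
The plan is to read off the two efficiency bounds directly from the results already established, and then observe that the well-specification hypothesis forces the ISS correction term to vanish, so that the two bounds coincide. This is an immediate consequence of Theorems~\ref{thm:efficiency_ISS} and~\ref{thm:no_improvement_efficiency}, and requires no new computation.

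First I would identify the semiparametric efficiency lower bound relative to the full model $\jointmodel$. Since by hypothesis $\EIF(z)$ is the efficient influence function of $\thetafunctional$ at $\joint$ relative to $\jointmodel$, Lemma~\ref{lem:classic_efficiency} gives this bound as $\Var[\EIF(Z)]$. Next I would invoke Theorem~\ref{thm:efficiency_ISS}, whose conclusion \eqref{eq:EIF_var_ISS} identifies the semiparametric efficiency lower bound relative to the reduced model $\reducedmodel$ as
\[
\Var[\EIF(Z) - \cEIF(X)] = \Var[\EIF(Z)] - \Var[\cEIF(X)].
\]

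The final step is to show that the subtracted term is zero under well-specification. Because $\thetafunctional$ is well-specified at $\conditional$ relative to $\marginalmodel$, the first conclusion \eqref{eq:ceif-zero} of Theorem~\ref{thm:no_improvement_efficiency} yields $\cEIF(X) = 0$ $\marginal$-a.s., hence $\Var[\cEIF(X)] = 0$. Substituting into the display above, the reduced-model bound equals $\Var[\EIF(Z)]$, which is exactly the full-model bound, establishing the claimed equality.

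I do not expect any genuine obstacle here, since the statement simply bundles the two preceding theorems. The only point meriting care is that this argument uses only the first conclusion \eqref{eq:ceif-zero} of Theorem~\ref{thm:no_improvement_efficiency}; the additional tangent-space condition $\tangentmarginal = \lpxone$, which is required only for the second conclusion \eqref{eq:cif-zero} about arbitrary influence functions, plays no role in the efficiency-bound statement and can safely be omitted from the hypotheses of this corollary.
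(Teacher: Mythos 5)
Your proof is correct and follows essentially the same route as the paper's: both invoke Theorem~\ref{thm:no_improvement_efficiency} to conclude $\cEIF(X)=0$ $\marginal$-a.s.\ under well-specification, and then apply Theorem~\ref{thm:efficiency_ISS} to see the two bounds coincide (the paper phrases this at the level of the efficient influence function, $\EIF(z)-\cEIF(x)=\EIF(z)$, while you phrase it via $\Var[\cEIF(X)]=0$ in \eqref{eq:EIF_var_ISS}, which is equivalent). Your observation that only the first conclusion \eqref{eq:ceif-zero} is needed, and not the tangent-space condition $\tangentmarginal=\lpxone$, is also consistent with the paper.
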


\subsection{Safe and efficient estimators}
\label{subsec:ISS_est}
Together, Theorems~\ref{thm:efficiency_ISS} and~\ref{thm:no_improvement_efficiency} imply that when  the parameter is not well-specified, we can potentially use knowledge of $\marginal$ for more efficient inference. We will now present two such approaches, both of which  build upon an initial supervised estimator. Under minimal assumptions, the  \emph{safe estimator}  is always at least as efficient as the initial supervised estimator. By contrast, under a stronger set of assumptions,  the \emph{efficient estimator}  achieves the efficiency lower bound \eqref{eq:EIF_var_ISS} under the ISS setting. 

We first provide some intuition behind the two estimators. Suppose that $\eta: \jointmodel \to \Omega$ is a functional that takes values in a metric space $\Omega$ with metric $\rho(\cdot, \cdot)$ and $\eta^* = \eta(\joint)$. Let $\thetaest = \hat{\theta}(\labeled)$ denote an initial supervised estimator of $\theta^*$ that is regular and asymptotically linear with influence function $\IF(z)$. Motivated by the form of the efficient influence function \eqref{eq:EIF_ISS} under the ISS setting, we aim to use knowledge of $\marginal$ to obtain an estimator $\cIFest(x)$ of the conditional influence function $\cIF(x) = \bE[\IF(Z)\mid X=x]$. This will lead to a new estimator of the form
\begin{equation}
\label{eq:corrected_est}
\hat\theta_{n,\marginal} = \thetaest - \frac{1}{n}\sum_{i=1}^n\cIFest(X_i).
\end{equation}
In what follows, we let $\cetaest$ denote an estimator of $\eta^*$. 

\subsubsection{The safe estimator}

Let $g: \sX \to \bR^d$ denote an arbitrary measurable function of $x$, and define 
\begin{equation}
\label{eq:centered_ISS}
\centeredg(x) := g(x)-\bE[g(X)]. 
\end{equation}
(Since $\marginal$ is known in the ISS setting, we can compute $\centeredg(\cdot)$ given  $g(\cdot)$.)

To construct the safe estimator, we will estimate $\cIF(x)$ by regressing $\set{\IFplugin(Z_i)}_{i=1}^n$ onto $\set{\centeredg(X_i)}_{i=1}^n$, leading to regression coefficients 
\begin{equation}
\label{eq:OLS_ISS}
\iOLS = \left[\frac{1}{n}\sum_{i=1}^n\IFplugin(Z_i)\centeredg(X_i)^\top\right]\bE\left[\centeredg(X)\centeredg(X)^\top\right]^{-1},
\end{equation}
where the expectation in \eqref{eq:OLS_ISS} is possible since $\marginal$ is known. 
 Then the safe ISS estimator takes the form 
\begin{equation}
\label{eq:corrected_est_proj}
\issthetaest = \thetaest - \frac{1}{n}\sum_{i=1}^n\iOLS\centeredg(X_i).
\end{equation}

To establish the convergence of $\issthetaest$, we made the following assumptions.
\begin{assumption}
\label{asu:IF_Lipschitz}
    (a) The initial supervised estimator $\thetaest = \hat{\theta}(\labeled)$ is a regular and asymptotically linear estimator of $\theta^*$ with influence function $\IF(z)$. 
    (b) There exists a set $\sO \subset \Omega$ such that $\eta^* \in \sO$, the class of functions $\set{\vp_\eta(z): \eta \in \sO}$ is $\joint$-Donsker, and for all $\set{\eta_1, \eta_2} \subset \sO$, $\norm{\vp_{\eta_1}(z)-\vp_{\eta_2}(z)} \le L(z)\rho(\eta_1, \eta_2)$, where $L:\sZ \to \bR^+$ is a square integrable function.
    (c) There exists an estimator $\cetaest$ of $\eta^*$ such that $\joint\set{\cetaest \in \sO} \to 1$ and $\rho(\cetaest, \eta^*) = \littleO_p(1)$. 
\end{assumption}


Assumptions~\ref{asu:IF_Lipschitz} (b) and (c) state that we can find a consistent estimator $\cetaest$ of the functional $\eta^*$ on which the influence function of $\thetaest$ depends, such that   $\cetaest$ asymptotically belongs to a realization set $\sO$ on which the class of functions $\set{\vp_\eta(z):\eta \in \sO}$ is a Donsker class. The Donsker condition is standard in semiparametric statistics, and leads to $\sqrt{n}$-converegnce while allowing for rich classes of functions. We validate Assumption~\ref{asu:IF_Lipschitz} for specific inferential problems in Section~\ref{sec:app}. In the special case when $\eta(\cdot)$ is finite-dimensional, the next proposition provides sufficient conditions for   Assumption~\ref{asu:IF_Lipschitz}.

\begin{proposition}
\label{prop:equi_assumption1}
Suppose that (i) $\eta(\cdot)$ is a finite-dimensional functional; (ii) there exists a bounded open set $\sO$ such that $\eta^* \in \sO$, and for all $\set{\eta_1, \eta_2} \subset \sO$, $\norm{\vp_{\eta_1}(z)-\vp_{\eta_2}(z)} \le L(z)\norm{\eta_1-\eta_2}$, where $L:\sZ \to \bR^+$ is a square integrable function; and (iii) there exists an estimator $\cetaest$ of $\eta^*$ such that $\norm{\cetaest-\eta^*} = \littleO_p(1)$. Then, Assumptions~\ref{asu:IF_Lipschitz} (b) and (c) hold.
\end{proposition}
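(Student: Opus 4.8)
The plan is to recognize that, once the abstract metric space $\Omega$ is specialized to a finite-dimensional Euclidean space with $\rho(\eta_1,\eta_2) = \norm{\eta_1-\eta_2}$, hypotheses (ii) and (iii) reproduce almost verbatim the Lipschitz and consistency requirements in Assumption~\ref{asu:IF_Lipschitz}, and the sole piece of genuine content is the $\joint$-Donsker property in part (b). Taking $\sO$ to be the bounded open set furnished by (ii), the Lipschitz inequality there is exactly the Lipschitz inequality demanded in Assumption~\ref{asu:IF_Lipschitz}(b), so nothing further is needed for that clause.

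For the Donsker property I would appeal to the standard empirical-process result for classes that are Lipschitz in a finite-dimensional index (Example 19.7 of \cite{van2000asymptotic}). First I would exhibit a square-integrable envelope: fixing any $\eta_0 \in \sO$, the Lipschitz bound gives $\norm{\vp_\eta(z)} \le \norm{\vp_{\eta_0}(z)} + L(z)\,\mathrm{diam}(\sO)$ for all $\eta \in \sO$, and since $\vp_{\eta^*} \in \lp$ together with the Lipschitz bound (applied to $\eta_0$ and $\eta^*$) forces $\vp_{\eta_0} \in \sL^2_p(\joint)$, while $L$ is square-integrable and $\mathrm{diam}(\sO) < \infty$ because $\sO$ is bounded, the right-hand side is square-integrable under $\joint$. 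Next I would bound the $L_2(\joint)$ bracketing numbers: covering $\sO \subset \bR^{\dim\eta}$ by Euclidean balls of radius proportional to $\epsilon/(\bE[L(Z)^2])^{1/2}$ and converting each parameter ball into an $L_2$ bracket of width $O(\epsilon)$ via the Lipschitz bound yields $N_{[\,]}(\epsilon, \set{\vp_\eta:\eta\in\sO}, L_2(\joint)) = O\big((\mathrm{diam}(\sO)/\epsilon)^{\dim\eta}\big)$. This polynomial growth makes the bracketing entropy integral $\int_0^\delta \sqrt{\log N_{[\,]}(\epsilon,\cdot,L_2(\joint))}\,d\epsilon$ finite, so the bracketing central limit theorem delivers the $\joint$-Donsker property. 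Because the Lipschitz condition bounds each coordinate of the $\bR^p$-valued $\vp_\eta$ by the same $L(z)$, the argument runs coordinatewise and the vector-valued class is Donsker as well.

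For Assumption~\ref{asu:IF_Lipschitz}(c), hypothesis (iii) gives $\rho(\cetaest,\eta^*) = \norm{\cetaest-\eta^*} = \littleO_p(1)$ immediately. Since $\sO$ is open with $\eta^* \in \sO$, there is $\delta > 0$ with $\set{\eta:\norm{\eta-\eta^*}<\delta}\subset\sO$, whence $\joint\set{\cetaest\in\sO} \ge \joint\set{\norm{\cetaest-\eta^*}<\delta} \to 1$, establishing (c).

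The main obstacle is the Donsker verification, and within it the two bookkeeping steps that require care: producing a genuinely square-integrable envelope (which hinges on upgrading $\vp_{\eta^*}\in\lp$ to square-integrability of $\vp_\eta$ for every $\eta\in\sO$ via the Lipschitz bound and $L$'s square-integrability), and the bracketing-number computation. Everything else is a direct transcription of the finite-dimensional hypotheses into the abstract language of Assumption~\ref{asu:IF_Lipschitz}.
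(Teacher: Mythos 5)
Your proof is correct and follows essentially the same route as the paper: the paper simply cites the Lipschitz-in-a-bounded-Euclidean-parameter example from \cite{van2000asymptotic} to get the $\joint$-Donsker property and then notes that consistency plus openness of $\sO$ gives $\joint\set{\cetaest\in\sO}\to 1$, exactly as you do. The only difference is that you unpack the cited example (envelope, polynomial bracketing numbers, bracketing CLT) rather than quoting it, which is a faithful reproduction of its standard proof rather than a new argument.
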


Define the population regression coefficient as
\begin{equation}
\label{eq:OLS}
\OLS := \bE\left[\IF(Z)\centeredg(X)^\top\right]\bE\left[\centeredg(X)\centeredg(X)^\top\right]^{-1}.
\end{equation}
The next theorem establishes the asymptotic behavior of $\issthetaest$ in \eqref{eq:corrected_est_proj}.
\begin{theorem}
\label{thm:ISS_estimator}
Suppose that $\hat{\theta}_n = \hat{\theta}(\labeled)$ is a supervised estimator that satisfies Assumption~\ref{asu:IF_Lipschitz}, and $\cetaest$ is an estimator of $\eta^*$ as in Assumption~\ref{asu:IF_Lipschitz} (c). Let $g: \sX \to \bR^d$ be a square-integrable function such that $\bE\left[\norm{g(X)}^2\right] < \infty$ and $\Var\left[g(X)\right]$ is non-singular, and let $\centeredg(x)$ be its centered version \eqref{eq:centered_ISS}. Then, for $\OLS$ in \eqref{eq:OLS},  $\issthetaest$ defined in \eqref{eq:corrected_est_proj} is a regular and asymptotically linear estimator of $\theta^*$ with influence function 
$$\IF(z)- \OLS\centeredg(x),$$ and asymptotic variance  $\Var\left[\IF(Z)- \OLS\centeredg(X)\right]$. Moreover,  
\begin{align}
\label{eq:ISS_est_var}
\Var\left[\IF(Z)- \OLS\centeredg(X)\right] &= \Var\left[\IF(Z)- \cIF(X)\right] + \Var\left[\cIF(Z)- \OLS\centeredg(X)\right] \nonumber \\
&\preceq \Var\left[\IF(Z)\right].
\end{align}
\end{theorem}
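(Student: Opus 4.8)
The plan is to establish four things in sequence: asymptotic linearity of $\issthetaest$ with influence function $\IF(z)-\OLS\centeredg(x)$, its regularity relative to the reduced model $\reducedmodel$, the additive variance decomposition, and the dominance $\preceq\Var[\IF(Z)]$. For asymptotic linearity I would write
$$\sqrt{n}(\issthetaest-\theta^*) = \sqrt{n}(\thetaest-\theta^*) - \iOLS\cdot\frac{1}{\sqrt{n}}\sum_{i=1}^n\centeredg(X_i),$$
replace the first term by $\frac{1}{\sqrt n}\sum_i\IF(Z_i)+o_p(1)$ using Assumption~\ref{asu:IF_Lipschitz}(a), and reduce everything to showing $\iOLS\to\OLS$ in probability. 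Writing $\iOLS=\hat A_n B^{-1}$ with $\hat A_n=\frac1n\sum_i\IFplugin(Z_i)\centeredg(X_i)^\top$ and $B=\bE[\centeredg(X)\centeredg(X)^\top]$ (known and nonsingular, equal to $\Var[g(X)]$), I would split $\hat A_n-\bE[\IF(Z)\centeredg(X)^\top]$ into a plug-in error and an ordinary law-of-large-numbers term. On the event $\{\cetaest\in\sO\}$, which has probability $\to 1$ by Assumption~\ref{asu:IF_Lipschitz}(c), the Lipschitz bound of Assumption~\ref{asu:IF_Lipschitz}(b) gives $\norm{\IFplugin(Z_i)-\IF(Z_i)}\le L(Z_i)\rho(\cetaest,\eta^*)$, so the plug-in error is at most $\rho(\cetaest,\eta^*)\cdot\frac1n\sum_i L(Z_i)\norm{\centeredg(X_i)}=o_p(1)\cdot O_p(1)$ by Cauchy--Schwarz and square-integrability of $L$ and $g$. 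Since $\cetaest$ enters only through $\iOLS$, which multiplies the $O_p(1)$ term $\frac{1}{\sqrt n}\sum_i\centeredg(X_i)$, mere consistency $\iOLS-\OLS=o_p(1)$ suffices (no Donsker property is invoked here), and the remainder $(\iOLS-\OLS)\frac{1}{\sqrt n}\sum_i\centeredg(X_i)=o_p(1)$, leaving the stated influence function.

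For regularity, which I interpret relative to $\reducedmodel$ since $\marginal$ is known, the key observation is that every score $s$ of the reduced model satisfies $\bE[s(Z)\mid X]=0$, because only the conditional law is perturbed while $\marginal$ is held fixed. Hence for the $X$-measurable correction, $\bE[\OLS\centeredg(X)s(Z)]=\bE[\OLS\centeredg(X)\bE[s(Z)\mid X]]=0$. Combined with the fact that regularity of $\thetaest$ makes $\IF$ a gradient of $\theta(\cdot)$ (the pathwise derivative along any submodel with score $s$ equals $\bE[\IF(Z)s(Z)]$), this shows $\IF-\OLS\centeredg$ is itself a gradient relative to $\reducedmodel$. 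Asymptotic linearity with a gradient influence function then yields regularity, e.g.\ by Le Cam's third lemma: under a local perturbation with score $s$ the limiting mean shift of $\frac{1}{\sqrt n}\sum_i[\IF(Z_i)-\OLS\centeredg(X_i)]$ relative to $\theta(\bP_{t^*+h/\sqrt n})$ equals $-h\,\bE[\OLS\centeredg(X)s(Z)]=0$, so the limit law is free of $h$.

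For the variance decomposition I would write $\IF(Z)-\OLS\centeredg(X)=[\IF(Z)-\cIF(X)]+[\cIF(X)-\OLS\centeredg(X)]$; since $\bE[\IF(Z)-\cIF(X)\mid X]=0$ by definition of $\cIF$, the first summand is uncorrelated with the $X$-measurable second summand, and all terms are mean zero, giving the additive form. For the dominance I would compute directly with $A:=\bE[\IF(Z)\centeredg(X)^\top]$: both $\Cov[\IF(Z),\OLS\centeredg(X)]$ and $\Var[\OLS\centeredg(X)]$ equal $AB^{-1}A^\top$, so
$$\Var[\IF(Z)-\OLS\centeredg(X)] = \Var[\IF(Z)] - AB^{-1}A^\top \preceq \Var[\IF(Z)],$$
because $B\succ 0$ forces $AB^{-1}A^\top\succeq 0$.

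I expect the main obstacle to be the asymptotic-linearity step, specifically controlling the plug-in error in $\iOLS$ over the randomness of $\cetaest$. The resolution is structural rather than technical: because $\cetaest$ appears only in a coefficient multiplying an $O_p(1)$ average, consistency of $\iOLS$ is all that is needed, and the Lipschitz bound together with a law of large numbers delivers it without the Donsker condition. A secondary point worth stating explicitly is that regularity is claimed relative to $\reducedmodel$, not the full model $\jointmodel$; indeed $\issthetaest$ is in general \emph{not} regular relative to $\jointmodel$, which is precisely the mechanism by which knowledge of $\marginal$ produces the efficiency gain $AB^{-1}A^\top$.
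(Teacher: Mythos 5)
Your proposal is correct, and its skeleton matches the paper's: expand $\sqrt{n}(\issthetaest-\theta^*)=\sqrt{n}\,\empiricalPn(\IF-\OLS\centeredg)+(\OLS-\iOLS)\sqrt{n}\,\empiricalPn(\centeredg)+\littleO_p(1)$, reduce to consistency of $\iOLS$, then decompose the variance. The interesting divergence is in how that consistency is obtained. The paper splits $\iOLS-\OLS$ into $\empiricalPn[\IFplugin(\centeredg)^\top]-\joint[\IFplugin(\centeredg)^\top]$ (controlled uniformly over $\sO$ via the Glivenko--Cantelli property implied by the Donsker assumption, Lemma~\ref{lem:apdx_1st_lem}) plus the population-level plug-in error $\joint[(\IFplugin-\IF)(\centeredg)^\top]$ (controlled by the Lipschitz bound). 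You instead put the plug-in error at the empirical level, bounding $\frac1n\sum_i\norm{\IFplugin(Z_i)-\IF(Z_i)}\,\norm{\centeredg(X_i)}$ by $\rho(\cetaest,\eta^*)\cdot O_p(1)$ via Cauchy--Schwarz on the event $\{\cetaest\in\sO\}$, and handle the remaining fixed function $\IF(\centeredg)^\top$ with an ordinary law of large numbers. Your observation that the Donsker condition is not actually needed for this theorem --- because $\cetaest$ enters only through a coefficient multiplying an $O_p(1)$ average, so consistency of $\iOLS$ suffices --- is correct and is a genuine simplification; the Donsker machinery only becomes essential in Theorems~\ref{thm:ISS_efficient_estimator} and Proposition~\ref{prop:ppi_OSS}, where the estimated nuisance enters the term that must be controlled at $\sqrt{n}$ scale. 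You also supply an explicit regularity argument (orthogonality of the $X$-measurable correction to every score of $\reducedmodel$, which all satisfy $\bE[s(Z)\mid X]=0$, plus Le Cam's third lemma) that the paper's proof leaves implicit, and your closing remark that $\issthetaest$ is generally not regular relative to the full model $\jointmodel$ is accurate. The variance step is the same computation in both: your direct calculation $\Var[\IF(Z)]-AB^{-1}A^\top$ is exactly what the paper's Pythagorean lemma delivers.
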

Theorem~\ref{thm:ISS_estimator} establishes that $\issthetaest$ is always at least as efficient as the initial supervised estimator $\thetaest$ under the ISS setting.

\begin{remark}[Choice of  regression basis function $g(x)$]
\label{rmk:choice_g_ISS}
 Theorem~\ref{thm:ISS_estimator} reveals that the  asymptotic variance of $\issthetaest$ is the sum of two terms. 
The first term, $\Var\left[\IF(Z)- \cIF(X)\right]$,  does not depend on $g(x)$. The second term, $\Var\left[\cIF(Z)- \OLS\centeredg(X)\right]$,  can be interpreted as the approximation error that arises from approximating $\cIF(x)$ with  $\OLS\centeredg(x)$. Thus, $\issthetaest$ is more efficient when $\OLS\centeredg(x)$ is a better approximation of $\cIF(x)$.
\end{remark}

\subsubsection{The efficient estimator}

To construct the efficient estimator, we will approximate $\cIF(x)$ by regressing $\set{\IFplugin(Z_i)}_{i=1}^n$   onto a growing number of basis functions. 
 Let $\sC^{\alpha}_{M}(\sX)$ denote the Hölder class of functions on $\sX$, 
\small
\begin{equation}
\label{eq:holder_space}
\sC^{\alpha}_{M}(\sX):= \set{f: \sX \to \bR,\text{ } \sup_\sX\lvert D^kf(x)\rvert + \sup_{x_1,x_2 \in \sX}\frac{\lvert D^kf(x_1)-D^kf(x_2)\rvert}{\norm{x_1-x_2}} \le M, \text{ }\forall k \in [\alpha]}.    
\end{equation}
\normalsize

\begin{assumption}\label{asu:IF_holder}
Under Assumption~\ref{asu:IF_Lipschitz}, the set $\sO$ additionally satisfies that: $\set{\phi_\eta(x):\eta\in\sO} \subset \sS^{\alpha}_{M}$, where $\phi_\eta(x)$ is the conditional influence function $\phi_\eta(x) = \bE[\vp_\eta(Z)\mid X = x]$ and 
$\sS^{\alpha}_{M}$ is the class of functions defined as, $$\sS^{\alpha}_{M} := \set{f: \sX \to \bR^p, \quad [f]_j \in \sC^{\alpha_j}_{M_j}(\sX),\quad \forall j \in [p]}.$$ 
In the definition of $\sS^{\alpha}_{M}$, $[f]_j$ is the $j$-th coordinate of a vector-valued function $f$, $\alpha = \min_{j \in [p]}\set{\alpha_k}$, and $M = \max_{j \in [p]}\set{M_j}$. 
\end{assumption}


Under Assumption~\ref{asu:IF_holder}, there exists a set of basis functions $\set{g_k(x)}_{k=1}^\infty$ of $\sS_M^\alpha$, such as a spline basis or a polynomial basis, such that any $f \in \sS_M^\alpha$ can be represented as an infinite linear combination
$f(x) = \sum_{k=1}^\infty \*A_k g_k(x)$,
for coefficients $\*A_k \in \bR^{p \times p}$. Define $\GKn(x) = \left[g_1(x)^\top, \ldots, g_{K_n}(x)^\top\right]^\top$ as the concatenation of the first $K_n$ basis functions. The optimal $\sL_2(\marginal)$-approximation of $f$ by the first $K_n$ basis functions is then $\hat{f}_{K_n} = \npOLS\GKn(x)$, where $\npOLS$ is the population regression coefficient,
$$\npOLS = \bE\left[f(X)G_k(X)^\top\right]\bE\left[G_k(X)[G_k(X)^\top\right]^{-1}.$$
It can be shown that the approximation error arising from the first $K_n$ basis functions satisfies
\begin{equation}
\label{eq:approx_error}
\norm{f-\hat{f}_{K_n}}^2_\lpx = O\left(K_n^{-\frac{2\alpha}{\dim(\sX)}}\right),
\end{equation}
where $\dim(\sX)$ is the dimension of $\sX$ \citep{newey1997convergence}. 


To estimate the conditional influence function $\cIF(x) = \bE[\IF(Z)\mid X =x]$, we define the centered basis functions 
\begin{equation}
\label{eq:centered_basis}
\centeredGKn := \GKn -\bE[\GKn(X)],
\end{equation}
 and regress $\set{\IFplugin(Z_i)}_{i=1}^n$ onto $\set{\centeredGKn(X_i)}_{i=1}^n$. (Note that \eqref{eq:centered_basis} can be computed  because $\marginal$ is known.) The resulting regression coefficients  are
\begin{equation}
\label{eq:B_np_OLS_ISS}
\npiOLS = \left[\frac{1}{n}\sum_{i=1}^n\IFplugin(Z_i)\centeredGKn(X_i)^\top\right]\bE\left[\centeredGKn(X_i)\centeredGKn(X_i)^\top\right]^{-1}.
\end{equation}
The nonparametric least squares estimator of $\cIF(x)$ is $\cIFest(x) = \npiOLS\centeredGKn(x)$, and the efficient ISS estimator is
\begin{equation}
\label{eq:est_NP_ISS}
\issthetanpest = \thetaest -\frac{1}{n}\sum_{i=1}^n\npiOLS\centeredGKn(X_i).
\end{equation}
Define 
$\zeta_n^2 := \sup_{x \in \sX} \left\{ \centeredGKn(x)^\top\Var[\centeredGKn(X)]^{-1}\centeredGKn(x) \right\}$. The next theorem establishes the theoretical properties of $\issthetanpest$. 
\begin{theorem}
\label{thm:ISS_efficient_estimator}
Suppose that $\hat{\theta}_n = \hat{\theta}(\labeled)$ is a supervised estimator that satisfies Assumptions~\ref{asu:IF_Lipschitz}  and~\ref{asu:IF_holder} and $\cetaest$ is an estimator of $\eta^*$ as in Assumption~\ref{asu:IF_Lipschitz} (c). Further suppose that $\set{g_k(x)}_{k=1}^\infty$ is a set of basis functions of $\sS_M^{\alpha}$ for which $\GKn(x) = \left[g_1(x)^\top, \ldots, g_{K_n}(x)^\top\right]^\top$ satisfies \eqref{eq:approx_error} and $$\inf_{K_n}\set{\lambda_{\min}\left(\Var\left[\GKn(X)\right]\right)} > 0.$$ Let $\centeredGKn(x)$ be the centered version of $\GKn(x)$ as in \eqref{eq:centered_basis}. 
If $\alpha > \dim(\sX)$, $K_n \to \infty$, $K_n\rho(\cetaest, \eta^*) \to 0$, and  $\frac{\zeta_n^2}{n} \to 0$, then  $\issthetanpest$ in \eqref{eq:est_NP_ISS} is a regular and asymptotically linear estimator of $\theta^*$ with influence function 
$$\IF(z)-\cIF(x)$$
under the ISS setting. Moreover, its asymptotic variance  is $\Var\left[\IF(Z)- \cIF(X)\right]$, which satisfies
\begin{equation}
\label{eq:ISS_np_est_var}
\Var\left[\IF(Z)- \cIF(X)\right] \preceq \Var\left[\IF(Z)- \OLS\centeredg(X)\right] \preceq \Var\left[\IF(Z)\right],
\end{equation}
for arbitrary $g(x)$, $\centeredg(x)$, and $\OLS$ as in Theorem~\ref{thm:ISS_estimator}.
\end{theorem}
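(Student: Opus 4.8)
The plan is to show that the series-regression correction in \eqref{eq:est_NP_ISS} is asymptotically linear with influence function $\cIF(x)$, so that, combined with the expansion of the initial estimator from Assumption~\ref{asu:IF_Lipschitz}(a),
\[
\issthetanpest-\theta^* = \frac1n\sum_{i=1}^n\left[\IF(Z_i)-\cIF(X_i)\right]+\littleO_p(n^{-1/2}),
\]
after which the stated influence function, the central limit theorem, and the asymptotic variance $\Var[\IF(Z)-\cIF(X)]$ follow immediately. Writing $\*\Sigma_{K_n}:=\bE[\centeredGKn(X)\centeredGKn(X)^\top]$ and $\bar G_n:=\frac1n\sum_i\centeredGKn(X_i)$, the correction equals $\npiOLS\bar G_n = \frac1{n^2}\sum_{i,j}\IFplugin(Z_i)\,\centeredGKn(X_i)^\top\*\Sigma_{K_n}^{-1}\centeredGKn(X_j)$, a $V$-statistic built from the projection kernel $\centeredGKn(x)^\top\*\Sigma_{K_n}^{-1}\centeredGKn(x')$. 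The entire task is to reduce this object to $\frac1n\sum_i\cIF(X_i)$ up to $\littleO_p(n^{-1/2})$.

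First I would remove the plug-in. Using the Lipschitz bound and the Donsker property of Assumption~\ref{asu:IF_Lipschitz}(b)--(c), together with $\rho(\cetaest,\eta^*)=\littleO_p(1)$, I would replace $\IFplugin$ by $\IF$ throughout the correction; because the series has $K_n$ growing terms, the plug-in error is amplified by a factor of order $K_n$, which is precisely why the rate condition $K_n\rho(\cetaest,\eta^*)\to0$ is imposed. Next I would insert the population projection coefficient $\npOLS=\bE[\IF(Z)\centeredGKn(X)^\top]\*\Sigma_{K_n}^{-1}$ and split the remaining ($\IF$-based) correction into a coefficient-estimation part $(\npiOLS-\npOLS)\bar G_n$ and an approximation part $\frac1n\sum_i[\npOLS\centeredGKn(X_i)-\cIF(X_i)]$, the first now understood with $\IF$ in place of the plug-in. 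The approximation part has mean zero, since both $\npOLS\centeredGKn$ and $\cIF$ integrate to zero under $\marginal$, and by the $\sL_2$ bound \eqref{eq:approx_error} its variance is of order $n^{-1}K_n^{-2\alpha/\dim(\sX)}$, so $\sqrt n$ times it vanishes; here $\alpha>\dim(\sX)$ is what allows the basis to be rich enough for this while keeping $K_n$ small.

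The heart of the argument is the coefficient-estimation part, which I would analyze through the Hoeffding/H\'ajek decomposition of the $V$-statistic. Its linear (H\'ajek) projection is $\frac1n\sum_i\npOLS\centeredGKn(X_i)$, which by the previous step agrees with $\frac1n\sum_i\cIF(X_i)$ up to $\littleO_p(n^{-1/2})$; this is the term that produces the influence-function contribution $\cIF$. Two remainders must then be shown to be $\littleO_p(n^{-1/2})$: a completely degenerate second-order term, whose variance is of order $\zeta_n^2/n^2$ because $\sup_x\centeredGKn(x)^\top\*\Sigma_{K_n}^{-1}\centeredGKn(x)=\zeta_n^2$ controls the kernel and which is therefore handled directly by $\zeta_n^2/n\to0$; and the diagonal (own-observation) term $\frac1{n^2}\sum_i\IF(Z_i)\,\centeredGKn(X_i)^\top\*\Sigma_{K_n}^{-1}\centeredGKn(X_i)$. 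The diagonal term is the main obstacle: it carries a bias of order $n^{-1}\bE[\cIF(X)\,\centeredGKn(X)^\top\*\Sigma_{K_n}^{-1}\centeredGKn(X)]$, and bounding this to $\littleO_p(n^{-1/2})$ is where the smoothness $\alpha>\dim(\sX)$ and $\zeta_n^2/n\to0$ must be used together, by splitting $\cIF$ into its projection and a uniformly small residual and controlling the fluctuation of $\centeredGKn(X)^\top\*\Sigma_{K_n}^{-1}\centeredGKn(X)$ about its mean $K_n$ via $\zeta_n$. I expect this diagonal-bias control to be the most delicate step of the whole proof.

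Once asymptotic linearity is established, the asymptotic variance follows from the central limit theorem, and regularity relative to $\reducedmodel$ follows exactly as in the proof of Theorem~\ref{thm:ISS_estimator}, since $\IF-\cIF$ has conditional mean zero given $X$ and is thus a valid gradient in the reduced model; in particular, when the initial estimator is efficient ($\IF=\EIF$) one has $\cIF=\cEIF$ and the influence function coincides with the ISS efficient influence function of Theorem~\ref{thm:efficiency_ISS}, so the bound is attained. Finally, the ordering \eqref{eq:ISS_np_est_var} is the easy part: for any function $m(X)$ of $X$, $\IF(Z)-\cIF(X)$ is orthogonal to $\cIF(X)-m(X)$, so that $\Var[\IF-m]=\Var[\IF-\cIF]+\Var[\cIF-m]\succeq\Var[\IF-\cIF]$; taking $m=\OLS\centeredg$ gives the first inequality, and the second is \eqref{eq:ISS_est_var} from Theorem~\ref{thm:ISS_estimator}.
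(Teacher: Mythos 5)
Your architecture is reasonable and several pieces — the plug-in removal via $K_n\rho(\cetaest,\eta^*)\to0$, the approximation-error term, the degenerate off-diagonal term, regularity, and the variance ordering \eqref{eq:ISS_np_est_var} — are handled correctly and consistently with the paper. But you take a genuinely different route at the crucial step (an explicit V-statistic/H\'ajek decomposition rather than an empirical-process argument), and that route leaves a gap exactly where you flagged it: the diagonal term. With $Q(x):=\centeredGKn(x)^\top\Var[\centeredGKn(X)]^{-1}\centeredGKn(x)$, the diagonal contributes a bias of order $n^{-1/2}\,\bE[\cIF(X)Q(X)]$ to $\sqrt n(\issthetanpest-\theta^*)$. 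The tools you invoke — $\sup_xQ(x)=\zeta_n^2$, $\bE[Q(X)]=K_n$, Cauchy--Schwarz — give only $|\bE[\cIF(X)Q(X)]|=|\Cov(\cIF(X),Q(X))|\lesssim\zeta_n\sqrt{K_n}$ (or $\lesssim K_n$ via boundedness of $\cIF$), and neither bound vanishes after dividing by $\sqrt n$ under the theorem's hypothesis $\zeta_n^2/n\to0$: for a spline basis with $\zeta_n^2\asymp K_n$ you would need $K_n^2/n\to0$, whereas the theorem assumes only $K_n/n\to0$. Splitting $\cIF$ into $\npOLS\centeredGKn+r_{K_n}$ controls the residual piece but leaves $\bE[\npOLS\centeredGKn(X)\,Q(X)]$, a third moment of the basis that $\zeta_n$ alone does not control. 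As written, your argument therefore proves the theorem only under strictly stronger rate conditions.

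The paper never isolates the diagonal. It shows $\norm{\cIFest-\cIF}_{\lpxone}=o_p(1)$ (essentially your coefficient-estimation and approximation bounds), notes that both $\cIFest$ and $\cIF$ lie in the H\"older class $\sS^{\alpha}_M$, which is $\marginal$-Donsker precisely because $\alpha>\dim(\sX)$, and then uses asymptotic equicontinuity (Lemma 19.24 of van der Vaart) together with the exact centering $\joint(\cIFest)=\joint(\cIF)=0$ to conclude $\sqrt n\,\empiricalPn(\cIFest-\cIF)=\bG_n(\cIFest-\cIF)=o_p(1)$. The own-observation correlation is absorbed wholesale; the smoothness assumption enters through the entropy of the H\"older ball rather than through explicit moment bounds on $Q$. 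To rescue your route under the stated assumptions you would have to exploit the smoothness of $\cIF$ directly inside $\bE[\cIF(X)Q(X)]$ — e.g., expand $Q-K_n$ in the basis and use decay of the coefficients of $\cIF$ — which amounts to re-deriving by hand what the Donsker argument yields for free. I recommend either adopting the paper's equicontinuity step or stating the stronger rate condition your decomposition actually requires.
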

Theorem~\ref{thm:ISS_efficient_estimator} shows that the efficient  estimator $\issthetanpest$ is always at least as efficient as both the safe  estimator $\issthetaest$ and the initial supervised estimator $\thetaest$. Further, \eqref{eq:ISS_np_est_var} and Theorem~\ref{thm:efficiency_ISS} suggest that if the initial supervised estimator is efficient under the supervised setting, then the efficient estimator is efficient under the ISS setting. 

\begin{remark}[Role of the marginal distribution $\marginal$]
\label{rmk:dependence_marginal}
The safe estimator  $\issthetaest$ involves  $\bE[g(X)]$ and $\bE\left[\centeredg(X)\centeredg(X)^\top\right]$, and the efficient estimator $\issthetanpest$
involves $\bE[\GKn(X)]$ and $\bE\left[\centeredGKn(X)\centeredGKn(X)^\top\right]$. All four of these quantities  require knowledge of $\marginal$.  
However, Theorems~\ref{thm:ISS_estimator} and \ref{thm:ISS_efficient_estimator} would hold
 even if $\bE\left[\centeredg(X)\centeredg(X)^\top\right]$ and 
 $\bE\left[\centeredGKn(X)\centeredGKn(X)^\top\right]$
 were replaced by their empirical counterparts,  $\frac{1}{n}\sum_{i=1}^n\centeredg(X_i)\centeredg(X_i)^\top$ and $\frac{1}{n}\sum_{i=1}^n\centeredGKn(X_i)\centeredGKn(X_i)^\top$. 
\end{remark}

\section{The ordinary semi-supervised setting}
\label{sec:OSS}

\subsection{Semiparametric efficiency lower bound}
\label{subsec:OSS_efficiency}

Because the OSS setting offers more information about the unknown parameter than the supervised setting but less information than the ISS setting, it is natural to think that the semiparametric efficiency bound in the OSS setting should be lower than in the supervised setting but higher than in the ISS setting. In this section, we will formalize this intuition. 


In the OSS setting, the full data $\labeled\cup \unlabeled$  are not jointly i.i.d.. Therefore, the classic theory of semiparametric efficiency for i.i.d. data, as described in Section~\ref{sec:efficiency_theory}, is not applicable. \cite{bickel2001inference} provides a framework for efficiency theory for non-i.i.d. data. Inspired by their results, our next theorem establishes the semiparametric efficiency lower bound under the OSS setting. 
To state Theorem~\ref{thm:efficiency_OSS}, we must  first adapt some definitions from Section~\ref{sec:efficiency_theory} to the OSS setting. 
Additional definitions necessary for its proof are deferred to Appendix~\ref{sec:apdx_efficiency_noniid}. 
\begin{definition}[Regularity in the OSS setting]
\label{def:regular_OSS}
An estimator $\thetaestnN = \thetaestnN(\labeled\cup\unlabeled)$ of $\theta^*$ is \emph{regular} relative to a model $\jointmodel$
if, for every regular parametric sub-model $\set{\bP_t:t \in T \subset \bR}\subset \jointmodel$ such that $\bP_{t^*} = \joint$ for some $t^* \in T$,
\begin{equation}
\label{eq:regularity_OSS}
\sqrt{n}\left[\thetaestnN-\theta\left(\bP_{t^*+\frac{h}{\sqrt{n}}}\right)\right] \indistribution V_\joint, 
\end{equation}
for all $h \in \bR$ under $\bP_{t^*+\frac{h}{\sqrt{n}}}$ and $\frac{N}{n+N} \to \gamma \in (0,1)$, where $V_\joint$ is some random variable whose distribution depends on $\joint$ but not on $h$. 
\end{definition}

\begin{definition}[Asymptotic linearity in the OSS setting]
\label{def:AL_OSS}
An estimator $\thetaestnN$ of $\theta^*$ is \emph{asymptotically linear} with influence function $\IF: \sZ \times \sX \rightarrow \bR^p$, where
\begin{equation}
\label{eq:OSS_IF}
\IF(z_1, x_2)=\IFa(z_1)+ \IFb(x_2)
\end{equation}
for some $\IFa(z) \in \lp$ and $\IFb(x) \in \lpx$, if
\begin{equation}
\label{eq:AL}
\sqrt{n}\left(\thetaestnN-\theta^*\right) = \frac{1}{\sqrt{n}}\sum_{i=1}^n\IFa(Z_i) + \frac{1}{\sqrt{N}}\sum_{i=n+1}^{n+N}\IFb(X_i)+\littleO_p(1),
\end{equation}
for $\frac{N}{n+N} \to \gamma \in (0,1]$.
\end{definition}

\begin{remark} \label{remark:subscripts} In the OSS setting, the arguments of the influence function are in the product space $\sZ \times \sX$. Furthermore, since $\sZ = (\sX, \sY)$,  this product space can also be written as $(\sX, \sY) 
\times \sX$. Thus, to avoid notational confusion, when referring to a function on this space, we use arguments $(z_1, x_2)$: that is, these subscripts are intended to disambiguate the role of the two arguments in the product space.
\end{remark}
The form of the influence function \eqref{eq:OSS_IF} arises from the fact that the data consist of two i.i.d. parts, $\labeled$ and $\unlabeled$, which are independent of each other.
 By Definition~\ref{def:AL_OSS}, if $\thetaestnN$ is asymptotically linear, then 
\begin{equation}
\notag
\sqrt{n}\left(\thetaestnN-\theta^*\right) \indistribution N\left(0, \Var\left[\IFa(Z)\right] + \Var\left[\IFb(X)\right]\right).
\end{equation}
That is, the asymptotic variance of an asymptotically linear estimator is the the sum of the variances of the two components of its influence function. 

Armed with these two definitions,  Lemma~\ref{lem:classic_efficiency} can be generalized to the OSS setting. Informally, there exists a unique efficient influence function of the form \eqref{eq:OSS_IF}, and any regular and asymptotically linear estimator whose influence function equals the efficient influence function is an efficient estimator. 
Additional details are provided in Appendix~\ref{sec:apdx_efficiency_noniid}. 

The next theorem identifies the efficient influence function and the semiparametric efficiency lower bound under the OSS.   
\begin{theorem}
\label{thm:efficiency_OSS}
Let $\jointmodel$ be defined as in \eqref{eq:separable_model}, and let $\frac{N}{n+N} \to \gamma \in (0,1)$. If the efficient influence function of $\thetafunctional$ at $\joint$ relative to $\jointmodel$ is $\EIF$, then 
the efficient influence function of $\thetafunctional$ at $\joint$ relative to $\jointmodel$ under the OSS setting is 
\begin{equation}
\label{eq:EIF_OSS}
\left[\EIF(z_1) - \gamma\cEIF(x_1)\right]+ \sqrt{\gamma(1-\gamma)}\cEIF(x_2),
\end{equation}
where $\cEIF(x)$ is the conditional efficient influence function defined in \eqref{eq:cEIF}. 
Moreover, the semiparametric efficiency bound can be expressed as
\begin{equation}
\label{eq:EIF_var_OSS}
\begin{aligned}
&\Var\left[\EIF(Z) -\gamma\cEIF(X)\right] +  \Var\left[\sqrt{\gamma(1-\gamma)}\cEIF(X)\right]\\
&= \Var\left[\EIF(Z)\right] - \gamma\Var\left[\cEIF(X)\right]\\
&= \Var\left[\EIF(Z)-\cEIF(X)\right] + (1-\gamma)\Var\left[\cEIF(X)\right].\\
\end{aligned}
\end{equation}
\end{theorem}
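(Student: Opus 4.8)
The plan is to place the problem inside the non-i.i.d.\ efficiency framework of \cite{bickel2001inference}, adapted in Appendix~\ref{sec:apdx_efficiency_noniid}. Because $\labeled$ and $\unlabeled$ form two independent i.i.d.\ blocks, every regular asymptotically linear estimator has an influence function of the product form \eqref{eq:OSS_IF}, and (as noted after Definition~\ref{def:AL_OSS}) its asymptotic variance is the block sum $\Var[\IFa(Z)]+\Var[\IFb(X)]$. I would therefore work in the Hilbert space $\sH=\lp\times\lpx$ with inner product
\[ \langle (a_1,a_2),(b_1,b_2)\rangle = \bE[a_1(Z)b_1(Z)] + \bE[a_2(X)b_2(X)], \]
so that the asymptotic variance of a pair $(\IFa,\IFb)$ is exactly its squared $\sH$-norm (coordinatewise, for vector parameters, the matrix $\Var[\IFa]+\Var[\IFb]$). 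In this language the efficient influence function is the canonical gradient, i.e.\ the $\sH$-projection of any single gradient onto the OSS tangent space, so the proof reduces to (i) pinning down that tangent space and one convenient gradient, (ii) computing the projection, and (iii) simplifying its norm.

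For (i), fix a one-dimensional submodel of $\jointmodel$ with score $s=s_X+s_{Y\mid X}$, where $s_X\in\tangentmarginal$ and $s_{Y\mid X}\in\tangentconditional$; the product structure \eqref{eq:separable_model} makes these subspaces orthogonal in $\lp$, since elements of $\tangentconditional$ have conditional mean zero given $X$. Perturbing by $h/\sqrt n$ perturbs the labeled block by $s$ and the unlabeled block by $s_X$; a Taylor expansion of the joint log-likelihood ratio, together with $N/n\to\gamma/(1-\gamma)$, shows the experiment is LAN. Applying Le Cam's third lemma to the standardized influence-function process against this score process, the local shift of the former equals the asymptotic covariance
\[ \bE\!\left[\IFa(Z)s(Z)\right] + \sqrt{\tfrac{\gamma}{1-\gamma}}\,\bE\!\left[\IFb(X)s_X(X)\right], \]
the weight $\sqrt{\gamma/(1-\gamma)}$ arising from the mismatch between the $1/\sqrt N$ scaling of $\IFb$ in \eqref{eq:AL} and the $h/\sqrt n$ perturbation. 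Regularity forces this to equal the pathwise derivative $\dot\theta(s)$, so the OSS tangent space is $\{(s_X+s_{Y\mid X},\sqrt{\gamma/(1-\gamma)}\,s_X)\}\subset\sH$, and the gradient condition reads $\langle(\IFa,\IFb),(s,\sqrt{\gamma/(1-\gamma)}\,s_X)\rangle=\dot\theta(s)$. Since $\bE[\EIF(Z)s(Z)]=\dot\theta(s)$ defines $\EIF$, the pair $(\EIF,0)$ --- ``use only the labeled data'' --- is a valid gradient to project.

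For (ii), I would exploit the decomposition $\EIF=\cEIF+(\EIF-\cEIF)$ with $\cEIF\in\tangentmarginal$ and $\EIF-\cEIF\in\tangentconditional$: this holds because $\EIF\in\tangent=\tangentmarginal\oplus\tangentconditional$ and the $\tangentmarginal$-component of $\EIF$ is precisely $\bE[\EIF\mid X]=\cEIF$. Writing the projection of $(\EIF,0)$ as $(s_X^*+s_{Y\mid X}^*,\sqrt{\gamma/(1-\gamma)}\,s_X^*)$ and requiring the residual to be $\sH$-orthogonal to the tangent space, the $\tangentconditional$-directions give $s_{Y\mid X}^*=\EIF-\cEIF$, and the $\tangentmarginal$-directions give $\cEIF-\tfrac{1}{1-\gamma}s_X^*\perp\tangentmarginal$, hence $s_X^*=(1-\gamma)\cEIF$. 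Substituting, and using $\sqrt{\gamma/(1-\gamma)}\,(1-\gamma)=\sqrt{\gamma(1-\gamma)}$, produces the influence function with first component $\EIF(z_1)-\gamma\cEIF(x_1)$ and second component $\sqrt{\gamma(1-\gamma)}\,\cEIF(x_2)$, matching \eqref{eq:EIF_OSS}. For (iii), the bound is the first line of \eqref{eq:EIF_var_OSS}, namely $\Var[\EIF(Z)-\gamma\cEIF(X)]+\gamma(1-\gamma)\Var[\cEIF(X)]$; both stated rewritings follow from the single tower-property identity $\Cov[\EIF(Z),\cEIF(X)]=\Var[\cEIF(X)]$ applied to $\cEIF(X)=\bE[\EIF(Z)\mid X]$.

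I expect the only genuinely delicate part to be step (i): rigorously transporting the ``canonical gradient $=$ tangent-space projection'' characterization from the i.i.d.\ theory of Section~\ref{sec:efficiency_theory} to the two-block non-i.i.d.\ experiment, and in particular justifying the weight $\sqrt{\gamma/(1-\gamma)}$ on the unlabeled score (this is exactly where Appendix~\ref{sec:apdx_efficiency_noniid} and \cite{bickel2001inference} do the work). Once $\sH$, the weighted tangent space, and the gradient condition are in place, the projection in (ii) and the algebra in (iii) are routine, and the vector-valued case adds nothing conceptual, being handled coordinatewise with variances replaced by covariance matrices.
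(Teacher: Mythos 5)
Your proposal is correct and follows essentially the same route as the paper's proof: establish LAN for the two-block experiment, characterize the OSS tangent space with the $\gamma/(1-\gamma)$-weighting on the marginal score, project a convenient gradient onto it using the decomposition $\EIF = \cEIF + (\EIF - \cEIF)$ with $\cEIF \in \tangentmarginal$ and $\EIF - \cEIF \in \tangentconditional$, and simplify the variance via $\Cov[\EIF(Z),\cEIF(X)] = \Var[\cEIF(X)]$. The only cosmetic differences are that you work in the product Hilbert space $\lp\times\lpx$ with the weight placed on the score's second coordinate (isometric to the paper's weighted inner product $\langle\cdot,\cdot\rangle_\sH$) and you project the gradient $(\EIF,0)$ rather than the paper's $\EIF(z_1)-\cEIF(x_1)+\cEIF(x_2)$ obtained from its Lemma on transporting gradients; both are valid gradients and yield the same canonical gradient.
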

Theorem~\ref{thm:efficiency_OSS} confirms our intuition for the efficiency bound under the OSS setting. By definition, the efficiency bound under the supervised setting is $\Var\left[\EIF(Z)\right]$. The second line of \eqref{eq:EIF_var_OSS} reveals that the efficiency bound under the OSS setting is smaller by the amount  $\gamma\Var\left[\cEIF(X)\right]$. Moreover, in Theorem~\ref{thm:efficiency_ISS} we showed that the efficiency  bound under the ISS setting is $\Var\left[\EIF(Z)-\cEIF(X)\right]$. The third line of \eqref{eq:EIF_var_OSS} shows that the efficiency bound under the OSS setting exceeds this by the amount $(1-\gamma)\Var\left[\cEIF(X)\right]$.

The efficiency bound \eqref{eq:EIF_var_OSS} depends on the limiting proportion of unlabeled data, $\gamma$.  Intuitively, when $\gamma$ is large, we have more unlabeled data, and  the efficiency bound in ~\eqref{eq:EIF_var_OSS}  improves.  
The special cases where $\gamma=0$ and $\gamma=1$ are particularly instructive. When $\gamma = 0$, the amount of unlabeled data is negligible, and hence we should expect no improvement in efficiency over the supervised setting: this intuition is confirmed by setting $\gamma \to 0$ in~\eqref{eq:EIF_var_OSS}. When $\gamma = 1$, there are many more labeled than unlabeled observations; thus, it is as if we know the marginal distribution $\marginal$. Letting $\gamma \rightarrow 1$,  the efficiency lower bound agrees with that under the ISS~\eqref{eq:EIF_var_ISS}.

We saw in Corollary~\ref{cor:well_specified_ISS} that when the functional of interest is well-specified, the efficiency bound under the ISS setting is the same as in the supervised setting. As an immediate corollary of Theorem~\ref{thm:efficiency_OSS} and Theorem~\ref{thm:no_improvement_efficiency}, we now show that the same result holds under the OSS setting.
\begin{corollary}
\label{cor:well_specified_OSS}
Under the conditions of Theorem~\ref{thm:efficiency_OSS}, let $\EIF(z)$ be  the efficient influence function  of $\thetafunctional$ at $\theta^*$ relative to $\jointmodel$. If $\thetafunctional$ is well-specified at $\conditional$ relative to $\marginalmodel$ in the sense of  Definition~\ref{def:well_specification}, 
then the efficient influence function of $\thetafunctional$ at $\theta^*$ relative to $\jointmodel$ under the OSS is $\EIF(z_1)$.
\end{corollary}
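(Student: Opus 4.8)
The plan is to prove this by direct substitution: Theorem~\ref{thm:efficiency_OSS} already expresses the OSS efficient influence function entirely in terms of $\EIF$ and the conditional efficient influence function $\cEIF$, while Theorem~\ref{thm:no_improvement_efficiency} tells us that $\cEIF$ vanishes under well-specification. So the entire argument amounts to checking that the two $\cEIF$-dependent terms in \eqref{eq:EIF_OSS} disappear.

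First I would invoke Theorem~\ref{thm:no_improvement_efficiency}. Since $\thetafunctional$ is well-specified at $\conditional$ relative to $\marginalmodel$ in the sense of Definition~\ref{def:well_specification}, its first conclusion \eqref{eq:ceif-zero} gives $\cEIF(X) = 0$, $\marginal$-a.s. Only this first conclusion is needed here; the additional hypothesis $\tangentmarginal = \lpxone$ required for the second conclusion \eqref{eq:cif-zero} plays no role.

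Next I would substitute $\cEIF \equiv 0$ into the OSS efficient influence function \eqref{eq:EIF_OSS}. Because the labeled covariate $X_1$ and the unlabeled covariate $X_2$ are both distributed according to $\marginal$, the equality $\cEIF(X) = 0$ holds $\marginal$-a.s. at both arguments. Hence both $\gamma\cEIF(x_1)$ and $\sqrt{\gamma(1-\gamma)}\cEIF(x_2)$ vanish almost surely, and \eqref{eq:EIF_OSS} collapses to $\EIF(z_1)$. As elements of the relevant $\sL^2$ spaces this identifies the efficient influence function under the OSS setting as $\EIF(z_1)$, as claimed.

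There is no serious obstacle here, as the statement is a genuine corollary; the only points requiring care are (i) recognizing that the weaker first conclusion of Theorem~\ref{thm:no_improvement_efficiency} suffices, so that no tangent-space condition on $\marginalmodel$ need be assumed, and (ii) noting that $\marginal$-a.s. vanishing at the $X$-arguments is enough to pin down the influence function as an $\sL^2$ element. As a consistency check, substituting $\Var[\cEIF(X)] = 0$ into the efficiency bound \eqref{eq:EIF_var_OSS} collapses it to $\Var[\EIF(Z)]$, the supervised bound, confirming that the supervised, ISS, and OSS efficiency bounds all coincide under well-specification.
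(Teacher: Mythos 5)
Your proposal is correct and follows exactly the route the paper intends: the paper presents this as an immediate consequence of Theorem~\ref{thm:efficiency_OSS} and Theorem~\ref{thm:no_improvement_efficiency}, namely that well-specification forces $\cEIF(X)=0$ $\marginal$-a.s.\ (via the first conclusion of Theorem~\ref{thm:no_improvement_efficiency}, with no tangent-space condition needed), so both $\cEIF$-terms in \eqref{eq:EIF_OSS} vanish and the OSS efficient influence function reduces to $\EIF(z_1)$. Your two points of care and the consistency check on the variance bound are sound but add nothing beyond the paper's own one-line justification.
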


Thus, an efficient supervised estimator of a well-specified parameter can never be improved via the use of unlabeled data.

\subsection{Safe and efficient estimators}
\label{subsec:OSS_est}

 Similar to Section~\ref{subsec:ISS_est}, in this section we provide two types of OSS estimators, a safe estimator and an efficient estimator, both of which build upon and improve  an initial supervised estimator.


Recall from Remark~\ref{rmk:dependence_marginal} that the safe and efficient estimators proposed in the ISS setting require knowledge of  $\marginal$. Of course, in the OSS setting, $\marginal$ is unavailable. 
Thus, we will simply replace $\marginal$ in \eqref{eq:corrected_est_proj} and \eqref{eq:est_NP_ISS} with $\bP_{n+N,X}$, the empirical marginal distribution of the labeled and unlabeled covariates,  $\set{X_i}_{i=1}^{n+N}$. 

\subsubsection{The safe estimator}

For an arbitrary measurable function of $x$, $g: \sX \to \bR^d$, recall that in the ISS setting, the safe estimator \eqref{eq:corrected_est_proj} made use of  its centered version $\centeredg(x) := g(x) -\bE[g(X)]$. Since $\bE[g(X)]$ is unknown under the OSS setting, we  define the empirically centered version of $g$ as  
\begin{equation}
\label{eq:center_OSS}
\ecenteredg(x) = g(x) - \frac{1}{n+N}\sum_{i=1}^{n+N}g(X_i).
\end{equation}
Now, regressing $\set{\IFplugin(Z_i)}_{i=1}^n$ onto $\set{\ecenteredg(X_i)}_{i=1}^n$ yields the regression coefficients 
\begin{equation}
\label{eq:OLS_OSS}
\oOLS = \left[\frac{1}{n}\sum_{i=1}^n\vp_{\cetaest}(Z_i)\ecenteredg(X_i)^\top\right]\left[\frac{1}{n+N}\sum_{i=1}^{n+N}\ecenteredg(X_i)^\top\ecenteredg(X_i)\right]^{-1}.
\end{equation}
The regression estimator of the conditional influence function is thus $\cIFest(x) = \oOLS \ecenteredg(x)$, and the corresponding safe OSS estimator is defined as 
\begin{equation}
\label{eq:est_OSS}
\ossthetaest = \thetaest - \frac{1}{n}\sum_{i=1}^n\oOLS \ecenteredg(X_i).
\end{equation}
Under the same conditions as in Theorem~\ref{thm:ISS_estimator}, we establish the asymptotic behavior of $\ossthetaest$.
\begin{theorem}
\label{thm:OSS_estimator}
Suppose that $\hat{\theta}_n = \hat{\theta}(\labeled)$ is a supervised estimator that satisfies Assumption~\ref{asu:IF_Lipschitz}, and $\cetaest$ is an estimator of $\eta^*$ as in Assumption~\ref{asu:IF_Lipschitz} (c). Let $g: \sX \to \bR^d$ be a square-integrable function such that $\bE\left[\norm{g(X)}^2\right] < \infty$ and $\Var\left[g(X)\right]$ is non-singular, and let $\ecenteredg(x)$ be its empirically centered version \eqref{eq:center_OSS}. Suppose that $\frac{N}{n+N} \to \gamma \in (0,1]$. Then, the  estimator $\ossthetaest$ defined in \eqref{eq:est_OSS} is a regular and asymptotically linear estimator of $\theta^*$ in the sense of Definitions~\ref{def:regular_OSS} and~\ref{def:AL_OSS},  with influence function 
\begin{equation}
\label{eq:IF_OSS}
\left[\IF(z_1)- \gamma\OLS\centeredg(x_1)\right]+ \sqrt{\gamma(1-\gamma)}\OLS\centeredg(x_2)
\end{equation}
under the OSS setting, where $\OLS$ is defined in \eqref{eq:OLS}. 
Furthermore, the asymptotic variance of $\ossthetaest$ takes the form  
\begin{equation}
\label{eq:sigma_safe}
\Sigmasafe(\gamma) = \Var\left[\IF(Z)- \gamma\OLS\centeredg(X)\right] + \Var\left[\sqrt{\gamma(1-\gamma)}\OLS\centeredg(X)\right],
\end{equation}
and satisfies
\begin{equation}
\label{eq:OSS_est_var}
\begin{aligned}
&\Var\left[\IF(Z)-\OLS\centeredg(X)\right] \preceq \Sigmasafe(\gamma)\preceq \Var\left[\IF(Z)\right],\quad \forall \gamma \in [0,1].
\end{aligned}
\end{equation}
\end{theorem}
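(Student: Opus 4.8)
The plan is to reduce the analysis of $\ossthetaest$ to that of the ISS safe estimator (Theorem~\ref{thm:ISS_estimator}) via an algebraic identity for the empirical centering, and then handle the additional two-sample structure that the identity exposes. The starting point is the observation that the correction term collapses to a difference of sample means. Writing $\bar g_n := \frac1n\sum_{i=1}^n g(X_i)$, $\bar g_N := \frac1N\sum_{i=n+1}^{n+N} g(X_i)$, and $\bar g_{n+N} := \frac{1}{n+N}\sum_{i=1}^{n+N} g(X_i)$, the definition \eqref{eq:center_OSS} gives $\frac1n\sum_{i=1}^n \ecenteredg(X_i) = \bar g_n - \bar g_{n+N}$, and a one-line computation yields the key identity $\bar g_n - \bar g_{n+N} = \frac{N}{n+N}(\bar g_n - \bar g_N)$. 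Since subtracting $\bE[g(X)]$ from $g$ leaves this difference unchanged, $\bar g_n - \bar g_N = \frac1n\sum_{i=1}^n \centeredg(X_i) - \frac1N\sum_{i=n+1}^{n+N}\centeredg(X_i)$, which splits the correction cleanly into a labeled part and an independent unlabeled part. This is exactly the structure demanded by Definition~\ref{def:AL_OSS}, and it is the source of the two-term influence function \eqref{eq:IF_OSS}.

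Next I would establish $\oOLS \inprobability \OLS$. The denominator $\frac{1}{n+N}\sum_{i=1}^{n+N}\ecenteredg(X_i)\ecenteredg(X_i)^\top$ converges to $\bE[\centeredg(X)\centeredg(X)^\top] = \Var[g(X)]$ by the law of large numbers together with $\bar g_{n+N}\inprobability\bE[g(X)]$; nonsingularity of $\Var[g(X)]$ controls the inverse. For the numerator $\frac1n\sum_{i=1}^n\IFplugin(Z_i)\ecenteredg(X_i)^\top$, I would first replace $\ecenteredg$ by $\centeredg$ (again using $\bar g_{n+N}\inprobability\bE[g(X)]$) and then replace the plugin influence function $\IFplugin$ by $\IF$. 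This last replacement is the technical heart of the argument: it is controlled exactly as in the proof of Theorem~\ref{thm:ISS_estimator}, using the Donsker property and the Lipschitz bound of Assumption~\ref{asu:IF_Lipschitz}(b) together with consistency $\rho(\cetaest,\eta^*)=\littleO_p(1)$ from Assumption~\ref{asu:IF_Lipschitz}(c), so that $\frac1n\sum_{i=1}^n(\IFplugin - \IF)(Z_i)\centeredg(X_i)^\top = \littleO_p(1)$. With these limits the numerator converges to $\bE[\IF(Z)\centeredg(X)^\top]$, whence $\oOLS\inprobability\OLS$ by the continuous mapping theorem.

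With the identity and the consistency of $\oOLS$ in hand, I would assemble the expansion. Multiplying the correction by $\sqrt n$ and substituting $\frac{N}{n+N}=\gamma+o(1)$ and $\oOLS=\OLS+\littleO_p(1)$, the error terms are $\littleO_p(1)\cdot O_p(1/\sqrt n)=\littleO_p(1)$ after scaling, so to leading order the correction equals $\gamma\OLS[\frac1n\sum_{i=1}^n\centeredg(X_i)-\frac1N\sum_{i=n+1}^{n+N}\centeredg(X_i)]$. Combining with the asymptotic linearity of $\thetaest$ from Assumption~\ref{asu:IF_Lipschitz}(a), and rescaling the unlabeled average through $\sqrt{n/N}\to\sqrt{(1-\gamma)/\gamma}$ — which converts the prefactor $\gamma$ into $\gamma\sqrt{(1-\gamma)/\gamma}=\sqrt{\gamma(1-\gamma)}$ — produces exactly the two-block expansion of Definition~\ref{def:AL_OSS} with $\IFa(z_1)=\IF(z_1)-\gamma\OLS\centeredg(x_1)$ and $\IFb(x_2)=\sqrt{\gamma(1-\gamma)}\OLS\centeredg(x_2)$, i.e. the influence function \eqref{eq:IF_OSS}. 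Asymptotic normality with variance \eqref{eq:sigma_safe} then follows from the CLT applied separately to the two independent blocks. Regularity in the sense of Definition~\ref{def:regular_OSS} I would obtain by the standard local-perturbation (contiguity) argument, inheriting regularity from $\thetaest$ and using the smoothness of the correction along the parametric submodel.

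Finally, for the ordering \eqref{eq:OSS_est_var} I would exploit the projection structure of $\OLS$. Because $\OLS=\bE[\IF(Z)\centeredg(X)^\top]\bE[\centeredg(X)\centeredg(X)^\top]^{-1}$ is the population least-squares coefficient, the residual $R:=\IF(Z)-\OLS\centeredg(X)$ is uncorrelated with $P:=\OLS\centeredg(X)$, so $\Var[\IF(Z)]=\Var[R]+\Var[P]$. Writing $\IF(Z)-\gamma\OLS\centeredg(X)=R+(1-\gamma)P$ and using $\Cov[R,P]=0$, a short computation collapses \eqref{eq:sigma_safe} to $\Sigmasafe(\gamma)=\Var[R]+(1-\gamma)\Var[P]$. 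Since $\Var[P]\succeq0$ and $0\le 1-\gamma\le1$, sandwiching gives $\Var[R]\preceq\Sigmasafe(\gamma)\preceq\Var[R]+\Var[P]$, which is precisely \eqref{eq:OSS_est_var}. The main obstacle throughout is the empirical-process control of the plugin $\cetaest$ in the numerator of $\oOLS$; everything else is either the clean centering identity or the projection algebra.
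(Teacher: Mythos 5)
Your proposal is correct and follows essentially the same route as the paper's proof: the same empirical-centering identity splitting the correction into independent labeled and unlabeled blocks, the same Donsker/Lipschitz argument to get $\oOLS \inprobability \OLS$, and the same assembly into the two-block expansion of Definition~\ref{def:AL_OSS}. Your variance ordering via the orthogonality of the least-squares residual $R$ to $P=\OLS\centeredg(X)$ is a slightly more direct phrasing of the paper's Pythagorean decomposition through $\cIF$, but it rests on the same projection identity and yields the same expression $\Sigmasafe(\gamma)=\Var[\IF(Z)]-\gamma\Var[\OLS\centeredg(X)]$.
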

Theorem~\ref{thm:OSS_estimator} shows that $\ossthetaest$ is always at least as efficient as the initial supervised estimator $\thetaest$ under the OSS setting. Thus, it is  a safe alternative to $\thetaest$ when additional unlabeled data are available.

\subsubsection{The efficient estimator}

As in Section~\ref{subsec:ISS_est}, we require that the initial supervised estimator $\thetaest$ satisfy  Assumption~\ref{asu:IF_holder}.
Then, for a suitable set of basis functions $\set{g_k(x)}_{k=1}^\infty$ of $\sS_M^\alpha$ (defined in Assumption~\ref{asu:IF_holder}), we define $\GKn(x) = \left[g_1(x)^\top, \ldots, g_{K_n}(x)^\top\right]^\top$. 
We then center $\GKn$ with its empirical mean, $\frac{1}{n+N}\sum_{i=1}^{n+N}\GKn(X_i)$, leading to 
\begin{equation}
\label{eq:ecenteredGKn}
\ecenteredGKn = \GKn(x) - \frac{1}{n+N}\sum_{i=1}^{n+N}\GKn(X_i).
\end{equation}
The nonparametric least squares estimator of the conditional influence function is  $\cIFest(x) = \npoOLS\ecenteredGKn(x)$, where 
\begin{equation}
\label{eq:npoOLS}
\npoOLS = \left[\frac{1}{n}\sum_{i=1}^n\IFplugin(Z_i)\ecenteredGKn(X_i)^\top\right]\left[\frac{1}{n+N}\sum_{i=1}^{n+N}\ecenteredGKn(X_i)\ecenteredGKn(X_i)^\top\right]^{-1} \in \bR^{p \times (pK_n)}
\end{equation}
 are the coefficients obtained from regressing $\set{\IFplugin(Z_i)}_{i=1}^n$ onto $\set{\ecenteredGKn(X_i)}_{i=1}^n$. 
The efficient OSS estimator is then
\begin{equation}
\label{eq:est_NP_OSS}
\ossthetanpest = \thetaest -\frac{1}{n}\sum_{i=1}^n\npoOLS\ecenteredGKn(X_i).
\end{equation}
The next theorem establishes the asymptotic properties  of $\ossthetanpest$. 

\begin{theorem}
\label{thm:OSS_efficient_estimator}
Suppose that the supervised estimator $\hat{\theta}_n = \hat{\theta}(\labeled)$  satisfies Assumptions~\ref{asu:IF_Lipschitz} and~\ref{asu:IF_holder}, and $\cetaest$ is an estimator of $\eta^*$ as in Assumption~\ref{asu:IF_Lipschitz} (c).  Further, suppose that $\set{g_k(x)}_{k=1}^\infty$ is a set of basis functions of $\sS_M^{\alpha}$ such that $\GKn(x) = \left[g_1(x)^\top, \ldots, g_{K_n}(x)^\top\right]^\top$  satisfies \eqref{eq:approx_error} for any $f \in \sS_M^\alpha$, and $\inf_{K_n}\set{\lambda_{\min}\left(\Var\left[\GKn(X)\right]\right)} > 0$. 
Suppose that $\frac{N}{n+N} \to \gamma \in (0,1]$, $\alpha > \dim(\sX)$,  $K_n \to \infty$ and $K_n\rho(\cetaest, \eta^*) \to 0$, and  $\frac{\zeta_n^2}{n} \to 0$. Then, $\ossthetanpest$ defined in \eqref{eq:est_NP_OSS} is a regular and asymptotically linear estimator of $\theta^*$ in the sense of Definitions~\ref{def:regular_OSS} and~\ref{def:AL_OSS},  and has influence function 
\begin{equation}
    [\IF(z_1)-\gamma\cIF(x_1)] + \sqrt{\gamma(1-\gamma)}\cIF(x_2).
\end{equation}
Furthermore,  the asymptotic variance of $\ossthetanpest$ takes the form   
\begin{equation}
\label{eq:sigma_eff}
\Sigmaeff(\gamma) = \Var\left[\IF(Z)- \gamma\cIF(X)\right] + \Var\left[\sqrt{\gamma(1-\gamma)}\cIF(X)\right],
\end{equation}
and satisfies
\begin{equation}
\label{eq:OSS_np_est_var}
\begin{aligned}
&\Var\left[\IF(Z)-\cIF(X)\right] \preceq \Sigmaeff(\gamma)\preceq \Sigmasafe(\gamma) \preceq  \Var\left[\IF(Z)\right], \quad \forall \gamma \in [0,1],
\end{aligned}
\end{equation}
where $\Sigmasafe(\gamma)$ is defined as ~\eqref{eq:sigma_safe}.
\end{theorem}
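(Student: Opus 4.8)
The plan is to separate the statement into (i) showing that $\ossthetanpest$ is regular and asymptotically linear with the stated two-sample influence function, and (ii) deriving the variance formula \eqref{eq:sigma_eff} and the ordering \eqref{eq:OSS_np_est_var}. Part (ii) is purely algebraic once (i) holds, so the real content is (i). I would assemble (i) from two pieces already available: the nonparametric-regression consistency behind the ISS efficient estimator (Theorem~\ref{thm:ISS_efficient_estimator}), and the pooled-centering decomposition behind the OSS safe estimator (Theorem~\ref{thm:OSS_estimator}).

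First I would exploit that $\ecenteredGKn$ is centered by the pooled mean over all $n+N$ covariates. Writing the pooled mean as a convex combination of the labeled and unlabeled sample means gives the identity
\[
\frac{1}{n}\sum_{i=1}^n\npoOLS\ecenteredGKn(X_i) = \frac{N}{n+N}\left[\frac{1}{n}\sum_{i=1}^n\npoOLS\GKn(X_i) - \frac{1}{N}\sum_{i=n+1}^{n+N}\npoOLS\GKn(X_i)\right],
\]
which makes the OSS structure surface: the correction to $\thetaest$ is a scaled difference between a labeled and an unlabeled average of the fitted conditional influence function. Invoking the consistency of the fit (next paragraph), namely that $\npoOLS\ecenteredGKn(\cdot)$ approximates $\cIF(\cdot)$ closely enough in $\lpx$ that
\[
\frac{1}{\sqrt n}\sum_{i=1}^n\left[\npoOLS\ecenteredGKn(X_i)-\cIF(X_i)\right]=\littleO_p(1),\quad \frac{1}{\sqrt N}\sum_{i=n+1}^{n+N}\left[\npoOLS\ecenteredGKn(X_i)-\cIF(X_i)\right]=\littleO_p(1),
\]
I may replace each $\npoOLS\GKn(X_i)$ by $\cIF(X_i)$ in the bracket, since $\npoOLS\GKn(x)-\cIF(x)$ equals $\npoOLS\ecenteredGKn(x)-\cIF(x)$ plus an additive constant that cancels in the bracketed difference. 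Multiplying by $\sqrt n$ and using $\frac{N}{n+N}\to\gamma$ and $\frac{\sqrt{nN}}{n+N}\to\sqrt{\gamma(1-\gamma)}$, the correction converges to $\gamma\,\frac{1}{\sqrt n}\sum_{i=1}^n\cIF(X_i)-\sqrt{\gamma(1-\gamma)}\,\frac{1}{\sqrt N}\sum_{i=n+1}^{n+N}\cIF(X_i)+\littleO_p(1)$. Combined with $\sqrt n(\thetaest-\theta^*)=\frac{1}{\sqrt n}\sum_{i=1}^n\IF(Z_i)+\littleO_p(1)$, this is precisely the expansion of Definition~\ref{def:AL_OSS} with $\IFa(z_1)=\IF(z_1)-\gamma\cIF(x_1)$ and $\IFb(x_2)=\sqrt{\gamma(1-\gamma)}\cIF(x_2)$; regularity in the sense of Definition~\ref{def:regular_OSS} follows from the same argument as for $\ossthetaest$ because the influence function has the same two-sample structure.

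I expect the main obstacle to be justifying the two $\littleO_p(1)$ displays, and especially the second (unlabeled) one. This is where the difference from the ISS analysis lies: in Theorem~\ref{thm:ISS_efficient_estimator} the basis is centered by the known mean $\bE[\GKn(X)]$ and regressed against the population Gram matrix, whereas here both the centering and the Gram matrix $\frac{1}{n+N}\sum_i\ecenteredGKn(X_i)\ecenteredGKn(X_i)^\top$ are empirical, and the unlabeled covariates enter $\npoOLS$ at the same time as they are averaged against it. I would control this by decomposing $\npoOLS\ecenteredGKn-\cIF$ into an approximation-bias piece (killed at rate $K_n^{-2\alpha/\dim(\sX)}\to0$ by \eqref{eq:approx_error} and $\alpha>\dim(\sX)$), a plug-in piece (killed by $K_n\rho(\cetaest,\eta^*)\to0$), and an estimation piece; the estimation piece together with the empirical-Gram and empirical-centering fluctuations is shown negligible using $\inf_{K_n}\lambda_{\min}(\Var[\GKn(X)])>0$ and $\zeta_n^2/n\to0$, exactly as in the earlier nonparametric arguments.

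Finally, for the variance claims, Definition~\ref{def:AL_OSS} gives the asymptotic variance as $\Var[\IFa(Z)]+\Var[\IFb(X)]$, which is \eqref{eq:sigma_eff}. Because $\cIF(X)=\bE[\IF(Z)\mid X]$, the law of total covariance gives $\Cov[\IF(Z),\cIF(X)]=\Var[\cIF(X)]$, and expanding \eqref{eq:sigma_eff} collapses it to $\Sigmaeff(\gamma)=\Var[\IF(Z)]-\gamma\Var[\cIF(X)]$; the identical computation with $\cIF$ replaced by the $\lpx$-projection $\OLS\centeredg$ (for which orthogonality gives $\Cov[\IF(Z),\OLS\centeredg(X)]=\Var[\OLS\centeredg(X)]$) yields $\Sigmasafe(\gamma)=\Var[\IF(Z)]-\gamma\Var[\OLS\centeredg(X)]$. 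Since $\OLS\centeredg(X)$ is the projection of $\cIF(X)$, one has $0\preceq\Var[\OLS\centeredg(X)]\preceq\Var[\cIF(X)]$, and the chain \eqref{eq:OSS_np_est_var} then follows for every $\gamma\in[0,1]$ together with the identity $\Var[\IF(Z)-\cIF(X)]=\Var[\IF(Z)]-\Var[\cIF(X)]$.
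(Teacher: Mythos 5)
Your proposal is correct and follows essentially the same route as the paper: the pooled-centering identity splitting the correction into labeled and unlabeled averages, reduction of the problem to showing that $\npoOLS\ecenteredGKn$ is $\sL^2(\marginal)$-consistent for $\cIF$ so that the Donsker property of the H\"older class kills the empirical-process remainders on both samples, and the law-of-total-covariance algebra giving $\Sigmaeff(\gamma)=\Var[\IF(Z)]-\gamma\Var[\cIF(X)]$ and $\Sigmasafe(\gamma)=\Var[\IF(Z)]-\gamma\Var[\OLS\centeredg(X)]$ with the ordering following from $\OLS\centeredg$ being a projection of $\cIF$. The only part you leave schematic --- the bias/plug-in/estimation decomposition of $\npoOLS-\npOLS$ with the empirical Gram and centering fluctuations controlled via $\zeta_n^2/n\to 0$ and $K_n\rho(\cetaest,\eta^*)\to 0$ --- is exactly where the paper spends its effort, and your sketch assigns each assumption to the right term.
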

As in Section~\ref{subsec:ISS_est},  the efficient OSS estimator, $\ossthetanpest$, is always at least as efficient as both the safe OSS estimator, $\ossthetaest$, and the initial supervised estimator $\thetaest$.
Furthermore, \eqref{eq:OSS_np_est_var} together with Theorem~\ref{thm:efficiency_OSS} show that if the initial supervised estimator is efficient under the supervised setting, then the efficient OSS estimator is efficient under the OSS setting. 

\begin{remark}
We noted in Section~\ref{subsec:OSS_efficiency} that the efficiency bound in the OSS falls between the efficiency bounds in the ISS and supervised settings. We can see from Theorems~\ref{thm:OSS_estimator} and \ref{thm:OSS_efficient_estimator} that a similar property holds for the safe and efficient estimators. Specifically, \eqref{eq:OSS_est_var} and \eqref{eq:OSS_np_est_var} show that the safe and efficient OSS estimators are more efficient than the initial supervised estimator $\thetaest$, but are less efficient than the  safe and efficient ISS estimators, respectively. This is again due to the fact that under the OSS, unlabeled observations provide more information than is available under the supervised setting, but less than is available under the ISS. Similarly, as $\gamma \to 0$, i.e.  as the proportion of unlabeled data becomes negligible, the OSS estimators are asymptotically equivalent to the initial supervised estimator. On the other hand,  when $\gamma = 1$, the OSS estimators are asymptotically equivalent to the corresponding ISS estimators. 
\end{remark}

\section{Connection to prediction-powered inference}
\label{sec:ppi}

Suppose now that in addition to  $\labeled$ and $\unlabeled$, the data analyst also has access to  $K \geq 1$ machine learning prediction models $f_k:\sX \to \sY$, $k \in [K]$, which are \emph{independent} of $\labeled$ and $\unlabeled$ (e.g., they were trained on independent data). For instance, $f_1,\ldots,f_K$ may arise from  black-box machine learning models such as neural networks or large language models. It is clear that this is a special case of semi-supervised learning, as $\set{f_k}_{k=1}^K$ can be treated as fixed functions conditional on the data upon which they were trained. 

Recently, \cite{angelopoulos2023prediction} proposed \emph{prediction-powered inference} (PPI), which provides a principled approach for making use of $\set{f_k}_{k=1}^K$. Subsequently, a number of PPI variants have been proposed to further improve statistical efficiency or extend PPI to other settings \citep{angelopoulos2023ppi++, miao2023assumption, gan2023prediction,miao2024task,gu2024local}. In this section, we re-examine the PPI problem through the lens of our results in previous sections, and apply these insights to improve upon existing PPI estimators.
 

Since $f_1,\ldots,f_K$ are  independent of $\labeled\cup\unlabeled$, existing PPI estimators fall into the category of OSS estimators, and can be shown to be regular and asymptotically linear in the sense of Definitions~\ref{def:regular_OSS} and~\ref{def:AL_OSS}. Therefore, Theorem~\ref{thm:efficiency_OSS} suggests that their asymptotic variances are lower bounded by the efficiency bound \eqref{eq:EIF_var_OSS}. We show in Supplement~\ref{sec:apdx_ppi} that existing PPI estimators cannot achieve the efficiency bound \eqref{eq:EIF_var_OSS} in the OSS setting, unless strong assumptions are made on the machine learning prediction models. Furthermore, if the parameter of interest is well-specified in the sense of Definition~\ref{def:well_specification}, then by Corollary~\ref{cor:well_specified_OSS} these estimators cannot be more efficient than the efficient supervised estimator. In other words, \emph{independently trained machine learning models, however sophisticated and accurate, cannot improve inference when the parameter is well-specified}.  

\begin{remark}
Our insight that independently trained machine learning models cannot improve inference in a well-specified model stands in apparent contradiction to the simulation results of \cite{angelopoulos2023ppi++}, who find that PPI does lead to improvement over supervised estimation in generalized linear models. This is because they have simulated data such that $f(X)=Y+\epsilon$, i.e. the machine learning model is \emph{not} independent of $\labeled$ and $\unlabeled$. A modification to their simulation study to achieve independence (in keeping with the setting of their paper) corroborates our insight, i.e., PPI does not outperform the supervised estimator.  
\end{remark}

Next, we take advantage of the insights developed in previous sections to propose a class of OSS estimators that  incorporates  the machine learning models $\set{f_k}_{k=1}^K$ and improves upon the existing PPI estimators. We begin with an initial supervised estimator $\thetaest$ that is regular and asymptotically linear with influence function $\IF(z)=\IF(x,y)$, and we suppose that $\cetaest$ is an estimator of $\eta^*$. As in Section~\ref{subsec:OSS_est}, we estimate the conditional influence function $\cEIF(x)$ defined in \eqref{eq:cEIF}  with regression. Specifically, consider 
\begin{equation}
\label{eq:ppi_g_est}
    \cg(x) = \left[\IFplugin(x, f_1(x)),\ldots, \IFplugin(x, f_K(x))\right]^\top,
\end{equation}
which arises from replacing the true response in  $\IFplugin(x, y)$ with the machine learning model $f_k(x)$, for $k \in [K]$. 
Its empirically centered version is 
\begin{equation}
\label{eq:ppi_g_est_centered}
\cecenteredg(x) = \cg(x) - \frac{1}{n+N}\sum_{i=1}^{n+N}\cg(X_i).
\end{equation}
Then the  regression estimator of the conditional influence function is $\cIFest(x) = \oOLSppi\cecenteredg(x)$, where
\begin{equation}
\label{eq:ppi_oOLS}
\oOLSppi = \left[\frac{1}{n}\sum_{i=1}^n\IFplugin(Z_i)\cecenteredg(X_i)^\top\right]\left[\frac{1}{n+N}\sum_{i=1}^{n+N}\cecenteredg(X_i)^\top\cecenteredg(X_i)\right]^{-1}
\end{equation}
are the coefficients obtained from regressing $\set{\IFplugin(Z_i)}_{i=1}^n$ onto $\set{\cecenteredg(X_i)}_{i=1}^n$. Motivated by the safe OSS estimator, $\ossthetaest$ in \eqref{eq:est_OSS}, the safe PPI estimator is defined as
\begin{equation}
\label{eq:ppi_est}
\ossthetaestppi = \thetaest - \frac{1}{n}\sum_{i=1}^n\oOLSppi \cecenteredg(X_i).
\end{equation}

We now investigate the asymptotic behavior of the estimator \eqref{eq:ppi_est}. Note that Theorem~\ref{thm:OSS_estimator} is not applicable, as the regression basis $\cecenteredg$ 
in \eqref{eq:ppi_g_est_centered} 
is random due to the involvement of $\cetaest$. 
Consider an arbitrary class of measurable functions indexed by $\eta$,
\begin{equation}
\label{eq:g_class}
\sG = \set{g_\eta(x): \sX \to \bR^d, \eta \in \Omega}.
\end{equation}

We make the following assumptions on $\sG$.
\begin{assumption}
\label{asu:g_Lipschitz}
(a) $\tg(x) \in \sL_d^2(\marginal)$ and $\Var\left[\tg(X)\right]$ is non-singular; 
(b) Under Assumption~\ref{asu:IF_Lipschitz},  $\set{g_\eta(x):\eta \in \sO}$ is $\joint$-Donsker, and for all $\set{\eta_1,\eta_2} \subset \sO$, $\norm{g_{\eta_1}(x)-g_{\eta_2}(x)} \le G(x)\rho(\eta_1, \eta_2)$, where $G:\sX \to \bR^+$ is a square-integrable function.
\end{assumption}


Similar to Assumption~\ref{asu:IF_Lipschitz}, Assumption~\ref{asu:g_Lipschitz} requires that the class of functions $\set{g_\eta(x): \eta \in \sO}$ is a Donsker class; when $\eta(\cdot)$ is finite-dimensional, the next proposition provides sufficient conditions for Assumption~\ref{asu:g_Lipschitz}.
\begin{proposition}
\label{prop:equi_assumption3}
When $\eta(\cdot)$ is a finite-dimensional functional, the following condition implies Assumption~\ref{asu:g_Lipschitz} (b): under the conditions of Proposition~\ref{prop:equi_assumption1}, for all $\set{\eta_1,\eta_2} \subset \sO$, $\norm{g_{\eta_1}(x)-g_{\eta_2}(x)} \le G(x)\norm{\eta_1-\eta_2}$, where $G:\sX \to \bR^+$ is a square integrable function.
\end{proposition}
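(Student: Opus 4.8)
The plan is to reduce the proposition to a single claim—the $\joint$-Donsker property of the class $\sG_\sO := \set{g_\eta(x):\eta\in\sO}$—and then to establish that claim by mirroring the argument already used for Proposition~\ref{prop:equi_assumption1}. Since $\eta(\cdot)$ is finite-dimensional, under the conditions of Proposition~\ref{prop:equi_assumption1} we have $\sO\subset\bR^k$ and the metric $\rho$ on $\Omega$ is the Euclidean metric, so $\rho(\eta_1,\eta_2)=\norm{\eta_1-\eta_2}$. Consequently the hypothesized bound $\norm{g_{\eta_1}(x)-g_{\eta_2}(x)}\le G(x)\norm{\eta_1-\eta_2}$ is verbatim the Lipschitz clause of Assumption~\ref{asu:g_Lipschitz}(b), and nothing further is needed there. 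Moreover, because each $g_\eta$ depends on $z=(x,y)$ only through $x$, the empirical process $\sqrt{n}(\empiricalPn-\joint)g_\eta$ coincides with the one driven by the $X$-marginal; hence $\joint$-Donsker and $\marginal$-Donsker are equivalent for $\sG_\sO$, and it suffices to treat $\sG_\sO$ as a class on $(\sX,\marginal)$.

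First I would control the $L_2(\marginal)$ bracketing numbers of $\sG_\sO$ through its Lipschitz structure. Fix $\epsilon>0$ and take an $\epsilon$-net $\set{\eta^{(1)},\ldots,\eta^{(m)}}$ of the bounded set $\sO$ in Euclidean distance; boundedness gives $m=N(\epsilon,\sO,\norm{\cdot})=O(\epsilon^{-k})$. For each net point and each coordinate $j\in[d]$, form the bracket $\sbr{[g_{\eta^{(i)}}]_j(x)-\epsilon G(x),\,[g_{\eta^{(i)}}]_j(x)+\epsilon G(x)}$. If $\norm{\eta-\eta^{(i)}}\le\epsilon$ then $\abr{[g_\eta]_j(x)-[g_{\eta^{(i)}}]_j(x)}\le\norm{g_\eta(x)-g_{\eta^{(i)}}(x)}\le\epsilon G(x)$ pointwise, so every $[g_\eta]_j$ lies in one of these brackets, and each bracket has $L_2(\marginal)$-width at most $2\epsilon\norm{G}_{L_2(\marginal)}<\infty$ since $G$ is square integrable. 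This yields $\log N_{[\,]}(c\,\epsilon,\set{[g_\eta]_j:\eta\in\sO},L_2(\marginal))=O(k\log(1/\epsilon))$ for a constant $c$ depending on $\norm{G}_{L_2(\marginal)}$. I would also record that $\sG_\sO$ admits a square-integrable envelope, namely $\sup_{\eta\in\sO}\norm{g_\eta(x)}\le\norm{\tg(x)}+G(x)\cdot\mathrm{diam}(\sO)$, which is in $\sL_d^2(\marginal)$ by Assumption~\ref{asu:g_Lipschitz}(a), the boundedness of $\sO$, and the square-integrability of $G$.

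Next I would verify the entropy-integral condition: because the bracketing entropy is of order $k\log(1/\epsilon)$, the integrand $\sqrt{\log N_{[\,]}(\epsilon,\cdot,L_2(\marginal))}$ behaves like $\sqrt{\log(1/\epsilon)}$, so $\int_0^\infty\sqrt{\log N_{[\,]}(\epsilon,\set{[g_\eta]_j:\eta\in\sO},L_2(\marginal))}\,d\epsilon<\infty$. By the bracketing central limit theorem (Theorem~19.5 and Example~19.7 of \citet{van2000asymptotic}), each coordinate class is $\marginal$-Donsker; a finite collection of Donsker classes is jointly Donsker, so the $\bR^d$-valued class $\sG_\sO$ is $\marginal$-Donsker, hence $\joint$-Donsker. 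Together with the Lipschitz clause noted in the first paragraph, this is exactly Assumption~\ref{asu:g_Lipschitz}(b).

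I do not expect a genuine obstacle here: the argument is structurally identical to that of Proposition~\ref{prop:equi_assumption1}, with $g_\eta$ in place of $\vp_\eta$ and the envelope $G$ in place of $L$. The only points requiring a moment of care are (i) passing from the single Euclidean Lipschitz envelope to coordinatewise brackets, which is handled because the Euclidean bound dominates each coordinate difference, and (ii) checking square-integrability of the envelope so that the bracketing integral is finite near the upper limit—both mild and routine.
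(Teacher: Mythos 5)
Your proposal is correct and follows essentially the same route as the paper, which omits the proof as "similar to that of Proposition~3.4" — and that proof simply invokes the Lipschitz-in-parameter Donsker example (Example~19.6/19.7 of van der Vaart) for a bounded Euclidean index set with square-integrable Lipschitz envelope. Your bracketing-number argument is exactly the standard proof of that cited example written out in full, plus the routine observations (coordinatewise brackets, joint Donskerness of finitely many coordinates, equivalence of $\marginal$- and $\joint$-Donsker for functions of $x$ alone) that the paper leaves implicit.
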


The proof of Proposition~\ref{prop:equi_assumption3} is similar to that of Proposition~\ref{prop:equi_assumption1} and is hence omitted.

Define $\tcenteredg(x) = \tg(x) - \bE[\tg(X)]$ as the centered version of $\tg(x)$, and 
\begin{equation}
\label{eq:ppi_OLS}
\OLSppi = \bE\left[\IF(Z)\tcenteredg(X)^\top\right]\bE\left[\tcenteredg(X)\tcenteredg(X)^\top\right]^{-1}
\end{equation}
as the population coefficients for the regression of  $\tcenteredg(X)$ onto $\IF(Z)$. 
The next proposition establishes the asymptotic behavior of $\ossthetaestppi$.
\begin{proposition}
\label{prop:ppi_OSS}
Suppose that $\hat{\theta}_n = \hat{\theta}(\labeled)$ is a supervised estimator that satisfies Assumption~\ref{asu:IF_Lipschitz}, $\sG$ defined as \eqref{eq:g_class} is a class of measurable functions that satisfies Assumption~\ref{asu:g_Lipschitz}, and $\cetaest$ is an estimator of $\eta^*$ as in Assumption~\ref{asu:IF_Lipschitz} (c). Suppose further that $\frac{N}{n+N} \to \gamma \in (0,1]$. Then the estimator $\ossthetaestppi$ defined in \eqref{eq:ppi_est} is a regular and asymptotically linear estimator of $\theta^*$ in the sense of Definitions~\ref{def:regular_OSS} and~\ref{def:AL_OSS}, and has  influence function  
\begin{equation}
\left[\IF(z_1)- \gamma\OLSppi\tcenteredg(x_1)\right]+ \sqrt{\gamma(1-\gamma)}\OLSppi\tcenteredg(x_2)
\end{equation}
in the OSS setting, where $\OLSppi$ is defined as in \eqref{eq:ppi_OLS}. Furthermore, the asymptotic variance of $\ossthetaestppi$ takes the form 
$$\*\Sigma(\gamma) = \Var\left[\IF(Z)- \gamma\OLSppi\tcenteredg(X)\right] + \Var\left[\sqrt{\gamma(1-\gamma)}\OLSppi\tcenteredg(X)\right],$$
and satisfies 
\begin{equation}
\begin{aligned}
&\Var\left[\IF(Z)-\OLSppi\tcenteredg(X)\right] \preceq \*\Sigma(\gamma)\preceq \Var\left[\IF(Z)\right],\quad \forall \gamma \in [0,1].
\end{aligned}
\end{equation}
\end{proposition}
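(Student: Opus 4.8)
The plan is to follow the proof of Theorem~\ref{thm:OSS_estimator}, but with extra care because the regression basis $\cecenteredg$ is now random through its dependence on $\cetaest$. First I would isolate the correction term and exploit the structure of the empirical centering. Writing $\bar g_n = \frac1n\sum_{i=1}^n\cg(X_i)$ and $\bar g_N = \frac1N\sum_{i=n+1}^{n+N}\cg(X_i)$, a direct computation yields the exact identity
$$\frac1n\sum_{i=1}^n\cecenteredg(X_i) = \frac{N}{n+N}\left(\bar g_n - \bar g_N\right),$$
so that the (random) population mean of $\cg$ cancels between the labeled and unlabeled averages. This cancellation is what keeps the estimator well-behaved despite the randomness of the basis, and it reduces the analysis to that of $\bar g_n - \bar g_N$ and of the coefficient matrix $\oOLSppi$.

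The main obstacle is the empirical-process argument needed to pass from the random basis $\cg = g_{\cetaest}$ to its population version $\tg = g_{\eta^*}$ inside $\bar g_n - \bar g_N$. Since this difference enters the final expansion at scale $\sqrt n$, I need a $\sqrt n$-level control: using the $\joint$-Donsker property and Lipschitz bound of Assumption~\ref{asu:g_Lipschitz}(b) together with the consistency $\rho(\cetaest,\eta^*)=\littleO_p(1)$ of Assumption~\ref{asu:IF_Lipschitz}(c), asymptotic equicontinuity of the empirical process gives $(\empiricalPn-\joint)(g_{\cetaest}-g_{\eta^*}) = \littleO_p(n^{-1/2})$. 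For the unlabeled average I would condition on $\labeled$, so that $\cetaest$ is fixed and the $\unlabeled$ observations are i.i.d.\ from $\marginal$; a conditional equicontinuity argument then yields the analogous replacement. Combined with the cancellation above, this reduces $\bar g_n - \bar g_N$ to $\frac1n\sum_{i=1}^n\tcenteredg(X_i)-\frac1N\sum_{i=n+1}^{n+N}\tcenteredg(X_i)+\littleO_p(n^{-1/2})$.

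Next I would establish $\oOLSppi\inprobability\OLSppi$, for which only $\littleO_p(1)$ convergence is needed. Replacing $\IFplugin$ by $\IF$ and $\cg$ by $\tg$ (now merely at the level of consistency, via the same Donsker and Lipschitz bounds plus the law of large numbers) and using $\bE[\IF(Z)]=0$, the numerator of $\oOLSppi$ converges to $\bE[\IF(Z)\tcenteredg(X)^\top]$ and the denominator to $\bE[\tcenteredg(X)\tcenteredg(X)^\top]$, which is nonsingular by Assumption~\ref{asu:g_Lipschitz}(a). Because the correction term $\frac1n\sum_{i=1}^n\cecenteredg(X_i)$ is $O_p(n^{-1/2})$, the estimation error $\oOLSppi-\OLSppi = \littleO_p(1)$ contributes only at order $\littleO_p(n^{-1/2})$, so no rate on $\cetaest$ beyond consistency is required. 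Assembling the pieces together with the asymptotically linear expansion of $\thetaest$ gives
$$\sqrt n\left(\ossthetaestppi-\theta^*\right) = \frac{1}{\sqrt n}\sum_{i=1}^n\left[\IF(Z_i)-\tfrac{N}{n+N}\OLSppi\tcenteredg(X_i)\right] + \frac{\sqrt n}{n+N}\sum_{i=n+1}^{n+N}\OLSppi\tcenteredg(X_i) + \littleO_p(1).$$
Using $\tfrac{N}{n+N}\to\gamma$ and $\tfrac{\sqrt{nN}}{n+N}\to\sqrt{\gamma(1-\gamma)}$, the labeled sum acquires limiting projection coefficient $\gamma$ and the unlabeled sum converges in distribution with scale $\sqrt{\gamma(1-\gamma)}$, which identifies the stated influence function $[\IF(z_1)-\gamma\OLSppi\tcenteredg(x_1)]+\sqrt{\gamma(1-\gamma)}\OLSppi\tcenteredg(x_2)$.

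Finally, regularity in the sense of Definition~\ref{def:regular_OSS} follows from the OSS efficiency framework of Appendix~\ref{sec:apdx_efficiency_noniid} once the influence function is identified, and the variance formula $\*\Sigma(\gamma)$ follows from the independence of $\labeled$ and $\unlabeled$, which makes the asymptotic variance the sum of the variances of the two influence-function components. The sandwich bound reduces to the same quadratic-form algebra as in Theorem~\ref{thm:OSS_estimator}: expanding $\*\Sigma(\gamma)$ as a quadratic in $\gamma$ and using that $\OLSppi\tcenteredg(X)$ is the $\sL_2(\marginal)$-projection of $\IF(Z)$ onto the span of $\tcenteredg$ yields $\Var[\IF(Z)-\OLSppi\tcenteredg(X)]\preceq\*\Sigma(\gamma)\preceq\Var[\IF(Z)]$ for all $\gamma\in[0,1]$.
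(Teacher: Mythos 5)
Your proposal is correct and follows essentially the same route as the paper's proof: the exact identity $\frac1n\sum_{i=1}^n\cecenteredg(X_i)=\frac{N}{n+N}(\bar g_n-\bar g_N)$ is just a repackaging of the paper's decomposition $\cecenteredg(X_i)=\ccenteredg(X_i)-\empiricalPnN(\ccenteredg)$, and the remaining ingredients --- Donsker asymptotic equicontinuity (Lemma 19.24 of van der Vaart) to replace $\cg$ by $\tg$ at $\sqrt n$-scale in both empirical averages, mere consistency of $\oOLSppi$ because the correction term is already $O_p(n^{-1/2})$, and the reduction of the variance bounds to the projection algebra of Theorem~\ref{thm:OSS_estimator} --- are exactly the steps the paper takes. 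No gaps.
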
 
Proposition~\ref{prop:ppi_OSS} can be viewed as an extension of Theorem~\ref{thm:OSS_estimator}, where in Theorem~\ref{thm:OSS_estimator} the function class $\sG$ is a singleton class $\set{g(x)}$ that does not depend on $\eta^*$.  Our proposed PPI estimator \eqref{eq:ppi_est} flexibly incorporates multiple black-box machine learning models, and enjoys  several advantages over existing PPI estimators: 
\begin{enumerate}
\item We show in Supplement~\ref{sec:apdx_ppi} that \eqref{eq:ppi_est} is optimal within a class of PPI estimators including \cite{angelopoulos2023prediction, angelopoulos2023ppi++, miao2023assumption, gan2023prediction}. 
\item Unlike the proposals of  \cite{angelopoulos2023prediction, angelopoulos2023ppi++, miao2023assumption}, our estimator is safe: that is, it is at least as efficient as the initial supervised estimator,  regardless of the quality of the machine learning models and the proportion of unlabeled data. 
\item While existing PPI estimators are only applicable to M- and Z-estimators,  our proposal is much more general: it is applicable to arbitrary inferential problems and requires only a regular and asymptotically linear initial supervised estimator.
\end{enumerate}
We provide a detailed discussion of the efficiency of existing PPI estimators in Appendix~\ref{sec:apdx_ppi}.

\section{Connection with missing data}
\label{sec:missing}
The missing data framework provides an alternative approach for modeling the semi-supervised setting  \citep{robins1995semiparametric, chen2008semiparametric, zhou2008estimating, li2023efficient, graham2024towards}.  Here we relate the proposed framework to the classical theory of missing data. 

To establish a formal relationship between the two paradigms, we consider a missing completely at random (MCAR) model under which the semiparametric efficiency bound coincides with that derived in Theorem~\ref{thm:efficiency_OSS} in the OSS setting.  Let $\set{(Z_i, W_i)}_{i=1}^{n+N}$ be i.i.d. data, where $Z \sim \joint$, $W \sim \Pdelta$ is a binary missingness indicator such that the response $Y_i$ is observed if and only if $W_i = 1$, and $\Pdelta$ is a Bernoulli distribution with known probability $1-\gamma$ where $\gamma = \lim_{n\to\infty}\frac{N}{n+N} \in (0,1)$. Assume that $\joint \in \jointmodel$, where $\jointmodel$ is defined in \eqref{eq:separable_model} as in  previous sections. The underlying model of $(Z,W)$ is thus
\begin{equation}
\label{eq:missing_model}
\sQ = \set{\bP\times\Pdelta: \bP \in \jointmodel}.
\end{equation}
The next proposition derives the efficiency bound relative to \eqref{eq:missing_model}. 
\begin{proposition}
\label{prop:efficiency_missing}
Let $\jointmodel$ be defined as in \eqref{eq:separable_model} and let $\frac{N}{n+N} \to \gamma \in (0,1)$. Suppose that the efficient influence function of $\thetafunctional$ at $\joint$ relative to $\jointmodel$ is $\EIF$, and recall that $\cEIF(x)=\bE\left[\EIF(Z) \mid X=x\right]$ is the conditional efficient influence function. Consider i.i.d. data $\set{(Z_i, W_i)}_{i=1}^{n+N}$ generated from $\joint \times \Pdelta$ with model $\sQ$ \eqref{eq:missing_model}. Then the efficient influence function of $\thetafunctional$ at $\joint \times \Pdelta$ relative to $\jointmodel$ is 
$$\frac{w}{\sqrt{1-\gamma}}[\EIF(z)-\cEIF(x)] + \sqrt{1-\gamma}\cEIF(x),$$
and the corresponding semiparametric efficiency lower bound is
\begin{equation}
\notag
\begin{aligned}
&\Var\set{\frac{W}{\sqrt{1-\gamma}}[\EIF(Z)-\cEIF(X)] + \sqrt{1-\gamma}\cEIF(X)}\\
&=\Var\left[\EIF(Z)-\cEIF(X)\right] + (1-\gamma)\Var[\cEIF(X)]. \label{eq:missing-bound}
\end{aligned}
\end{equation}
\end{proposition}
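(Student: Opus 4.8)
The plan is to treat the statement as an ordinary i.i.d.\ semiparametric efficiency computation for coarsened data with total sample size $m=n+N$, and only afterwards reconcile the normalization with the $\sqrt{n}$-convention used elsewhere in the paper. Each unit contributes the observed vector $O=(X,W,WY)$: we always see $(X,W)$, and we see $Y$ only when $W=1$. Because the missingness probability $1-\gamma$ is known and $W\indep Z$ (MCAR), the likelihood of $O$ under a submodel $\bP_t\times\Pdelta$ factors as $[p_t(x,y)(1-\gamma)]^{w}[p_{X,t}(x)\gamma]^{1-w}$, and differentiating in $t$ shows that the observed-data score attached to a full-data score $s(z)\in\tangent$ is
\[
s_O(o) = w\,s(z) + (1-w)\,\bE[s(Z)\mid X=x].
\]
Since the missingness mechanism carries no free parameter, the observed-data tangent space $\tangentmissing$ is exactly the image of $\tangent$ under this linear map.

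Next I would construct the canonical gradient. Guided by the inverse-probability-weighting-plus-augmentation form familiar from missing-data theory, I propose
\[
\widetilde\psi(o) := \frac{w}{1-\gamma}\big[\EIF(z)-\cEIF(x)\big] + \cEIF(x),
\]
and verify two properties. First, $\widetilde\psi$ is a gradient: for every $s\in\tangent$, using $W\indep Z$, $W^2=W$, $\bE[W]=1-\gamma$, and the tower identity $\bE[\EIF(Z)-\cEIF(X)\mid X]=0$, a short calculation collapses $\bE[\widetilde\psi(O)s_O(O)]$ to $\bE[\EIF(Z)s(Z)]$, which is the pathwise derivative $\tfrac{\partial}{\partial t}\theta(\bP_t)\big|_{t=0}$ by definition of the full-data efficient influence function (note $\theta(\bP_t\times\Pdelta)=\theta(\bP_t)$). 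Second, $\widetilde\psi\in\tangentmissing$: by the product structure \eqref{eq:separable_model}, $\tangent=\overline{\tangentmarginal}\oplus\overline{\tangentconditional}$ orthogonally, so writing $\EIF=a+b$ with $a$ its $X$-measurable component forces $a=\bE[\EIF\mid X]=\cEIF$ and $b=\EIF-\cEIF$; hence $A:=\tfrac{1}{1-\gamma}(\EIF-\cEIF)+\cEIF\in\overline{\tangent}$ and $\widetilde\psi=w\,A+(1-w)\bE[A\mid X]$ has the required tangent form. By uniqueness of the canonical gradient, $\widetilde\psi$ is the efficient influence function in $\sQ$ for i.i.d.\ data of size $m$.

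Finally I would reconcile the normalization and compute the variance. Because the proposition reports the bound in the $\sqrt{n}$-convention (so it can be compared directly with Theorem~\ref{thm:efficiency_OSS}) and $n/(n+N)\to 1-\gamma$, I rescale $\widetilde\psi$ by $\sqrt{1-\gamma}$; this produces exactly the stated influence function $\tfrac{w}{\sqrt{1-\gamma}}[\EIF(z)-\cEIF(x)]+\sqrt{1-\gamma}\,\cEIF(x)$. Its variance then splits into three pieces: the cross term vanishes since $\bE[W(\EIF(Z)-\cEIF(X))\cEIF(X)^\top]=\bE[W]\,\bE[(\EIF(Z)-\cEIF(X))\cEIF(X)^\top]=0$ by $W\indep Z$ and the tower property; the weighted term equals $\tfrac{1}{1-\gamma}\bE[W]\Var[\EIF(Z)-\cEIF(X)]=\Var[\EIF(Z)-\cEIF(X)]$ using $W^2=W$; and the remaining term is $(1-\gamma)\Var[\cEIF(X)]$. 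Summing gives $\Var[\EIF(Z)-\cEIF(X)]+(1-\gamma)\Var[\cEIF(X)]$, matching the third line of \eqref{eq:EIF_var_OSS}.

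The main obstacle is the second verification above: correctly identifying the observed-data tangent space under known MCAR missingness and confirming that $\widetilde\psi$ genuinely lies in it — this is what upgrades ``a gradient'' to ``\emph{the} efficient influence function'' — which relies on the orthogonal decomposition of $\tangent$ and on the fact that the $X$-measurable component of $\EIF$ is precisely $\cEIF$. A secondary but easy-to-mishandle point is the bookkeeping between the $\sqrt{n}$ and $\sqrt{n+N}$ normalizations that generates the $\sqrt{1-\gamma}$ factors in the stated influence function.
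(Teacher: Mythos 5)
Your proposal is correct and follows essentially the same route as the paper's proof: characterize the observed-data tangent space induced by the known-MCAR coarsening (your score map $s\mapsto ws+(1-w)\bE[s\mid X]$ is identical to the paper's $s_X(x)+w\,s_{Y\mid X}(z)$), verify the candidate is a gradient via the same inner-product collapse to $\bE[\EIF(Z)s(Z)]$, place it in the tangent space using the facts $\cEIF\in\tangentmarginal$ and $\EIF-\cEIF\in\tangentconditional$ established in the proof of Theorem~\ref{thm:efficiency_ISS}, and then rescale by $\sqrt{1-\gamma}$ and compute the variance with the cross term vanishing by $W\indep Z$. No substantive gaps.
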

A comparison of \eqref{eq:missing-bound} and \eqref{eq:EIF_var_OSS} reveals that, 
 for any fixed $\gamma \in (0,1)$, the efficiency bound under the MCAR model \eqref{eq:missing_model} is the same as  under the OSS setting. Thus, the amount of information useful for inference under the two paradigms is the same. However, in the MCAR model the data 
$\set{(Z_i, W_i)}_{i=1}^{n+N}$ are fully i.i.d.; by contrast, in the OSS setting the  data are not fully i.i.d., and instead consist of two independent i.i.d. parts from $\joint$ and $\marginal$, respectively. In fact, the OSS setting
corresponds to the MCAR model conditional on the event  $\set{\sum_{i=1}^{n+N}W_i = N}$.  
 As the distribution of $W$ is completely known under the MCAR model, conditioning does not alter the information available, and so  it is not surprising that the efficiency bounds are the same. 

The semi-supervised setting allows for $\gamma = 1$, i.e., for the possibility that the sample size of the unlabeled data far exceeds that of  the labeled data,  $N \gg n$. As  discussed in Section~\ref{subsec:OSS_efficiency}, this case corresponds to the ISS setting, with  efficiency lower bound given by Theorem~\ref{thm:efficiency_ISS}. Our proposed safe and efficient OSS estimators also allow for $\gamma = 1$, as shown in Theorems~\ref{thm:OSS_estimator} and \ref{thm:OSS_efficient_estimator}. By contrast, it is difficult to theoretically analyze the efficiency lower bound when $\gamma = 1$ using the missing data framework, and estimators developed for missing data require the probability of missingness to be strictly smaller than 1.

\section{Applications}
\label{sec:app}

In this section, we apply the proposed framework to a variety of inferential problems, including M-estimation, U-statistics, and average treatment effect estimation. 
 Constructing the safe OSS estimator $\ossthetaest$ \eqref{eq:corrected_est_proj} or  the PPI estimator $\ossthetaestppi$ \eqref{eq:ppi_est} requires finding an initial supervised estimator $\thetaest$ and an estimator $\cetaest$ that satisfies Assumption~\ref{asu:IF_Lipschitz}. To construct the efficient OSS estimator \eqref{eq:est_NP_OSS}, the initial supervised estimator also needs to satisfies Assumption~\ref{asu:IF_holder}, which cannot be verified in practice. 

\subsection{M-estimation}
First, we apply the proposed framework to M-estimation, a setting also considered in Chapter 2 of \cite{chakrabortty2016robust} and \cite{song2023general}. For a function $m_\theta(z): \Theta \times \sZ \to \bR$, we define the target parameter  as the maximizer of 
$$\theta^* = \arg\max_{\theta}\bE[m_\theta(Z)].$$
Given  i.i.d. labeled data $\labeled$, we use the M-estimator 
\begin{equation}
\label{eq:m-estimator}
\thetaest = \arg\max_{\theta}\set{\frac{1}{n}\sum_{i=1}^nm_\theta(Z_i)}
\end{equation}
as the initial supervised estimator of $\theta^*$.  
 Under regularity conditions such as those stated in Theorems 5.7 and 5.21 of \cite{van2000asymptotic}, the M-estimator \eqref{eq:m-estimator} is a regular and asymptotically linear estimator of $\theta^*$ with influence function
\begin{equation}
 \label{eq:m-estimator_IF}  
 \IF(z) = -\*V_{\theta^*}^{-1}\nabla m_{\theta^*}(z),
\end{equation}
where  $\*V_{\theta}(\bP) = \frac{\partial^2 \bE_\bP[m_\theta(Z)]}{\partial \theta \partial \theta^\top}$ and  $\msecondderivative = \*V_{\theta(\joint)}(\joint)$. The functional $\eta(\bP)$ that appears  in \eqref{eq:m-estimator_IF} is $$\eta(\bP) = \left(\*V^{-1}_{\theta(\bP)}(\bP), \theta(\bP)\right),$$
which is finite-dimensional. Therefore, validating Assumption~\ref{asu:IF_Lipschitz} is equivalent to validating the conditions stated in Proposition~\ref{prop:equi_assumption1}, which we now do for two canonical examples of M-estimation problems.


\begin{exmp}[Mean]
\label{exmp:mean}
Define $m_{\theta}(z) = \frac{1}{2}[h(y) -\theta]^2$ for some function $h(y) \in \sL^2_p(\bP^*_Y)$. The target parameter is the expectation
$\theta^* = \bE[h(Y)]$. The M-estimator \eqref{eq:m-estimator} in this case is the sample mean 
$$\thetaest = \frac{1}{n}\sum_{i=1}^nh(Y_i),$$ 
which is regular and asymptotically linear with influence function $\IF(z) = h(y)-\theta^*$. The functional $\eta(\bP)$ in this example is $\eta(\bP)  = \theta(\bP)$, as $\*V_\theta(\bP) \equiv \*I_p$ and does not depend on the underlying distribution. A consistent estimator $\cetaest$ of $\eta^* = \theta^*$ is the sample mean $\cetaest = \thetaest$. Furthermore, $\vp_\eta(z)$ is $1$-Lipschitz in $\eta$, and the conditions of Proposition~\ref{prop:equi_assumption1} are satisfied.
\end{exmp}

\begin{exmp}[Generalized linear models]
\label{exmp:glm}
Define $m_\theta(z) = y\theta^\top x - b\left(x^\top\theta\right)$, where $b:\bR \to \bR$ is a convex and infinitely-differentiable function. Let $b^{(1)}(\cdot)$ and $b^{(2)}(\cdot)$ denote its first and second-order derivatives, respectively. Here $m_\theta(z)$ corresponds to the log-likelihood of a canonical exponential family distribution with natural parameter $\theta^\top x$ and log partition function $b(\cdot)$, i.e., a generalized linear model (GLM). However, we do not assume that the underlying distribution belongs to this model. The target parameter $\theta^*$ maximizes $\bE[m_\theta(Z)]$, and can be viewed as the best Kullback–Leibler approximation of the underlying distribution by the GLM model. The M-estimator \eqref{eq:m-estimator} in this case is the GLM estimator
$$\hat{\theta}_n = \underset{\theta}{\arg\max}\set{\frac{1}{n}\sum_{i=1}^n\left[Y_i\theta^\top X_i - b\left(X_i^\top\theta\right)\right]},$$
which is regular and asymptotically linear with influence function $$\IF(z) = \bE\left[b^{(2)}(X^\top\theta^*)XX^\top\right]^{-1}\left[yx-b^{(1)}(x^\top\theta^*)x\right].$$
A consistent estimator of the functional  $\eta(\bP) = \left(\bE_\bP\left[b^{(2)}(X^\top\theta(\bP))XX^\top\right]^{-1}, \theta(\bP)\right)$ is $$\cetaest = \left(\left[\frac{1}{n}\sum_{i=1}^nb^{(2)}(X_i^\top\hat{\theta}_n)X_iX_i^\top\right]^{-1}, \hat{\theta}_n\right).$$ 
Under mild conditions, it can be shown that $\hat{\theta}_n$ satisfies the conditions of Proposition~\ref{prop:equi_assumption1}. Claims made in this example are proved in Supplement~\ref{sec:apdx_app}.
\end{exmp}

Application to Z-estimation or estimating equations is similar and thus omitted.

\subsection{U-statistics}
\label{subsec:u_statistics}
Next, we apply the proposed framework to U-statistics. Let $h(y_1, \ldots, y_R): \sY^R \to \bR$ be a symmetric kernel function. The target parameter is defined as 
$$\theta^* = \bE[h(Y_1, \ldots, Y_R)],$$
where the expectation is taken over $R$ i.i.d. random variables $Y_i \iidsim \bP^*_{Y}$.
To estimate $\theta^*$, the supervised estimator is
\begin{equation}
\label{eq:u-statistics}
\thetaest = \frac{1}{\binom{n}{r}}\sum_{\set{i_1 < \ldots < i_r} \subset [n]}h(X_{i_1},\ldots,X_{i_r}).
\end{equation}
We assume that $h$ is non-degenerate:\begin{equation}
\label{eq:u-statistics_degeneracy}  
0<\Var[h_1(Y; \bP_Y^*)]<\infty,
\end{equation}
where $h_1(y; \bP_{Y})$ is the conditional expectation $\bE_\bP[h(Y_1, \ldots, Y_R)\mid Y_1 = y]$ with the first argument fixed at $y$, i.e.,
\begin{equation}
\label{eq:u-statistics_h1}
h_1(y; \bP_{Y}) = \int h(y, y_2, \ldots, y_R)\prod_{r = 2}^Rd\bP_{Y}(y_r).
\end{equation}
Under this condition, $\thetaest$ is a regular and asymptotically linear estimator with influence function
\begin{equation}
\label{eq:u-statistics_IF}
\IF(z) = R[h_1(y;\bP^*_Y)-\theta^*],
\end{equation}
where $R$ is again the order of the kernel \citep[for a proof, see Theorem 12.3 of ][]{van2000asymptotic}.
The functional $\eta(\bP)$ in \eqref{eq:u-statistics_IF}  is $\eta(\bP) = \left(h_1(y; \bP_Y), \theta(\bP_Y)\right)$. This is an infinite-dimensional functional that takes values in $\sY_\infty \times \bR$, where $\sY_\infty $ is the space of uniformly bounded functions $h: \sY \to \bR$ equipped with the uniform metric $\norm{h_1-h_2}_\infty = \sup_{\sY}|h_1(y)-h_2(y)|$. Denoting $\eta_1 = (h_1, \theta_1)$ and $\eta_2 = (h_2, \theta_2)$, it follows that \eqref{eq:u-statistics_IF} is  $R$-Lipschitz with respect to the metric $\rho(\eta_1, \eta_2) = \norm{h_1-h_2}_\infty + \norm{\theta_1 - \theta_2}$. 

We have already established that U-statistics \eqref{eq:u-statistics} are regular and asymptotically linear with $R$-Lipschitz influence functions. Therefore, it remains to validate the remaining parts of (b) and (c) of Assumption~\ref{asu:IF_Lipschitz}, which we now do for two canonical examples of U-statistics. 

\begin{exmp}[Variance]
\label{exmp:var}
Define $h(y_1, y_2) = \frac{1}{2}(y_1-y_2)^2$. The target of inference is then the variance $\theta^* = \Var(Y)$. The U-statistic in this case is the sample variance,
$$\hat{\theta}_n = \frac{1}{n(n-1)}\sum_{i<j}(Y_i-Y_j)^2,$$
which is regular and asymptotically linear  
with influence function $\IF(z) = (y-\bE[Y])^2 - \theta^*$ when $h(y_1,y_2)$ is non-degenerate. In this simple example, the functional $\eta(\bP) = \left(\bE[Y], \theta(\bP_Y)\right)$ is finite-dimensional, and a consistent estimator of $\eta^*$ is
$\cetaest = \left(\frac{1}{n}\sum_{i=1}^nY_i, \thetaest\right)$. Therefore, invoking Proposition~\ref{prop:equi_assumption1}, Assumption~\ref{asu:IF_Lipschitz} is satisfied.
\end{exmp}

\begin{exmp}[Kendall's $\tau$]
\label{exmp:kendall}
Consider $Y = (U,V) \in \bR^2$ and define $h(y_1,y_2) = I\set{(u_1-u_2)(v_1-v_2)>0}$. The target of inference is $$\theta^* = \bP\set{(U_1-U_2)(V_1-V_2)>0},$$ 
which measures the dependence between $U$ and $V$. The U-statistic is  Kendall's $\tau$,
$$\hat{\theta}_n = \frac{2}{n(n-1)}\sum_{i<j}I\set{(U_i-U_j)(V_i-V_j)>0},$$
which is the average number of pairs $(U_i, V_i)$ with concordant sign. 
When $h(y_1,y_2)$ is non-degenerate, $\hat{\theta}_n$ is regular and asymptotically linear with influence function
$$\IF(z) = 2\bP_Y^*\set{(U-u)(V-v)>0} -2\theta^*,$$
where $\eta(\bP) = \left(\bP_Y\set{(U-u)(V-v)>0}, \theta(\bP_Y)\right)$. A natural estimator of $\eta^*$ in this case is
$$\cetaest = \left(\frac{1}{n}\sum_{i=1}^nI\set{(U_i-u)(V_i-v)>0}, \thetaest\right).$$ 
In Supplement~\ref{sec:apdx_app}, we validate the conditions in Assumption~\ref{asu:IF_Lipschitz} under additional assumptions on $\bP_Y^*\set{(U-u)(V-v)>0}$.
\end{exmp}

\subsection{Average treatment effect}
We consider application of the proposed framework to the estimation of the average treatment effect (ATE). Suppose we have $Z = (U, A, Y)$ for confounders $U \in \sU$, binary treatment $A \in \set{0,1}$, and outcome $Y \in \sY$. Let $Y^{(0)}$ and $Y^{(1)}$ represent the counterfactual outcomes under control and treatment, respectively.  Under appropriate assumptions, the ATE, defined as $\bE\left[Y^{(1)}\right]-\bE\left[Y^{(0)}\right]$,  can be expressed as $\bE\set{\mu^{(1)}(U;\joint)-\mu^{(0)}(U;\joint)}$,
where $\mu^{(j)}(u;\bP) = \bE_\bP[Y\mid U=u, A = j]$ for $j \in \set{0,1}$. For simplicity, we consider the target of inference
\begin{equation}
\label{eq:ATE_1}
\theta^* = \bE\left[Y^{(1)}\right] =\bE\set{\mu^{(1)}(U;\joint)}.
\end{equation}
Inference for the ATE, $\bE\left[Y^{(1)}\right]-\bE\left[Y^{(0)}\right]$, follows similarly.

We define $\mu(u;\bP) = \mu^{(1)}(u;\bP)$ and $\pi(u;\bP) = \bP\set{A = 1\mid U}$, and consider the model
$$\sP = \set{p_U(u) \pi(u; \sP)^a[1-\pi(u; \sP)]^{1-a} p_{Y\mid A,U}(y\mid u,a): \forall u, p_U(u)>0; \exists\epsilon \in (0,0.5), \pi(u;\bP) \in [\epsilon, 1-\epsilon]}.$$

It can be shown \citep{robins1994estimation, hahn1998role} that the efficient influence function relative to $\sP$ is
\begin{equation}
\label{eq:EIF_ATE}
\begin{aligned}
&\EIF(z) = \frac{a}{\pi(u;\joint)}\left[y-\mu(u;\joint)\right]+ \mu(u;\joint)-\theta^*,    
\end{aligned}
\end{equation}
where $\eta(\bP) = \set{\pi(u;\bP), \mu(u;\bP)}$. In this case, $\eta(\bP)$ is infinite-dimensional and takes values in the product space $\sU_\infty \times \sU_\infty$, where $\sU_\infty$ is the space of uniformly bounded functions $h: \sU \to \bR$ equipped with the uniform metric $\norm{h_1 - h_2}_\infty =  \sup_{\sY}|h_1(y)-h_2(y)|$.

We consider three examples.   

\begin{exmp}[Additional data on confounders]
Suppose that additional observations of the confounders are available, i.e. $\unlabeled = \set{U_i}_{i=n+1}^{n+N}$.   The conditional efficient influence function $\cEIF(u) = \bE[\EIF(Z)\mid U = u]$ in this case is
$$\cEIF(u) = \mu(u,\joint) - \theta^*.$$
By Theorem~\ref{thm:efficiency_OSS}, the efficient influence function under the OSS is
\begin{equation}
\label{eq:efficiency_OSS_U}
\begin{aligned}
&\frac{a_1}{\pi(u_1;\joint)}\left[y_1-\mu(u_1;\joint)\right] + (1-\gamma)\cEIF(u_1) + \sqrt{\gamma(1-\gamma)} \cEIF(u_2),
\end{aligned}
\end{equation}
where --- in keeping with Remark~\ref{remark:subscripts} ---  we  have used $(z_1, u_2)=((u_1, a_1, y_1), u_2)$ as arguments to the influence function. We note that if $U$ has no confounding effect, i.e. $\mu(u,\joint) = \theta^*$, then $\cEIF(u) = 0$ and the efficiency bound \eqref{eq:efficiency_OSS_U} is the same as in the supervised setting. 
\end{exmp}

\begin{exmp}[Additional data on confounders and treatment]
Suppose that additional observations of both the confounders and the treatment indicators are available, i.e. $\unlabeled = \set{(U_i, A_i)}_{i=n+1}^{n+N}$. The conditional efficient influence function $\cEIF(u,a) = \bE[\EIF(Z)\mid U = u, A = a]$ in this case is
$$\cEIF(u,a) = \frac{a}{\pi(u;\joint)}\set{\bE[Y\mid U = u, A = a]-\mu(u;\joint)} + \mu(u;\joint) -\theta^*.$$
By Theorem~\ref{thm:efficiency_OSS}, the efficient influence function under the OSS is
\begin{equation}
\begin{aligned}
\label{eq:efficiency_OSS_UA}
&\frac{a_1}{\pi(u_1)}\set{y_1-\bE[Y\mid U = u_1, A = a_1]} + (1-\gamma)\cEIF(u_1,a_1) \\
&\quad + \sqrt{\gamma(1-\gamma)} \cEIF(u_2,a_2),
\end{aligned}
\end{equation}
where  again the subscripts in \eqref{eq:efficiency_OSS_UA} are in keeping with Remark~\ref{remark:subscripts}.
If $U$ has no confounding effect and $A$ has no treatment effect, i.e. if $\bE[Y\mid U = u, A = a] = \mu(u,\joint) = \theta^*$, then $\cEIF(u,a) = 0$ and the efficiency bound \eqref{eq:efficiency_OSS_UA} is the same as in the supervised setting. 
\end{exmp}

\begin{exmp}[Additional data on confounders and treatment, and availability of surrogates]
When measuring the primary outcome $Y$ is time-consuming or expensive, we may use a \emph{surrogate marker} $S \in \bR$ as a  replacement for $Y$, to facilitate more timely decisions on the treatment effect \citep{wittes1989surrogate}. 
Suppose that $\labeled=\set{(U_i, A_i, S_i, Y_i)}_{i=1}^{n}$, and that additional observations of the confounders, the treatment indicators, and the surrogate markers are available: that is,  $\unlabeled = \set{(U_i, A_i, S_i)}_{i=n+1}^{n+N}$. The conditional efficient influence function $\cEIF(u,a,s) = \bE[\EIF(Z)\mid U = u, A = a, S = s]$ in this case is
\begin{equation}
\notag
\begin{aligned}
&\cEIF(u,a,s) =\frac{a}{\pi(u;\joint)}\set{\bE[Y\mid U = u, A = a, S = s]-\mu(u;\joint)} + \mu(u;\joint) -\theta^*.
\end{aligned}
\end{equation}
By Theorem~\ref{thm:efficiency_OSS}, the efficient influence function under the OSS is
\begin{equation}
\label{eq:efficiency_OSS_UAS}
\begin{aligned}
&\frac{a_1}{\pi(u_1)}\set{y_1-\bE[Y\mid U = u_1, A = a_1, S = s_1]} + (1-\gamma)\cEIF(u_1,a_1,s_1) 
\\&+ \sqrt{\gamma(1-\gamma)} \cEIF(u_2,a_2,s_2).
\end{aligned}
\end{equation}
Here $Z=(U,A,S,Y)$, and the subscripts in \eqref{eq:efficiency_OSS_UAS} are again in keeping with Remark~\ref{remark:subscripts}.
If $\bE[Y\mid U = u, A = a, S = s] = \mu(u,\joint) = \theta^*$, then $\cEIF(u,a,s) = 0$ and the efficiency bound \eqref{eq:efficiency_OSS_UA} is the same as in the supervised setting. 
\end{exmp}

\begin{remark}
\label{rmk:efficiency_ATE}
We show in Supplement~\ref{sec:apdx_app} that the semiparametric efficiency bound \eqref{eq:efficiency_OSS_U} is greater than \eqref{eq:efficiency_OSS_UA}, which is greater than \eqref{eq:efficiency_OSS_UAS}. In other word, efficiency improves when the unlabeled data $\unlabeled$ contains more information. 
\end{remark}

\section{Numerical experiments}
\label{sec:simu}

In this section, we illustrate the proposed framework numerically in the context of  mean estimation and  generalized linear models. Numerical experiments for variance estimation and Kendall's $\tau$ can be found in Sections~\ref{subsec:apdx_simu_var} and~\ref{subsec:apdx_simu_kendall} of the Appendix. 

In each example, the covariates $X =[X_1,X_2]$ are two-dimensional and generated as i.i.d. $\text{Unif}(0,1)$. We compute the proposed estimators $\ossthetaest$ \eqref{eq:est_OSS}, $\ossthetanpest$ \eqref{eq:est_NP_OSS}, and $\ossthetaestppi$ \eqref{eq:ppi_est} as follows:
\begin{itemize}
  \item For the estimator $\ossthetaest$, we use $g(x) = x$.
    \item For the estimator $\ossthetanpest$, we use a basis of  tensor product natural cubic splines, with $K_n \in \set{4,9,16}$ basis functions. 
    \item For the estimator $\ossthetaestppi$, we use $\cg(x) = \IFplugin(x,f(x))$ where $f:\sX \to \sY$ is a prediction model. 
    We consider two prediction models: (i) a random forest model trained on independent data, which represents an informative prediction model; and (ii) randomly-generated Gaussian noise, which represents a non-informative prediction model. 
    \end{itemize}
Each of these estimators is constructed by modifying an efficient supervised estimator, whose performance we also consider. Additionally, we include the PPI++ estimator proposed by \cite{angelopoulos2023ppi++}, using the same two prediction models as for $\ossthetaestppi$. 
    
In each simulation setting, we also estimate the semiparametric efficiency lower bounds in the ISS setting \eqref{eq:EIF_var_ISS} and in the OSS setting \eqref{eq:EIF_var_OSS}.

For each method, we report the coverage of the 95\% confidence interval, as well as the standard error; all results are averaged over 1,000 simulated datasets.

\subsection{Mean estimation}
\label{subsec:simu_mean}
We consider Example~\ref{exmp:mean} with $h(y)=y$. In this example, the supervised estimator is the sample mean, which  has influence function $\IF(z) = y-\theta^*$. The conditional influence function is
$$\cIF(x) = \bE[\IF(Z)\mid X=x] = \bE[Y\mid X=x] - \theta^*,$$
which depends on $x$ through the conditional expectation. We generate the response $Y$ as $Y = \bE[Y\mid X] + \epsilon$, where $\epsilon \sim N(0,1)$. We consider three settings for $\bE[Y\mid X]$:
\begin{enumerate}
    \item \textit{Setting 1 (linear model):} 
    $$\bE[Y\mid X=x] = 1.05 + 4.76x_1-6.2x_2.$$
    \item \textit{Setting 2 (non-linear model):} 
    $$\bE[Y\mid X=x] = -1.70x_1x_2 -6.94x_1x_2^2-1.35x_1^2x_2+2.28x_1^2x_2^2.$$
    \item \textit{Setting 3 (well-specified model, in the sense of Definition~\ref{def:well_specification}):} 
    $$\bE[Y\mid X=x] = 0.$$
\end{enumerate}
For each model, we set  $n = 1,000$, and vary the proportion of unlabeled observations, $\gamma =\frac{N}{n+N} \in \set{0.1,0.3,0.5,0.7,0.9}$.
The semiparametric efficiency lower bound in the ISS setting \eqref{eq:EIF_var_ISS} and in the OSS setting \eqref{eq:EIF_var_OSS} are estimated  separately on a sample of  $100,000$ observations.

Table~\ref{tab:apdx_mean_CI} of Appendix~\ref{subsec:apdx_simu_mean_glm} reports the coverage of 95\% confidence intervals for each method. All methods achieve the nominal coverage. 

Figure~\ref{fig:mean} displays the standard error of each method, averaged over 1,000 simulated datasets. 

\begin{figure}
    \centering
    \includegraphics[width=\linewidth]{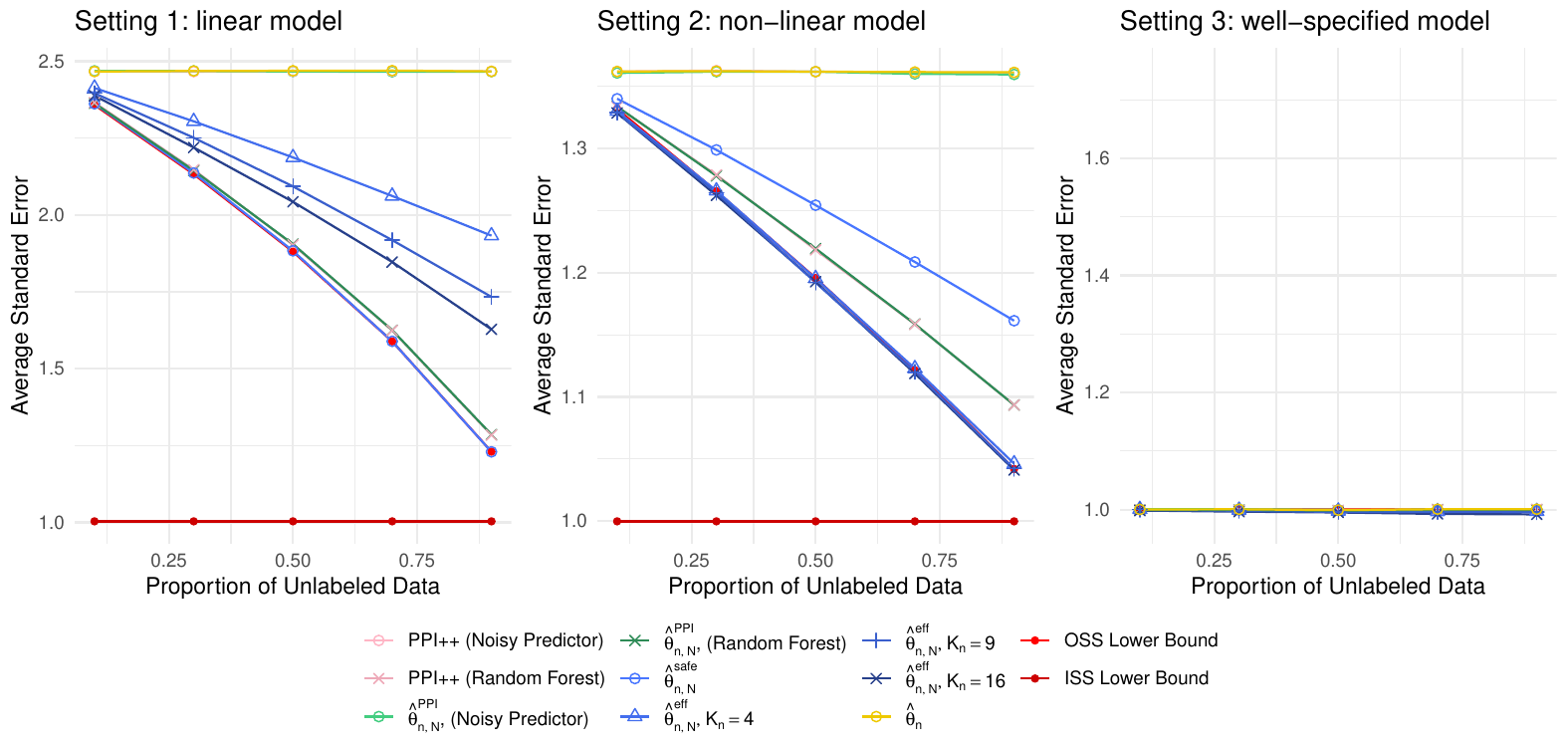}
    \caption{Standard errors of estimators of the mean, as described in Section~\ref{subsec:simu_mean}, averaged over 1,000 simulations with $n=1,000$. \emph{Left:} When the conditional influence function is linear (Setting 1), $\ossthetaest$ with $g(x) = x$ achieves the efficiency lower bound in the OSS setting. \emph{Center:} When the conditional influence function is non-linear (Setting 2), $\ossthetaest$ with $g(x) = x$ is no longer efficient, whereas $\ossthetanpest$ is efficient with a sufficient number of basis functions. \emph{Right:} For a well-specified estimation problem in the sense of Definition~\ref{def:well_specification} (Setting 3), no semi-supervised method can improve upon the supervised estimator, as shown in  Corollary~\ref{cor:well_specified_OSS}.}
    \label{fig:mean}
\end{figure}

We begin with a discussion of Setting 1 (linear model). The estimator $\ossthetaest$ achieves the efficiency lower bound in the OSS setting, since $\ossthetaest$ with $g(x) = x$ suffices to accurately approximate the true conditional influence function. 
In fact, $\ossthetanpest$, which uses a greater number of basis functions, performs worse: those additional basis functions contribute to increased variance without improving bias. Both PPI++ and $\ossthetaestppi$ using a random forest prediction model perform well, whereas the version of those methods that uses a pure-noise prediction model performs comparably to the supervised estimator, i.e., incorporating a useless prediction model does not lead to deterioration of performance. Since there is only one parameter of interest in the context of mean estimation, i.e. $p=1$, the PPI++ estimator is asymptotically equivalent to $\ossthetaestppi$ with $\cg(x) = \IFplugin(x,f(x))$ with the same prediction model $f(\cdot)$: thus, there is no difference in performance between PPI++ and $\ossthetaestppi$. However, as we will show in Section~\ref{subsec:simu_glm}, when there are multiple parameters, i.e. $p > 1$, $\ossthetaestppi$ can outperform the PPI++ estimator. 

We now consider Setting 2 (non-linear model). 
Because the conditional influence function is non-linear,  the best performance  for $\ossthetanpest$ is achieved when the number of basis functions $K_n$ is sufficiently large. Furthermore, this performance is substantially better than that of $\ossthetaest$ with $g(x) = x$. Other than this, the results are quite similar to Setting 1.

Finally, we consider Setting 3 (well-specified model), in which the model is well-specified in the sense of Definition~\ref{def:well_specification}. In keeping with Corollary~\ref{cor:well_specified_OSS},  no method improves upon the OSS lower bound.

\subsection{Generalized linear model}
\label{subsec:simu_glm}
We consider Example ~\ref{exmp:glm} with a Poisson GLM. In this example, the supervised estimator is the Poisson GLM estimator, which has influence function 
$$\IF(z) = \bE\left[e^{X^\top\theta^*}XX^\top \right]^{-1}x\left(y-e^{x^\top\theta^*}\right).$$
The conditional influence function is then
$$\cIF(x) = \bE\left[e^{X^\top\theta^*}XX^\top \right]^{-1}x\left(\bE[Y\mid X=x]-e^{x^\top\theta^*}\right).$$ We generate the response $Y$ as $Y\mid X \sim \text{Poisson}\left(\bE[Y\mid X]\right)$. However, unlike in Section~\ref{subsec:simu_mean},  the conditional influence function is not a linear function of $x$ regardless of the form of $\bE[Y\mid X=x]$. Therefore, we only consider two settings for $\bE[Y\mid X]$:
\begin{enumerate}
    \item \textit{Setting 1 (non-linear model):} $$\bE[Y\mid X=x] = \exp\set{-1.70x_1x_2 -6.94x_1x_2^2-1.35x_1^2x_2+2.28x_1^2x_2^2}.$$
    \item \textit{Setting 2 (well-specified model, in the sense of Definition~\ref{def:well_specification}):} $$\bE[Y\mid X=x] = \exp\set{1.05 + 4.76x_1-6.2x_2}.$$
\end{enumerate}
For each model, we set $n = 1,000$, and vary the proportion of unlabeled observations, $\gamma =\frac{N}{n+N} \in \set{0.1,0.3,0.5,0.7,0.9}$.
The semiparametric efficiency lower bound in the ISS setting \eqref{eq:EIF_var_ISS} and in the OSS setting \eqref{eq:EIF_var_OSS} are estimated  separately on a sample of $100,000$ observations. 

Table~\ref{tab:apdx_glm1_CI} of Appendix~\ref{subsec:apdx_simu_mean_glm} reports the coverage of 95\% confidence intervals for each method. All methods achieve the nominal coverage.

\begin{figure}
    \centering
    \includegraphics[width=0.8\linewidth]{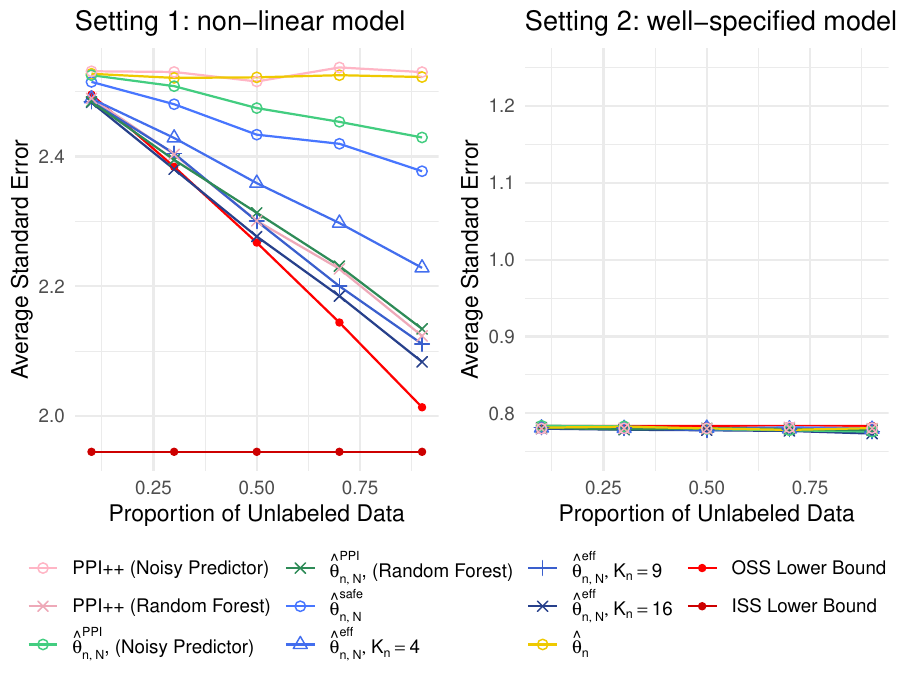}
    \caption{Standard errors of estimators of the first parameter of the Poisson GLM, as described in Section~\ref{subsec:simu_glm},  averaged over 1,000 simulations, with $n=1,000$. \emph{Left:} When the model in non-linear (Setting 1), and with a sufficient number of basis functions, $\ossthetanpest$ nearly achieves the OSS semiparametric efficiency lower bound. Moreover, $\ossthetaestppi$ with $\cg(x) = \IFplugin(x,f(x))$ outperforms the PPI++ estimators with the noisy prediction model. \emph{Right:}  For  a well-specified estimation problem in the sense of Definition~\ref{def:well_specification} (Setting 2), no semi-supervised method can improve upon the supervised estimator. This agrees with Corollary~\ref{cor:well_specified_OSS}. }
    \label{fig:glm}
\end{figure}

 Figure~\ref{fig:glm}
  displays the standard error of the first parameter for each method, averaged over 1,000 simulated datasets. Results for the standard error of the second parameter are similar, and are displayed in Figure~\ref{fig:apdx_glm2} in Appendix~\ref{subsec:apdx_simu_mean_glm}. 

We first consider Setting 1 (non-linear model). Consistent with the results for Setting 2 of Section~\ref{subsec:simu_mean}, the estimator $\ossthetanpest$ approximates the efficiency lower bound in the OSS setting when the number of basis functions is sufficiently large. Meanwhile, the estimator $\ossthetaest$ with $g(x)=x$ improves upon the supervised estimator, but is not efficient
as it cannot accurately approximate the true conditional influence function $\IFplugin(x)$. Both PPI++ and $\ossthetaestppi$ perform well with a random forest prediction model.
 In addition, we make the following observations that differ from the results shown in Section~\ref{subsec:simu_mean}: 
\begin{enumerate}
\item  $\ossthetaestppi$ significantly improves upon the supervised estimator $\thetaest$ even with a pure-noise prediction model. To see this, recall that $\ossthetaestppi$ estimates the conditional influence function $\cIF(x)$ by regressing $\IF(x,y)$ onto $\IF(x,f(x))$ where $f(\cdot)$ is a prediction model. (In practice, the unknown function $\eta^*$ is replaced with a consistent estimator $\cetaest$.) Define $\*V_\theta^* = \bE\left[XX^\top e^{X^\top\theta^*}\right]$. In the case of a Poisson GLM, we have that
 $$\IF(z) =  \*V_{\theta^*}^{-1}x\left(y-e^{x^\top\theta^*}\right)$$
    and
    $$\IF(x,f(x)) =  \*V_{\theta^*}^{-1}x\left(f(x)-e^{x^\top\theta^*}\right).$$ Thus, in the case of a Poisson GLM, the projection of $\IF(z)$ onto $\IF(x,f(x))$
     in $\sL^2_p(\joint)$ is non-zero, even if --- in the extreme case --- $f(X)=0$.
     
     By contrast, in the case of mean estimation in Section \ref{subsec:simu_mean}, the projection of  $\IF(z) =  y-\theta^*$ onto $\IF(x,f(x)) =  f(x)-\theta^*$
     in $\sL^2_p(\joint)$ equals zero when $f(X)$ is independent of $Y$. Thus, for mean estimation, a pure-noise  prediction model $f(\cdot)$ is completely useless. 
    \item $\ossthetaestppi$ is more efficient than the PPI++ estimator of \cite{angelopoulos2023ppi++}  when both use the noisy prediction model.  This is because PPI++ uses a scalar weight to minimize the trace of the $p \times p$ asymptotic covariance matrix, which is sub-optimal in efficiency when $p > 1$. On the other hand, $\ossthetaestppi$ considers a regression approach to find the best linear approximation of the conditional influence function $\cIF(x)$, as in \eqref{eq:ppi_est}.
    \end{enumerate}

Finally, we consider Setting 2 (well-specified model). No method improves upon the OSS lower bound, in keeping with Corollary~\ref{cor:well_specified_OSS}.

\section{Discussion}
\label{sec:dis}
We have  proposed a general framework to study statistical inference in the semi-supervised setting. We established the semiparametric efficiency lower bound for an arbitrary inferential problem under the semi-supervised setting, and showed that no improvement can be made when the model is well-specified. 
 Furthermore,  we proposed a class of easy-to-compute estimators that build upon existing supervised estimators and that can  achieve the efficiency lower bound for an arbitrary inferential problem. 

This paper leaves open several directions for future work. First, our results require a  Donsker condition on the influence function of the supervised estimator. This may not hold,  for example, when  the functional $\eta(\cdot)$ is high-dimensional or infinite-dimensional.  
Recent advances in double/debiased machine learning \citep{chernozhukov2018double,foster2023orthogonal} may provide an  avenue for  obtaining efficient semi-supervised estimators in the presence of high-  or infinite-dimensional nuisance parameters.   Second, a  general theoretical framework for efficient semi-supervised estimation in the presence of covariate shift  also remains a relatively open problem, despite some promising preliminary work \citep{ryan2015semi,aminian2022info}. 

\section*{Acknowledgments} We thank Abhishek Chakrabortty for pointing out that  an earlier version of this paper overlooked connections with Chapter 2 of his dissertation \citep{chakrabortty2016robust} in the special case of M-estimation. 

\bibliography{Bibliography-MM-MC}

\begin{thebibliography}{53}
\providecommand{\natexlab}[1]{#1}
\providecommand{\url}[1]{\texttt{#1}}
\expandafter\ifx\csname urlstyle\endcsname\relax
  \providecommand{\doi}[1]{doi: #1}\else
  \providecommand{\doi}{doi: \begingroup \urlstyle{rm}\Url}\fi

\bibitem[Zhang et~al.(2019)Zhang, Brown, and Cai]{zhang2019semi}
Anru Zhang, Lawrence~D Brown, and T~Tony Cai.
\newblock Semi-supervised inference: General theory and estimation of means.
\newblock \emph{The Annals of Statistics}, 47\penalty0 (5):\penalty0
  2538--2566, 2019.

\bibitem[Cheng et~al.(2021)Cheng, Ananthakrishnan, and Cai]{cheng2021robust}
David Cheng, Ashwin~N Ananthakrishnan, and Tianxi Cai.
\newblock Robust and efficient semi-supervised estimation of average treatment
  effects with application to electronic health records data.
\newblock \emph{Biometrics}, 77\penalty0 (2):\penalty0 413--423, 2021.

\bibitem[Kawakita and Kanamori(2013)]{kawakita2013semi}
Masanori Kawakita and Takafumi Kanamori.
\newblock Semi-supervised learning with density-ratio estimation.
\newblock \emph{Machine Learning}, 91:\penalty0 189--209, 2013.

\bibitem[Buja et~al.(2019{\natexlab{a}})Buja, Brown, Berk, George, Pitkin,
  Traskin, Zhang, and Zhao]{buja2019models}
Andreas Buja, Lawrence Brown, Richard Berk, Edward George, Emil Pitkin, Mikhail
  Traskin, Kai Zhang, and Linda Zhao.
\newblock Models as approximations i.
\newblock \emph{Statistical Science}, 34\penalty0 (4):\penalty0 523--544,
  2019{\natexlab{a}}.

\bibitem[Song et~al.(2023)Song, Lin, and Zhou]{song2023general}
Shanshan Song, Yuanyuan Lin, and Yong Zhou.
\newblock A general m-estimation theory in semi-supervised framework.
\newblock \emph{Journal of the American Statistical Association}, pages 1--11,
  2023.

\bibitem[Gan and Liang(2023)]{gan2023prediction}
Feng Gan and Wanfeng Liang.
\newblock Prediction de-correlated inference.
\newblock \emph{arXiv preprint arXiv:2312.06478}, 2023.

\bibitem[Angelopoulos et~al.(2023{\natexlab{a}})Angelopoulos, Bates, Fannjiang,
  Jordan, and Zrnic]{angelopoulos2023prediction}
Anastasios~N Angelopoulos, Stephen Bates, Clara Fannjiang, Michael~I Jordan,
  and Tijana Zrnic.
\newblock Prediction-powered inference.
\newblock \emph{Science}, 382\penalty0 (6671):\penalty0 669--674,
  2023{\natexlab{a}}.

\bibitem[Angelopoulos et~al.(2023{\natexlab{b}})Angelopoulos, Duchi, and
  Zrnic]{angelopoulos2023ppi++}
Anastasios~N Angelopoulos, John~C Duchi, and Tijana Zrnic.
\newblock Ppi++: Efficient prediction-powered inference.
\newblock \emph{arXiv preprint arXiv:2311.01453}, 2023{\natexlab{b}}.

\bibitem[Bennett and Demiriz(1998)]{bennett1998semi}
Kristin Bennett and Ayhan Demiriz.
\newblock Semi-supervised support vector machines.
\newblock \emph{Advances in Neural Information Processing Systems}, 11, 1998.

\bibitem[Chapelle et~al.(2006)Chapelle, Chi, and
  Zien]{chapelle2006continuation}
Olivier Chapelle, Mingmin Chi, and Alexander Zien.
\newblock A continuation method for semi-supervised svms.
\newblock In \emph{Proceedings of the 23rd International Conference on Machine
  Learning}, pages 185--192, 2006.

\bibitem[Bair(2013)]{bair2013semi}
Eric Bair.
\newblock Semi-supervised clustering methods.
\newblock \emph{Wiley Interdisciplinary Reviews: Computational Statistics},
  5\penalty0 (5):\penalty0 349--361, 2013.

\bibitem[Van~Engelen and Hoos(2020)]{van2020survey}
Jesper~E Van~Engelen and Holger~H Hoos.
\newblock A survey on semi-supervised learning.
\newblock \emph{Machine Learning}, 109\penalty0 (2):\penalty0 373--440, 2020.

\bibitem[Zhang and Bradic(2022)]{zhang2022high}
Yuqian Zhang and Jelena Bradic.
\newblock High-dimensional semi-supervised learning: in search of optimal
  inference of the mean.
\newblock \emph{Biometrika}, 109\penalty0 (2):\penalty0 387--403, 2022.

\bibitem[Chakrabortty and Cai(2018)]{chakrabortty2018efficient}
Abhishek Chakrabortty and Tianxi Cai.
\newblock Efficient and adaptive linear regression in semi-supervised settings.
\newblock \emph{The Annals of Statistics}, 46\penalty0 (4):\penalty0
  1541--1572, 2018.

\bibitem[Azriel et~al.(2022)Azriel, Brown, Sklar, Berk, Buja, and
  Zhao]{azriel2022semi}
David Azriel, Lawrence~D Brown, Michael Sklar, Richard Berk, Andreas Buja, and
  Linda Zhao.
\newblock Semi-supervised linear regression.
\newblock \emph{Journal of the American Statistical Association}, 117\penalty0
  (540):\penalty0 2238--2251, 2022.

\bibitem[Deng et~al.(2023)Deng, Ning, Zhao, and Zhang]{deng2023optimal}
Siyi Deng, Yang Ning, Jiwei Zhao, and Heping Zhang.
\newblock Optimal and safe estimation for high-dimensional semi-supervised
  learning.
\newblock \emph{Journal of the American Statistical Association}, pages 1--12,
  2023.

\bibitem[Wang et~al.(2023)Wang, Tang, Lin, and Su]{wang2023semi}
Tong Wang, Wenlu Tang, Yuanyuan Lin, and Wen Su.
\newblock Semi-supervised inference for nonparametric logistic regression.
\newblock \emph{Statistics in Medicine}, 42\penalty0 (15):\penalty0 2573--2589,
  2023.

\bibitem[Quan et~al.(2024)Quan, Lin, Chen, and Yu]{quan2024efficient}
Zhuojun Quan, Yuanyuan Lin, Kani Chen, and Wen Yu.
\newblock Efficient semi-supervised inference for logistic regression under
  case-control studies.
\newblock \emph{arXiv preprint arXiv:2402.15365}, 2024.

\bibitem[Chakrabortty(2016)]{chakrabortty2016robust}
Abhishek Chakrabortty.
\newblock \emph{Robust semi-parametric inference in semi-supervised settings}.
\newblock PhD thesis, 2016.

\bibitem[Yuval and Rosset(2022)]{yuval2022semi}
Oren Yuval and Saharon Rosset.
\newblock Semi-supervised empirical risk minimization: Using unlabeled data to
  improve prediction.
\newblock \emph{Electronic Journal of Statistics}, 16\penalty0 (1):\penalty0
  1434--1460, 2022.

\bibitem[Kim et~al.(2024)Kim, Wasserman, Balakrishnan, and Neykov]{kim2024semi}
Ilmun Kim, Larry Wasserman, Sivaraman Balakrishnan, and Matey Neykov.
\newblock Semi-supervised u-statistics.
\newblock \emph{arXiv preprint arXiv:2402.18921}, 2024.

\bibitem[Chakrabortty and Dai(2022)]{chakrabortty2022general}
Abhishek Chakrabortty and Guorong Dai.
\newblock A general framework for treatment effect estimation in
  semi-supervised and high dimensional settings.
\newblock \emph{arXiv preprint arXiv:2201.00468}, 2022.

\bibitem[Ahmed et~al.(2024)Ahmed, Einmahl, and Zhou]{ahmed2024extreme}
Hanan Ahmed, John~HJ Einmahl, and Chen Zhou.
\newblock Extreme value statistics in semi-supervised models.
\newblock \emph{Journal of the American Statistical Association}, pages 1--14,
  2024.

\bibitem[Miao et~al.(2023)Miao, Miao, Wu, Zhao, and Lu]{miao2023assumption}
Jiacheng Miao, Xinran Miao, Yixuan Wu, Jiwei Zhao, and Qiongshi Lu.
\newblock Assumption-lean and data-adaptive post-prediction inference.
\newblock \emph{arXiv preprint arXiv:2311.14220}, 2023.

\bibitem[Miao and Lu(2024)]{miao2024task}
Jiacheng Miao and Qiongshi Lu.
\newblock Task-agnostic machine learning-assisted inference.
\newblock \emph{arXiv preprint arXiv:2405.20039}, 2024.

\bibitem[Zrnic and Cand{\`e}s(2024{\natexlab{a}})]{zrnic2024active}
Tijana Zrnic and Emmanuel~J Cand{\`e}s.
\newblock Active statistical inference.
\newblock \emph{arXiv preprint arXiv:2403.03208}, 2024{\natexlab{a}}.

\bibitem[Zrnic and Cand{\`e}s(2024{\natexlab{b}})]{zrnic2024cross}
Tijana Zrnic and Emmanuel~J Cand{\`e}s.
\newblock Cross-prediction-powered inference.
\newblock \emph{Proceedings of the National Academy of Sciences}, 121\penalty0
  (15):\penalty0 e2322083121, 2024{\natexlab{b}}.

\bibitem[Gu and Xia(2024)]{gu2024local}
Yanwu Gu and Dong Xia.
\newblock Local prediction-powered inference.
\newblock \emph{arXiv preprint arXiv:2409.18321}, 2024.

\bibitem[Van~der Vaart(2000)]{van2000asymptotic}
Aad~W Van~der Vaart.
\newblock \emph{Asymptotic statistics}, volume~3.
\newblock Cambridge University Press, 2000.

\bibitem[Tsiatis(2006)]{tsiatis2006semiparametric}
Anastasios~A Tsiatis.
\newblock \emph{Semiparametric theory and missing data}, volume~4.
\newblock Springer, 2006.

\bibitem[Bickel et~al.(1993)Bickel, Klaassen, J.Klaasen, and
  Wellner]{bickel1993efficient}
Peter~J Bickel, Chris~AJ Klaassen, Ya’acov~Ritov J.Klaasen, and Jon~A
  Wellner.
\newblock \emph{Efficient and Adaptive Estimation for Semiparametric Models},
  volume~4.
\newblock Springer, 1993.

\bibitem[Buja et~al.(2019{\natexlab{b}})Buja, Brown, Kuchibhotla, Berk, George,
  and Zhao]{buja2019modelsII}
Andreas Buja, Lawrence Brown, Arun~Kumar Kuchibhotla, Richard Berk, Edward
  George, and Linda Zhao.
\newblock Models as approximations ii.
\newblock \emph{Statistical Science}, 34\penalty0 (4):\penalty0 545--565,
  2019{\natexlab{b}}.

\bibitem[Newey(1997)]{newey1997convergence}
Whitney~K Newey.
\newblock Convergence rates and asymptotic normality for series estimators.
\newblock \emph{Journal of Econometrics}, 79\penalty0 (1):\penalty0 147--168,
  1997.

\bibitem[Bickel and Kwon(2001)]{bickel2001inference}
Peter~J Bickel and Jaimyoung Kwon.
\newblock Inference for semiparametric models: some questions and an answer.
\newblock \emph{Statistica Sinica}, pages 863--886, 2001.

\bibitem[Robins and Rotnitzky(1995)]{robins1995semiparametric}
James~M Robins and Andrea Rotnitzky.
\newblock Semiparametric efficiency in multivariate regression models with
  missing data.
\newblock \emph{Journal of the American Statistical Association}, 90\penalty0
  (429):\penalty0 122--129, 1995.

\bibitem[Chen et~al.(2008)Chen, Hong, and Tarozzi]{chen2008semiparametric}
Xiaohong Chen, Han Hong, and Alessandro Tarozzi.
\newblock Semiparametric efficiency in gmm models with auxiliary data.
\newblock \emph{The Annals of Statistics}, 36\penalty0 (2):\penalty0 808--843,
  2008.

\bibitem[Zhou et~al.(2008)Zhou, Wan, and Wang]{zhou2008estimating}
Yong Zhou, Alan T~K Wan, and Xiaojing Wang.
\newblock Estimating equations inference with missing data.
\newblock \emph{Journal of the American Statistical Association}, 103\penalty0
  (483):\penalty0 1187--1199, 2008.

\bibitem[Li and Luedtke(2023)]{li2023efficient}
Sijia Li and Alex Luedtke.
\newblock Efficient estimation under data fusion.
\newblock \emph{Biometrika}, 110\penalty0 (4):\penalty0 1041--1054, 2023.

\bibitem[Graham et~al.(2024)Graham, Carone, and Rotnitzky]{graham2024towards}
Ellen Graham, Marco Carone, and Andrea Rotnitzky.
\newblock Towards a unified theory for semiparametric data fusion with
  individual-level data.
\newblock \emph{arXiv preprint arXiv:2409.09973}, 2024.

\bibitem[Robins et~al.(1994)Robins, Rotnitzky, and Zhao]{robins1994estimation}
James~M Robins, Andrea Rotnitzky, and Lue~Ping Zhao.
\newblock Estimation of regression coefficients when some regressors are not
  always observed.
\newblock \emph{Journal of the American statistical Association}, 89\penalty0
  (427):\penalty0 846--866, 1994.

\bibitem[Hahn(1998)]{hahn1998role}
Jinyong Hahn.
\newblock On the role of the propensity score in efficient semiparametric
  estimation of average treatment effects.
\newblock \emph{Econometrica}, pages 315--331, 1998.

\bibitem[Wittes et~al.(1989)Wittes, Lakatos, and
  Probstfield]{wittes1989surrogate}
Janet Wittes, Edward Lakatos, and Jeffrey Probstfield.
\newblock Surrogate endpoints in clinical trials: cardiovascular diseases.
\newblock \emph{Statistics in Medicine}, 8\penalty0 (4):\penalty0 415--425,
  1989.

\bibitem[Chernozhukov et~al.(2018)Chernozhukov, Chetverikov, Demirer, Duflo,
  Hansen, Newey, and Robins]{chernozhukov2018double}
Victor Chernozhukov, Denis Chetverikov, Mert Demirer, Esther Duflo, Christian
  Hansen, Whitney Newey, and James Robins.
\newblock Double/debiased machine learning for treatment and structural
  parameters, 2018.

\bibitem[Foster and Syrgkanis(2023)]{foster2023orthogonal}
Dylan~J Foster and Vasilis Syrgkanis.
\newblock Orthogonal statistical learning.
\newblock \emph{The Annals of Statistics}, 51\penalty0 (3):\penalty0 879--908,
  2023.

\bibitem[Ryan and Culp(2015)]{ryan2015semi}
Kenneth~Joseph Ryan and Mark~Vere Culp.
\newblock On semi-supervised linear regression in covariate shift problems.
\newblock \emph{The Journal of Machine Learning Research}, 16\penalty0
  (1):\penalty0 3183--3217, 2015.

\bibitem[Aminian et~al.(2022)Aminian, Abroshan, Mahdi~Khalili, Toni, and
  Rodrigues]{aminian2022info}
Gholamali Aminian, Mahed Abroshan, Mohammad Mahdi~Khalili, Laura Toni, and
  Miguel Rodrigues.
\newblock An information-theoretical approach to semi-supervised learning under
  covariate-shift.
\newblock \emph{International Conference on Artificial Intelligence and
  Statistics}, pages pp. 7433--7449, 2022.

\bibitem[van~der Vaart and Wellner(2013)]{wellner2013weak}
Aad van~der Vaart and Jon Wellner.
\newblock \emph{Weak convergence and empirical processes: with applications to
  statistics}.
\newblock Springer Science \& Business Media, 2013.

\bibitem[Hansen(2022)]{hansen2022econometrics}
Bruce Hansen.
\newblock \emph{Econometrics}.
\newblock Princeton University Press, 2022.

\bibitem[van~der Laan(1995)]{van1995efficient}
Mark~J van~der Laan.
\newblock Efficient and inefficient estimation in semiparametric models.
\newblock 1995.

\bibitem[Gill et~al.(1995)Gill, Laan, and Wellner]{gill1995inefficient}
Richard~D Gill, Mark~J Laan, and Jon~A Wellner.
\newblock Inefficient estimators of the bivariate survival function for three
  models.
\newblock In \emph{Annales de l'IHP Probabilit{\'e}s et statistiques},
  volume~31, pages 545--597, 1995.

\bibitem[Pfanzagl(1990)]{pfanzagl1990estimation}
Johann Pfanzagl.
\newblock \emph{Estimation in semiparametric models}.
\newblock Springer, 1990.

\bibitem[Van Der~Vaart(1991)]{van1991differentiable}
Aad Van Der~Vaart.
\newblock On differentiable functionals.
\newblock \emph{The Annals of Statistics}, pages 178--204, 1991.

\bibitem[van~der Laan and Robins(2003)]{van2003unified}
Mark~J van~der Laan and James Robins.
\newblock Unified approach for causal inference and censored data.
\newblock \emph{Unified Methods for Censored Longitudinal Data and Causality},
  pages 311--370, 2003.

\end{thebibliography}

\begin{appendix}
\section{Additional notation}
We introduce additional notation that is used in the appendix. For a matrix $\*A \in \bR^{p \times q}$, let $\norm{\*A}_2$ denote its operator norm and let $\norm{\*A}_F$ denote its Frobenius norm. For a set $\sS$, let $\text{int}(\sS)$ denote its interior. Consider the probability space $(\sZ, \sF, \joint)$. Letting $f(z)$ be a measurable function $f: \sZ \to \bR^p$ over $(\sZ, \sF, \joint)$, we adopt the following notation from the empirical process literature: $\bP(f) = \bE_\bP[f(Z)]$, $\empiricalPn(f) = \bE_{\empiricalPn}[f(Z)] = \frac{1}{n}\sum_{i=1}^n f(Z_i)$, and $\bG_n(f) = \sqrt{n}(\empiricalPn-\joint)(f)$. Similarly, for a measurable function $g:\sX \to \bR^p$, $\empiricalPnN(g) = \bE_{\empiricalPnN}[g(X)] = \frac{1}{n+N}\sum_{i=1}^{n+N} g(X_i)$, $\empiricalPN(g) = \bE_{\empiricalPN}[g(X)] = \frac{1}{N}\sum_{i=n+1}^{n+N} g(X_i)$, and $\bG_{n+N}(g) = \sqrt{n+N}(\empiricalPnN-\joint)(g)$. For two subspaces $\sT_1\in \lp$ and $\sT_2\in \lp$ such that $\sT_1 \perp \sT_2$, let $\sT_1 \oplus\sT_2$ represent their direct sum in $\lp$.

\section{Proof of main results}
\begin{proof}[Proof of Theorem \ref{thm:efficiency_ISS}.]
In the first step, we characterize the tangent space of the reduced model $\reducedmodel$, which is defined in \eqref{eq:reduced_model}. Note that the reduced model $\reducedmodel$ satisfies the conditions of Lemma~\ref{lem:apdx_tangent_separable}, i.e., the marginal distribution and the conditional distribution are separately modeled. Therefore, by Lemma~\ref{lem:apdx_tangent_separable}, the tangent space of the reduced model $\reducedmodel$ can be expressed as $\sT_{\set{\marginal}}(\marginal) \oplus \tangentconditional$, where $\conditionalmodel$ is the conditional model. Since $\set{\marginal}$ is a singleton set, $\sT_{\set{\marginal}}(\marginal) = \set{0}$, and hence the tangent space of $\reducedmodel$ becomes $\tangentconditional$. 

Recall that $\cEIF(x)$ is the conditional efficient influence function $\bE[\EIF(Z)\mid X=x]$. In the second step, we show that for all $a \in \bR^p$, 
\begin{equation}
\label{eq:apdx_conditional_in_tangent}
a^\top[\EIF(z)-\cEIF(x)] \in \tangentconditional.
\end{equation}
We prove this by contradiction. Suppose there exists $a \in \bR^p$ such that $a^\top[\EIF(z)-\cEIF(x)] \notin \tangentconditional$. 
Consider the decomposition
$$a^\top\EIF(z) = \underbrace{a^\top\cEIF(x)}_{f_a(x)} + \underbrace{a^\top[\EIF(z)-\cEIF(x)]}_{g_a(x)}.$$
As $a^\top\EIF(z) \in \lpone = \lpxone \oplus \lpyxone$, 
$f_a(x) \in \lpxone$, and $g_a(x) \in \lpyxone$, therefore by the property of direct sum, 
$f_a(x)$ and $g_a(z)$ is the unique decomposition of $a^\top\EIF(z) = f_a(x) + g_a(z)$ such that $f_a(x) \in \lpxone$, and $g_a(x) \in \lpyxone$. Further, by definition of the efficient influence function, $a^\top\EIF(z) \in \tangent$. Recall the model $\jointmodel$ defined in \eqref{eq:separable_model}. By Lemma~\ref{lem:apdx_tangent_separable}, the tangent space relative to $\jointmodel$ is 
$$\tangent = \tangentmarginal \oplus \tangentconditional.$$
Similarly, by the property of direct sum, there exists a unique decomposition $a^\top\EIF(z) = f^\prime_a(x) + g^\prime_a(z)$ such that $f^\prime_a(x) \in \tangentmarginal$ and $g^\prime_a(z) \in \tangentconditional$. However, 
$g_a(z) \notin \tangentconditional$, and hence $g_a(z) \ne g_a^\prime(z)$. By definition, $\tangentmarginal \subseteq \lpxone$,  and $\tangentconditional \subseteq \lpyxone$. Therefore, $a^\top\EIF(z) = f^\prime_a(x) + g^\prime_a(z)$
is another decomposition of $f^\prime_a(x)$ such that $f^\prime_a(x) \in \lpxone$, and $g^\prime_a(x) \in \lpyxone$. This contradicts the uniqueness of direct sum. Therefore, we have established \eqref{eq:apdx_conditional_in_tangent} for all $a \in \bR^p$.

In the third step, we show that $\EIF(z)-\cEIF(x)$ is a gradient relative to the model $\reducedmodel$. Consider any one-dimensional regular parametric sub-model $\sP_T$ of $\reducedmodel$. Since the marginal model is a singleton set $\set{\marginal}$, this maps one-to-one to a one-dimensional regular parametric sub-model $\sP_{T,Y\mid X}$ of $\conditionalmodel$ such that $\sP_T = \set{\marginal} \otimes \sP_{T,Y\mid X}$. Suppose that $\sP_{T,Y\mid X} = \set{p_{t,Y\mid X}(z), t \in T} \subset \conditionalmodel$ such that $p_{t^*,Y\mid X}(z)$ is the conditional density of $\conditional$ for some $t^* \in T$. Denote $s_{t^*}(y\mid x)$ as the score function relative to $\sP_{T,Y\mid X}$ at $t^*$, which then satisfies $s_{t^*}(y\mid x) \in \lpyxone$. Note that the score function relative to $\sP_T$ at $t^*$ remains $s_{t^*}(y\mid x)$, as the marginal distribution is known. Consider the function $\theta: \sP_T \to \Theta$ as a function of $t$, $\theta(t)$. By definition, the efficient influence function $\EIF(z)$ is a gradient relative to $\jointmodel$, hence
\begin{equation}
\notag
\begin{aligned}
\frac{\partial \theta(t)}{\partial t} &= \langle \EIF(z),  s_{t^*}(y\mid x)\rangle_\lp\\
&= \langle \EIF(z)-\cEIF(x)+\cEIF(x),  s_{t^*}(y\mid x)\rangle_\lp\\
&= \langle \EIF(z)-\cEIF(x),  s(y\mid x)\rangle_\lp + \langle \cEIF(x),  s_{t^*}(y\mid x)\rangle_\lp\\
&=\langle \EIF(z)-\cEIF(x),  s_{t^*}(y\mid x)\rangle_\lp,
\end{aligned}
\end{equation}
where the last equality used the fact that $s_{t^*}(y\mid x) \in \lpyxone$ and  $$\bE[\cEIF(X)s_{t^*}(Y\mid X)] = \bE\set{\cEIF(X)\bE[s_{t^*}(Y\mid X)\mid X]} = 0.$$ As the above holds for an arbitrary one-dimensional regular parametric sub-model of $\reducedmodel$, $\EIF(z)-\cEIF(x)$ is a gradient relative to $\reducedmodel$ at $\joint$. 

Finally, combining step 1 to step 3 above, $\EIF(z)-\cEIF(x)$ is a gradient relative to $\reducedmodel$ at $\joint$ and satisfies $a^\top[\EIF(z)-\cEIF(x)] \in \sT_{\joint}(\reducedmodel)$ for all $a \in \bR^p$. Therefore, by definition, $\EIF(z)-\cEIF(x)$ is the efficient influence function of $\thetafunctional$ at $\theta^*$ relative to $\reducedmodel$. By Lemma~\ref{lem:apdx_decomp_var:}, the variance, $\Var[\EIF(Z)]$, can be represented as
\begin{equation}
\notag
\begin{aligned}
\Var[\EIF(Z)] &= \Var[\EIF(Z)-\cEIF(X)] +  \Var[\cEIF(X)],
\end{aligned}
\end{equation}
which proves the final claim of the theorem.
\end{proof}

\begin{proof}[Proof of Theorem~\ref{thm:no_improvement_efficiency}] To prove the first part of the theorem, consider any element $s(x) \in \tangentmarginal$. Without loss of generality, suppose $\sP_{T,X} = \set{p_{t,X}(x):t \in T}$ is a regular parametric sub-model of $\marginalmodel$ with score function $s(x)$ at $\marginal$. (Otherwise, because the tangent space is a closed linear space by definition, we can always find a sequence of functions $\set{s_r(x)}_{r=1}^\infty$ that are score functions of regular parametric sub-models of $\jointmodel$, and the following arguments hold by the continuity of the inner product.)
Then, the model  $\sP_T = \sP_{T,X} \otimes \set{\conditional}$ is a regular parametric sub-model of $\jointmodel$  with score function $s(x)$ at $\joint$. Since $\sP_T$ is parameterized by $t \in T$, we can write $\theta: \sP_T\to \Theta$ as a function of $t$, $\theta(t)$. Because $\thetafunctional$ is well-specified at $\conditional$ relative to $\sP_{T,X} \subset \marginalmodel$, $\theta(t)$ is a constant function, $\theta(t) \equiv \theta^*$, and hence $\frac{\partial \theta(t)}{\partial t} = 0$. By pathwise-differentiability, $$<\gradient(z), s(x)>_{\lp} = \frac{\partial \theta(t)}{\partial t} = 0$$ 
 for any gradient $\gradient(z)$ of $\thetafunctional$ at $\joint$ relative to $\jointmodel$. Since this holds true for any $s(x) \in \tangentmarginal$, we see that any gradient $\gradient(z)$ satisfies
$$\gradient(z) \perp \tangentmarginal.$$
Consider the the efficient influence function $\EIF(z)$ of $\thetafunctional$ relative to $\jointmodel$ at $\joint$, which is a gradient of $\thetafunctional$ relative to $\jointmodel$ at $\joint$ by definition. Further, by the definition of the efficient influence function, for all $a \in \bR^p$ it holds that
$$a^\top\EIF(z) \in \tangent = \tangentmarginal\oplus\tangentconditional.$$
As $\EIF(z) \perp \tangentmarginal$, we see that $a^\top\EIF(z) \perp \tangentmarginal$, and hence
$$a^\top\EIF(z) \in \tangentconditional \subset \lpyxone,$$
for all $a \in \bR^p$.
Therefore $$\cEIF(x) = \bE[\EIF(Z)\mid X = x] = 0$$ $\marginal$-almost surely. 

To prove the second part of the theorem, recall that $\EIF(z)$ is a gradient by definition. Therefore, by Lemma~\ref{lem:apdx_gradient_set}, the set of gradients relative to $\jointmodel$ can be expressed as
\begin{equation}
\label{eq:apdx_gradient_well_specification}
\set{\EIF(z) + h(z): a^\top h(z) \in \left[\tangentconditional \oplus \tangentmarginal\right]^\perp, \forall a \in \bR^p},
\end{equation}
where $\perp$ represents the orthogonal complement of a subspace. If $\tangentmarginal = \lpxone$, then $$\left[\tangentconditional \oplus \tangentmarginal\right]^\perp \subseteq \lpxone^\perp = \lpyxone.$$
In proving the first part of the theorem, we have shown that $\EIF(z) \in \lpyx$ under well-specification. Therefore, for any gradient $\gradient(z)$ relative to $\jointmodel$, by \eqref{eq:apdx_gradient_well_specification},
we have $$a^\top\gradient(z) =  a^\top\EIF(z) + a^\top h(z) \in \lpyxone,$$
for all $a \in \bR^p$, which implies that $\gradient(z) \in \lpyx$. For a regular and asymptotically linear estimator $\thetaest$ with influence function $\IF(z)$, Lemma~\ref{lem:apdx_gradients_IF} implies that $\IF(z)$ is a gradient of $\thetafunctional$ at $\joint$ relative to $\jointmodel$. Therefore $\IF(z)\in \lpyx$, which implies that
$\bE[\IF(Z)\mid X = x] = 0$ $\marginal$-almost surely. 
\end{proof}

\begin{proof}[Proof of Corollary~\ref{cor:well_specified_ISS}]
Suppose the efficient influence function for $\thetafunctional$ at $\joint$ relative to $\jointmodel$ is $\EIF(z)$. Since $\thetafunctional$ is well-specified, Theorem~\ref{thm:no_improvement_efficiency} implies that the conditional efficient influence function $\cEIF(X) = 0, \marginal-\text{almost surely}$. Therefore, by Theorem~\ref{thm:efficiency_ISS}, the efficient influence function for $\thetafunctional$ at $\joint$ relative to $\jointmodel$ as \eqref{eq:separable_model} remains $\EIF(z)$. We have proved that the efficient influence relative to the reduced model $\reducedmodel$ is $\EIF(z)-\cEIF(x) = \EIF(z)$ in the proof of Theorem~\ref{thm:efficiency_ISS}.
\end{proof}

\begin{proof}[Proof of Proposition~\ref{prop:equi_assumption1}]
We first show that (i) and (ii) of Proposition~\ref{prop:equi_assumption1} implies Assumption~\ref{asu:IF_Lipschitz} (b). Since $\sO$ is a bounded Euclidean subset and $\vp_\eta(z)$ is $L(z)$-Lipshitz in $\eta$ over $\sO$, by example 19.6 of \cite{van2000asymptotic}, the class $\set{\vp_\eta(z):\eta \in \sO}$ is $\joint$-Donsker. To see that  Assumption~\ref{asu:IF_Lipschitz} (c) holds, note that by consistency of $\cetaest$, $\joint\set{\cetaest \in \sO} \to 1$ for any open set $\sO$ such that $\eta^* \in \sO$.\\
\end{proof}

\begin{proof}[Proof of Theorem~\ref{thm:ISS_estimator}]
Denote $\*\Sigma = \Cov[g(X)]$. First, we will show that
\begin{equation}
\label{eq:apdx_convergence_iOLS}
\norm{\iOLS - \OLS}_2 = \littleO_p(1),
\end{equation}
where $\iOLS$ is defined as in \eqref{eq:OLS_ISS}, and $\OLS$ is defined as in \eqref{eq:OLS}. To this end, we write
\begin{equation}
\notag
\begin{aligned}
\norm{\iOLS - \OLS}_2 
&\le \norm{\empiricalPn\left[\IFplugin (\centeredg)^\top\right]-\joint\left[\IF(\centeredg)^\top\right]}_2\norm{\*\Sigma^{-1}}_2\\
&\le \norm{\empiricalPn\left[\IFplugin (\centeredg)^\top\right]-\joint\left[\IFplugin (\centeredg)^\top\right]}_2\norm{\*\Sigma^{-1}}_2 + \\
&\quad\norm{\joint\left[\IFplugin (\centeredg)^\top\right]-\joint\left[\IF(\centeredg)^\top\right]}_2\norm{\*\Sigma^{-1}}_2.
\end{aligned}
\end{equation}
By Assumption~\ref{asu:IF_Lipschitz} and Lemma~\ref{lem:apdx_1st_lem}, we have, 
\begin{equation}
\label{eq:apdx_ULLN}
\norm{\empiricalPn\left[\IFplugin (\centeredg)^\top\right]-\joint\left[\IFplugin (\centeredg)^\top\right]}_2 = \littleO_p(1).
\end{equation}
Next,  
\begin{equation}
\label{eq:apdx_convergence_expected_IF}
\begin{aligned}
\norm{\joint\left[\IFplugin (\centeredg)^\top\right]-\joint\left[\IF(\centeredg)^\top\right]}_2
&=\norm{\joint\left[(\IFplugin -\IF)(\centeredg)^\top\right]}_2\\
&\le \joint\norm{(\IFplugin -\IF)(\centeredg)^\top}_2\\
&\le \norm{\IFplugin -\IF}_{\sL^2(\joint)}\norm{\centeredg}_{\sL^2(\joint)}. 
\end{aligned}
\end{equation}
By Assumption~\ref{asu:IF_Lipschitz}, when $\cetaest \in \sO$, we have $\norm{\IFplugin-\IF}_{\sL^2(\joint)} \le \norm{L(z)}_{\sL^2(\joint)}\rho(\cetaest,\eta^*)$, and it follows that
$$\norm{\joint\left[\IFplugin (\centeredg)^\top\right]-\joint\left[\IF(\centeredg)^\top\right]}_2 \le \norm{L(z)}_{\sL^2(\joint)}\rho(\cetaest, \eta^*)\norm{\centeredg}_{\sL^2(\joint)}.$$
Since $\norm{L(z)}_{\sL^2(\joint)} < \infty$ and $\rho(\cetaest, \eta^*) = \littleO_p(1)$ by Assumption~\ref{asu:IF_Lipschitz}, and $\norm{\centeredg}_{\sL^2(\joint)} < \infty$, we have that
$$\norm{L(z)}_{\sL^2(\joint)}\rho(\cetaest, \eta^*)\norm{\centeredg}_{\sL^2(\joint)} = \littleO_p(1).$$
Further, by Assumption~\ref{asu:IF_Lipschitz}, $\joint\set{\cetaest \in \sO} \to 1$, and hence
\begin{equation}
\label{eq:apdx_convergence_expected_IF_2}
\norm{\joint\left[\IFplugin (\centeredg)^\top\right]-\joint\left[\IF(\centeredg)^\top\right]}_2 = \littleO_p(1).   
\end{equation}
Combining \eqref{eq:apdx_ULLN} and  \eqref{eq:apdx_convergence_expected_IF_2}, we have $\norm{\iOLS - \OLS}_2 = \littleO_p(1)$ as \eqref{eq:apdx_convergence_iOLS}.

Since $\sqrt{n}\empiricalPn \left(\centeredg\right) = \bigO_p(1)$, the fact that $\norm{\iOLS - \OLS}_2 = \littleO_p(1)$ implies,
\begin{equation}
\notag
\begin{aligned}
\sqrt{n}\left(\issthetaest - \theta^*\right) &= \sqrt{n}\left(\thetaest - \theta^*\right) - \iOLS\sqrt{n}\empiricalPn \left(\centeredg\right) \\
&= \sqrt{n}\empiricalPn (\IF) - \iOLS\sqrt{n}\empiricalPn \left(\centeredg\right) + \littleO_p(1) \\
&= \sqrt{n}\empiricalPn (\IF) - \OLS\sqrt{n}\empiricalPn \left(\centeredg\right) + \left(\OLS-\iOLS\right)\empiricalPn \left(\centeredg\right)+\littleO_p(1)\\
&= \sqrt{n}\empiricalPn (\IF) - \OLS\sqrt{n}\empiricalPn \left(\centeredg\right) +\littleO_p(1)\\
&= \sqrt{n}\empiricalPn\left(\IF-\OLS\centeredg\right)+\littleO_p(1).
\end{aligned}
\end{equation}
Since $\OLS\centeredg(x)$ is the projection of $\cIF$ onto the linear subspace spanned by $\centeredg(x)$, by Lemma~\ref{lem:apdx_decomp_var:} and~\ref{lem:apdx_pythagorean} we have the decomposition
\begin{equation}
\notag
\begin{aligned}
&\Var\left[\IF(Z)-\OLS\centeredg(X)\right] \\
&= \Var \set{\IF(Z) - \cIF(X) + \cIF(X) -\OLS\centeredg(X)}\\    
&= \Var[\IF(Z) - \cIF(X)] + \Var\set{\cIF(X) -\OLS\centeredg(X)}\\
&= \Var[\IF(Z)]-\Var\set{\cIF(X)} + \Var\set{\cIF(X)} -\Var[\OLS\centeredg(X)]\\
&= \Var[\IF(Z)]-\Var[\OLS\centeredg(X)].
\end{aligned}
\end{equation}
\end{proof}

\begin{proof}[Proof of Theorem~\ref{thm:ISS_efficient_estimator}]
It suffices to prove the case of $p=1$, i.e., we only one parameter. For $p>1$, the analysis follows by separately analyzing each of the $p$ components. This does not affect the convergence rate since we treat $p$ as fixed (rather than increasing with the sample size $n$).

When $p = 1$, 
by Assumption~\ref{asu:IF_holder}, $\cIF \in \sC^{\alpha_1}_{M_1}(\sX)$, where $\sC^{\alpha_1}_{M_1}(\sX)$ is the Hölder class \eqref{eq:holder_space} with parameter $\alpha_1$ and $M_1$. By Theorem 2.7.1 of \cite{wellner2013weak}, $\sC^{\alpha_1}_{M_1}(\sX)$ is $\marginal$-Donsker when $\alpha_1 > \dim(\sX)$. Let $\set{g_k(x)}_{k=1}^\infty$ be a basis of $\sC^{\alpha_1}_{M_1}(\sX)$ that satisfies \eqref{eq:approx_error} for all $f \in \sC^{\alpha_1}_{M_1}(\sX)$. 
Since $\GKn(x) \in \sC^{\alpha_1}_{M_1}(\sX)$, clearly we also have $\cIFest = \npiOLS\centeredGKn(x) \in \sC^{\alpha_1}_{M_1}(\sX)$ as this is only a linear transformation of $\GKn(x)$. If we can show that
$$\norm{\cIF - \cIFest}_\lpxone = \littleO_p(1),$$
then by Lemma~\ref{lem:19.24 of vdv} and the fact that $\joint(\cIF) = \joint(\cIFest) = 0$,  
$$\bG_n(\cIF-\cIFest) = \sqrt{n}\empiricalPn(\cIF-\cIFest) = \littleO_p(1).$$ 
Then
\begin{equation}
\notag
\begin{aligned}
\sqrt{n}\left(\issthetanpest - \theta^*\right) &= \sqrt{n}\empiricalPn(\IF-\cIFest) + \littleO_p(1)\\
&= \sqrt{n}\empiricalPn(\IF-\cIF) + \littleO_p(1),
\end{aligned}
\end{equation}
and the results of Theorem~\ref{thm:ISS_efficient_estimator} follow.

It remains to show that 
$$\norm{\cIF - \cIFest}_\lpxone = \littleO_p(1).$$
Denoting $\*\Sigma = \Var\left[\centeredGKn(X)\right] \succ 0$ and $\npOLS = \joint\left[\IF(\centeredGKn)^\top\right]\*\Sigma^{-1}$, we have:
\begin{equation}
\notag
\begin{aligned}
\norm{\cIF - \cIFest}_\lpxone &= \norm{\cIF-\npiOLS\centeredGKn}_\lpxone \\
&= \norm{\cIF-\npOLS\centeredGKn + \npOLS\centeredGKn - \npiOLS\centeredGKn}_\lpxone \\
&\le \underbrace{\norm{\npiOLS\centeredGKn - \npOLS\centeredGKn}_\lpxone}_{\text{I}} + \underbrace{\norm{\cIF - \npOLS\centeredGKn}_\lpxone}_{\text{II}}
\end{aligned}
\end{equation}
where $\npiOLS$ is defined as \eqref{eq:B_np_OLS_ISS}.
We first look at I. By the fact that
\begin{equation}
\label{eq:apdx_var_normalized}
\bE\left[\*\Sigma^{-\frac{1}{2}}\centeredGKn(X)\centeredGKn(X)^\top\*\Sigma^{-\frac{1}{2}}\right] = \*I_{K_n},
\end{equation}
we have:
\begin{equation}
\notag
\begin{aligned}
\text{I}^2 &=\norm{\left(\npiOLS - \npOLS\right)\centeredGKn(x)}^2_\lpxone \\
&= \bE\left[(\centeredGKn(X))^\top\left(\npiOLS - \npOLS\right)^\top\left(\npiOLS - \npOLS\right)\centeredGKn(X)\right]\\
&= \bE\set{\left[\*\Sigma^{-\frac{1}{2}}\centeredGKn(X)\right]^\top\empiricalPn\left[(\IF - \IFplugin)\*\Sigma^{-\frac{1}{2}}\centeredGKn \right]^\top \empiricalPn\left[(\IF - \IFplugin)\*\Sigma^{-\frac{1}{2}}\centeredGKn \right]\*\Sigma^{-\frac{1}{2}}\GKn(X)}\\
&= \bE\left[\*\Sigma^{-\frac{1}{2}}\centeredGKn(X)\centeredGKn(X)^\top\*\Sigma^{-\frac{1}{2}}\right]\empiricalPn\left[(\IF - \IFplugin)\*\Sigma^{-\frac{1}{2}}\centeredGKn \right]^\top \empiricalPn\left[(\IF - \IFplugin)\*\Sigma^{-\frac{1}{2}}\centeredGKn \right]\\
&=\norm{\empiricalPn\left[\*\Sigma^{-\frac{1}{2}}\centeredGKn(\IF - \IFplugin)\right]}^2.
\end{aligned}
\end{equation}
By \eqref{eq:apdx_var_normalized}, without loss of generality, we can suppose that $\centeredGKn$ has identity covariance, i.e., $\*\Sigma = \*I_{K_n}$. For I, when $\cetaest \in \sO$, it follows that
\begin{equation}
\notag
\begin{aligned}
\text{I}^2 &= \norm{\empiricalPn\left[\centeredGKn(\IF - \IFplugin)\right]}^2\\
&\le \norm{\empiricalPn\left[L\centeredGKn\right]}^2\rho^2(\cetaest, \eta^*)\\
&\le \empiricalPn(L^2)\empiricalPn\left[(\centeredGKn)^\top\centeredGKn\right] \rho^2(\cetaest, \eta^*)\\
\end{aligned}
\end{equation}
Since $\joint\set{\cetaest \in \sO} \to 1$ by Assumption~\ref{asu:IF_Lipschitz}, we have
$$\text{I}^2 \le \empiricalPn(L^2)\empiricalPn\left[(\centeredGKn)^\top\centeredGKn\right] \rho^2(\cetaest, \eta^*) + \littleO_p(1).$$
Now, we consider the term $\empiricalPn\left[(\centeredGKn)^\top\centeredGKn\right]$. By Theorem 12.16.1 of \cite{hansen2022econometrics}, 
$$\norm{\empiricalPn\left[\centeredGKn(\centeredGKn)^\top\right]-\*I_{K_n}}_F = \littleO_p(1).$$
Therefore,
\begin{equation}
\notag
\begin{aligned}
\empiricalPn\left[(\centeredGKn)^\top\centeredGKn\right] &= K_n + \tr\left(\empiricalPn\left[\centeredGKn(\centeredGKn)^\top\right]-\*I_{K_n}\right)\\
&= K_n + \littleO_p(1)\\
&= \bigO_p(K_n),
\end{aligned}
\end{equation}
where we used continuous mapping for the $\tr(\cdot)$ function. Since $L(z)$ is square integrable, we then have $$\empiricalPn(L^2)\empiricalPn\left[(\centeredGKn)^\top\centeredGKn\right] \rho^2(\cetaest, \eta^*) = \bigO_p(\rho^2(\cetaest, \eta^*)K_n),$$
which implies that
$$\text{I} = \bigO_p\left(\sqrt{K_n}\rho(\cetaest, \eta^*)\right) = \littleO_p(1).$$
For II, by the bias-variance decomposition, 
$$\norm{\cIF - \npOLS\GKn(x)}^2_\lpxone = \bE[\npOLS\GKn(x)]^2 + \norm{\cIF - \npOLS\centeredGKn(x)}^2_\lpxone,$$
where we used the fact that $\bE[\cIF(X)] = \bE[\centeredGKn(X)] = 0$. Therefore,
\begin{equation}
\label{eq:apdx_approx_error}
\begin{aligned}
\text{II}^2 
&\le \norm{\cIF - \npOLS\GKn(x)}^2_\lpxone\\
&= \bigO\left(K_n^{-\frac{2\alpha_1}{\dim(\sX)}}\right),
\end{aligned}
\end{equation}
by \eqref{eq:approx_error}. This shows
$$\text{II} = \bigO\left(K_n^{-\frac{\alpha_1}{\dim(\sX)}}\right) = \littleO_p(1).$$
Combining the results of I and II, we have shown that $\norm{\cIF - \cIFest}_\lpxone = \littleO_p(1)$, which finishes the proof.
\end{proof}

\begin{proof}[Proof of Theorem~\ref{thm:efficiency_OSS}.] 
The proof of this theorem utilizes notation and results from  Section~\ref{sec:apdx_efficiency_noniid}, as well as Lemma~\ref{lem:apdx_tangent_space_OSS}.

By Lemma~\ref{lem:apdx_path}, pathwise differentiablity at $\joint$ implies pathwise differentiability at $\ossjoint$. Since the efficient influence function at $\joint$ relative to $\jointmodel$ is $\EIF(z)$, $\EIF(z)$ is a gradient at $\joint$ relative to $\jointmodel$, and by Lemma~\ref{lem:apdx_path}, the function
$$\EIF(z_1)-\cEIF(x_1)+\cEIF(x_2)$$
is a gradient at $\ossjoint$ relative to $\jointmodel$. 

First we consider the case of $p = 1$. We first derive the $\norm{\cdot}_\sH$-projection of $\EIF(z_1)-\cEIF(x_1)+\cEIF(x_2)$ onto $\osstangent$, where the Hilbert space $\sH$ and its norm $\norm{\cdot}_\sH$ are defined in Section~\ref{sec:apdx_efficiency_noniid}. The form of $\osstangent$ is characterized by Lemma~\ref{lem:apdx_tangent_space_OSS}. Recall that $\cEIF(x)$ is the conditional expectation of $\EIF(z)$ on $X$. The projection problem can be expressed as
\begin{equation}
\label{eq:apdx_projection}
\begin{aligned}
&\min_{\substack{f \in \tangentmarginal, \\g \in \tangentconditional}}\norm{\EIF(z_1)-\cEIF(x_1)-f(x_1)-g(z_1)+\cEIF(x_2)-\frac{\gamma}{1-\gamma}f(x_2)}^2_\sH\\
&= \min_{\substack{f \in \tangentmarginal, \\g \in \tangentconditional}}\left(\norm{\EIF(z)-\cEIF(x)-f(x)-g(z)}^2_\lpone+\frac{\gamma}{1-\gamma}\norm{\cEIF(x)-\frac{\gamma}{1-\gamma}f(x)}^2_\lpxone\right)\\
&=\min_{g \in \tangentconditional}\norm{\EIF(z)-\cEIF(x) - g(z)}^2_{\lpyxone}\\
&\quad + \min_{f \in \tangentmarginal}\set{\norm{f(x)}^2_{\lpxone} + \frac{\gamma}{1-\gamma} \norm{f(x)-\frac{1-\gamma}{\gamma}\cEIF(x)}^2_\lpxone},
\end{aligned}
\end{equation}
where we used the definition of $\norm{\cdot}_\sH$ and the fact that 
$\lpone = \lpxone \oplus \lpyxone$. Since $\EIF(z)$ is the efficient influence function at $\joint$ and $p=1$, we have $\EIF(z)-\cEIF(x) \in \tangentconditional$, which we proved in the proof of Theorem~\ref{thm:efficiency_ISS}. Therefore, for the first term in \eqref{eq:apdx_projection}, the minimizer takes $g(z) = \EIF(z)-\cEIF(x)$. Similarly, we have $\cEIF(x) \in \tangentmarginal$. Then, it can be straightforwardly verified that $f(x) = (1-\gamma)\cEIF(x)$ minimizes the second term in \eqref{eq:apdx_projection}. Therefore, the $\norm{\cdot}_\sH$-projection of $\EIF(z_i)$ onto $\osstangent$ is
\begin{equation}
\notag
\begin{aligned}
\EIF(z_1,x_2) &=\EIF(z_1)-\cEIF(x_1) +(1-\gamma)\cEIF(x_1)+\gamma\cEIF(x_2)\\
&= \EIF(z_1)-\gamma\cEIF(x_1)+\gamma\cEIF(x_2).
\end{aligned}
\end{equation}

We now consider the general case of $p$ and validate that $\EIF(z_1,x_2)$ is indeed the efficient influence function by definition. Clearly, since $\osstangent$ is a linear space, $a^\top\EIF(z_1,x_2) \in \osstangent$ for all $a \in \bR^p$. Denote $D_\ossjoint(z_1,x_2) = \EIF(z_1)-\cEIF(x_1)+\cEIF(x_2)$. For any element $f(x_1) + g(z_1) + \frac{\gamma}{1-\gamma}f(x_2) \in \osstangent$, it can be straightforwardly validated by the definition of $\left\langle \cdot, \cdot \right\rangle_\sH$ that
\begin{equation}
\notag
\begin{aligned}
\langle \EIF(z_1,x_2)- D_\ossjoint(z_1,x_2), f(x_1) + g(z_1) + \frac{\gamma}{1-\gamma}f(x_2) \rangle_\sH = 0.
\end{aligned}
\end{equation}
Consider any one-dimensional regular parametric sub-model $\sP_T = \set{\bP_t: t \in T}$ of $\jointmodel$ such that $\bP_{t^*} = \joint$ and its score function at $\ossjoint$ is $l_{t^*}(z_1,x_2)$. Then, $l_{t^*}(z_1,x_2) \in \osstangent$. Write $\theta: \sP_T \to \Theta$ as a function of $t \in T$. By pathwise differentiability and the fact that $D_\ossjoint(z_1,x_2)$,is a gradient at $\ossjoint$ relative to $\jointmodel$,
\begin{equation}
\notag
\begin{aligned}
\frac{d \theta(t^*)}{dt} &= \langle l_{t^*}(z_1,x_2), D_\ossjoint(z_1,x_2)\rangle_\sH \\
&= \langle l_{t^*}(z_1,x_2), D_\ossjoint(z_1,x_2)-\EIF(z_1,x_2)+\EIF(z_1,x_2)\rangle_\sH\\
&= \langle l_{t^*}(z_1,x_2), \EIF(z_1,x_2)\rangle_\sH + \langle l_{t^*}(z_1,x_2), D_\ossjoint(z_1,x_2)-\EIF(z_1,x_2)\rangle_\sH\\
&= \langle l_{t^*}(z_1,x_2), \EIF(z_1,x_2)\rangle_\sH,
\end{aligned}
\end{equation}
which shows that $\EIF(z_1,x_2)$ is a gradient. Since further $a^\top\EIF(z_1,x_2) \in \osstangent$ for all $a \in \bR^p$, by definition $\EIF(z_1,x_2)$ is the efficient influence function at $\ossjoint$ relative to $\jointmodel$.

By Lemma~\ref{lem:oss_efficiency}, the semiparametric efficiency lower bound is the outer product of the efficient influence function with itself in $\langle\cdot, \cdot\rangle_\sH$. By the definition of $\langle\cdot, \cdot\rangle_\sH$, this can be written as
\begin{equation}
\notag
\begin{aligned}
&\langle\EIF(z)-\gamma\cEIF(x), \EIF(z)^\top-\gamma\cEIF(x)^\top \rangle_\lp \\
&\quad+  \frac{\gamma}{1-\gamma}\langle (1-\gamma)\cEIF(x),  (1-\gamma)\cEIF(x)^\top \rangle_\lpx\\
&= \Var\left[\EIF(Z)-\cEIF(X)\right] + (1-\gamma)^2\Var\left[\cEIF(X)\right] + \gamma(1-\gamma)\Var\left[\cEIF(X)\right]\\
&=  \Var\left[\EIF(Z)-\cEIF(X)\right] + (1-\gamma)\Var\left[\cEIF(X)\right]\\
&=  \Var\left[\EIF(Z)\right]-\gamma\Var\left[\cEIF(X)\right],
\end{aligned}
\end{equation}
where we used Lemma~\ref{lem:apdx_decomp_var:} in the last equality.
\end{proof}

\begin{proof}[Proof of Theorem~\ref{thm:OSS_estimator}.]
Recall that
\begin{equation}
\notag
\begin{aligned}
\ecenteredg(X_i) &= g(X_i) - \empiricalPnN (g)   \\
&= g(X_i) - \joint(g) - \empiricalPnN \left[g-\joint(g)\right]\\
&= \centeredg(X_i) - \empiricalPnN (\centeredg).
\end{aligned}
\end{equation}
Then, for the estimator $\ossthetaest$ defined in \eqref{eq:est_OSS},
\begin{equation}
\label{eq:apdx_OSS_est}
\begin{aligned}
\sqrt{n}(\ossthetaest-\theta^*) &= \sqrt{n}(\thetaest-\theta^*) - \oOLS\sqrt{n}\empiricalPn (\ecenteredg)\\
&= \sqrt{n}\empiricalPn(\IF) - \oOLS\sqrt{n}\empiricalPn(\centeredg) + \oOLS\sqrt{n}\empiricalPnN(\centeredg)+ \littleO_p(1)\\
&= \sqrt{n}\empiricalPn(\IF) - \oOLS\sqrt{n}\empiricalPn(\centeredg) + \oOLS\frac{\sqrt{n}n}{n+N}\empiricalPn(\centeredg) + \oOLS\frac{\sqrt{n}N}{n+N}\empiricalPN(\centeredg)+ \littleO_p(1)\\
&= \sqrt{n}\empiricalPn(\IF) - \gamma\oOLS\sqrt{n}\empiricalPn(\centeredg) + \sqrt{\gamma(1-\gamma)}\oOLS\sqrt{N}\empiricalPN(\centeredg) + \littleO_p(1),
\end{aligned}
\end{equation}
as $\frac{N}{n+N} \to \gamma \in (0,1]$. By CLT, we have $\sqrt{n}\empiricalPn\left(\centeredg\right) = \bigO_p(1)$, and $\sqrt{N}\empiricalPN\left(\centeredg\right) = \bigO_p(1)$. Further, by the fact that $\labeled \indep \unlabeled$, for any function $f$, it holds that $\empiricalPn(f) \indep \empiricalPN(f)$.

Next, we show that
$$\norm{\iOLS - \oOLS}_2 = \littleO_p(1),$$
where $\iOLS$ is defined in \eqref{eq:OLS_ISS} and $\oOLS$ is defined in \eqref{eq:OLS_OSS}. Recall that we define $\*\Sigma = \Cov[g(X)]$. Then, 
\begin{equation}
\notag
\begin{aligned}
\norm{\iOLS - \oOLS}_2 
&= \norm{\empiricalPn\left[\IFplugin (\centeredg)^\top\right]\*\Sigma^{-1}-\empiricalPn\left[\IFplugin (\ecenteredg)^\top\right]\empiricalPnN\left[\ecenteredg(\ecenteredg)^\top\right]^{-1}}_2\\
&\le \underbrace{\norm{\empiricalPn\left[\IFplugin (\centeredg)^\top\right]-\empiricalPn\left[\IFplugin (\ecenteredg)^\top\right]}_2\norm{\*\Sigma^{-1}}_2}_{\text{I}}+ \\
&\quad\underbrace{\norm{\empiricalPn\left[\IFplugin (\ecenteredg)^\top\right]}_2\norm{\*\Sigma^{-1} - \set{\empiricalPnN\left[\ecenteredg(\ecenteredg)^\top\right]}^{-1}}_2}_{\text{II}}.
\end{aligned}
\end{equation}

First we show that I=$\littleO_p(1)$.By Assumption~\ref{asu:IF_Lipschitz} and Lemma~\ref{lem:apdx_1st_lem}, $\norm{\empiricalPn(\IFplugin )} = \bigO_p(1)$, and since $\joint\left(\centeredg\right) = 0$, $\empiricalPnN\left(\centeredg\right) = \littleO_p(1)$, we have:
\begin{equation}
\notag
\begin{aligned}
&\norm{\empiricalPn\left[\IFplugin (\centeredg)^\top\right]-\empiricalPn\left[\IFplugin (\ecenteredg)^\top\right]}_2\\
&= \norm{\empiricalPn(\IFplugin )\left[\empiricalPnN(\centeredg)\right]^\top}_2\\
&\le \norm{\empiricalPn(\IFplugin )}_2\norm{\empiricalPnN(\centeredg)}_2\\
&= \littleO_p(1).
\end{aligned}
\end{equation}
Since $\*\Sigma \succ 0$, it follows that
$$\text{I} = \norm{\empiricalPn\left[\IFplugin (\centeredg)^\top\right]-\empiricalPn\left[\IFplugin (\ecenteredg)^\top\right]}_2\norm{\*\Sigma^{-1}}_2 = \littleO_p(1).$$

Next we show that II=$\littleO_p(1)$. By \eqref{eq:apdx_ULLN},
$$\norm{\empiricalPn\left[\IFplugin (\centeredg)^\top\right] - \joint\left[\IFplugin (\centeredg)^\top\right]}_2 = \littleO_p(1),$$ 
and by \eqref{eq:apdx_convergence_expected_IF},
$$\norm{\joint\left[\IFplugin (\centeredg)^\top\right] - \joint\left[\IF (\centeredg)^\top\right]}_2 = \littleO_p(1).$$
Therefore,
\begin{equation}
\notag
\begin{aligned}
&\norm{\empiricalPn\left[\IFplugin (\ecenteredg)^\top\right]}_2\\
&\le \norm{\empiricalPn\left[\IFplugin (\centeredg)^\top\right]}_2 + \norm{\empiricalPn(\IFplugin )\left[\empiricalPnN(\centeredg)\right]^\top}_2\\
&= \norm{\empiricalPn\left[\IFplugin (\centeredg)^\top\right]}_2 + \littleO_p(1)\\
&\le \norm{\empiricalPn\left[\IFplugin (\centeredg)^\top\right] - \joint\left[\IFplugin (\centeredg)^\top\right]}_2 + \norm{\joint\left[\IFplugin (\centeredg)^\top\right] - \joint\left[\IF (\centeredg)^\top\right]}_2\\
&\quad+ \norm{\joint\left[\IF (\centeredg)^\top\right]}_2 + \littleO_p(1)\\
&\le \underbrace{\norm{\empiricalPn\left[\IFplugin (\centeredg)^\top\right] - \joint\left[\IFplugin (\centeredg)^\top\right]}_2}_{\littleO_p(1)} + \underbrace{\norm{\joint\left[\IFplugin (\centeredg)^\top\right] - \joint\left[\IF (\centeredg)^\top\right]}_2}_{\littleO_p(1)}\\
&\quad+ \underbrace{\norm{\joint\left[\IF (\centeredg)^\top\right]}_2}_{\bigO_p(1)} + \littleO_p(1)\\
&= \bigO_p(1).
\end{aligned}
\end{equation}
Further, by the law of large numbers, $\norm{\*\Sigma - \empiricalPnN\left[\centeredg\left(\centeredg\right)^\top\right]}_2 = \littleO_p(1)$ and $\empiricalPnN(\centeredg) = \littleO_p(1)$. Then,
\begin{equation}
\notag
\begin{aligned}
&\norm{\*\Sigma - \empiricalPnN\left[\ecenteredg(\ecenteredg)^\top\right]}_2\\
&\le \norm{\*\Sigma - \empiricalPnN\left[\centeredg(\centeredg)^\top\right]}_2 + \norm{\empiricalPnN(\centeredg)\empiricalPnN\left[(\centeredg)^\top\right]}_2\\
&= \littleO_p(1).
\end{aligned}
\end{equation}
Hence, $\norm{\*\Sigma^{-1} - \set{\empiricalPnN\left[\ecenteredg(\ecenteredg)^\top\right]}^{-1}}_2 = \littleO_p(1)$ by continuous mapping. As a result, the second term also satisfies
$$\text{II} = \norm{\empiricalPn\left[\IFplugin (\ecenteredg)^\top\right]}_2\norm{\*\Sigma^{-1} - \set{\empiricalPnN\left[\ecenteredg(\ecenteredg)^\top\right]}^{-1}}_2 = \littleO_p(1).$$
Combining I and II, we have shown that $\norm{\iOLS - \oOLS}_2 = \littleO_p(1)$, and consequently $\norm{\OLS - \oOLS}_2 = \littleO_p(1)$ given \eqref{eq:apdx_convergence_iOLS}.

Now, by \eqref{eq:apdx_OSS_est},
$$\sqrt{n}\left(\ossthetaest-\theta^*\right) = \sqrt{n}\empiricalPn(\IF - \gamma\OLS\centeredg) + \sqrt{N}\empiricalPN(\sqrt{\gamma(1-\gamma)}\OLS\centeredg) + \littleO_p(1).$$
Therefore, $\ossthetaest$ is a regular and asymptotically linear estimator with influence function
$$\IF(z_1) - \gamma\OLS\centeredg(x_1)+ \sqrt{\gamma(1-\gamma)}\OLS\centeredg(x_2).$$
By Lemmas~\ref{lem:apdx_decomp_var:} and~\ref{lem:apdx_pythagorean}, the asymptotic variance of $\ossthetaest$ can be represented as
\begin{equation}
\notag
\begin{aligned}
&\Var\left[\IF(Z) - \gamma\OLS\centeredg(X)\right] + \Var\left[\sqrt{\gamma(1-\gamma)}\OLS\centeredg(X)\right]\\
&= \Var\left[\IF(Z) - \gamma\OLS\centeredg(X)\right] + \gamma(1-\gamma)\Var\left[\OLS\centeredg(X)\right]\\
&= \Var\left[\IF(Z)-\cIF(X)\right] + \Var\left[\cIF(X)-\OLS\centeredg(X) + (1-\gamma)\OLS\centeredg(X)\right]\\
&\quad+\gamma(1-\gamma)\Var\left[\OLS\centeredg(X)\right]\\
&=\Var\left[\IF(Z)-\cIF(X)\right] + \Var\left[\cIF(X)-\OLS\centeredg(X)\right] + (1-\gamma)^2\Var\left[\OLS\centeredg(X)\right]\\
&\quad+ \gamma(1-\gamma)\Var\left[\OLS\centeredg(X)\right]\\
&=\Var\left[\IF(Z)-\cIF(X)\right] + \Var\left[\cIF(X)-\OLS\centeredg(X)\right] + (1-\gamma)\Var\left[\OLS\centeredg(X)\right]\\
&=\Var\left[\IF(Z)\right]-\Var\left[\cIF(X)\right] + \Var\left[\cIF(X)-\OLS\centeredg(X)\right] + (1-\gamma)\Var\left[\OLS\centeredg(X)\right]\\
&=\Var\left[\IF(Z)\right]- \Var\left[\OLS\centeredg(X)\right] + (1-\gamma)\Var\left[\OLS\centeredg(X)\right]\\
&= \Var\left[\IF(Z)\right]-\gamma\Var\left[\OLS\centeredg(X)\right],
\end{aligned}
\end{equation}
which proves the claim of the theorem.
\end{proof}

\begin{proof}[Proof of Theorem~\ref{thm:OSS_efficient_estimator}.]
As $\ecenteredGKn(x) = \centeredGKn - \empiricalPnN\left(\centeredGKn\right)$, we have
\begin{equation}
\notag
\begin{aligned}
\sqrt{n}\left(\ossthetanpest-\theta^*\right)
&= \sqrt{n}\empiricalPn(\IF) - \sqrt{n}\empiricalPn\left(\npoOLS\centeredGKn\right) + \sqrt{n}\empiricalPnN\left(\npoOLS\centeredGKn\right)\\
&= \sqrt{n}\empiricalPn(\IF)- \sqrt{n}\empiricalPn\left(\frac{N}{n+N}\npoOLS\centeredGKn\right) + \sqrt{N}\empiricalPN\left(\frac{\sqrt{nN}}{n+N}\npoOLS\centeredGKn\right).
\end{aligned}
\end{equation}
Similar to the proof of Theorem~\ref{thm:ISS_efficient_estimator}, we only prove the case of $p=1$. The general case proceeds by analyzing each coordinate individually. By Assumption~\ref{asu:IF_holder}, $\cIF \in \sC^{\alpha_1}_{M_1}(\sX)$. As $\set{g_k(x)}_{k=1}^\infty$ is a basis of $\sC^{\alpha_1}_{M_1}(\sX)$, we have that $\npoOLS\centeredGKn(x) \in \sC^{\alpha_1}_{M_1}$. Therefore, if we can show that
\begin{equation}
\label{eq:apdx_convergence_np_est}
\norm{\cIF-\npoOLS\centeredGKn(x)}_{\lpxone} = \littleO_p(1),
\end{equation}
then by Lemma~\ref{lem:19.24 of vdv}, $\sqrt{n}\empiricalPn\left(\cIF - \npoOLS\centeredGKn(x)\right) = \littleO_p(1)$. By definition, $\bE[\centeredGKn] = 0$. Therefore, using the fact that $\frac
{N}{n+N} \to \gamma \in (0,1]$, 
\begin{equation}
\notag
\begin{aligned}
&\sqrt{n}\left(\ossthetanpest-\theta^*\right)\\
&= \sqrt{n}\empiricalPn(\IF)- \sqrt{n}\empiricalPn\left(\frac{N}{n+N}\npoOLS\centeredGKn\right) + \sqrt{N}\empiricalPN\left(\frac{\sqrt{nN}}{n+N}\npoOLS\centeredGKn\right)\\
&= \sqrt{n}\empiricalPn(\IF-\gamma\cIF) + \sqrt{N}\empiricalPN\left(\sqrt{\gamma(1-\gamma)}\cIF\right) + \littleO_p(1).
\end{aligned}
\end{equation}
It then follows that $\ossthetanpest$ is a regular and asymptotically linear estimator with influence function
$$\IF(z_1) - \gamma\cIF(x_1)+ \sqrt{\gamma(1-\gamma)}\cIF(x_2).$$
By Lemma~\ref{lem:apdx_decomp_var:}, the asymptotic variance of $\ossthetanpest$ can be represented as
\begin{equation}
\notag
\begin{aligned}
&\Var\left[\IF(Z) - \gamma\cIF(X)\right] + \Var\left[\sqrt{\gamma(1-\gamma)}\cIF(X)\right]\\
&= \Var\left[\IF(Z) - \cIF(X) + (1-\gamma)\cIF(x)\right] + \gamma(1-\gamma)\Var\left[\cIF(X)\right]\\
&= \Var\left[\IF(Z) - \cIF(X)\right] + (1-\gamma)^2\Var\left[\cIF(x)\right] + \gamma(1-\gamma)\Var\left[\cIF(X)\right]\\
&= \Var\left[\IF(Z) - \cIF(X)\right] + (1-\gamma)\Var\left[\cIF(X)\right]\\
&= \Var\left[\IF(Z)\right] - \Var\left[\cIF(X)\right] + (1-\gamma)\Var\left[\cIF(X)\right]\\
&= \Var\left[\IF(Z)\right] - \gamma\Var\left[\cIF(X)\right],
\end{aligned}
\end{equation}
which would prove Theorem~\ref{thm:OSS_efficient_estimator}. 

To complete the argument, we next prove \eqref{eq:apdx_convergence_np_est}.
Denoting $\*\Sigma = \bE\left[\centeredGKn(\centeredGKn)^\top\right]$, and $\npOLS = \joint\left[\IF(\centeredGKn)^\top\right]\*\Sigma^{-1} \in \bR^{K_n \times 1}$, we have:
\begin{equation}
\notag
\begin{aligned}
\norm{\cIF - \npoOLS\centeredGKn}_\lpxone
&= \norm{\cIF - \npOLS\centeredGKn + \npOLS\centeredGKn-\npoOLS\centeredGKn}_\lpxone\\
&\le \underbrace{\norm{\npoOLS\centeredGKn(x) - \npOLS\centeredGKn(x)}_\lpxone}_{\text{I}} \\
&\quad+ \underbrace{\norm{\cIF - \npOLS\centeredGKn(x)}_\lpxone}_{\text{II}}.
\end{aligned}
\end{equation}
From \eqref{eq:apdx_approx_error} in the proof of Theorem~\ref{thm:ISS_efficient_estimator}, we already have
$$\text{II} = \bigO\left(K_n^{-\frac{\alpha_1}{2\dim(\sX)}}\right) = \littleO_p(1).$$
Therefore, it remains to prove that $\text{I} = \littleO_p(1)$. Notice that
\begin{equation}
\notag
\begin{aligned}
\text{I} &= \norm{\npoOLS\centeredGKn - \npOLS\centeredGKn}^2_{\lpx} \\
&= \bE\set{\centeredGKn^\top\left(\npoOLS - \npOLS\right)^\top\left(\npoOLS - \npOLS\right)\centeredGKn}\\
&= \tr\set{\left(\npoOLS - \npOLS\right)^\top\left(\npoOLS - \npOLS\right)\*\Sigma}.
\end{aligned}
\end{equation}
Moreover,
\begin{equation}
\notag
\begin{aligned}
&\npOLS = \joint\left[\IF\left(\*\Sigma^{-\frac{1}{2}}\centeredGKn\right)^\top\right]\*\Sigma^{-\frac{1}{2}},\\
&\npoOLS = \empiricalPn\left[\IFplugin\left(\*\Sigma^{-\frac{1}{2}}\ecenteredGKn\right)^\top\right]\empiricalPn\left[\left(\*\Sigma^{-\frac{1}{2}}\ecenteredGKn\right)\left(\*\Sigma^{-\frac{1}{2}}\ecenteredGKn\right)^\top\right]^{-1}\*\Sigma^{-\frac{1}{2}},\\
&\*\Sigma^{-\frac{1}{2}}\ecenteredGKn = \*\Sigma^{-\frac{1}{2}}\centeredGKn - \empiricalPnN \left(\*\Sigma^{-\frac{1}{2}}\centeredGKn\right).
\end{aligned}
\end{equation}
Therefore, without loss of generality, we can consider normalizing $\centeredGKn$ with $\*\Sigma^{-\frac{1}{2}}\centeredGKn$ and let $\*\Sigma = \*I_{K_n}$. Then, we can write
$$\text{I} = \tr\set{\left(\npoOLS - \npOLS\right)^\top\left(\npoOLS - \npOLS\right)} = \norm{\npoOLS - \npOLS}^2_F.$$

Denoting $r_{K_n}(x) = \cIF(x) - \npOLS\centeredGKn(x)$ as the approximation error, and $\epsilon(z) = \IF(z)-\cIF(x)$, $\IFplugin(z)$ can be decomposed as:
\begin{equation}
\label{eq:apdx_IF_decomp}
\begin{aligned}
\IFplugin(z) &= \cIF(x) + \epsilon(z) + [\IFplugin(z)-\IF(z)],\\
&= \npOLS\centeredGKn(x) + r_{K_n}(x) + \epsilon(z) + [\IFplugin(z)-\IF(z)],\\
&= \npOLS\ecenteredGKn(x) + \npOLS\empiricalPnN(\centeredGKn) + r_{K_n}(x) + \epsilon(z)+ [\IFplugin(z)-\IF(z)].
\end{aligned}
\end{equation}
Therefore, denoting $\hat{\*\Sigma} = \empiricalPn\left[\ecenteredGKn(\ecenteredGKn)^\top\right]$, it follows that
\begin{equation}
\notag
\begin{aligned}
\npoOLS &= \npOLS + \underbrace{\npOLS\empiricalPnN(\centeredGKn)\empiricalPn(\ecenteredGKn)^\top\hat{\*\Sigma}^{-1}}_{\text{III}} + \underbrace{\empiricalPn\left[r_{K_n}(\ecenteredGKn)^\top\right]\hat{\*\Sigma}^{-1}}_{\text{IV}}\\
&\quad + \underbrace{\empiricalPn\left[\epsilon(\ecenteredGKn)^\top\right]\hat{\*\Sigma}^{-1}}_{\text{V}} + \underbrace{\empiricalPn\left[(\IFplugin(z)-\IF(z))(\ecenteredGKn)^\top\right]\hat{\*\Sigma}^{-1}}_{\text{VI}}.
\end{aligned}
\end{equation}
We first consider $\hat{\*\Sigma}$:
\begin{equation}
\notag
\begin{aligned}
&\empiricalPn\left[\ecenteredGKn\left(\ecenteredGKn\right)^\top\right] \\
&= \empiricalPn\left[\centeredGKn\left(\centeredGKn\right)^\top\right] + \empiricalPn\left(\centeredGKn\right)\empiricalPnN\left[\left(\centeredGKn\right)^\top\right] + \empiricalPnN\left(\centeredGKn\right)\empiricalPn\left[\left(\centeredGKn\right)^\top\right] \\
&\quad+\empiricalPnN\left(\centeredGKn\right)\empiricalPnN\left[\left(\centeredGKn\right)^\top\right],
\end{aligned}
\end{equation}
where, by Lemma~\ref{lem:apdx_order_Kn}, we have that
\begin{equation}
\notag
\begin{aligned}
&\norm{\empiricalPn\left(\centeredGKn\right)\empiricalPnN\left[\left(\centeredGKn\right)^\top\right]}_F = \bigO_p\left(\frac{K_n}{n}\right) = \littleO_p(1)\\
&\norm{\empiricalPnN\left(\centeredGKn\right)\empiricalPn\left[\left(\centeredGKn\right)^\top\right]}_F = \bigO_p\left(\frac{K_n}{n}\right)= \littleO_p(1),\\
&\norm{\empiricalPnN\left(\centeredGKn\right)\empiricalPnN\left[\left(\centeredGKn\right)^\top\right]}_F = \bigO_p\left(\frac{K_n}{n}\right)= \littleO_p(1).
\end{aligned}
\end{equation}
Further, by Theorem 12.16.1 of \cite{hansen2022econometrics},$$\norm{\empiricalPn\left[\centeredGKn\left(\centeredGKn\right)^\top\right]-\*I_{K_n}}_F = \littleO_p(1),\text{ and } \lambda_{\min}\left(\empiricalPn\left[\centeredGKn\left(\centeredGKn\right)^\top\right]\right) = \bigO_p(1).$$
Therefore it follows that
$$\norm{\empiricalPn\left[\ecenteredGKn\left(\ecenteredGKn\right)^\top\right]-\*I_{K_n}}_F = \littleO_p(1),\text{ and } \lambda_{\min}\left(\empiricalPn\left[\ecenteredGKn\left(\ecenteredGKn\right)^\top\right]\right) = \bigO_p(1),$$
which implies $\norm{\hat{\*\Sigma}^{-1}}_2 = \bigO_p(1)$. Now, for the term III,
\begin{equation}
\notag
\begin{aligned}
\norm{\text{III}} &\le \vert\npOLS\empiricalPnN\left(\centeredGKn\right)\rvert\norm{\empiricalPn\left(\ecenteredGKn\right)^\top}\norm{\hat{\*\Sigma}^{-1}}_2\\
&=\bigO_p\left(\sqrt{\frac{K_n}{n}}\right)\times\bigO_p\left(\sqrt{\frac{K_n}{n}}\right) \times \bigO_p(1)\\
&= \bigO_p\left(\frac{K_n}{n}\right)\\
&= \littleO_p(1),
\end{aligned}
\end{equation}
where we used Lemma~\ref{lem:apdx_order_Kn} to show that $\vert\npOLS\empiricalPnN\left(\centeredGKn\right)\rvert$ and $\norm{\empiricalPn\left(\ecenteredGKn\right)^\top}$ are $\bigO_p\left(\sqrt{\frac{K_n}{n}}\right)$. For the term IV, first note that by the property of projection and \eqref{eq:approx_error}, we have
$$\joint(r_{K_n}) = 0, \quad \joint(r_{K_n}\centeredGKn) = 0,\text{ and } \joint\left(r_{K_n}^2\right) = \bigO\left(K_n^{-\frac{2\alpha}{\dim(\sX)}}\right).$$
Therefore,
\begin{equation}
\notag
\begin{aligned}
\norm{\text{IV}} &\le \norm{\empiricalPn\left[r_{K_n}\left(\ecenteredGKn\right)^\top\right]}\norm{\hat{\*\Sigma}}_2\\
&\le \norm{\empiricalPn\left[r_{K_n}\left(\centeredGKn\right)^\top\right]}\norm{\hat{\*\Sigma}}_2 +  \lvert\empiricalPn(r_{K_n})\rvert\norm{\empiricalPnN\left(\ecenteredGKn\right)^\top}\norm{\hat{\*\Sigma}}_2\\
&\le \norm{\empiricalPn\left[r_{K_n}\left(\centeredGKn\right)^\top\right]}\times \bigO_p(1) + \bigO_p\left(\frac{1}{\sqrt{n}}\right)\times\bigO_p\left(\sqrt{\frac{K_n}{n}}\right)\times\bigO_p(1)\\
&= \bigO_p\left(\norm{\empiricalPn\left[r_{K_n}\left(\centeredGKn\right)^\top\right]}\right) + \littleO_p(1),
\end{aligned}
\end{equation}
where we again used Lemma~\ref{lem:apdx_order_Kn}. 
Further,
\begin{equation}
\notag
\begin{aligned}
\bE\set{\empiricalPn\left[r_{K_n}\left(\centeredGKn\right)^\top\right]\empiricalPn\left[r_{K_n}\left(\centeredGKn\right)\right]}
&=\frac{1}{n}\bE\left[r_{K_n}^2(\centeredGKn)^\top\centeredGKn\right]\\
&\le \frac{\sup_{\sX}\set{\centeredGKn(x)^\top\centeredGKn(x)}}{n} \bE[r^2_{K_n}]\\
&= \bigO\left(\frac{\zeta_n^2K_n^{-\frac{2\alpha_1}{\dim(\sX)}}}{n}\right) \\
&= \littleO(1).
\end{aligned}
\end{equation}
Therefore, by Markov's inequality $\norm{\empiricalPn\left[r_{K_n}(\centeredGKn)^\top\right]} = \littleO_p(1)$ and thus so is IV. Next, consider the term V. Recall that $\epsilon(z) = \IF-\cIF$, and hence
$$\joint(\epsilon)=0,\text{ and }\joint(\epsilon g) = 0,\quad \joint\left(\epsilon^2\right) < \infty,$$
for any measurable function $g(x)$ of $x$. Thus,
\begin{equation}
\notag
\begin{aligned}
\norm{\text{V}} &\le \norm{\empiricalPn\left[\epsilon\left(\ecenteredGKn\right)^\top\right]}\norm{\hat{\*\Sigma}}_2\\
&\le \norm{\empiricalPn\left[\epsilon\left(\centeredGKn\right)^\top\right]}\norm{\hat{\*\Sigma}}_2 +  \lvert\empiricalPn(\epsilon)\rvert\norm{\empiricalPnN\left(\ecenteredGKn\right)^\top}\norm{\hat{\*\Sigma}}_2\\
&\le \norm{\empiricalPn\left[\epsilon\left(\centeredGKn\right)^\top\right]}\times \bigO_p(1) + \bigO_p\left(\frac{1}{\sqrt{n}}\right)\times\bigO_p\left(\sqrt{\frac{K_n}{n}}\right)\times\bigO_p(1)\\
&= \bigO_p\left(\norm{\empiricalPn\left[\epsilon(\centeredGKn)^\top\right]}\right) + \littleO_p(1).
\end{aligned}
\end{equation}
Moreover,
\begin{equation}
\notag
\begin{aligned}
\bE\set{\empiricalPn\left[\epsilon\left(\centeredGKn\right)^\top\right]\empiricalPn\left[\epsilon\left(\centeredGKn\right)\right]}
&=\frac{1}{n}\bE\left[\epsilon^2\left(\centeredGKn\right)^\top\centeredGKn\right]\\
&\le \frac{\sup_{\sX}\set{\centeredGKn(x)^\top\centeredGKn(x)}}{n} \bE[\epsilon^2]\\
&= \bigO\left(\frac{\zeta_n^2}{n}\right)\\
&= \littleO(1).
\end{aligned}
\end{equation}
Therefore, by Markov's inequality $\norm{\empiricalPn\left[\epsilon(\centeredGKn)^\top\right]} = \littleO_p(1)$ and so is V. Finally, for VI, by the fact that $\norm{\empiricalPn\left[\centeredGKn(\centeredGKn)^\top\right]-\*I_{K_n}}_F = \littleO_p(1)$ and continuous mapping, we have:
\begin{equation}
\notag
\begin{aligned}
\empiricalPn\left[(\centeredGKn)^\top\centeredGKn\right] &= K_n + \tr\left(\empiricalPn\left[\centeredGKn(\centeredGKn)^\top\right]-\*I_{K_n}\right)\\
&= K_n + \littleO_p(1)\\
&= \bigO_p(K_n).
\end{aligned}
\end{equation}
Then,
\begin{equation}
\notag
\begin{aligned}
\norm{\text{IV}} &\le \norm{\empiricalPn\left[(\IFplugin-\IF)(\centeredGKn)^\top\right]}\norm{\hat{\*\Sigma}^{-1}}_2 + \norm{\empiricalPn(\IFplugin-\IF)}\norm{\empiricalPnN(\centeredGKn)^\top}\norm{\hat{\*\Sigma}^{-1}}_2 \\
&\le \norm{\empiricalPn\left[(\IFplugin-\IF)(\centeredGKn)^\top\right]} \times \bigO_p(1) + \bigO_p(\rho(\cetaest, \eta^*))\times\bigO_p\left(\sqrt{\frac{K}{n}}\right)\times \bigO_p(1)\\
&= \norm{\empiricalPn\left[(\IFplugin-\IF)(\centeredGKn)^\top\right]} \times \bigO_p(1) + \littleO_p(1)\\
&\le \norm{\empiricalPn\left[L(\centeredGKn)^\top\right]} \times \bigO_p(\rho(\cetaest, \eta^*)) + \littleO_p(1)\\
&\le \sqrt{\empiricalPn(L^2)\empiricalPn\left[(\centeredGKn)^\top\centeredGKn\right]} \times \bigO_p(\rho(\cetaest, \eta^*))+ \littleO_p(1)\\
&= \bigO_p\left(\rho(\cetaest, \eta^*)\sqrt{K_n}\right) + \littleO_p(1)\\
&= \littleO_p(1).
\end{aligned}
\end{equation}
Putting these results together, we see that $\text{I} = \norm{\npOLS - \npoOLS}_F = \littleO_p(1)$, which then finishes the proof.
\end{proof}

\begin{proof}[Proof of Proposition~\ref{prop:ppi_OSS}]
By the definition of $\cecenteredg(x)$, we have:
\begin{equation}
\notag
\begin{aligned}
\cecenteredg(X_i) &= \cg(X_i) - \empiricalPnN (\cg)   \\
&= \cg(X_i)-\joint(\cg) - \empiricalPnN \left[\cg-\joint(\cg)\right]   \\
&= \ccenteredg(X_i) - \empiricalPnN (\ccenteredg).
\end{aligned}
\end{equation}
For the estimator $\ossthetaest$ defined in \eqref{eq:ppi_est} and $\oOLS$ defined in \eqref{eq:ppi_oOLS},
\begin{equation}
\label{eq:apdx_ppi_est}
\begin{aligned}
\sqrt{n}\left(\ossthetaest-\theta^*\right) &= \sqrt{n}\left(\thetaest-\theta^*\right) - \oOLS\sqrt{n}\empiricalPn\left(\cecenteredg\right)\\
&= \sqrt{n}\empiricalPn(\IF) - \oOLS\sqrt{n}\empiricalPn\left(\ccenteredg\right) + \oOLS\sqrt{n}\empiricalPnN\left(\ccenteredg\right)+ \littleO_p(1)\\
&= \sqrt{n}\empiricalPn(\IF) - \gamma\oOLS\sqrt{n}\empiricalPn\left(\ccenteredg\right) + \sqrt{\gamma(1-\gamma)}\oOLS\sqrt{N}\empiricalPN\left(\ccenteredg\right) + \littleO_p(1),
\end{aligned}
\end{equation}
as $\frac{N}{n+N} \to \gamma \in (0,1]$. 

By Assumption~\ref{asu:g_Lipschitz}, the restricted class of functions $\set{g_{\eta}: \eta \in \sO}$ is $\joint$-Donsker, and it  satisfies $\norm{\cg(x)-\tg(x)}_{\sL^2(\marginal)} = \littleO_p(1)$
by Lemma~\ref{lem:apdx_1st_lem}.
Therefore,  by Lemma~\ref{lem:apdx_3rd_lem}, the centered class  $\set{g^0_\eta: \eta \in \sO}$ satisfies the conditions of Lemma~\ref{lem:apdx_1st_lem}, and it then follows that $\norm{\ccenteredg(x)-\tcenteredg(x)}_{\sL^2(\marginal)} = \littleO_p(1)$, and 
\begin{equation}
\label{eq:apdx_ep_term}
\bG_n(\ccenteredg-\tcenteredg) = \littleO_p(1). 
\end{equation}
As a result,
\begin{equation}
\notag
\begin{aligned}
\sqrt{n}\empiricalPn\left(\ccenteredg\right) &= \bG_n\left(\ccenteredg-\tcenteredg\right) + \sqrt{n}\joint\left(\ccenteredg\right) + \sqrt{n}\empiricalPn\left(\tcenteredg\right)\\
&= \bG_n\left(\ccenteredg-\tcenteredg\right) + \sqrt{n}\empiricalPn\left(\tcenteredg\right)\\
&= \littleO_p(1) + \bigO_p(1)\\
&= \bigO_p(1),
\end{aligned}
\end{equation}
where we used the fact that $\joint\left(\ccenteredg\right) = 0$ by centering and $\sqrt{n}\empiricalPn\left(\tcenteredg\right) = \bigO_p(1)$ by centering and CLT. Following a similar argument,  $\sqrt{N}\empiricalPN\left(\ccenteredg\right) = \bigO_p(1)$. Further, by \eqref{eq:apdx_ep_term},
\begin{equation}
\notag
\begin{aligned}
&\sqrt{n}\empiricalPn(\ccenteredg) - \sqrt{n}\empiricalPn(\tcenteredg)\\
&=\bG_n(\ccenteredg-\tcenteredg) - \sqrt{n}\joint(\ccenteredg) + \sqrt{n}\joint(\tcenteredg)\\
&= \bG_n(\ccenteredg-\tcenteredg) = \littleO_p(1),
\end{aligned}
\end{equation}
and similarly 
$$\sqrt{n}\empiricalPN(\ccenteredg) - \sqrt{n}\empiricalPN(\tcenteredg) = \littleO_p(1).$$
Therefore, \eqref{eq:apdx_ppi_est} can be further modified as:
\begin{equation}
\label{eq:apdx_ppi_est_1}
\begin{aligned}
\sqrt{n}(\ossthetaest-\theta^*) &= \sqrt{n}\empiricalPn(\IF) - \gamma\oOLS\sqrt{n}\empiricalPn(\tcenteredg) + \sqrt{\gamma(1-\gamma)}\oOLS\sqrt{N}\empiricalPN(\tcenteredg) + \littleO_p(1).
\end{aligned}
\end{equation}

For convenience, let
$$\hat{\*B}_1 = \oOLS = \empiricalPn\left[\IFplugin\left(\cecenteredg\right)^\top\right]\set{\empiricalPnN\left[\cecenteredg\left(\cecenteredg\right)^\top\right]}^{-1},$$
$$\hat{\*B}_2 = \empiricalPn\left[\IFplugin\left(\tecenteredg\right)^\top\right]\set{\empiricalPnN\left[\tecenteredg\left(\tecenteredg\right)^\top\right]}^{-1}.$$
We now show that
$\norm{\hat{\*B}_1-\hat{\*B}_2}_2 = \littleO_p(1)$. First,
\begin{equation}
\notag
\begin{aligned}
\norm{\hat{\*B}_1-\hat{\*B}_2}_2
&\le\underbrace{\norm{\empiricalPn\left[\IFplugin(\cecenteredg-\tecenteredg)^\top\right]}_2}_{\text{I}}\underbrace{\norm{\set{\empiricalPnN\left[\cecenteredg(\cecenteredg)^\top\right]}^{-1}}_2}_{\text{II}}\\
&\quad+\underbrace{\norm{\empiricalPn\left[\IFplugin(\tecenteredg)^\top\right]}_2}_{\text{III}}\underbrace{\norm{\set{\empiricalPnN\left[\cecenteredg(\cecenteredg)^\top\right]}^{-1} - \set{\empiricalPnN\left[\tecenteredg(\tecenteredg)^\top \right]}^{-1}}_2}_{\text{IV}}.
\end{aligned}
\end{equation}

For $\text{I}$, note that by Lemma~\ref{lem:apdx_2nd_lem}, we have $
\norm{\empiricalPn\left[\IFplugin(\ccenteredg-\tcenteredg)^\top\right]}_2 = \littleO_p(1)$.
Therefore, 
\begin{equation}
\notag
\begin{aligned}
&\text{I} \le \norm{\empiricalPn\left[\IFplugin\left(\ccenteredg-\tcenteredg\right)^\top\right]}_2 + \norm{\empiricalPn\left(\IFplugin \right)\left[\empiricalPnN\left(\tcenteredg\right)\right]^\top}_2 + \norm{\empiricalPn\left(\IFplugin \right)\left[\empiricalPnN\left(\ccenteredg\right)\right]^\top}_2\\
&= \norm{\empiricalPn\left(\IFplugin \right)\left[\empiricalPnN\left(\tcenteredg\right)\right]^\top}_2 + \norm{\empiricalPn(\IFplugin )\left[\empiricalPnN\left(\ccenteredg\right)\right]^\top}_2 + \littleO_p(1)\\
&\le \norm{\empiricalPn\left(\IFplugin \right)}_2\norm{\empiricalPnN\left(\tcenteredg\right)}_2 + \norm{\empiricalPn\left(\IFplugin \right)}_2\norm{\empiricalPnN\left(\ccenteredg\right)}_2 + \littleO_p(1)\\
& = \littleO_p(1), 
\end{aligned}
\end{equation}
where we used the fact that $\norm{\empiricalPn(\IFplugin )}_2 = \littleO_p(1)$ and $\norm{\empiricalPnN\left(\ccenteredg\right)}_2 = \littleO_p(1)$ by Lemma~\ref{lem:apdx_1st_lem}. 

For II, note that by Lemma~\ref{lem:apdx_1st_lem} we have $\norm{\empiricalPnN(\ccenteredg)} = \littleO_p(1)$. Thus,
\begin{equation}
\begin{aligned}
\label{eq:adpx_convergence_cov_1}
\norm{\empiricalPnN\left[\cecenteredg\left(\cecenteredg\right)^\top - \ccenteredg\left(\ccenteredg\right)^\top\right]}_2 &= \norm{\empiricalPnN\left(\ccenteredg\right)\empiricalPnN\left(\ccenteredg\right)^\top}_2\\
&\le \norm{\empiricalPnN\left(\ccenteredg\right)}^2_2\\
&= \littleO_p(1).
\end{aligned}
\end{equation}
Next, consider the term $\norm{\empiricalPnN\left[\ccenteredg\left(\ccenteredg\right)^\top - \tcenteredg\left(\tcenteredg\right)^\top \right]}_2$. By Lemma~\ref{lem:apdx_3rd_lem}, the centered class $\set{g^0_{\eta}: \eta \in \Omega}$ satisfies the conditions of Lemma~\ref{lem:apdx_2nd_lem}. Therefore applying Lemma~\ref{lem:apdx_2nd_lem}, the two terms $\norm{\empiricalPnN\left[\ccenteredg\left(\ccenteredg-\tcenteredg\right)^\top \right]}_2$ and $ \norm{\empiricalPnN\left[\tcenteredg\left(\ccenteredg-\tcenteredg\right)^\top \right]}_2$ are both $\littleO_p(1)$. Then,
\begin{equation}
\notag
\begin{aligned}
&\norm{\empiricalPnN\left[\ccenteredg\left(\ccenteredg\right)^\top - \tcenteredg\left(\tcenteredg\right)^\top \right]}_2\\
&\le \norm{\empiricalPnN\left[\ccenteredg\left(\ccenteredg-\tcenteredg\right)^\top \right]}_2 + \norm{\empiricalPnN\left[\tcenteredg\left(\ccenteredg-\tcenteredg\right)^\top \right]}_2\\
&= \littleO_p(1).
\end{aligned}
\end{equation}
Combining these two results, we have:
\begin{equation}
\label{eq:adpx_convergence_cov_2}
\begin{aligned}
&\norm{\empiricalPnN\left[\cecenteredg(\cecenteredg)^\top - \tcenteredg(\tcenteredg)^\top \right]}_2\\
&\le \norm{\empiricalPnN\left[\cecenteredg(\cecenteredg)^\top - \ccenteredg(\ccenteredg)^\top \right]}_2 + \norm{\empiricalPnN\left[\ccenteredg(\ccenteredg)^\top - \tcenteredg(\tcenteredg)^\top \right]}_2\\
&= \littleO_p(1).
\end{aligned}
\end{equation}
Finally, since $\empiricalPnN\left[\tcenteredg(\tcenteredg)^\top\right] \inprobability \joint\left[\tcenteredg(\tcenteredg)^\top \right] \succ 0$, by continuous mapping, 
$$\text{II} = \norm{\set{\empiricalPnN\left[\cecenteredg(\cecenteredg)^\top \right]}^{-1}}_2 = \norm{\set{\joint\left[\tcenteredg(\tcenteredg)^\top \right]}^{-1}}_2 + \littleO_p(1) = \bigO_p(1).$$

We showed that III$=\bigO_p(1)$ in the proof of Theorem~\ref{thm:OSS_estimator}. 

For IV, by Lemma~\ref{lem:apdx_1st_lem}, both $\norm{\empiricalPnN(\ccenteredg)}$ and $\norm{\empiricalPnN(\tcenteredg)}$ are $\littleO_p(1)$. Consequently, 
\begin{equation}
\notag
\begin{aligned}
&\norm{\empiricalPnN\left[\cecenteredg(\cecenteredg)^\top - \tecenteredg(\tecenteredg)^\top \right]}_2\\
&= \norm{\empiricalPnN\left[\ccenteredg(\ccenteredg)^\top - \tcenteredg(\tcenteredg)^\top \right] - \empiricalPnN(\ccenteredg)\empiricalPnN(\ccenteredg)^\top + \empiricalPnN(\tcenteredg)\empiricalPnN(\tcenteredg)^\top}_2\\
&\le \norm{\empiricalPnN\left[\ccenteredg(\ccenteredg)^\top - \tcenteredg(\tcenteredg)^\top \right]}_2 + \norm{\empiricalPnN(\ccenteredg)\empiricalPnN(\ccenteredg)^\top}_2 + \norm{\empiricalPnN(\tcenteredg)\empiricalPnN(\tcenteredg)^\top}_2\\
&\le \norm{\empiricalPnN\left[\ccenteredg(\ccenteredg)^\top - \tcenteredg(\tcenteredg)^\top \right]}_2 + \norm{\empiricalPnN(\ccenteredg)}^2_2 + \norm{\empiricalPnN(\tcenteredg)}^2_2\\
&=\littleO_p(1),
\end{aligned}
\end{equation}
where $\norm{\empiricalPnN\left[\ccenteredg(\ccenteredg)^\top - \tcenteredg(\tcenteredg)^\top \right]}_2 = \littleO_p(1)$ is given by \eqref{eq:adpx_convergence_cov_1}. Further, we have shown that $\text{II} = \norm{\set{\empiricalPnN\left[\cecenteredg(\cecenteredg)^\top \right]}^{-1}}_2 = \bigO_p(1)$. Similarly, it can be shown that $\norm{\set{\empiricalPnN\left[\tecenteredg(\tecenteredg)^\top \right]}^{-1}}_2 = \bigO_p(1)$.
Moreover, we have
\begin{equation}
\begin{aligned}
\norm{\empiricalPnN\left[\tecenteredg(\tecenteredg)^\top - \tcenteredg(\tcenteredg)^\top\right]}_2 &= \norm{\empiricalPnN\left(\tcenteredg\right)\empiricalPnN\left(\tcenteredg\right)^\top}_2\\
&\le \norm{\empiricalPnN\left(\tcenteredg\right)}^2_2\\
&= \littleO_p(1),
\end{aligned}
\end{equation}
which, combined with \eqref{eq:adpx_convergence_cov_2}, gives
$$\norm{\empiricalPnN\left[\cecenteredg(\cecenteredg)^\top - \tecenteredg(\tecenteredg)^\top \right]}_2 = \littleO_p(1).$$
Finally, from the fact that 
$\*A^{-1}-\*B^{-1} = \*B^{-1}(\*B-\*A)\*A^{-1}$,
it follows that:
\begin{equation}
\notag
\begin{aligned}
&\text{IV} = \norm{\set{\empiricalPnN\left[\cecenteredg(\cecenteredg)^\top\right]}^{-1} - \set{\empiricalPnN\left[\tecenteredg(\tecenteredg)^\top \right]}^{-1}}_2\\
&\le\norm{\empiricalPnN\left[\cecenteredg(\cecenteredg)^\top - \tecenteredg(\tecenteredg)^\top \right]}_2\norm{\set{\empiricalPnN\left[\cecenteredg(\cecenteredg)^\top\right]}^{-1}}_2\norm{\set{\empiricalPnN\left[\tecenteredg(\tecenteredg)^\top \right]}^{-1}}_2\\
&= \littleO_p(1). 
\end{aligned}
\end{equation}

Combining the above results, we have $\norm{\hat{\*B}_1-\hat{\*B}_2}_2 = \littleO_p(1)$. As we have shown that $\sqrt{n}\empiricalPn(\ccenteredg) = \bigO_p(1)$ and $\sqrt{N}\empiricalPN(\ccenteredg) = \bigO_p(1)$, \eqref{eq:apdx_ppi_est_1} can be expressed as
\begin{equation}
\notag
\begin{aligned}
\sqrt{n}\left(\ossthetaest-\theta^*\right) &= \sqrt{n}\empiricalPn(\IF) - \gamma\hat{\*B}_2\sqrt{n}\empiricalPn(\tcenteredg) \\
&\quad+ \sqrt{\gamma(1-\gamma)}\hat{\*B}_2\sqrt{N}\empiricalPN(\tcenteredg) + \littleO_p(1).
\end{aligned}
\end{equation}
The rest of the proof follows  Theorem~\ref{thm:OSS_estimator}.
\end{proof}

\begin{proof}[Proof of Proposition~\ref{prop:efficiency_missing}.]
First, we show that the tangent space relative to $\sQ$ at $\joint \times \Pdelta$, denoted as $\tangentmissing$, is 
$$\sM = \set{s_X(x)+w s_{Y\mid X}(z): s_X(x) \in \tangentmarginal, s_{Y\mid X}(z) \in \tangentconditional},$$
where $\tangentmarginal$ is the tangent space at $\marginal$ relative to $\marginalmodel$ and $\tangentconditional$ is the tangent space at $\conditional$ relative to $\conditionalmodel$. 

We first show that $\tangentmissing \subseteq \sM$. Consider any one-dimensional regular parametric sub-model of $\sQ$, which can be represented as
$$\set{\bP \times \Pdelta: \bP \in \sP_T},$$
where $\sP_T$ is a one-dimensional regular parametric sub-model of $\jointmodel$ such that
$$\sP_T = \set{p_t(z) = p_{t,X}(x)p_{t,Y\mid X}(z): t \in T},$$ 
where $p_{t^*}(z)$ corresponds to the density of $\joint$ for some $t^* \in T$. Denote the score function of $p_t(z)$ at $t^*$ as $s(z) = s_X(x) + s_{Y\mid X}(z)$, where $s_X(x)$ is score function of $p_{t,X}(x)$ at $t^*$, and $s_{Y\mid X}(z)$ is the score function of $p_{t,Y\mid X}(y\mid x)$ at $t^*$. The density of $(Z, W)$ is thus
$$p_{t,X}(x)\left[(1-\gamma)p_{t,Y\mid X}(z)\right]^{w}\gamma^{1-w}, $$
and its score function at $t^*$ is $s_X(x) + w s_{Y\mid X}(z)$. Because $\sP_T$ is a one-dimensional regular parametric sub-model of $\jointmodel$, which has the form \eqref{eq:separable_model}, it must be true that $\set{p_{t,X}(x): t \in T}$ is a one-dimensional regular parametric sub-model of $\marginalmodel$, and $\set{p_{t,Y\mid X}(z): t \in T}$ is a one-dimensional regular parametric sub-model of $\conditional$. Therefore, the corresponding score function satisfies that $s_X(x) \in \tangentmarginal$ and $s_{Y\mid X}(z) \in \tangentconditional$, and hence $s_X(x) + w s_{Y\mid X}(z) \in \sM$. We have $\tangentmissing \subseteq \sM$ by the fact that $\sM$ is a closed linear space. 

Now we show the other direction, i.e., $\sM \subseteq \tangentmissing$. Consider an arbitrary element $s_X(x)+w s_{Y\mid X}(z)$ of $\sM$ where $s_X(x) \in \tangentmarginal$ and $s_{Y\mid X}(z) \in \conditional$. Suppose $s_X(x)$ is the score function of some parametric sub-model $\set{p_{t,X}: t \in T \subset \bR}$ of $\marginalmodel$ at $\marginal$ and $s_{Y\mid X}(z)$ is the score function of some parametric sub-model $\set{p_{t,X\mid Y}: t \in T \subset \bR}$ of $\conditionalmodel$ at $\conditional$. Then $s_X(x)+w s_{Y\mid X}(z)$ is the score function at $\joint \times \Pdelta$ of
$$\set{\bP \times \Pdelta: \bP \in \sP_T},$$
where 
$\sP_T = \set{p_{t,X}(x)p_{t,Y\mid X}(z): t \in T \subset R}$, which proves $s_X(x)+w s_{Y\mid X}(z) \in \tangentmissing$ and hence $\sM \subseteq \tangentmissing$.

Pathwise differentiability of $\thetafunctional$ at $\joint$ relative to $\joint$ implies, for any one-dimensional parametric sub-model $\sP_T \subset \jointmodel$ such that $p_{t^*}(z)$ corresponds to the density of $\joint$,
$$\frac{d\theta(p_{t^*})}{dt} = \langle \EIF(z), s_X(x) + s_{Y\mid X}(z) \rangle_\lp,$$
where $s(z) = s_X(x) + s_{Y\mid X}(z)$ is the score function of $\sP_T$ at $\joint$, $s_X(x) \in \tangentmarginal$ and $s_{Y\mid X}(z) \in \tangentconditional$. This maps one-to-one to a parametric sub-model $\set{\bP \times \Pdelta: \bP \in \sP_T}$ of $\sQ$ with score function $s_X(x) + w s_{Y\mid X}(z) \in \tangentmissing$. We thus have:
\begin{equation}
\notag
\begin{aligned}
\frac{d\theta(p_{t^*})}{dt} &= \langle \EIF(z), s_X(x) + s_{Y\mid X}(z) \rangle_\lp\\
&= \langle \EIF(z)-\cEIF(x), s_{Y\mid X}(z) \rangle_\lpyx + \langle \cEIF(x), s_{X}(z) \rangle_\lpx\\
&= \left\langle \frac{w}{1-\gamma}[\EIF(z)-\cEIF(x)], \delta s_{Y\mid X}(z) \right\rangle_\lpyx + \langle \cEIF(x), s_{X}(z) \rangle_\lpx\\
&= \left\langle \frac{w}{1-\gamma}[\EIF(z)-\cEIF(x)] + \cEIF(x),  s_{X}(z) + \delta s_{Y\mid X}(z) \right\rangle_\lp.
\end{aligned}
\end{equation}
This implies that the function
$$\frac{w}{1-\gamma}[\EIF(z)-\cEIF(x)] + \cEIF(x)$$
is a gradient of $\thetafunctional$ at $\joint \times \Pdelta$ relative to $\sQ$. As shown in the proof of Theorem~\ref{thm:efficiency_ISS}, $a^\top[\EIF(z)-\cEIF(x)] \in \tangentconditional$ and $a^\top\cEIF(x) \in \marginal$ for $a \in \bR^p$. Therefore, we see that 
$$a^\top\set{\frac{w}{1-\gamma}[\EIF(z)-\cEIF(x)] + \cEIF(x)} \in \tangentmissing,$$
for all $a \in \bR^p$. By definition, $\frac{w}{1-\gamma}[\EIF(z)-\cEIF(x)] + \cEIF(x)$ is the efficient influence function of $\thetafunctional$ at $\joint \times \Pdelta$ relative to $\sQ$. Note the above derivations correspond to the sample size $n+N$. Therefore we need to additionally multiply a factor of $\lim_{n\to\infty}\sqrt{\frac{n}{n+N}} = \sqrt{1-\gamma}$ to obtain the efficient influence function corresponding to the sample size $n$. By independence of $W$ and $Z$, the semiparametric efficiency lower bound can be expressed as:
\begin{equation}
\notag
\begin{aligned}
&\Var\set{\frac{\delta}{\sqrt{1-\gamma}}[\EIF(Z)-\cEIF(X)] + \sqrt{1-\gamma}\cEIF(X)}\\
&= \Var\set{\frac{\delta}{\sqrt{1-\gamma}}[\EIF(Z)-\cEIF(X)]} + \Var[\sqrt{1-\gamma}\cEIF(X)]\\
&\quad+\Cov\set{\frac{\delta}{\sqrt{1-\gamma}}[\EIF(Z)-\cEIF(X)],\sqrt{1-\gamma}\cEIF(X)}\\
&\quad+ \Cov\set{\sqrt{1-\gamma}\cEIF(X),\frac{\delta}{\sqrt{1-\gamma}}[\EIF(Z)-\cEIF(X)]}\\
&= \frac{\bE[\delta^2]}{1-\gamma}\Var\set{[\EIF(Z)-\cEIF(X)]} + (1-\gamma)\Var[\cEIF(X)]\\
&\quad + \bE[\delta]\Cov\set{\EIF(Z)-\cEIF(X),\cEIF(X)}+ \bE[\delta]\Cov\set{\cEIF(X),\EIF(Z)-\cEIF(X)}\\
&=\Var\left[\EIF(Z)-\cEIF(X)\right] + (1-\gamma)\Var[\cEIF(X)],
\end{aligned}
\end{equation}
where in the last step we use Lemma~\ref{lem:apdx_decomp_var:} to show that $\Cov\set{\cEIF(X),\EIF(Z)-\cEIF(X)} = 0$.
\end{proof}

\section{Connection to prediction-powered inference}
\label{sec:apdx_ppi}
In this section, we analyze existing PPI estimators through the lens of the proposed framework. We will show that (i) many existing PPI estimators can be analyzed in a unified manner, including our proposed safe PPI estimator \eqref{eq:ppi_est}; (ii) the proposed safe PPI estimator is optimally efficient among the PPI estimators; (iii) none of the PPI estimators achieves the efficiency bound of Theorem~\ref{thm:efficiency_OSS}, without strong assumptions on the machine learning prediction model. 

We consider the setting of M-estimation as described in Section 7.1 of the main text. (Our results can be extended to Z-estimation.) Let $f:\sX \to \sY$ denote a machine learning prediction model trained on independent data. Let $\mdf(x)$ denote $\mdf(x) = \md(x,f(x))$, and let $\cmd(x) = \bE[\md(Z)\mid X = x]$. Further, denote $\thetaest$ as the supervised M-estimator \eqref{eq:m-estimator}, and suppose that suitable conditions hold such that $\thetaest$ is regular and asymptotically linear with influence function $\IF(z)$ in  \eqref{eq:m-estimator_IF}. Finally, suppose that the conditions of Proposition~\ref{prop:ppi_OSS} hold. 

First, we present a unified approach to analyze existing PPI estimators, as well as the proposed safe PPI estimator \eqref{eq:ppi_est}. Consider semi-supervised estimators of $\theta^*$ that are regular and asymptotically linear in the sense of Definitions~\ref{def:regular_OSS} and~\ref{def:apdx_AL_OSS}, with influence function 
\begin{equation}
\label{eq:apdx_ppi_IF}
\vp^f_{\*A}(z_1, x_2) = -\msecondderivativeinv\left[\md(z_1)-\*A\mdf(x_1) + \*A\mdf(x_2)\right],
\end{equation}
where $\*A \in \bR^{p \times p}$. The above class includes a number of existing PPI estimators, such as the proposal of  \cite{angelopoulos2023prediction, angelopoulos2023ppi++, miao2023assumption,gan2023prediction}, and the proposed safe PPI estimator \eqref{eq:ppi_est}, with  different estimators corresponding to different choices of $\*A$. Specifically, the original PPI estimator \citep{angelopoulos2023prediction} considers $\*A = \*I_p$. \cite{angelopoulos2023ppi++} improve the original PPI estimator by introducing a tuning weight $\omega \in \bR$ and considering $\*A = \omega\*I_p$. They show that the value of $\omega$ that minimizes the trace of the asymptotic variance of their estimator is
$$\omega = \frac{\gamma\tr\left(\msecondderivativeinv\set{\Cov[\mdf(X), \md(Z)] + \Cov[\md(Z), \mdf(X)]}\msecondderivativeinv\right)}{2\tr\set{\msecondderivativeinv\Var[\mdf(X)]\msecondderivativeinv}}.$$
\cite{miao2023assumption} instead lets $\*A = \text{diag}(\pmb{\omega})$, which is a diagonal matrix with tuning weights $\pmb{\omega} \in \bR^p$. They set the weights to minimize the element-wise asymptotic variance:
$$\omega_j = \frac{\left[\msecondderivativeinv \Cov[\md(Z), \mdf(X)]\msecondderivativeinv\right]_{jj}}{\left[\msecondderivativeinv \Var[ \mdf(X)]\msecondderivativeinv\right]_{jj}},$$
for each $j \in [p]$, where $\omega_j$ is the $j$-th element of $\pmb{\omega}$. Finally, 
 \cite{gan2023prediction} considers
$$\*A = \omega\Cov[\md(Z),\mdf(X)]\Var[\mdf(X)]^{-1},$$
for a tuning weight $\omega \in \bR$ and show that the optimal weight is $\omega = \gamma$. 

Now, consider the proposed safe PPI estimator \eqref{eq:ppi_est} with $\tg(x) = \vp_{\eta^*}(x,f(x))$ and with $\sG = \set{\vp_\eta(x,f(x)):\eta \in \Omega}$, where $\eta(\bP) = \left(\*V^{-1}_{\theta(\bP)}(\bP), \theta(\bP)\right)$. Then, \eqref{eq:ppi_OLS} can be written as
\begin{equation}
\notag
\begin{aligned}
\OLSppi &=  \bE\left[\IF(Z)\tcenteredg(X)^\top\right]\bE\left[\tcenteredg(X)\tcenteredg(X)^\top\right]^{-1}\\
&= \msecondderivativeinv \Cov\left[\md(Z),\mdf(X)\right]\Var[\mdf(X)]^{-1}\msecondderivative.
\end{aligned}
\end{equation}
By Proposition~\ref{prop:ppi_OSS}, \eqref{eq:ppi_est} is regular and asymptotically linear with influence function
\begin{equation}
\label{eq:apdx_safe_ppi_IF}
\begin{aligned}
&\IF(x_1)-\gamma\OLSppi\tcenteredg(x_1)+\gamma\OLSppi\tcenteredg(x_2)\\
&=-\msecondderivativeinv\left[\md(z_1)-\*A^*\mdf(x_1) +  \*A^*\mdf(x_2)\right]\\
&=\vp^f_{\*A^*}(z_1, x_2),
\end{aligned}
\end{equation}
where 
$$\*A^* = \gamma \Cov\left[\md(Z),\mdf(X)\right]\Var[\mdf(X)]^{-1}.$$
Therefore, \eqref{eq:ppi_OLS} has influence function in the form of \eqref{eq:apdx_ppi_IF} with $\*A = \*A^*$. In fact, this is the same influence function as \cite{gan2023prediction} with the optimal weight $\omega = \gamma$. 

Next, we show that \eqref{eq:apdx_safe_ppi_IF} has the smallest asymptotic variance among all PPI estimators with influence function of the form \eqref{eq:apdx_ppi_IF}. That is, the proposed safe estimator \eqref{eq:ppi_est} is at least as efficient as existing PPI estimators.  
Recall that under the OSS setting, the asymptotic variance of an estimator with influence function $\IF(z_1,x_2)$ is $\langle\IF(z_1,x_2), \IF(z_1,x_2)^\top\rangle_\sH$.
\begin{proposition}
\label{prop:apdx_ppi_optimal}
For all $\*A \in \bR^{p \times p}$, the following holds:
$$\left\langle\vp^f_{\*A}(z_1, x_2), \vp^f_{\*A}(z_1, x_2)^\top\right\rangle_\sH \succeq \left\langle\vp^f_{\*A^*}(z_1, x_2), \vp^f_{\*A^*}(z_1, x_2)^\top\right\rangle_\sH.$$
\end{proposition}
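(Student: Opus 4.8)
The plan is to turn the Loewner comparison into a matrix completion-of-squares in $\*A$. First I would split the influence function \eqref{eq:apdx_ppi_IF} into its labeled and unlabeled parts,
\[
\vp^f_{\*A}(z_1, x_2) = a_{\*A}(z_1) + c_{\*A}(x_2), \quad a_{\*A}(z_1) = -\msecondderivativeinv\big[\md(z_1) - \*A\mdf(x_1)\big], \quad c_{\*A}(x_2) = -\msecondderivativeinv\*A\mdf(x_2),
\]
so that $a_{\*A}$ depends only on the labeled datum $z_1=(x_1,y_1)$ and $c_{\*A}$ only on the unlabeled datum $x_2$.

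Next I would record the explicit quadratic form taken by $\langle\cdot,\cdot\rangle_\sH$ on such a decomposition. Arguing exactly as in the closing display of the proof of Theorem~\ref{thm:efficiency_OSS}, and using that the labeled and unlabeled samples are independent, for $a \in \lp$ and $c \in \lpx$ one has
\[
\big\langle a(z_1) + c(x_2),\, [a(z_1) + c(x_2)]^\top \big\rangle_\sH = \Var[a(Z)] + \tfrac{1-\gamma}{\gamma}\,\Var[c(X)].
\]
As a check, this reproduces $\Sigmasafe(\gamma)$ of Proposition~\ref{prop:ppi_OSS} and the bound \eqref{eq:EIF_var_OSS}. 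Applying it to $\vp^f_{\*A}$ and pulling the symmetric matrix $\msecondderivativeinv$ outside gives $\langle\vp^f_{\*A},(\vp^f_{\*A})^\top\rangle_\sH = \msecondderivativeinv\, Q(\*A)\, \msecondderivativeinv$, where
\[
Q(\*A) := \Var[\md(Z) - \*A\mdf(X)] + \tfrac{1-\gamma}{\gamma}\,\*A\,\Var[\mdf(X)]\,\*A^\top.
\]

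I would then abbreviate $\Sigma_m=\Var[\md(Z)]$, $\Sigma_f=\Var[\mdf(X)]$, $\Sigma_{mf}=\Cov[\md(Z),\mdf(X)]$, expand, and combine the two $\*A\Sigma_f\*A^\top$ terms to obtain
\[
Q(\*A) = \Sigma_m - \*A\Sigma_{mf}^\top - \Sigma_{mf}\*A^\top + \tfrac{1}{\gamma}\,\*A\Sigma_f\*A^\top .
\]
The core step is the matrix identity: with $\*A^* = \gamma\,\Sigma_{mf}\Sigma_f^{-1}$ --- precisely the matrix identified in \eqref{eq:apdx_safe_ppi_IF} --- one verifies
\[
Q(\*A) - Q(\*A^*) = \tfrac{1}{\gamma}\,(\*A - \*A^*)\,\Sigma_f\,(\*A - \*A^*)^\top .
\]
Since $\Sigma_f = \Var[\mdf(X)]\succeq 0$ and $\gamma>0$, the right-hand side is positive semidefinite, so $Q(\*A)\succeq Q(\*A^*)$; conjugating by the symmetric $\msecondderivativeinv$ preserves the order, because $v^\top\msecondderivativeinv[Q(\*A)-Q(\*A^*)]\msecondderivativeinv v = (\msecondderivativeinv v)^\top[Q(\*A)-Q(\*A^*)](\msecondderivativeinv v)\ge 0$ for every $v$. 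This is the claimed inequality.

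The main obstacle is bookkeeping rather than conceptual. I must fix the correct weight $\tfrac{1-\gamma}{\gamma}$ on the unlabeled component --- it is forced by the definition of $\langle\cdot,\cdot\rangle_\sH$ and differs from the $\sqrt{\gamma(1-\gamma)}$ scaling appearing in the Definition~\ref{def:AL_OSS} representation --- and I must track transposes carefully in the completion of squares, since $\*A$ is a general (non-symmetric) $p\times p$ matrix and $\Sigma_{mf}$ is not symmetric. The identity for $\*A^*$ rests on the cross terms matching, i.e. on $\tfrac{1}{\gamma}\Sigma_f(\*A^*)^\top = \Sigma_{mf}^\top$ together with $\Sigma_{mf}(\*A^*)^\top = \*A^*\Sigma_{mf}^\top$; verifying these two equalities is where I would concentrate the care.
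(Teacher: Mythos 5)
Your proof is correct and is essentially the paper's argument in a different costume: the paper expands $\langle h_1,h_1^\top\rangle_\sH$ around $h_2=\vp^f_{\*A^*}$ and shows the cross term $\langle \vp^f_{\*A}-\vp^f_{\*A^*},(\vp^f_{\*A^*})^\top\rangle_\sH$ vanishes, which is exactly your completion of squares, since both reduce to the same identity $\tfrac{1}{\gamma}\Sigma_f(\*A^*)^\top=\Sigma_{mf}^\top$ and isolate the same positive semidefinite remainder $\tfrac{1}{\gamma}\msecondderivativeinv(\*A-\*A^*)\Var[\mdf(X)](\*A-\*A^*)^\top\msecondderivativeinv$. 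Your weight $\tfrac{1-\gamma}{\gamma}$ on the unlabeled block and the final conjugation step are both right, so the argument goes through as written.
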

\begin{proof}
By the linearity of inner product, 
$$\left\langle h_1,h_1^\top   \right\rangle_\sH = \left\langle h_1-h_2,(h_1-h_2)^\top   \right\rangle_\sH+\left\langle h_2,h_2^\top \right\rangle_\sH + \left\langle h_1-h_2,h_2^\top   \right\rangle_\sH + \left\langle h_2,(h_2-h_1)^\top   \right\rangle_\sH,$$
for arbitrary $h_1$ and $h_2$. Therefore, it suffices to prove that the cross term satisfies
$$\left\langle\vp^f_{\*A}(z_1, x_2)-\vp^f_{\*A^*}(z_1, x_2), \vp^f_{\*A^*}(z_1, x_2)^\top\right\rangle_\sH = 0$$
for all $\*A \in \bR^{p \times p}$, which we now prove.

By definition, we have
\begin{equation}
\label{eq:apdx_IF_PPI_inner}
\begin{aligned}
&\left\langle\vp^f_{\*A}(z_1, x_2)-\vp^f_{\*A^*}(z_1, x_2), \vp^f_{\*A^*}(z_1, x_2)^\top\right\rangle_\sH\\
&= \msecondderivativeinv\left\langle-(\*A-\*A^*)\mdf(x_1)+(\*A-\*A^*)\mdf(x_2), \left[\md(x_1)-\*A^*\mdf(x_1)+\*A^*\mdf(x_2)\right]^\top\right\rangle_\sH\msecondderivativeinv\\
&= -\msecondderivativeinv(\*A-\*A^*)\Cov\left[\mdf(X), \md(X)-\*A^*\mdf(X)\right]\msecondderivativeinv\\
&\quad+ \frac{1-\gamma}{\gamma}\msecondderivativeinv(\*A-\*A^*)\Var[\mdf(X)](\*A^*)^\top\msecondderivativeinv.
\end{aligned}
\end{equation}
Recall that $\*A^* = \gamma \Cov\left[\md(Z),\mdf(X)\right]\Var[\mdf(X)]^{-1}$. Therefore, we have
$$\Cov\left[\mdf(X), \md(X)-\*A^*\mdf(X)\right] = \frac{1-\gamma}{\gamma}\Var[\mdf(X)](\*A^*)^\top.$$
Plugging this into \eqref{eq:apdx_IF_PPI_inner}, it then follows that
$$\left\langle\vp^f_{\*A}(z_1, x_2)-\vp^f_{\*A^*}(z_1, x_2), \vp^f_{\*A^*}(z_1, x_2)^\top\right\rangle_\sH = 0,$$
which then finishes the proof.
\end{proof}
Proposition~\ref{prop:apdx_ppi_optimal} shows that \eqref{eq:ppi_est} provides optimal efficiency among estimators with influence function of the form \eqref{eq:apdx_ppi_IF}. We note that, for fair comparison, we consider the case where there is only one machine learning model $f(\cdot)$ for all estimators. However, the proposed safe PPI estimator can incorporate multiple machine learning models in a principled way, whereas it is unclear how to do this for most existing PPI estimators. Moreover, our estimator can deal with general inferential problems beyond M-estimation, such as U-statistics.

Our next result shows that estimators with influence function of the form \eqref{eq:apdx_ppi_IF} cannot achieve the semiparametric efficiency bound in the OSS setting unless strong assumptions are imposed on the machine learning prediction model $f$. To discuss efficiency in M-estimation (or Z-estimation), we informally consider a nonparametric model in the form of \eqref{eq:separable_model} such that the supervised M-estimator \eqref{eq:m-estimator} is efficient (in the supervised setting). Proposition~\ref{prop:PPIconnection2} provides a necessary condition for estimators with influence function \eqref{eq:apdx_ppi_IF} to be efficient in the OSS setting. 
\begin{proposition}
\label{prop:PPIconnection2}
Let $\sS_f = \set{\*A\mdf(x)-\*A\bE[\mdf(X)]:\*A \in \bR^{p \times p}}$ denote the linear space generated by $\mdf(x)-\bE[\mdf(X)]$ in $\lpx$. If
\begin{equation}
\label{eq:apdx_condition_ppi_efficient}
 \cmd(x) \notin \sS_f,   
\end{equation}
then \eqref{eq:apdx_ppi_IF} cannot be the efficient influence function of $\thetafunctional$ at $\ossjoint$ relative to model \eqref{eq:separable_model}.
\end{proposition}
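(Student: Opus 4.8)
The plan is to argue by contraposition: I assume that some member $\vp^f_{\*A}$ of the class \eqref{eq:apdx_ppi_IF} is the efficient influence function of $\thetafunctional$ at $\ossjoint$ relative to \eqref{eq:separable_model}, and derive $\cmd \in \sS_f$, contradicting \eqref{eq:apdx_condition_ppi_efficient}. The starting point is the stipulation made just above the proposition that the model is rich enough that the supervised M-estimator is efficient in the supervised setting, so its influence function $\IF(z) = -\msecondderivativeinv\md(z)$ coincides with the efficient influence function $\EIF$ relative to \eqref{eq:separable_model}. Consequently the conditional efficient influence function is $\cEIF(x) = \bE[\EIF(Z)\mid X=x] = -\msecondderivativeinv\cmd(x)$, where $\cmd(x)=\bE[\md(Z)\mid X=x]$; note that $\bE[\cmd(X)]=\bE[\md(Z)]=0$ by the first-order optimality condition, so $\cmd$ indeed lies in $\lpx$.

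First I would invoke Theorem~\ref{thm:efficiency_OSS} to write the efficient influence function at $\ossjoint$ explicitly. Substituting $\cEIF(x)=-\msecondderivativeinv\cmd(x)$ into \eqref{eq:EIF_OSS} gives
$$-\msecondderivativeinv\big[\md(z_1)-\gamma\cmd(x_1)\big]-\sqrt{\gamma(1-\gamma)}\,\msecondderivativeinv\cmd(x_2).$$
Since the efficient influence function of the form \eqref{eq:OSS_IF} is unique, $\vp^f_{\*A}$ can equal it only if the two agree as elements of $\sH$.

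Next I would exploit the uniqueness of the decomposition \eqref{eq:OSS_IF} into a $z_1$-part in $\lp$ and an $x_2$-part in $\lpx$: because $Z_1$ and $X_2$ are independent and both components are mean zero, conditioning on $X_2$ shows the two pieces are determined separately. Rewriting \eqref{eq:apdx_ppi_IF} in centered form,
$$\vp^f_{\*A}(z_1,x_2)=-\msecondderivativeinv\Big[\md(z_1)-\*A\big(\mdf(x_1)-\bE[\mdf(X)]\big)\Big]-\msecondderivativeinv\*A\big(\mdf(x_2)-\bE[\mdf(X)]\big),$$
so its $x_2$-component is $-\msecondderivativeinv\*A\big(\mdf(x_2)-\bE[\mdf(X)]\big)$, where $\*A\big(\mdf(x)-\bE[\mdf(X)]\big)\in\sS_f$ by the definition of $\sS_f$. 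Matching this against the $x_2$-component of the efficient influence function and cancelling the invertible matrix $\msecondderivativeinv$ yields $\*A\big(\mdf(x)-\bE[\mdf(X)]\big)=\sqrt{\gamma(1-\gamma)}\,\cmd(x)$ for $\marginal$-almost every $x$. Since $\gamma\in(0,1)$, the scalar $\sqrt{\gamma(1-\gamma)}$ is nonzero, so this places $\cmd$ in $\sS_f$ — the desired contradiction.

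The argument is essentially bookkeeping once these identifications are made; the only point needing care is the uniqueness/orthogonality that permits matching the $x_2$-components in isolation, together with the centering of $\mdf$ that ensures both pieces genuinely lie in the mean-zero spaces $\lp$ and $\lpx$ required by Definition~\ref{def:AL_OSS}. I would also remark that matching the $z_1$-component instead produces $\*A\big(\mdf(x)-\bE[\mdf(X)]\big)=\gamma\cmd(x)$, which again forces $\cmd\in\sS_f$; the two requirements are mutually consistent only when $\gamma=\tfrac12$, which underscores that \eqref{eq:apdx_condition_ppi_efficient} is a necessary but generally not sufficient condition for efficiency.
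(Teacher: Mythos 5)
Your proof is correct and follows essentially the same route as the paper: invoke Theorem~\ref{thm:efficiency_OSS} to write the OSS efficient influence function for the M-estimation problem, match the $x$-dependent components against \eqref{eq:apdx_ppi_IF}, and observe that equality forces a nonzero scalar multiple of $\cmd(x)$ to lie in $\sS_f$, contradicting \eqref{eq:apdx_condition_ppi_efficient}. One caveat on your closing remark: the claim that the $x_1$- and $x_2$-matchings are mutually consistent only when $\gamma=\tfrac12$ is an artifact of mixing normalizations --- the class \eqref{eq:apdx_ppi_IF} is written in the appendix's $\sH$-representation (compare \eqref{eq:apdx_safe_ppi_IF}), in which the $x_2$-component of the efficient influence function carries the coefficient $\gamma$ rather than $\sqrt{\gamma(1-\gamma)}$, so both components yield the same condition $\*A\left(\mdf(x)-\bE[\mdf(X)]\right)=\gamma\cmd(x)$; this does not affect the validity of your contradiction, since either matching places $\cmd$ in the linear space $\sS_f$.
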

\begin{proof}
Specializing to the case of M-estimation, by Theorem~\ref{thm:efficiency_OSS},
the efficient influence function is
$$\IF(z_1, x_2) = -\msecondderivativeinv[\md(z_1)-\gamma\cmd(x_1)+\gamma\cmd(x_2)].$$
Therefore \eqref{eq:apdx_ppi_IF} is the efficient influence function if and only if
$$\*A\mdf(X)-\*A\bE[\mdf(X)] = \gamma\cmd(X)$$
$\marginal$-almost surely. This cannot happen if $\cmd(x) \notin \sS_f$. 
\end{proof}
To better interpret condition \eqref{eq:apdx_condition_ppi_efficient}, we consider a simple case where our target of inference is the expectation of $Y$: $\theta^* = \bE[Y]$, and the loss function is $m_\theta(z) = \frac{1}{2}(y-\theta)^2$. Therefore, it follows that $\nabla m_\theta(z) = y - \theta$, $\msecondderivative = \*I_p$, $\cmd(x) = \bE[Y\mid X=x] - \theta^*$, and $\mdf(x) = f(x) - \theta^*$. We see that, in this case, condition \eqref{eq:apdx_condition_ppi_efficient} is equivalent to
$$f(x) = \*A\bE[Y\mid X =x] + b,$$
for some $\*A \in \bR^{p \times p}$ and $b \in \bR^p$. In other words, a necessary condition for these estimators to be efficient is that the machine learning prediction model is a linear transformation of the true regression function. Of course, this is unlikely to hold for black-box machine learning predictions in practice. 

We end this section by noting that although the safe PPI estimator \eqref{eq:ppi_est} may not be efficient, the proposed efficient estimator \eqref{eq:est_NP_OSS} can always achieve the semiparametric efficiency bound under regularity conditions. Further, as we show in numerical experiments in Section~\ref{sec:simu}, the safe PPI estimator has good performance when using a good prediction model.


\section{Proof of claims in Section~\ref{sec:app}}
\label{sec:apdx_app} 
\subsection{Proof of claim in Example~\ref{exmp:glm}}
Recall that the GLM estimator
\begin{equation}
\label{eq:apdx_GLM}
\hat{\theta}_n = \underset{\theta}{\arg\max}\set{\frac{1}{n}\sum_{i=1}^n\left[Y_i\theta^\top X_i - b\left(X_i^\top\theta\right)\right]}
\end{equation}
which is regular and asymptotically linear with influence function $$\IF(z) = \bE\left[b^{(2)}(X^\top\theta^*)XX^\top\right]^{-1}\left[yx-b^{(1)}(x^\top\theta^*)x\right]$$
under regularity conditions, where $\eta^* = \left(\bE\left[b^{(2)}(X^\top\theta^*)XX^\top\right]^{-1}, \theta^*\right)$.
\begin{proposition}
Suppose that both $\sX$ and $\Theta$ are compact subsets. Then, the GLM estimator $\thetaest$ satisfies the conditions of Proposition~\ref{prop:equi_assumption1} with estimator $$\cetaest = \left(\left[\frac{1}{n}\sum_{i=1}^nb^{(2)}(X_i^\top\hat{\theta}_n)X_iX_i^\top\right]^{-1}, \hat{\theta}_n\right).$$
\end{proposition}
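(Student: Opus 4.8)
The plan is to verify conditions (i)--(iii) of Proposition~\ref{prop:equi_assumption1} in turn, writing $\eta = (\*M, \theta)$ with $\*M \in \bR^{p \times p}$ and $\theta \in \bR^p$, so that $\eta^* = \left(\bE[b^{(2)}(X^\top\theta^*)XX^\top]^{-1}, \theta^*\right)$ and the influence function reads $\vp_\eta(z) = \*M\left[yx - b^{(1)}(x^\top\theta)x\right]$. Condition (i) is then immediate, since $\eta$ takes values in the finite-dimensional space $\bR^{p \times p} \times \bR^p$.

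For condition (ii) I would fix $\sO$ to be a bounded open Euclidean ball around $\eta^*$ and establish the Lipschitz bound by adding and subtracting. For $\eta_1 = (\*M_1, \theta_1)$ and $\eta_2 = (\*M_2, \theta_2)$ in $\sO$,
$$\vp_{\eta_1}(z) - \vp_{\eta_2}(z) = (\*M_1 - \*M_2)\left[yx - b^{(1)}(x^\top\theta_1)x\right] + \*M_2\left[b^{(1)}(x^\top\theta_2) - b^{(1)}(x^\top\theta_1)\right]x.$$
Compactness does the rest: since $\sX$ is compact, $\norm{x}$ is bounded, and since $\sO$ is bounded, $\norm{\*M_2}_2$ and $\norm{\theta_j}$ are bounded, so $x^\top\theta_j$ ranges over a compact subset of $\bR$ on which $b^{(1)},b^{(2)}$ (continuous because $b \in C^\infty$) are bounded. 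The first term is then bounded by $C(|y|+1)\norm{\*M_1-\*M_2}_2$; for the second, the mean value theorem gives $|b^{(1)}(x^\top\theta_2)-b^{(1)}(x^\top\theta_1)| \le C\norm{x}\norm{\theta_1-\theta_2}$, so it is bounded by $C'\norm{\theta_1-\theta_2}$. Using $\norm{\*M_1-\*M_2}_2 \le \norm{\*M_1-\*M_2}_F \le \norm{\eta_1-\eta_2}$ and $\norm{\theta_1-\theta_2}\le\norm{\eta_1-\eta_2}$ yields $\norm{\vp_{\eta_1}(z)-\vp_{\eta_2}(z)} \le L(z)\norm{\eta_1-\eta_2}$ with $L(z) = C(|y|+1)$.

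For condition (iii) I would treat the two coordinates of $\cetaest$ separately. Consistency $\thetaest \inprobability \theta^*$ is part of the regularity conditions making $\thetaest$ regular and asymptotically linear. For the matrix coordinate I would decompose
$$\frac{1}{n}\sum_{i=1}^n b^{(2)}(X_i^\top\thetaest)X_iX_i^\top - \bE[b^{(2)}(X^\top\theta^*)XX^\top]$$
into a term that vanishes by the law of large numbers at $\theta^*$ (valid since $b^{(2)}(X^\top\theta^*)XX^\top$ is integrable by compactness of $\sX$) plus the term $\frac{1}{n}\sum_i\left[b^{(2)}(X_i^\top\thetaest)-b^{(2)}(X_i^\top\theta^*)\right]X_iX_i^\top$. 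The latter is $\littleO_p(1)$, because $|b^{(2)}(X_i^\top\thetaest)-b^{(2)}(X_i^\top\theta^*)| \le C\norm{X_i}\norm{\thetaest-\theta^*}$ by the mean value theorem (using boundedness of $b^{(3)}$ on the relevant compact set), so its norm is $\bigO_p(1)\cdot\littleO_p(1)$. Since the limiting matrix $\bE[b^{(2)}(X^\top\theta^*)XX^\top]$ is invertible (again a standing regularity condition), the continuous mapping theorem transfers convergence to the inverses, giving $\norm{\cetaest-\eta^*}=\littleO_p(1)$.

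The main obstacle is the $y$-dependence of the Lipschitz constant: the factor $(\*M_1-\*M_2)yx$ forces $L(z)$ to grow linearly in $|y|$, so square-integrability of $L$ is equivalent to $\bE[Y^2]<\infty$. Because the GLM is \emph{not} assumed correctly specified, this cannot be read off the exponential-family variance function and must be imposed as a moment condition (it is, in any case, precisely what is needed for $\IF \in \lp$). Identifying this as the operative assumption is the one genuinely nontrivial point; the remainder is routine use of compactness, the mean value theorem, and the continuous mapping theorem.
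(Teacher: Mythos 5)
Your proof is correct and follows essentially the same route as the paper's: the same add-and-subtract decomposition of $\vp_{\eta_1}-\vp_{\eta_2}$, compactness of $\sX$ and boundedness of $\sO$ to control $\norm{x}$ and the $b^{(1)},b^{(2)}$ terms, the mean value theorem for the $\theta$-increment, and continuous mapping to pass to the matrix inverse. The only substantive difference is in the consistency of the matrix coordinate: the paper invokes a Glivenko--Cantelli argument (Example 19.9 of van der Vaart, using compactness of $\Theta$) to get uniform convergence of $\frac{1}{n}\sum_i b^{(2)}(X_i^\top\theta)X_iX_i^\top$ before plugging in $\thetaest$, whereas you split into an ordinary law-of-large-numbers term at $\theta^*$ plus a perturbation bounded via $b^{(3)}$ and $\norm{\thetaest-\theta^*}$; both work, and yours is marginally more elementary. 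Your explicit observation that square-integrability of $L(z)$ forces $\bE[Y^2]<\infty$ as a standing moment condition (since the GLM is not assumed correctly specified) is a fair point --- the paper asserts $L \in \sL^2_1(\joint)$ without flagging this, though it is implicitly required anyway for $\IF \in \lp$.
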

\begin{proof}
We first prove that $\thetaest$ satisfies the local Lipshitz condition of Proposition~\ref{prop:equi_assumption1}. Denote $\eta^{(1)}(\bP) = \bE_\bP\left[b^{(2)}(X^\top\theta(\bP))XX^\top\right]^{-1} \in \bR^{p \times p}$ and $\eta^{(2)}(\bP) = \theta(\bP)$. Consider the metric $\rho(\eta_1, \eta_2) = \norm{\eta_1^{(1)}-\eta_2^{(1)}}_2 + \norm{\eta_1^{(2)}-\eta_2^{(2)}}$. Assuming that both $\sX$ and $\Theta$ are compact, which is a mild assumption, we have: 
\begin{equation}
\notag
\begin{aligned}
&\norm{\vp_{\eta_1}(z)-\vp_{\eta_2}(z)}\\ &= \norm{\left(\eta_1^{(1)}-\eta_2^{(1)}\right)yx-\eta_1^{(1)} b^{(1)}\left(x^\top\eta_1^{(2)}\right)+\eta_2^{(1)} b^{(1)}\left(x^\top\eta_2^{(2)}\right)}\\
&=\norm{\left(\eta_1^{(1)}-\eta_2^{(1)}\right)yx-\eta_1^{(1)} \left[b^{(1)}\left(x^\top\eta_1^{(2)}\right)-b^{(1)}\left(x^\top\eta_2^{(2)}\right)\right]+\left(\eta_2^{(1)}-\eta_1^{(1)}\right) b^{(1)}\left(x^\top\eta_2^{(2)}\right)}\\
&\le \norm{\eta_1^{(1)}-\eta_2^{(1)}}_2\norm{yx}+\norm{\eta_1^{(1)}}_2 \left\lvert b^{(1)}\left(x^\top\eta_1^{(2)}\right)-b^{(1)}\left(x^\top\eta_2^{(2)}\right)\right\rvert+\norm{\eta_2^{(1)}-\eta_1^{(1)}}_2 \left\lvert b^{(1)}\left(x^\top\eta_2^{(2)}\right) \right\rvert. 
\end{aligned}
\end{equation}
Since $\bE\left[b^{(2)}\left(X^\top\theta^*\right)XX^\top\right] \succ 0$, we can find a sufficiently small neighborhood $\sO$ containing $\eta^*$ such that $\sup_{\eta \in \sO}\norm{\eta^{(1)}}_2 > 0$. Since $b^{(1)}\left(x^\top\theta\right)$ is differentiable, 
\begin{equation}
\notag
\begin{aligned}
&\left\lvert b^{(1)}\left(x^\top\eta_1^{(2)}\right)-b^{(1)}\left(x^\top\eta_2^{(2)}\right)\right\rvert \le \sup_{x \in \sX, \eta \in \sO}\left\lvert 
b^{(2)}(x^\top\eta^{(2)})\right\rvert \norm{x}\norm{\eta_1^{(2)}-\eta_2^{(2)}},
\end{aligned}
\end{equation}
where $\sup_{x \in \sX, \eta \in \sO}\left\lvert 
b^{(2)}\left(x^\top\eta^{(2)}\right)\right\rvert < \infty$ as $b^{(2)}(\cdot)$ is continuous, $\sX$ is compact, and $\sO$ is bounded. Additionally, we have 
$$\left\lvert 
b^{(1)}\left(x^\top\eta_2^{(2)}\right)\right\rvert \le \sup_{\eta \in \sO}\left\lvert 
b^{(1)}\left(x^\top\eta^{(2)}\right)\right\rvert.$$
Similarly, we can always find a sufficiently small neighborhood $\sO$ containing $\eta^*$ such that $$\sup_{\eta \in \sO}\left\lvert 
b^{(1)}\left(x^\top\eta^{(2)}\right)\right\rvert \in \sL^2_1(\joint).$$
Putting these results together, we obtain
\begin{equation}
\notag
\begin{aligned}
&\norm{\vp_{\eta_1}(z)-\vp_{\eta_2}(z)}\\
&\le \norm{\eta_1^{(1)}-\eta_2^{(1)}}_2\norm{yx}+\sup_{\eta \in \sO}\norm{\eta^{(1)}}_2\sup_{x \in \sX, \eta \in \sO}\left\lvert 
b^{(2)}\left(x^\top\eta^{(2)}\right)\right\rvert \norm{x}\norm{\eta_1^{(2)}-\eta_2^{(2)}} \\
&\quad+ \sup_{\eta \in \sO}\left\lvert 
b^{(1)}\left(x^\top\eta^{(2)}\right)\right\rvert\norm{\eta_2^{(1)}-\eta_1^{(1)}}_2\\
&\le L(z)\rho(\eta_1, \eta_2),
\end{aligned}
\end{equation}
where
$$L(z) = \max\set{\norm{yx}+\sup_{\eta \in \sO}\left\lvert 
b^{(1)}\left(x^\top\eta^{(2)}\right)\right\rvert,\quad \left(\sup_{\eta \in \sO}\norm{\eta^{(1)}}_2\sup_{x \in \sX, \eta \in \sO}\left\lvert 
b^{(2)}\left(x^\top\eta^{(2)}\right)\right\rvert\right) \norm{x}} \in  \sL_1^2(\joint).$$
Next, we show that a consistent estimator of $\eta^*$ is $\cetaest = \left(\left[\frac{1}{n}\sum_{i=1}^nb^{(2)}\left(X_i^\top\hat{\theta}_n\right)X_iX_i^\top\right]^{-1}, \hat{\theta}_n\right)$. Since $b^{(2)}(\cdot)$ is continuous, $b^{(2)}\left(x^\top\theta\right)xx^\top$ is a continuous (matrix-valued) function of $\theta$ for every $x$. Then, $b^{(2)}\left(x^\top\theta\right)xx^\top$ is a Gilvenko-Cantelli class (which means that it is Gilvenko-Cantelli component-wise) by Example 19.9 of \cite{van2000asymptotic} and compactness of $\Theta$. Therefore,
$$\norm{\frac{1}{n}\sum_{i=1}^nb^{(2)}\left(X_i^\top\hat{\theta}_n\right)X_iX_i^\top-\bE\left[b^{(2)}\left(X^\top\thetaest\right)XX^\top\right]}_2 = \littleO_p(1).$$
Similarly,
\begin{equation}
\notag
\begin{aligned}
&\left\lvert b^{(2)}\left(x^\top\theta_1\right)-b^{(2)}\left(x^\top\theta_2\right)\right\rvert \le \sup_{x \in \sX, \theta \in \Theta}\left\lvert 
b^{(3)}\left(x^\top\theta\right)\right\rvert \norm{x}\norm{\theta_1-\theta_2},
\end{aligned}
\end{equation}
where $\sup_{x \in \sX, \theta \in \Theta}\left\lvert 
b^{(3)}\left(x^\top\theta\right)\right\rvert < \infty$ as $b^{(3)}(\cdot)$ is continuous and both $\sX$ and $\Theta$ are compact. Therefore, it follows that $$\bE\set{\left[b^{(2)}\left(X^\top\thetaest\right)-b^{(2)}\left(X^\top\theta^*\right)\right]XX^\top} \le  \bE\left[\norm{X}^3\right] \sup_{x \in \sX, \theta \in \Theta}\left\lvert 
b^{(3)}\left(x^\top\theta\right)\right\rvert \norm{\thetaest-\theta^*} =  \littleO_p(1)$$ as $\thetaest$ is consistent. Putting these results together, we have
$$\norm{\frac{1}{n}\sum_{i=1}^nb^{(2)}\left(X_i^\top\hat{\theta}_n\right)X_iX_i^\top-\bE\left[b^{(2)}\left(X^\top\theta^*\right)XX^\top\right]}_2 = \littleO_p(1),$$
and by continuous mapping,
$$\norm{\left[\frac{1}{n}\sum_{i=1}^nb^{(2)}\left(X_i^\top\hat{\theta}_n\right)X_iX_i^\top\right]^{-1}-\set{\bE\left[b^{(2)}\left(X^\top\theta^*\right)XX^\top\right]}^{-1}}_2 = \littleO_p(1).$$
\end{proof}

\subsection{Proof of claim in Example~\ref{exmp:kendall}}

Here, we validate the remaining parts of Assumption~\ref{asu:IF_Lipschitz} for Example~\ref{exmp:kendall} under additional assumptions. Following the notation of Example~\ref{exmp:kendall},  recall that the supervised estimator is Kendall's $\tau$:
\begin{equation}
\label{eq:apdx_kendall}
\thetaest = \frac{2}{n(n-1)}\sum_{i<j}I\set{(U_i-U_j)(V_i-V_j)>0}
\end{equation} 
which is regular and asymptotically linear with influence function 
$$\vp_{\eta^*}(z) = 2\bP_Y^*\set{(U-u)(V-v)>0}-2\theta^*,$$
where $\eta^* = \left(\bP_Y^*\set{(U-u)(V-v)>0}, \theta^*\right)$. For simplicity, denote $h^*(y) = \bP_Y^*\set{(U-u)(V-v)>0}$. By imposing additional assumptions on $h^*(y)$, the next proposition guarantees that \eqref{eq:apdx_kendall} satisfies Assumption~\ref{asu:IF_Lipschitz}. 
\begin{proposition}
Suppose that $\norm{h^*(y)}_V^* \le M$ for some $M > 1$, where $\norm{\cdot}_V^*$ represents the uniform sectional variation norm of a function \citep{van1995efficient, gill1995inefficient} . Further suppose that $\Theta$ is a bounded subset of $\bR^2$ such that $\theta^* \in \text{int}(\Theta)$. Then, the Kendall's $\tau$ \eqref{eq:apdx_kendall} satisfies Assumption~\ref{asu:IF_Lipschitz} with $$\sO = \set{h(y)-\theta:\bR^2 \to \bR, \norm{h}_V^*\le M, \theta \in \Theta},$$ and the estimator $$\cetaest = \left(\frac{1}{n}\sum_{i=1}^nI\set{(U_i-u)(V_i-v)>0}, \thetaest\right).$$
\end{proposition}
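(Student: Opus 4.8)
The plan is to verify each clause of Assumption~\ref{asu:IF_Lipschitz}, using that (a) and the Lipschitz half of (b) are already in hand from Example~\ref{exmp:kendall}. There, $\thetaest$ is shown to be regular and asymptotically linear with influence function $\vp_{\eta^*}(z) = 2h^*(y) - 2\theta^*$; since $\vp_\eta(z) = 2h(y) - 2\theta$ depends on $\eta = (h,\theta)$ only through a difference, one has $\norm{\vp_{\eta_1}(z) - \vp_{\eta_2}(z)} \le 2\norm{h_1 - h_2}_\infty + 2\norm{\theta_1 - \theta_2} = 2\rho(\eta_1,\eta_2)$, giving the Lipschitz bound with the constant, hence square-integrable, envelope $L \equiv 2$. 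The substance of the proof is therefore (i) the Donsker property in (b), (ii) the consistency $\rho(\cetaest,\eta^*) = \littleO_p(1)$ in (c), and (iii) the containment $\joint\set{\cetaest \in \sO} \to 1$.

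For (i) I would decompose $\set{\vp_\eta : \eta \in \sO}$ as $2$ times the class $\set{h : \norm{h}_V^* \le M}$ minus $2$ times the class of constants $\set{\theta : \theta \in \Theta}$, and invoke permanence of the Donsker property. The ball $\set{h : \norm{h}_V^* \le M}$ in uniform sectional variation norm is $\bP_Y^*$-Donsker by \cite{van1995efficient, gill1995inefficient}; the constants indexed by the bounded set $\Theta$ form a Donsker class; and scalar multiples and sums of Donsker classes are Donsker. Because $\vp_\eta$ depends on $z$ only through $y$, being $\bP_Y^*$-Donsker is equivalent to being $\joint$-Donsker, which yields (b).

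For (ii), consistency of the scalar part $\thetaest \inprobability \theta^*$ follows from the asymptotic normality in (a). For the functional part, I would note that $\hat h_n(u,v) = \empiricalPn\sbr{I\set{(U-u)(V-v) > 0}}$ and $h^*(u,v)$ are the empirical and true probabilities of the concordance events $\set{(U-u)(V-v) > 0} = \set{U>u,V>v} \cup \set{U<u,V<v}$. Each such event is a union of two orthants, so the indexing class is a finite union of VC classes and is itself VC; the uniform Glivenko--Cantelli theorem then gives $\norm{\hat h_n - h^*}_\infty = \littleO_p(1)$, and together with $\thetaest \inprobability \theta^*$ this delivers $\rho(\cetaest,\eta^*) = \littleO_p(1)$.

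The crux is (iii), specifically controlling the random sectional variation norm $\norm{\hat h_n}_V^*$: since this norm is only lower semicontinuous under uniform convergence, it cannot be inferred from $\norm{\hat h_n - h^*}_\infty \to 0$. My plan is a deterministic bound by subadditivity. Writing $\hat h_n = \tfrac1n \sum_{i=1}^n \phi(\cdot\,;U_i,V_i)$ with $\phi(u,v;a,b) = I\set{(a-u)(b-v)>0}$, a Minkowski-type triangle inequality for $\norm{\cdot}_V^*$ gives $\norm{\hat h_n}_V^* \le \tfrac1n\sum_{i=1}^n \norm{\phi(\cdot\,;U_i,V_i)}_V^* = C_0$, where $C_0 < \infty$ is the translation-invariant variation norm of a single concordance indicator; the same bound applied to $h^* = \int \phi(\cdot\,;a,b)\,d\bP_Y^*(a,b)$ shows $\norm{h^*}_V^* \le C_0$, so the hypothesis $\norm{h^*}_V^* \le M$ is read with $M \ge C_0$, whence $\norm{\hat h_n}_V^* \le M$ for every $n$. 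Finally, $\theta^* \in \text{int}(\Theta)$ and $\thetaest \inprobability \theta^*$ give $\thetaest \in \Theta$ with probability tending to one, so $\joint\set{\cetaest \in \sO} \to 1$. The main obstacle I anticipate is making this deterministic step rigorous --- verifying that a concordance indicator has finite uniform sectional variation norm on the (compactified) domain and that the subadditive and integral triangle inequalities hold in the sense of \cite{gill1995inefficient, van1995efficient} --- after which the remaining steps are routine.
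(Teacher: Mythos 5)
Your proposal is correct and follows essentially the same route as the paper's proof: the Donsker property via decomposing the class into a uniform-sectional-variation ball (Donsker by \cite{van1995efficient, gill1995inefficient}) plus a finite-dimensional Lipschitz-indexed class, consistency of $\cetaest$ via a Glivenko--Cantelli argument for the indicator class, and the containment $\joint\set{\cetaest \in \sO} \to 1$ via subadditivity of $\norm{\cdot}_V^*$ over the empirical average of concordance indicators together with $\theta^* \in \text{int}(\Theta)$. The only step you flag as an obstacle --- that a single concordance indicator has uniform sectional variation norm at most $1$, so that $M>1$ suffices --- is exactly the fact the paper invokes to close the argument.
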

\begin{proof}

We first validate part (b) of Assumption~\ref{asu:IF_Lipschitz}. Clearly, $h^*(y)-\theta^* \in \sO$, and we have shown that the influence function of U-statistics of kernel $R$ is $R$-Lipshitz in Section~\ref{subsec:u_statistics}. Therefore, it remains to prove that $\set{\vp_\eta: \eta \in \sO} = \sO$ is $\joint$-Donsker. Re-write $\sO$ as
$$\sO = \set{h(y) - h^*(y) + h^*(y)-\theta:\bR^2 \to \bR, \norm{h}_V^*\le M, \theta \in \Theta},$$
which is the pairwise sum of two sets: $\sO = \sO_1 + \sO_2$ where $\sO_1 = \set{h(y) - h^*(y): \norm{h}_V^*\le M}$ and $\sO_2 = \set{h^*(y)-\theta: \theta \in \Theta}$. By Example 2.10.9 of \cite{wellner2013weak}, if both  $\sO_1$ and $\sO_2$ are $\joint$-Donsker, then their pairwise sum $\sO$ is also $\joint$-Donsker. For $\sO_1$, by triangle inequality of the norm $\norm{h - h^*}_V^* \le \norm{h}_V^* + \norm{h^*}_V^* \le 2M$, therefore $\sO_1$ is a subset of functions with uniform sectional variation norm bounded by $2M$. By Example 1.2 of \cite{van1995efficient}, this implies that $\sO_1$ is $\joint$-Donsker. For $\sO_2$, it is a set indexed by a finite-dimensional parameter $\theta \in \Theta$ where $\Theta$ is bounded and $h(y)^* - \theta$ is $1$-Lipschitz in $\theta$. Hence, by Example 19.6 of \cite{van2000asymptotic}, $\sO_2$ is $\joint$-Donsker. 

We now validate part (c) of Assumption~\ref{asu:IF_Lipschitz} with $\cetaest$. Since any indicator function on $\bR^2$ has uniform sectional variation norm bounded by 1, we have
$$\norm{\frac{1}{n}\sum_{i=1}^nI\set{(U_i-u)(V_i-v)>0}}_V^* \le \frac{1}{n}\sum_{i=1}^n\norm{I\set{(U_i-u)(V_i-v)>0}}_V^* \le 1.$$
Further, by asymptotic linearity of the U-statistics $\thetaest$, we have $\norm{\thetaest-\theta^*}_2 = \littleO_p(1)$. Therefore, $$\joint\set{\cetaest \in \sO} = \joint\set{\thetaest \in \Theta} \to 1$$
by the definition of $\sO$. Further, since the class of indicator functions is $\joint$-Gilvenko Cantelli, it follows that
$$\norm{\frac{1}{n}\sum_{i=1}^nI\set{(U_i-u)(V_i-v)>0}-h^*(y)}_\infty = \littleO_p(1).$$
Combined with the consistency of $\cthetaest$, we have
$\rho(\cetaest, \eta^*) = \littleO_p(1)$, which completes the proof.
\end{proof}

\subsection{Proof of claim in Remark~\ref{rmk:efficiency_ATE}}
We prove a general result. 
\begin{proposition}
Let $f(Z) \in \lp$ and let $V= g(X)$ be a measurable function of $X$. Denote $f_X(x) = \bE[f(Z)\mid X=x]$ and $f_V(v) = \bE[f(Z) \mid V=v]$. Then
\begin{equation}
\label{eq:apdx_condition_on_more}
\begin{aligned}
&\left\langle f(z_1)-\gamma f_V(v_1)+\gamma f_V(v_2) ,\left[f(z_1)-\gamma f_V(v_1)+\gamma f_V(v_2)\right]^\top   \right\rangle_\sH \\
&\succeq \left\langle f(z_1)-\gamma f_X(x_1)+\gamma f_X(x_2) ,[f(z_1)-\gamma f_X(x_1)+\gamma f_X(x_2)]^\top   \right\rangle_\sH,
\end{aligned}
\end{equation}
where $\langle \cdot, \cdot \rangle_\sH$ is defined in \eqref{eq:apdx_sH_norm}.
\end{proposition}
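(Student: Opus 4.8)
The plan is to reduce both $\langle\cdot,\cdot\rangle_\sH$ quadratic forms in \eqref{eq:apdx_condition_on_more} to ordinary variance matrices, and then to exploit the nesting of $\sigma$-algebras $\sigma(V)\subseteq\sigma(X)$ that holds because $V=g(X)$. For the first reduction I would use \eqref{eq:apdx_sH_norm} exactly as in the proof of Theorem~\ref{thm:efficiency_OSS}: an element of the form $a(z_1)+\gamma\psi(x_2)$, with labeled part $a$ (a function of $z_1$) and unlabeled part carried by $x_2$, has $\sH$-squared-norm equal to $\Var[a(Z)]+\gamma(1-\gamma)\Var[\psi(X)]$ (the factor arises because the base function of the $x_2$-component is $(1-\gamma)\psi$, on which \eqref{eq:apdx_sH_norm} puts weight $\gamma/(1-\gamma)$). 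Applying this with $\psi=f_V$ and $a(z)=f(z)-\gamma f_V(v)$, and using the orthogonal split $f(z)-\gamma f_V(v)=[f(z)-f_V(v)]+(1-\gamma)f_V(v)$—orthogonal because $\bE[f(Z)-f_V(V)\mid V]=0$ while $f_V(V)$ is $\sigma(V)$-measurable—Lemmas~\ref{lem:apdx_decomp_var:} and~\ref{lem:apdx_pythagorean} together with $(1-\gamma)^2+\gamma(1-\gamma)=1-\gamma$ collapse the left-hand form to $\Var[f(Z)-f_V(V)]+(1-\gamma)\Var[f_V(V)]$. The identical computation with $V$ replaced by $X$ reduces the right-hand form to $\Var[f(Z)-f_X(X)]+(1-\gamma)\Var[f_X(X)]$.

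The key structural step is then to relate $f_V$ and $f_X$. Since $V=g(X)$ is a measurable function of $X$, the tower property gives $f_V(V)=\bE[f(Z)\mid V]=\bE[f_X(X)\mid V]$, so $f_V(V)$ is the $\lp$-projection of $f_X(X)$ onto the $\sigma(V)$-measurable functions. This produces two Pythagorean decompositions via Lemmas~\ref{lem:apdx_decomp_var:} and~\ref{lem:apdx_pythagorean}: first, $\Var[f_X(X)]=\Var[f_V(V)]+\Var[f_X(X)-f_V(V)]$, using orthogonality of $f_X(X)-f_V(V)$ to $\sigma(V)$; and second, $\Var[f(Z)-f_V(V)]=\Var[f(Z)-f_X(X)]+\Var[f_X(X)-f_V(V)]$, using that $f(Z)-f_X(X)$ is orthogonal to every $\sigma(X)$-measurable function, in particular to $f_X(X)-f_V(V)$. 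Substituting both identities into the reduced left-hand side and subtracting the reduced right-hand side, every term cancels except one, leaving
\begin{equation}
\notag
\left\langle f(z_1)-\gamma f_V(v_1)+\gamma f_V(v_2),(\cdot)^\top\right\rangle_\sH-\left\langle f(z_1)-\gamma f_X(x_1)+\gamma f_X(x_2),(\cdot)^\top\right\rangle_\sH=\gamma\,\Var[f_X(X)-f_V(V)]\succeq 0,
\end{equation}
since $\gamma\in(0,1)$ and every variance matrix is positive semidefinite. This establishes \eqref{eq:apdx_condition_on_more}, with equality precisely when conditioning on $X$ carries no more information about $f(Z)$ than conditioning on $V$, i.e. $f_X(X)=f_V(V)$ almost surely.

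I do not anticipate a genuine obstacle: the argument is essentially two applications of the Pythagorean theorem in $\lp$ organized around the nesting $\sigma(V)\subseteq\sigma(X)$. The only place demanding care is the first reduction—correctly translating the $\sH$-quadratic forms into plain variances while keeping the $\gamma$ and $1-\gamma$ weights attached to the labeled ($z_1$) and unlabeled ($x_2$) components straight, and identifying the base function of the $x_2$-part so that \eqref{eq:apdx_sH_norm} yields the correct $\gamma(1-\gamma)$ weight. Once both sides are written in the common shape $\Var[f(Z)-\psi]+(1-\gamma)\Var[\psi]$ with $\psi\in\{f_V(V),f_X(X)\}$, the monotonicity in the conditioning $\sigma$-algebra is immediate and matches the information ordering asserted in Remark~\ref{rmk:efficiency_ATE}.
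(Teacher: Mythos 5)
Your proof is correct, but it is organized differently from the paper's. The paper expands the left-hand quadratic form bilinearly around the right-hand element $h_2 = f(z_1)-\gamma f_X(x_1)+\gamma f_X(x_2)$ and shows, by a single computation with the tower property, that the cross term $\langle h_1-h_2, h_2^\top\rangle_\sH$ vanishes; positive semidefiniteness of $\langle h_1-h_2,(h_1-h_2)^\top\rangle_\sH$ then finishes the argument in one line, entirely inside the product Hilbert space $\sH$. You instead first collapse each $\sH$-quadratic form to the ``efficiency-bound shape'' $\Var[f(Z)-\psi]+(1-\gamma)\Var[\psi]$ (your bookkeeping of the $\gamma/(1-\gamma)$ weight on the $x_2$-component and of the identity $(1-\gamma)^2+\gamma(1-\gamma)=1-\gamma$ is right, and matches the third line of \eqref{eq:EIF_var_OSS}), and then compare the two reduced expressions via two Pythagorean identities in $\lp$ keyed to the nesting $\sigma(V)\subseteq\sigma(X)$. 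Both routes rest on the same underlying orthogonality facts, but yours has the advantage of producing the explicit gap $\gamma\,\Var[f_X(X)-f_V(V)]$ together with the equality condition $f_X(X)=f_V(V)$ a.s., which makes the ``more information in $\unlabeled$ helps more'' interpretation of Remark~\ref{rmk:efficiency_ATE} transparent; the paper's route is shorter because it only has to verify that one inner product is zero and never needs to disentangle the $\gamma$-weights. (Incidentally, the paper's cross-difference $h_1-h_2$ has $\sH$-squared-norm $\gamma^2\Var[f_X-f_V]+\gamma(1-\gamma)\Var[f_X-f_V]=\gamma\Var[f_X-f_V]$, so the two computations agree exactly on the size of the gap.) No gaps to report.
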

\begin{proof}
By the linearity of inner product, 
$$\left\langle h_1,h_1^\top   \right\rangle_\sH = \left\langle h_1-h_2,(h_1-h_2)^\top   \right\rangle_\sH+\left\langle h_2,h_2^\top \right\rangle_\sH + \left\langle h_1-h_2,h_2^\top   \right\rangle_\sH + \left\langle h_2,(h_2-h_1)^\top   \right\rangle_\sH,$$
for arbitrary $h_1$ and $h_2$. Therefore, it suffices to prove that the cross term satisfies
$$\left\langle \gamma f_X(x_1)-\gamma f_V(v_1) - \gamma f_X(x_2) + \gamma f_V(v_2) ,[f(z_1)-\gamma f_X(x_1)+\gamma f_X(x_2)]^\top   \right\rangle_\sH = 0.$$
By the definition of $\langle \cdot, \cdot \rangle_\sH$,
\begin{equation}
\notag
\begin{aligned}
&\left\langle \gamma f_X(x_1)-\gamma f_V(v_1) - \gamma f_X(x_2) + \gamma f_V(v_2) ,[f(z_1)-\gamma f_X(x_1)+\gamma f_X(x_2)]^\top   \right\rangle_\sH \\
&= \gamma\left\langle f_X(x_1)- f_V(v_1) - f_X(x_2) + f_V(v_2) ,[f(z_1)- f_X(x_1) + (1-\gamma)f_X(x_1) + \gamma f_X(x_2)]^\top   \right\rangle_\sH\\
&= \gamma\set{\left\langle f_X- f_V, f- f_X + (1-\gamma)f_X\right\rangle_\lp +  \frac{1-\gamma}{\gamma}\left\langle f_V- f_X, \gamma f_X\right\rangle_\lpx}\\
&= \gamma\set{\left\langle f_X- f_V, (1-\gamma)f_X\right\rangle_\lpx +  \frac{1-\gamma}{\gamma}\left\langle f_V- f_X, \gamma f_X\right\rangle_\lpx}\\
&= 0,
\end{aligned}
\end{equation}
where in the second-to-last equality we used the fact that
$$\left\langle f_X- f_V, f- f_X\right\rangle_\lp = 0$$
by taking the conditional expectation on $X$ and noting that $V = g(X)$. 
\end{proof}

\section{Additional results}
\subsection{Technical lemmas}
\begin{lemma}[Decomposition of conditional variances]
\label{lem:apdx_decomp_var:}
For a random variables $Z = (X,Y) \sim \joint$ and $X \sim \marginal$. Let $f,g$ be any functions such that $f(z) \in \sL^2_p(\joint)$ and $g(x) \in \sL^2_p(\marginal)$. Then,
$$\Var[f(Z)-g(X)] = \Var\set{f(Z)-\bE[f(Z)\mid X]} + \Var\set{\bE[f(Z)\mid X]-g(X)}.$$
\end{lemma}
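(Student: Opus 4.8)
The plan is to prove this as an orthogonal decomposition of $f(Z) - g(X)$ into a conditionally centered ``residual'' and a piece that is a function of $X$ alone, and then to verify that these two pieces are uncorrelated. Write $m(x) := \bE[f(Z)\mid X=x]$, which belongs to $\sL^2_p(\marginal)$ because $f \in \sL^2_p(\joint)$. First I would perform the algebraic splitting
\begin{equation}
\notag
f(Z) - g(X) = \underbrace{[f(Z) - m(X)]}_{=:\,R} + \underbrace{[m(X) - g(X)]}_{=:\,S},
\end{equation}
so that $R$ satisfies $\bE[R \mid X] = 0$ (and hence $\bE[R] = 0$), while $S$ is a measurable function of $X$.

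Next I would expand the variance of the sum, using that for $\bR^p$-valued quantities $\Var[R+S] = \Var[R] + \Var[S] + \Cov[R,S] + \Cov[S,R]$, where the two cross-covariance terms are transposes of one another. The key step is to show that $\Cov[R,S]$ vanishes. Since $\bE[R]=0$, we have $\Cov[R,S] = \bE[R\,(S-\bE[S])^\top]$; because $S-\bE[S]$ is a function of $X$, conditioning on $X$ and applying the tower property yields $\bE[R\,(S-\bE[S])^\top] = \bE[\bE[R\mid X]\,(S-\bE[S])^\top] = 0$. Transposing gives $\Cov[S,R]=0$ as well, so $\Var[f(Z)-g(X)] = \Var[R] + \Var[S]$, which is exactly the claimed identity once the definitions of $R$ and $S$ are substituted back.

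There is essentially no hard part here; the only point requiring a little care is the matrix-valued (rather than scalar) nature of the covariance, which forces me to treat both $\Cov[R,S]$ and its transpose $\Cov[S,R]$, and to justify pulling the $X$-measurable factor $S-\bE[S]$ through the inner conditional expectation. This is simply the law of total variance applied to the difference $f(Z)-g(X)$ in place of $f(Z)$ itself.
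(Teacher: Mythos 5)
Your proposal is correct and matches the paper's argument in substance: the paper applies the law of total (conditional) variance to $f(Z)-g(X)$, which is exactly the orthogonal decomposition you carry out by hand, with the vanishing cross-covariance $\Cov[R,S]=0$ playing the role of the missing cross term in the conditional variance formula. Your version is slightly more explicit about the matrix-valued cross terms, but it is the same proof.
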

\begin{proof}
By the conditional variance formula, 
$$\Var\set{f(Z)-\bE[f(Z)\mid X]} = \bE\set{\Var\set{f(Z)-\bE[f(Z)\mid X]}},$$
and
$$\Var\set{\bE[f(Z)\mid X] - g(X)} = \Var\set{\bE\set{\bE[f(Z)\mid X] - g(X)}}.$$
It follows that
\begin{equation}
\notag
\begin{aligned}
\Var[f(Z)-g(X)] &= \Var\set{f(Z)-\bE[f(Z)\mid X] + \bE[f(Z)\mid X] - g(X)}\\
&= \Var\set{\bE\set{\bE[f(Z)\mid X] - g(X)}} + \bE\set{\Var\set{f(Z)-\bE[f(Z)\mid X]}}\\
&=\Var\set{\bE[f(Z)\mid X] - g(X)} + \Var\set{f(Z)-\bE[f(Z)\mid X]}.
\end{aligned}
\end{equation}
\end{proof}

\begin{lemma}[Multivariate pythagorean theorem]
\label{lem:apdx_pythagorean}
For any function $g(x) \in \sL^2_{d}(\marginal)$, let 
$$\sA = \set{\*Bg(x): \*B \in \bR^{p \times d}}$$ denote its linear span in $\sL^2_{p}(\marginal)$. For any $f(x) \in \sL^2_{p}(\marginal)$,
$$\Var\left[f(X)\right] = \Var\left[f(X) - \Pi(f(X)\mid \sA)\right] + \Var\left[\Pi(f(X)\mid \sA)\right],$$
where $\Pi(a\mid \sA)$ denote the projection operator that project $a$ onto a linear space $\sA$.
\end{lemma}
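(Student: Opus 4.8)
The plan is to reduce the matrix identity to the vanishing of a single cross-covariance term, and then to extract that vanishing from the defining orthogonality (normal equations) of the $\sL^2_p(\marginal)$-projection. Writing $r(X) := f(X) - \Pi(f(X)\mid\sA)$ for the residual, I would first expand
\begin{equation}
\notag
\Var[f(X)] = \Var[r(X)] + \Var[\Pi(f(X)\mid\sA)] + \Cov[r(X), \Pi(f(X)\mid\sA)] + \Cov[\Pi(f(X)\mid\sA), r(X)].
\end{equation}
Since the last two terms are transposes of one another, it suffices to show $\Cov[r(X), \Pi(f(X)\mid\sA)] = 0$.

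Next, I would invoke the characterization of the projection. By definition $\Pi(f\mid\sA) = \*B^* g$, where $\*B^* \in \bR^{p\times d}$ minimizes $\bE\norm{f(X) - \*B g(X)}^2$; the first-order condition is the normal equation $\bE[\,(f(X) - \*B^* g(X))\,g(X)^\top\,] = 0$, i.e. the residual $r$ is orthogonal to $\sA$ in the inner product $\langle a,b\rangle = \bE[a(X)^\top b(X)]$. Because $\Pi(f\mid\sA) = \*B^* g$ is itself an element of $\sA$, this yields the (uncentered) matrix identity $\bE[\,r(X)\,\Pi(f(X)\mid\sA)^\top\,] = \bE[r(X)g(X)^\top](\*B^*)^\top = 0$.

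Finally, I would convert this second-moment identity into the covariance statement. Here is the only delicate step: $\Cov[r, \Pi f] = \bE[r\,(\Pi f)^\top] - \bE[r]\,\bE[\Pi f]^\top$, so the normal equations alone leave the residual term $-\bE[r]\bE[\Pi f]^\top$. This term vanishes because in every instance where the lemma is invoked $g$ is a centered function (the centered bases $\centeredg$ and $\centeredGKn$), whence $\bE[\Pi f] = \*B^*\bE[g] = 0$; equivalently, one may assume without loss of generality that $\bE[g(X)] = 0$ (or that $\sA$ contains the constant functions, which would instead force $\bE[r] = 0$). Under this centering the cross term reduces to exactly $\bE[r\,(\Pi f)^\top] = 0$, and the decomposition follows.

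I expect this final conversion — from the $\sL^2$ orthogonality delivered by the normal equations to the centered (covariance) orthogonality required by $\Var[\cdot]$ — to be the main obstacle, since it is precisely where the mean-zero hypothesis on $g$ enters; the expansion and the normal-equation steps are otherwise routine.
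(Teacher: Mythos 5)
Your proof is correct, and it is genuinely more self-contained than the paper's, which simply cites Theorem~3.3 of Tsiatis (2006) and supplies no argument. Your route --- expand $\Var[f]$ around the residual $r = f - \Pi(f\mid\sA)$, reduce to the vanishing of one cross-covariance, and extract that from the normal equations $\bE[r(X)g(X)^\top]=0$ --- is exactly the standard projection argument, and the ``delicate step'' you isolate is a real one, not a pedantic worry. As literally stated the lemma is false without a centering hypothesis: take $p=d=1$, $g(x)=x$ with $\bE[X]=1$, $\bE[X^2]=2$, and $f\equiv 1$; then $\Pi(f\mid\sA)=X/2$, $\Var[f]=0$, but $\Var[f-\Pi(f\mid\sA)]+\Var[\Pi(f\mid\sA)]=1/4+1/4=1/2$. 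The discrepancy is precisely your term $-\bE[r]\,\bE[\Pi(f\mid\sA)]^\top$ and its transpose. Tsiatis's Theorem~3.3 avoids this by working in the Hilbert space of \emph{mean-zero} finite-variance functions, where variance coincides with the second moment, so the paper's citation implicitly carries that hypothesis; and, as you note, every invocation in the paper uses the centered bases $\centeredg$ or $\centeredGKn$ (and a mean-zero $f$, namely an influence function), so the conclusion holds wherever it is used. In short: your proof is valid under the centering assumption you state, and your observation amounts to a correct, minor repair of the lemma's hypotheses rather than a gap in your own argument.
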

\begin{proof}
See Theorem 3.3 of \cite{tsiatis2006semiparametric}.
\end{proof}

\begin{lemma}
\label{lem:matrix_CS}
For a random variable $Z \sim \joint$, and let $f,g$ be any functions such that $f \in \sL^2_p(\joint)$ and $g \in \sL^2_q(\joint)$. Then,
$$\norm{\joint\left(fg^\top\right)}_2 \le \norm{f}_{\sL^2(\joint)}\norm{g}_{\sL^2(\joint)}.$$
\end{lemma}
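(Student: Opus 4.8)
The plan is to reduce the matrix operator norm to a scalar bilinear form and then invoke the ordinary Cauchy–Schwarz inequality for square-integrable random variables. Write $M := \joint(fg^\top) = \bE[f(Z)g(Z)^\top] \in \bR^{p \times q}$. The first step is to note that $M$ is well-defined: each entry $M_{jk} = \bE[f_j(Z) g_k(Z)]$ is finite because $|f_j g_k| \le \norm{f}\,\norm{g}$ pointwise and the product $\norm{f}\,\norm{g}$ is integrable by Cauchy–Schwarz, using $f \in \sL^2_p(\joint)$ and $g \in \sL^2_q(\joint)$. This integrability check is really the only thing that needs care, and it is immediate.

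Next I would use the variational characterization of the operator norm, $\norm{M}_2 = \sup_{\norm{a} = 1,\, \norm{b} = 1} a^\top M b$, where the supremum runs over unit vectors $a \in \bR^p$ and $b \in \bR^q$. For any such $a, b$ we have the identity
$$a^\top M b = a^\top \bE\left[f(Z) g(Z)^\top\right] b = \bE\left[\left(a^\top f(Z)\right)\left(b^\top g(Z)\right)\right],$$
which expresses the quadratic form as the $\sL^2(\joint)$-inner product of the two scalar random variables $a^\top f(Z)$ and $b^\top g(Z)$.

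Applying the scalar Cauchy–Schwarz inequality then gives
$$\left| a^\top M b\right| \le \sqrt{\bE\left[(a^\top f(Z))^2\right]}\,\sqrt{\bE\left[(b^\top g(Z))^2\right]}.$$
To finish, I would bound each factor using the vector Cauchy–Schwarz inequality: since $\norm{a} = 1$, we have $(a^\top f(Z))^2 \le \norm{a}^2 \norm{f(Z)}^2 = \norm{f(Z)}^2$, hence $\bE[(a^\top f(Z))^2] \le \bE[\norm{f(Z)}^2] = \norm{f}_{\sL^2(\joint)}^2$, and similarly for the $g$ factor. Combining, $|a^\top M b| \le \norm{f}_{\sL^2(\joint)} \norm{g}_{\sL^2(\joint)}$ uniformly over unit vectors, and taking the supremum over $a, b$ yields the claimed bound $\norm{\joint(fg^\top)}_2 \le \norm{f}_{\sL^2(\joint)} \norm{g}_{\sL^2(\joint)}$. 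There is no substantive obstacle here; the result is a direct matrix-valued analogue of Cauchy–Schwarz, and the only point worth stating explicitly is the reduction to the bilinear form, after which everything follows from the two standard Cauchy–Schwarz inequalities (one in $\sL^2(\joint)$ and one in Euclidean space).
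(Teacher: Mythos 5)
Your proof is correct and takes essentially the same approach as the paper's: both reduce the operator norm to a variational problem over unit vectors and then conclude with Cauchy--Schwarz. The only cosmetic difference is that the paper uses $\norm{M}_2=\sup_{\norm{a}=1}\norm{Ma}$, pushes the norm inside the expectation, and bounds $\norm{f g^\top a}\le\norm{f}\,\norm{g}$ before applying Cauchy--Schwarz to $\bE[\norm{f}\norm{g}]$, whereas you use the bilinear form $a^\top M b=\bE[(a^\top f)(b^\top g)]$ and apply Cauchy--Schwarz in $\sL^2(\joint)$ first and in Euclidean space second; both yield the identical bound.
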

\begin{proof}
Consider any non-random vector $a \in \bR^q$ such that $\norm{a}_2 = 1$. By the definition of operator norm,
\begin{equation}
\notag
\begin{aligned}
\norm{\joint\left(fg^\top\right)a}_2 &= \norm{\joint\left(fg^\top a\right)}_2 \\
&\le \joint\norm{fg^\top a}_2\\
&\le  \joint\norm{f}\norm{g}\\
&\le \norm{f}_{\sL^2(\joint)}\norm{g}_{\sL^2(\joint)},
\end{aligned}
\end{equation}
by Cauchy-Schwartz inequality.
\end{proof}

\begin{lemma}[Lemma 19.24 of \cite{van2000asymptotic}.]
\label{lem:19.24 of vdv}
Let $\sS$ denote a $\joint$-Donsker class of measurable functions over $(\sZ, \sF, \joint)$, and let $\hat{s}_n$ denote a sequence of random functions that takes value in $\sS$ and satisfies
$$\norm{\hat{s}_n-s}_{\lp} = \littleO_p(1),$$
for some element $s \in \sS$. Then,
$$\bG_n(\hat{s}_n-s) = \littleO_p(1).$$
\end{lemma}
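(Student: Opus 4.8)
The plan is to treat this as the classical stochastic-equicontinuity argument for Donsker classes, i.e. to reproduce the proof of Lemma~19.24 in \cite{van2000asymptotic}. The key fact I would invoke is that a $\joint$-Donsker class $\sS$ is, by definition, one for which the empirical process $f \mapsto \bG_n(f)$ converges weakly in $\ell^\infty(\sS)$ to a tight, Borel-measurable Gaussian process $\bG$, and that weak convergence to a tight limit is equivalent to \emph{asymptotic equicontinuity} of $\bG_n$ with respect to the intrinsic standard-deviation semimetric $\rho(f,g) := \sqrt{\Var[f(Z)-g(Z)]}$. Since $\rho$ is dominated by the $\sL^2(\joint)$ norm, the hypothesis $\norm{\hat{s}_n-s}_{\lp} = \littleO_p(1)$ immediately yields $\rho(\hat{s}_n, s) = \littleO_p(1)$, which is all the argument needs.

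First I would record the asymptotic equicontinuity statement implied by the Donsker property: for every $\epsilon > 0$,
\[
\lim_{\delta \downarrow 0}\; \limsup_{n \to \infty}\; \bP\left( \sup_{\rho(f,g) < \delta}\, \norm{\bG_n(f-g)} > \epsilon \right) = 0,
\]
where the supremum ranges over $f, g \in \sS$ and all probabilities are read as outer probabilities to sidestep measurability. Second, I would fix $\epsilon, \eta > 0$ and use this display to choose $\delta > 0$ so that the $\limsup_n$ of the probability that the $\delta$-oscillation exceeds $\epsilon$ is below $\eta$. Because $\hat{s}_n, s \in \sS$ and $\rho(\hat{s}_n, s) = \littleO_p(1)$, the event $A_n := \{ \rho(\hat{s}_n, s) < \delta \}$ has probability tending to $1$; and on $A_n$ the (random) pair $(\hat{s}_n, s)$ lies in the index set of the supremum, giving the pointwise bound $\norm{\bG_n(\hat{s}_n - s)} \le \sup_{\rho(f,g)<\delta}\norm{\bG_n(f-g)}$. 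Splitting on $A_n$,
\[
\bP\left( \norm{\bG_n(\hat{s}_n-s)} > \epsilon \right) \le \bP(A_n^c) + \bP\left( \sup_{\rho(f,g)<\delta} \norm{\bG_n(f-g)} > \epsilon \right),
\]
and taking $\limsup_n$ gives a bound of $0 + \eta = \eta$. Since $\eta$ was arbitrary, $\bG_n(\hat{s}_n - s) = \littleO_p(1)$.

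Since the result is quoted directly from \cite{van2000asymptotic}, there is no genuine obstacle, and the only points requiring care are the two bookkeeping issues already flagged: (i) confirming that the assumed $\sL^2(\joint)$-convergence dominates the intrinsic semimetric $\rho$ of the Gaussian limit, which is immediate because the $\lp$ norm is no smaller than $\rho$; and (ii) interpreting all probabilities as outer probabilities so that the equicontinuity characterization of Donsker classes applies verbatim and the measurability of the suprema need not be assumed. I would therefore present the statement as a cited lemma, with the equicontinuity reduction sketched exactly as above.
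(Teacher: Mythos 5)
Your proposal is correct and is essentially the standard argument behind the cited result: the paper itself gives no proof and simply defers to Lemma 19.24 of van der Vaart, whose proof is exactly the asymptotic-equicontinuity reduction you describe (with the only cosmetic addition that the vector-valued case here follows componentwise). Nothing further is needed.
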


\begin{lemma}
\label{lem:apdx_1st_lem}
Let $\sS = \set{s_\eta(z): \sZ \to \bR^p,\eta \in \Omega}$ be a class of measurable functions on $(\sZ, \sF, \joint)$ indexed by $\eta$, where  $\Omega$ is a metric space with metric $\rho$. Let $\eta^* \in \text{int}(\Omega)$ such that $s_{\eta^*}(z) \in \sL^2_p(\joint)$. Suppose that there exists a set $\sO \subset \Omega$ such that $\eta^* \in \sO$, the class of functions $\set{s_\eta(z): \eta \in \sO}$ is $\joint$-Donsker, and for all $\set{\eta_1,\eta_2} \subset \sO$ there exists a square-integrable function $S:\sZ \to \bR^+$ such that
$\norm{s_{\eta_1}(Z)-s_{\eta_2}(Z)} \le S(Z)\rho(\eta_1, \eta_2)$
almost surely. In addition, suppose there exists an estimator $\cetaest$ of $\eta^*$ such that $\joint\set{\cetaest \in \sO} \to 1$ and $\rho(\cetaest, \eta^*) = \littleO_p(1)$. Then, the following hold:
\begin{itemize}
    \item $\sup_{\eta \in \sO}\norm{\left(\empiricalPn-\joint\right)s_\eta} = \littleO_p(1)$,
    \item $\norm{\cs-\ts}^2_{\sL^2(\joint)} = \littleO_p(1)$,
    \item $\norm{\bG_n(\cs-\ts)} = \littleO_p(1)$,
    \item $\norm{\empiricalPn(\cs)} = \bigO_p(1)$; and if additionally $\joint(\ts) = 0$, then $$\norm{\empiricalPn(\cs)} = \bigO_p\left(\rho(\cetaest, \eta^*) \wedge \frac{1}{\sqrt{n}}\right) = \littleO_p(1).$$
\end{itemize}
\end{lemma}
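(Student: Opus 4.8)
The plan is to prove the four conclusions in order, each building on the previous, working throughout on the event $A_n = \set{\cetaest \in \sO}$; since $\joint(A_n) \to 1$ by hypothesis and every conclusion is an $\littleO_p$ or $\bigO_p$ statement, the behaviour off $A_n$ is asymptotically irrelevant and may be discarded. All functions take values in $\bR^p$ with $p$ fixed, so vector-valued quantities can be treated one coordinate at a time.

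First I would obtain the uniform law of large numbers $\sup_{\eta \in \sO}\norm{(\empiricalPn - \joint)s_\eta} = \littleO_p(1)$ from the fact that a $\joint$-Donsker class is $\joint$-Glivenko--Cantelli. Writing $(\empiricalPn - \joint)s_\eta = n^{-1/2}\bG_n(s_\eta)$, the Donsker hypothesis makes $\bG_n$ converge to a tight limit in $\ell^\infty(\sO)$, so $\sup_{\eta \in \sO}\norm{\bG_n(s_\eta)} = \bigO_p(1)$ by continuity of the supremum functional; dividing by $\sqrt n$ gives the claim. The second conclusion follows directly from the Lipschitz hypothesis: on $A_n$ both $\cetaest, \eta^* \in \sO$, so $\norm{\cs(z) - \ts(z)} \le S(z)\rho(\cetaest, \eta^*)$ pointwise, and integrating the square against $\joint$ (with the realized $\cetaest$ held fixed) yields $\norm{\cs - \ts}^2_{\sL^2(\joint)} \le \norm{S}^2_{\sL^2(\joint)}\,\rho(\cetaest, \eta^*)^2 = \littleO_p(1)$, using $\norm{S}_{\sL^2(\joint)} < \infty$ and $\rho(\cetaest,\eta^*) = \littleO_p(1)$.

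The third conclusion is the crux, since it is here that the data-dependence of the index $\cetaest$ must be controlled. I would apply Lemma~\ref{lem:19.24 of vdv} directly: the class $\set{s_\eta : \eta \in \sO}$ is $\joint$-Donsker, the random function $\cs$ takes values in it on $A_n$, and $\norm{\cs - \ts}_{\sL^2(\joint)} = \littleO_p(1)$ by the second conclusion, whence $\bG_n(\cs - \ts) = \littleO_p(1)$. This step is exactly what converts $\sL^2$-consistency of the plug-in into asymptotic equicontinuity of the empirical process evaluated at the random estimate $\cetaest$, and it is the one place where more than routine bookkeeping is required.

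Finally I would assemble the fourth conclusion from the decomposition $\empiricalPn(\cs) = \joint(\cs) + n^{-1/2}\bG_n(\cs)$ with $\bG_n(\cs) = \bG_n(\ts) + \bG_n(\cs - \ts)$. The CLT gives $\bG_n(\ts) = \bigO_p(1)$ and the third conclusion gives $\bG_n(\cs - \ts) = \littleO_p(1)$, so the stochastic term is $\bigO_p(n^{-1/2})$; the bias obeys $\norm{\joint(\cs)} \le \norm{\cs - \ts}_{\sL^2(\joint)} + \norm{\joint(\ts)}$ with $\norm{\cs - \ts}_{\sL^2(\joint)} = \bigO_p(\rho(\cetaest, \eta^*))$ by Jensen and the Lipschitz bound, giving $\empiricalPn(\cs) = \bigO_p(1)$ in general. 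When additionally $\joint(\ts) = 0$, the bias reduces to $\joint(\cs - \ts) = \bigO_p(\rho(\cetaest, \eta^*))$ while $\empiricalPn(\ts) = n^{-1/2}\bG_n(\ts) = \bigO_p(n^{-1/2})$ by the mean-zero CLT; separating the contribution governed by the estimation rate $\rho(\cetaest, \eta^*)$ from that governed by the sampling rate $n^{-1/2}$ delivers the stated $\bigO_p(\rho(\cetaest, \eta^*) \wedge n^{-1/2})$ order, which is $\littleO_p(1)$ because each of the two rates vanishes. I expect no serious difficulty beyond the third step.
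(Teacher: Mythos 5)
Your proof is correct and follows essentially the same route as the paper's: Donsker $\Rightarrow$ Glivenko--Cantelli for the first claim, the pointwise Lipschitz bound integrated against $\joint$ for the second, Lemma~19.24 of van der Vaart for the third, and the decomposition $\empiricalPn(\cs) = \joint(\cs-\ts) + n^{-1/2}\bG_n(\cs-\ts) + \empiricalPn(\ts)$ for the fourth, all carried out on the event $\set{\cetaest \in \sO}$ whose probability tends to one. The only blemish --- shared with the paper's own proof --- is that the sum of an $\bigO_p(\rho(\cetaest,\eta^*))$ term and an $\bigO_p(n^{-1/2})$ term has the order of the \emph{maximum} of the two rates, not the minimum, so the displayed rate should read $\rho(\cetaest,\eta^*) \vee n^{-1/2}$ rather than $\wedge$; this does not affect the $\littleO_p(1)$ conclusion or any downstream use of the lemma.
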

\begin{proof}
Since the class $\sS = \set{s_\eta(z): \eta \in \sO}$ is $\joint$-Donsker, it is also $\joint$-Gilvenko-Cantelli, hence $$\sup_{\eta \in \sO}\norm{\left(\empiricalPn-\joint\right)s_\eta} = \littleO_p(1).$$ 
By assumption $\norm{s_{\eta_1}(Z)-s_{\eta_2}(Z)} \le S(Z)\rho(\eta_1, \eta_2)$ for all $\set{\eta_1,\eta_2} \subset \sO$, therefore when $\cetaest \in \sO$, we have 
$$\norm{\cs-\ts}^2_{\sL^2(\joint)} \le \rho^2(\cetaest, \eta^*)\norm{S(z)}_{\sL^2(\joint)}^2 = \bigO_p(\rho^2(\cetaest, \eta^*)) = \littleO_p(1).$$
Further, by assumption $\joint\set{\cetaest \notin \sO} \to 0$, it then follows that $\norm{\cs-\ts}^2_{\sL^2(\joint)} = \littleO_p(1)$. Therefore, by Lemma~\ref{lem:19.24 of vdv}, $\norm{\bG_n(\cs-\ts)} = \littleO_p(1)$. Then,
\begin{equation}
\begin{aligned}
\notag
\norm{\empiricalPn(\cs)}&\le \frac{1}{\sqrt{n}}\norm{\bG_n(\cs-\ts)} + \norm{\joint(\cs-\ts)} + \norm{\empiricalPn(\ts)}\\
&= \littleO_p\left(\frac{1}{\sqrt{n}}\right) + \norm{\joint(\cs-\ts)} + \norm{\empiricalPn(\ts)}\\\
&\le \bigO_p\left(\frac{1}{\sqrt{n}}\right) + \norm{\cs-\ts}_{\sL^2(\joint)} + \norm{\joint(\ts)}\\
&= \bigO_p\left(\frac{1}{\sqrt{n}}\right) + \bigO_p(\rho(\cetaest, \eta^*)) +\norm{\joint(\ts)}\\
&= \bigO_p\left(\rho(\cetaest, \eta^*) \wedge \frac{1}{\sqrt{n}}\right) +\norm{\joint(\ts)}.
\end{aligned}
\end{equation}
Therefore, $\norm{\empiricalPn(\cs)} = \bigO_p(1)$. Further, if $\joint(\ts) = 0$, $\norm{\empiricalPn(\cs)} = \left(\rho(\cetaest, \eta^*) \wedge \frac{1}{\sqrt{n}}\right)$.
\end{proof}

\begin{lemma}
\label{lem:apdx_2nd_lem}
Under the conditions of Lemma~\ref{lem:apdx_1st_lem}. Let $\sG = \set{g_\eta(z): \sZ \to \bR^p, \eta \in \Omega}$ denote another class of measurable functions on $(\sZ, \sF, \joint)$ such that (i) $g_{\eta^*}(z) \in \sL^2(\joint)$ (ii) for all $\set{\eta_1,\eta_2} \subset \sO$ where $\sO$ follows from Lemma~\ref{lem:apdx_1st_lem} there exists a square-integrable function $G:\sZ \to \bR^+$ such that
$\norm{g_{\eta_1}(Z)-g_{\eta_2}(Z)} \le G(Z)\rho(\eta_1, \eta_2)$
almost surely. Then
$$\norm{\empiricalPn\left[g_{\cetaest}(\cs-\ts)^\top\right]}_2 = \bigO_p(\rho(\cetaest, \eta^*)) = \littleO_p(1),$$
where $\cetaest$ is defined in  Lemma~\ref{lem:apdx_1st_lem}.
\end{lemma}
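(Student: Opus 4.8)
The plan is to work on the high-probability event $\set{\cetaest \in \sO}$, bound the operator norm of the empirical average term-by-term by exploiting the rank-one structure of each summand, and then apply the Lipschitz conditions together with the law of large numbers. Since the Donsker hypothesis plays no role in this particular estimate, only the Lipschitz bounds and square-integrability will be needed.

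First I would note that $\joint\set{\cetaest \in \sO} \to 1$ by the hypotheses inherited from Lemma~\ref{lem:apdx_1st_lem}, so it suffices to establish the bound on the event $\set{\cetaest \in \sO}$; off this event the quantity is $\littleO_p(1)$ automatically. On $\set{\cetaest \in \sO}$, each summand $g_{\cetaest}(Z_i)(\cs(Z_i)-\ts(Z_i))^\top$ is a rank-one matrix whose operator norm equals $\norm{g_{\cetaest}(Z_i)}\,\norm{\cs(Z_i)-\ts(Z_i)}$. By the triangle inequality for the operator norm and the Lipschitz bound $\norm{\cs(Z_i)-\ts(Z_i)} \le S(Z_i)\rho(\cetaest,\eta^*)$ from Lemma~\ref{lem:apdx_1st_lem}, I obtain
\[
\norm{\empiricalPn[g_{\cetaest}(\cs-\ts)^\top]}_2 \le \rho(\cetaest,\eta^*)\,\frac{1}{n}\sum_{i=1}^n \norm{g_{\cetaest}(Z_i)}\,S(Z_i).
\]

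Next I would show the empirical average on the right is $\bigO_p(1)$. By Cauchy--Schwarz it is at most $\big(\empiricalPn(\norm{g_{\cetaest}}^2)\big)^{1/2}\big(\empiricalPn(S^2)\big)^{1/2}$; the second factor is $\bigO_p(1)$ by the law of large numbers since $S$ is square-integrable. For the first factor, the Lipschitz assumption on $\sG$ gives $\norm{g_{\cetaest}(Z_i)}^2 \le 2\norm{g_{\eta^*}(Z_i)}^2 + 2G(Z_i)^2\rho^2(\cetaest,\eta^*)$, whence $\empiricalPn(\norm{g_{\cetaest}}^2) \le 2\,\empiricalPn(\norm{g_{\eta^*}}^2) + 2\rho^2(\cetaest,\eta^*)\,\empiricalPn(G^2)$. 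The first term converges to $2\,\bE[\norm{g_{\eta^*}(Z)}^2] < \infty$ using $g_{\eta^*} \in \sL^2(\joint)$, and the second is $\littleO_p(1)$ since $\rho(\cetaest,\eta^*)=\littleO_p(1)$ and $\empiricalPn(G^2)=\bigO_p(1)$; hence the first factor is $\bigO_p(1)$. Combining, the empirical average is $\bigO_p(1)$, and multiplying by $\rho(\cetaest,\eta^*)$ yields $\norm{\empiricalPn[g_{\cetaest}(\cs-\ts)^\top]}_2 = \bigO_p(\rho(\cetaest,\eta^*)) = \littleO_p(1)$.

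The argument is essentially mechanical; the only point requiring genuine care is the handling of the random index $\cetaest$ inside $\norm{g_{\cetaest}}^2$. This is why I decouple it via the Lipschitz bound into the fixed-$\eta^*$ term, which the law of large numbers controls, plus a perturbation that vanishes because $\rho(\cetaest,\eta^*)=\littleO_p(1)$, and why I restrict to the event $\set{\cetaest\in\sO}$ on which the Lipschitz inequalities for both $\sS$ and $\sG$ are valid.
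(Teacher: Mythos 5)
Your proof is correct and follows essentially the same route as the paper's: restrict to the event $\set{\cetaest\in\sO}$, bound the operator norm of each rank-one summand by the product of vector norms, decouple the random index $\cetaest$ via the Lipschitz conditions into a fixed-$\eta^*$ piece plus a $\rho(\cetaest,\eta^*)$-perturbation, and control the resulting empirical averages by the law of large numbers for square-integrable functions. The only cosmetic difference is that you apply Cauchy--Schwarz at the empirical level while the paper bounds $\empiricalPn\set{SG}$ and $\empiricalPn\set{S\norm{g_{\eta^*}}}$ directly and applies Cauchy--Schwarz to the population limits; both yield the same $\bigO_p(\rho(\cetaest,\eta^*))$ rate.
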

\begin{proof}
When $\cetaest \in \sO$, 
\begin{equation}
\notag
\begin{aligned}
&\norm{\empiricalPn\left[g_{\cetaest}(\cs-\ts)^\top\right]}_2 \\
&\le \empiricalPn\set{\norm{g_{\cetaest}(\cs-\ts)^\top}_2}\\
&\le \empiricalPn\set{\norm{g_{\cetaest}}\norm{\cs-\ts}}\\
&\le \empiricalPn\set{(\norm{g_{\cetaest}-g_{\eta^*}} + \norm{g_{\eta^*}})\norm{\cs-\ts}}\\
&\le \empiricalPn\set{\norm{g_{\cetaest}-g_{\eta^*}}\norm{\cs-\ts}} + \empiricalPn\set{\norm{g_{\eta^*}}\norm{\cs-\ts}}\\
&\overset{\text{a.s.}}{\le} \empiricalPn\set{SG}\rho(\cetaest, \eta^*)^2 + \empiricalPn\set{S\norm{g_{\eta^*}}}\rho(\cetaest, \eta^*)\\
&=  \joint\set{SG}\rho(\cetaest, \eta^*)^2 + \joint\set{S\norm{g_{\eta^*}}}\rho(\cetaest, \eta^*) + \bigO_p\left(\frac{1}{\sqrt{n}}\rho(\cetaest, \eta^*)\right)\\
&\le \norm{S}_{\sL^2(\joint)}\norm{G}_{\sL^2(\joint)}\rho(\cetaest, \eta^*)^2 + \norm{S}_{\sL^2(\joint)}\norm{g_{\eta^*}}_{\sL^2(\joint)}\rho(\cetaest, \eta^*) + \bigO_p\left(\frac{1}{\sqrt{n}}\rho(\cetaest, \eta^*)\right)\\
&= \bigO_p\left(\rho(\cetaest, \eta^*)^2\right) + \bigO_p\left(\rho(\cetaest, \eta^*)\right)+ \bigO_p\left(\frac{1}{\sqrt{n}}\rho(\cetaest, \eta^*)\right)\\
&=\bigO_p(\rho(\cetaest, \eta^*))\\
&=\littleO_p(1).
\end{aligned}
\end{equation}
Therefore the statement follows from the fact $\joint\set{\cetaest \notin \sO} \to 0$.
\end{proof}

\begin{lemma}
\label{lem:apdx_3rd_lem}
Under the conditions of Lemma~\ref{lem:apdx_1st_lem}, consider the centered class
$$\sS^* = \set{s_\eta^0(z) = s_\eta(z)-\joint(s_\eta): \eta \in \Omega}.$$
Then, for all $\set{\eta_1,\eta_2} \subset \sO$ where $\sO$ follows from Lemma~\ref{lem:apdx_1st_lem}, it holds that
$$\norm{s^0_{\eta_1}(Z)-s^0_{\eta_2}(Z)} \le \left[S(Z)+\norm{S}_{\sL^2(\joint)}\right]\rho(\eta_1, \eta_2),$$
almost surely. Further, the class $\sS^* = \set{s_\eta^0(z) = s_\eta(z)-\joint(s_\eta): \eta \in \sO}$ is $\joint$-Donsker.
\end{lemma}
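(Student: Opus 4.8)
The statement has two parts: the pointwise Lipschitz bound for the recentered class $\sS^*$, and the assertion that $\sS^*$ is $\joint$-Donsker. The plan is to dispatch the Lipschitz bound by a direct triangle-inequality decomposition, and then to observe that recentering each $s_\eta$ by the deterministic constant $\joint(s_\eta)$ leaves the empirical process literally unchanged, so the Donsker property is inherited for free from Lemma~\ref{lem:apdx_1st_lem}.

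For the Lipschitz bound, I would write the difference of the centered functions as $s^0_{\eta_1}(z) - s^0_{\eta_2}(z) = [s_{\eta_1}(z) - s_{\eta_2}(z)] - [\joint(s_{\eta_1}) - \joint(s_{\eta_2})]$ and apply the triangle inequality. The first bracket is bounded pointwise by $S(z)\rho(\eta_1,\eta_2)$ directly from the hypothesis of Lemma~\ref{lem:apdx_1st_lem}. For the second bracket I would pull the norm inside the expectation, $\norm{\joint(s_{\eta_1}) - \joint(s_{\eta_2})} = \norm{\bE[s_{\eta_1}(Z)-s_{\eta_2}(Z)]} \le \bE\norm{s_{\eta_1}(Z)-s_{\eta_2}(Z)} \le \bE[S(Z)]\,\rho(\eta_1,\eta_2)$, and then use $\bE[S(Z)] \le \norm{S}_{\sL^2(\joint)}$ (Jensen, or Cauchy--Schwarz against the constant $1$, valid since $S \ge 0$ is square integrable). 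Adding the two bounds yields exactly $[S(z) + \norm{S}_{\sL^2(\joint)}]\rho(\eta_1,\eta_2)$ almost surely, as claimed.

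For the Donsker property, the key observation is that for each fixed $\eta$ the quantity $\joint(s_\eta)$ is a non-random constant, so $s^0_\eta$ differs from $s_\eta$ only by a deterministic shift and satisfies $\joint(s^0_\eta) = 0$. Since the empirical process annihilates constants, i.e. $\bG_n(c) = \sqrt{n}(\empiricalPn - \joint)(c) = 0$ for any constant $c$, I would conclude that $\bG_n(s^0_\eta) = \bG_n(s_\eta)$ identically for every $\eta \in \sO$. Hence the empirical processes indexed by $\sS^*$ and by $\sS$ coincide as stochastic processes; since $\set{s_\eta : \eta \in \sO}$ is $\joint$-Donsker by hypothesis, the same weak-convergence statement transfers verbatim to $\sS^*$. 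One minor preliminary I would check is that $\joint(s_\eta)$ is well-defined for each $\eta \in \sO$: the Lipschitz hypothesis gives $\norm{s_\eta - \ts} \le S\,\rho(\eta,\eta^*)$ with $S$ square integrable and $\ts \in \sL^2_p(\joint)$, so every $s_\eta$ with $\eta \in \sO$ lies in $\sL^2_p(\joint)$ and is in particular integrable. I do not anticipate any genuine obstacle here — both parts are routine, the only mild care being this integrability check and the remark that $\bG_n$ kills constants, which is precisely what makes recentering invisible to the empirical process.
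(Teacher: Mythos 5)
Your proposal is correct and follows essentially the same route as the paper: the triangle-inequality decomposition with the bound $\norm{\joint(s_{\eta_1}-s_{\eta_2})}\le \joint\norm{s_{\eta_1}-s_{\eta_2}}\le \norm{S}_{\sL^2(\joint)}\rho(\eta_1,\eta_2)$ for the Lipschitz part, and the observation that $\bG_n(s^0_\eta)=\bG_n(s_\eta)$ because the empirical process annihilates constants for the Donsker part. The added integrability check is harmless and the argument is complete.
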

\begin{proof}
\begin{equation}
\notag
\begin{aligned}
\norm{s^0_{\eta_1}(Z)-s^0_{\eta_2}(Z)} &\le  \norm{s_{\eta_1}(Z)-s_{\eta_2}(Z)} + \norm{\joint(s_{\eta_1}-s_{\eta_2})}\\
&\le \norm{s_{\eta_1}(Z)-s_{\eta_2}(Z)} + \joint\norm{s_{\eta_1}-s_{\eta_2}}\\
&\le \norm{s_{\eta_1}(Z)-s_{\eta_2}(Z)} + \norm{s_{\eta_1}-s_{\eta_2}}_{\sL^2(\joint)}\\
&\le\left[ S(Z) + \norm{S}_{\sL^2(\joint)}\right]\rho(\eta_1, \eta_2).
\end{aligned}
\end{equation}
Since $\bG_n(s_\eta) = \bG_n(s^0_\eta)$ for all $\eta \in \Omega$, $\joint$-Donsker follows immediately.
\end{proof}

\begin{lemma}
\label{lem:apdx_order_Kn}
Let $\set{X_i}_{i=1}^n$ be i.i.d. random vectors with mean zero and variance $\*I_{K_n}$, where $K_n$ satisfies $K_n \to \infty$ and $\frac{K_n}{n} \to 0$. Let $a \in \bR^{K_n}$ be any fixed vector such that $\norm{a} = \bigO(\sqrt{K_n})$.
Then
\begin{itemize}
    \item $\frac{1}{n}\sum_{i=1}^n a^\top X_i  =  \bigO_p\left(\sqrt{\frac{K_n}{n}}\right)$
    \item $\norm{\frac{1}{n}\sum_{i=1}^nX_i} = \bigO_p\left(\sqrt{\frac{K_n}{n}}\right)$
\end{itemize}
\end{lemma}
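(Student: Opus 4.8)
The plan is to control each quantity through its second moment and then convert the moment bound into a stochastic-order statement via Markov's inequality. Both parts are elementary moment computations; the mean-zero and identity-covariance hypotheses are exactly what is needed to annihilate the cross terms, and the assumption $\norm{a} = \bigO(\sqrt{K_n})$ feeds directly into the variance bound for the first claim.

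For the first claim, I would set $S_n = \frac{1}{n}\sum_{i=1}^n a^\top X_i$ and note $\bE[S_n] = 0$ since each $X_i$ is mean zero and $a$ is fixed. By independence across $i$, $\Var[S_n] = \frac{1}{n^2}\sum_{i=1}^n \Var[a^\top X_i] = \frac{1}{n}\, a^\top \Var[X_1]\, a = \frac{\norm{a}^2}{n}$, where the last equality uses $\Var[X_1] = \*I_{K_n}$. Since $\norm{a}^2 = \bigO(K_n)$ by hypothesis, we obtain $\bE[S_n^2] = \Var[S_n] = \bigO(K_n/n)$, and applying Markov's inequality to $S_n^2$ (equivalently, noting that a sequence with $\bE[S_n^2] = \bigO(r_n^2)$ is $\bigO_p(r_n)$) yields $S_n = \bigO_p\big(\sqrt{K_n/n}\big)$.

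For the second claim, I would set $\bar X_n = \frac{1}{n}\sum_{i=1}^n X_i$ and expand $\bE[\norm{\bar X_n}^2] = \frac{1}{n^2}\sum_{i,j}\bE[X_i^\top X_j]$. The off-diagonal terms vanish because $\bE[X_i^\top X_j] = \bE[X_i]^\top \bE[X_j] = 0$ for $i \neq j$ by independence and mean zero, while each diagonal term equals $\bE[\norm{X_1}^2] = \tr(\Var[X_1]) = \tr(\*I_{K_n}) = K_n$. Hence $\bE[\norm{\bar X_n}^2] = K_n/n$, and Markov's inequality again gives $\norm{\bar X_n}^2 = \bigO_p(K_n/n)$, so $\norm{\bar X_n} = \bigO_p\big(\sqrt{K_n/n}\big)$. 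I do not anticipate any genuine obstacle here: the only points requiring care are the vanishing of the cross terms and the passage from a second-moment bound to an $\bigO_p$ rate. I note that the stated orders do not actually require $K_n/n \to 0$; that condition merely guarantees the bounds are informative, i.e.\ that the two averages converge to $0$ in probability, which is how this lemma is used in the proof of Theorem~\ref{thm:OSS_efficient_estimator}.
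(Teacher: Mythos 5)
Your proof is correct and takes essentially the same approach as the paper's: compute the second moment of each average (using the mean-zero and identity-covariance hypotheses to kill the cross terms, giving $\norm{a}^2/n$ and $K_n/n$ respectively) and convert the moment bound into an $\bigO_p$ rate via Markov's inequality. Your side remark that the condition $K_n/n \to 0$ is not needed for the stated orders, only for their usefulness downstream, is also accurate.
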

\begin{proof}
We have:
$$\bE\left[\left(\frac{1}{n}\sum_{i=1}^n a^\top X_i\right)^2\right] = \frac{\norm{a}^2}{n} = \bigO\left(\frac{K_n}{n}\right).$$
$$\bE\left[\left(\frac{1}{n}\sum_{i=1}^nX_i\right)^\top\left(\frac{1}{n}\sum_{i=1}^nX_i\right)\right] = \frac{K_n}{n}.$$
Therefore, by Markov's inequality, for any $\epsilon>0$,
\begin{equation}
\notag
\begin{aligned}
\bP\set{\left\lvert\frac{1}{n}\sum_{i=1}^n a^\top X_i \right\rvert> \epsilon}
&= \bP\set{\left(\frac{1}{n}\sum_{i=1}^n a^\top X_i \right)^2> \epsilon^2}\\
&\le \frac{\bE\left[\left(\frac{1}{n}\sum_{i=1}^n a^\top X_i\right)^2\right]}{\epsilon^2}\\
&= \frac{\norm{a}^2}{n\epsilon^2},
\end{aligned}
\end{equation}
which we see that let $\epsilon = C^{-1}\sqrt{\frac{K_n}{n}}$ leads to the result. The seond equation can be proved in a similar manner.
\end{proof}

\subsection{Additional definitions and results in semiparametric efficiency theory}
\label{subsec:apdx_efficiency_iid}
Following the notation in Section~\ref{sec:efficiency_theory}, we provide additional definitions and results from classical semiparametric efficiency theory. Recall that for a model $\jointmodel$ that contains $\joint$, the \emph{tangent set} at $\joint$ relative to $\jointmodel$ is
the set of score functions at $\joint$ from all one-dimensional regular parametric sub-models of $\jointmodel$, and the corresponding \emph{tangent space} is the closed linear span of the tangent set. A Euclidean functional $\theta: \jointmodel \to \bR^p$ is \emph{pathwise differentiable} at $\joint$ relative to $\jointmodel$ if, for an arbitrary smooth one-dimensional parametric sub-model $\sP_T = \set{\bP_t: t \in T \subset \bR}$ of $\jointmodel$ such that $\joint = \bP_{t^*}$ for some $t^* \in T$, it holds that $\frac{d\theta(\bP_{t^*})}{dt} = \bE[\gradient(Z) s_{t^*}(Z)]$, where $\gradient(z) \in \lp$ is a \emph{gradient} of $\thetafunctional$ at $\joint$ relative to $\jointmodel$, and $s_{t^*}$ is the score function of $\sP_T$ at $t^*$. The next lemma relates the set of gradients to the set of influence functions of regular asymptotically linear estimators.
\begin{lemma}
\label{lem:apdx_gradients_IF}
Let $\thetaest$ be an asymptotically linear estimator of $\theta^*$ with influence function $\IF(z)$, then $\thetafunctional$ is pathwise differentiable at $\joint$ relative to $\jointmodel$ with a gradient $\IF(z)$ if and only if $\thetaest$ is a regular estimator of $\theta^*$ over $\jointmodel$.
\end{lemma}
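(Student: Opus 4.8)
The plan is to characterize the limiting law of $\thetaest$ under local alternatives via Le Cam's third lemma, and then to recognize that the regularity requirement is equivalent to a drift-matching identity that is exactly the gradient condition defining pathwise differentiability. Fix an arbitrary one-dimensional regular parametric submodel $\set{\bP_t : t \in T}\subset\jointmodel$ with $\bP_{t^*}=\joint$, score $s_{t^*}(z)$ at $t^*$, and Fisher information $I(t^*)=\bE[s_{t^*}(Z)^2]$. Since the submodel is regular (differentiable in quadratic mean), the log-likelihood ratio $\Lambda_n(h):=\log\big(d\bP_{t^*+h/\sqrt{n}}^{n}/d\bP_{t^*}^{n}\big)$ obeys the local asymptotic normality (LAN) expansion
\begin{equation}
\notag
\Lambda_n(h)=\frac{h}{\sqrt{n}}\sum_{i=1}^n s_{t^*}(Z_i)-\frac{h^2}{2}I(t^*)+\littleO_p(1)
\end{equation}
under $\joint^{n}$, for every $h\in\bR$.

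First I would combine the asymptotic-linearity expansion $\sqrt{n}(\thetaest-\theta^*)=n^{-1/2}\sum_i\IF(Z_i)+\littleO_p(1)$ with this LAN expansion and the multivariate CLT. Writing $\tau:=\bE[\IF(Z)s_{t^*}(Z)]$ and using that $\bE[\IF]=0$ and $\bE[s_{t^*}]=0$, the joint limit under $\joint^{n}$ is
\begin{equation}
\notag
\begin{pmatrix}\sqrt{n}(\thetaest-\theta^*)\\ \Lambda_n(h)\end{pmatrix}
\indistribution
N\!\left(\begin{pmatrix}0\\ -\tfrac{h^2}{2}I(t^*)\end{pmatrix},\ \begin{pmatrix}\Var[\IF(Z)] & h\,\tau\\ h\,\tau^\top & h^2 I(t^*)\end{pmatrix}\right).
\end{equation}
Because the drift in the LAN expansion is $-\tfrac12$ of the asymptotic variance of $\Lambda_n(h)$, Le Cam's third lemma applies and transfers the limit to the local alternative: under $\bP_{t^*+h/\sqrt{n}}^{n}$,
\begin{equation}
\notag
\sqrt{n}(\thetaest-\theta^*)\indistribution N\!\left(h\,\tau,\ \Var[\IF(Z)]\right).
\end{equation}

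Next I would subtract the deterministic centering. Since $\sqrt{n}[\thetaest-\theta(\bP_{t^*+h/\sqrt{n}})]=\sqrt{n}(\thetaest-\theta^*)-c_n(h)$ with $c_n(h):=\sqrt{n}[\theta(\bP_{t^*+h/\sqrt{n}})-\theta^*]$ nonrandom, the limit law under the alternative is $N(h\tau-c(h),\,\Var[\IF])$ whenever $c(h):=\lim_n c_n(h)$ exists. Regularity of $\thetaest$ demands this law be free of $h$; as the variance already is, regularity along this submodel holds if and only if $h\tau-c(h)$ is constant in $h$, i.e.\ (using $c(0)=0$) if and only if $c(h)=h\tau$ for all $h$. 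Since $c(h)=h\,\dot{\theta}$ with $\dot{\theta}=\tfrac{d}{dt}\theta(\bP_t)|_{t^*}$ whenever $t\mapsto\theta(\bP_t)$ is differentiable, this is precisely the gradient identity $\dot{\theta}=\bE[\IF(Z)s_{t^*}(Z)]$, i.e.\ $\IF$ is a gradient of $\thetafunctional$ along this path. Quantifying over all regular one-dimensional submodels yields both implications simultaneously: $\thetaest$ is regular relative to $\jointmodel$ iff the gradient identity holds along every path, which is exactly pathwise differentiability of $\thetafunctional$ at $\joint$ relative to $\jointmodel$ with gradient $\IF$.

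I expect the main obstacle to lie in the ``regular $\Rightarrow$ gradient'' direction, specifically in justifying that regularity forces the deterministic sequence $c_n(h)$ to \emph{converge} to a finite, linear-in-$h$ limit (not merely to be bounded), since otherwise one cannot read off differentiability of $t\mapsto\theta(\bP_t)$. This is handled by a tightness argument: $c_n(h)=\sqrt{n}(\thetaest-\theta^*)-\sqrt{n}[\thetaest-\theta(\bP_{t^*+h/\sqrt{n}})]$ is the difference of two sequences that converge in distribution under $\bP_{t^*+h/\sqrt{n}}^{n}$, so $c_n(h)$ is bounded, and every convergent subsequence forces the same Gaussian location shift, pinning $c(h)=h\tau$ uniquely. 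The LAN expansion and Le Cam's third lemma are standard and may be invoked directly.
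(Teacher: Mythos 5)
Your proof is correct and follows the standard route; the paper itself does not prove this lemma (it cites \citet{pfanzagl1990estimation} and \citet{van1991differentiable}), but the proof it gives for the non-i.i.d.\ analogue, Lemma~\ref{lem:apdx_gradients_IF_OSS}, is exactly your argument: joint CLT for the estimator and the log-likelihood ratio, contiguity plus Le Cam's third lemma to shift the limit under local alternatives, and then regularity forcing the deterministic drift $c(h)$ to equal $h\,\bE[\IF(Z)s_{t^*}(Z)]$, which is the gradient identity. Your added tightness argument pinning down the convergence of $c_n(h)$ is a worthwhile clarification of a step the paper glosses over with ``pathwise differentiability then follows by the definition of a derivative.''
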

For a proof, see \citep{pfanzagl1990estimation, van1991differentiable}. Lemma~\ref{lem:apdx_gradients_IF} implies the equivalence between the set of gradients and the set of influence functions of regular asymptotically linear estimators. If a gradient $\cgradient$ satisfies $a^\top\cgradient \in \tangent$ for all $a \in \bR^p$, then it is the \emph{canonical gradient} or \emph{efficient influence function} of $\thetafunctional$ at $\joint$ relative to $\jointmodel$. The next lemma characterizes the set of gradients for a pathwise differentiable parameter. 
\begin{lemma}[Characterization of the set of gradients]
\label{lem:apdx_gradient_set}
Let $\thetafunctional: \jointmodel \to \bR^p$ be a functional that is pathwise differentiable at $\joint \in \jointmodel$ relative to $\jointmodel$. Then the set of gradients of $\thetafunctional$ relative to $\jointmodel$ can be represented as $\set{\gradient(z) + g(z): a^\top g(z) \perp \tangent,  \forall a \in \bR^p}$, where $\gradient$ is any gradient of $\thetafunctional$.
\end{lemma}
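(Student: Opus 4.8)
The plan is to prove the representation by two inclusions, working throughout inside the Hilbert space $\lp = \sL^2_{p,0}(\joint)$ so that the subtlety of constant functions never arises: both the reference gradient $\gradient$ and every candidate perturbation $g$ are taken to be mean zero. Recall that, by pathwise differentiability, a function $D \in \lp$ is a gradient of $\thetafunctional$ at $\joint$ relative to $\jointmodel$ precisely when, for every one-dimensional regular parametric sub-model $\set{\bP_t:t\in T}$ with $\bP_{t^*}=\joint$ and score $s_{t^*}$, one has $\frac{d\theta(\bP_{t^*})}{dt} = \bE[D(Z)\,s_{t^*}(Z)]$, an identity between $\bR^p$-valued objects that I read coordinatewise. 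Every such score $s_{t^*}$ belongs to the tangent set, hence to $\tangent \subseteq \lpone$.

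First I would prove the inclusion $\supseteq$. Fix $g \in \lp$ with $a^\top g(z) \perp \tangent$ for every $a \in \bR^p$, and consider $\gradient + g$. For an arbitrary score $s_{t^*} \in \tangent$ and any $a$, linearity gives $\inner{a^\top(\gradient+g)}{s_{t^*}} = \inner{a^\top\gradient}{s_{t^*}} + \inner{a^\top g}{s_{t^*}}$, and the second summand vanishes by assumption; taking $a=e_j$ for each coordinate recovers $\bE[(\gradient+g)(Z)s_{t^*}(Z)] = \bE[\gradient(Z)s_{t^*}(Z)] = \frac{d\theta(\bP_{t^*})}{dt}$. Since $\gradient+g \in \lp$, this verifies that $\gradient+g$ is a gradient.

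For the reverse inclusion $\subseteq$, I would take an arbitrary gradient $D'(z) \in \lp$ and set $g := D' - \gradient \in \lp$. Because $D'$ and $\gradient$ are both gradients, for every score $s_{t^*}$ and every $a$ the two defining identities coincide, so $\inner{a^\top g}{s_{t^*}} = \inner{a^\top D'}{s_{t^*}} - \inner{a^\top\gradient}{s_{t^*}} = a^\top\frac{d\theta(\bP_{t^*})}{dt} - a^\top\frac{d\theta(\bP_{t^*})}{dt} = 0$. Thus $a^\top g$ is orthogonal to the whole tangent set, and writing $D' = \gradient + g$ exhibits $D'$ as an element of the claimed set once this orthogonality is upgraded to all of $\tangent$.

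That upgrade is the one step I would flag as the main — though routine — obstacle: the orthogonality just established is against the generating tangent set, whereas the lemma asserts orthogonality against the tangent space $\tangent$, its closed linear span. This is resolved by noting that orthogonality to a set in a Hilbert space is equivalent to orthogonality to its closed linear span, which follows from the bilinearity and continuity of $\inner{\cdot}{\cdot}$; consequently $a^\top g \perp \tangent$ for all $a$. Combining the two inclusions yields the stated representation. A minor bookkeeping issue throughout is the $\bR^p$-valued nature of $\thetafunctional$ and the gradients, which I dispatch uniformly by testing the inner-product identities against the coordinate vectors $a=e_j$.
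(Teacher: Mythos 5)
Your proof is correct: both inclusions are argued properly, the passage from orthogonality to the tangent \emph{set} to orthogonality to its closed linear span (the tangent space) is the right and only non-trivial step, and restricting $g$ to $\lp$ and testing against coordinate vectors $a=e_j$ handles the vector-valued bookkeeping. The paper itself gives no argument here --- it simply cites Theorem 3.4 of Tsiatis (2006) --- and your two-inclusion argument is exactly the standard proof underlying that citation, so there is nothing to reconcile.
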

\begin{proof}
See Theorem 3.4 of \cite{tsiatis2006semiparametric}.
\end{proof}
Recall the model \eqref{eq:separable_model}:
\begin{equation}
\notag
\jointmodel = \set{\bP = \bP_X\bP_{Y\mid X}:\bP_X \in \marginalmodel, \bP_{Y\mid X} \in \conditionalmodel},
\end{equation} 
which models the marginal distribution and the conditional distribution separately. We next characterize the tangent space relative to  \eqref{eq:separable_model}. 
\begin{lemma}
\label{lem:apdx_tangent_separable}
Let $\jointmodel$ be defined as in \eqref{eq:separable_model}. Then, the tangent space $\tangent$ can be written as
$$\tangent = \tangentmarginal \oplus \tangentconditional,$$
where $\tangentmarginal$ is the tangent space at $\marginal$ relative to $\marginalmodel$, and $\tangentconditional$ is the tangent space at $\conditional$ relative to $\conditionalmodel$.
\end{lemma}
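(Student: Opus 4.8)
The plan is to prove the two inclusions $\tangent \subseteq \tangentmarginal \oplus \tangentconditional$ and $\tangentmarginal \oplus \tangentconditional \subseteq \tangent$ separately, after first recording the orthogonality that justifies the direct-sum notation. For the orthogonality, note that by definition $\tangentmarginal \subseteq \lpxone$ and $\tangentconditional \subseteq \lpyxone$, so it suffices to observe that any $f(x) \in \lpxone$ and $g(z) \in \lpyxone$ satisfy $\bE[f(X)g(Z)] = \bE\set{f(X)\bE[g(Z)\mid X]} = 0$, since $\bE[g(Z)\mid X] = 0$ by definition of $\lpyxone$. Hence $\tangentmarginal \perp \tangentconditional$; as each is a closed linear span, their sum is closed, and $\tangentmarginal \oplus \tangentconditional$ is a well-defined closed subspace of $\lpone$.

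For the inclusion $\tangent \subseteq \tangentmarginal \oplus \tangentconditional$, I would take an arbitrary score $s_{t^*}(z)$ of a one-dimensional regular parametric sub-model $\sP_T = \set{\bP_t : t \in T} \subset \jointmodel$ with $\bP_{t^*} = \joint$. Because every element of $\jointmodel$ factors as in \eqref{eq:separable_model}, we may write $\bP_t = \bP_{t,X}\bP_{t,Y\mid X}$ with $\bP_{t,X} \in \marginalmodel$ and $\bP_{t,Y\mid X} \in \conditionalmodel$, so the log-density is additive, $\log p_t(z) = \log p_{t,X}(x) + \log p_{t,Y\mid X}(y\mid x)$. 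Differentiating at $t^*$ gives $s_{t^*}(z) = s_X(x) + s_{Y\mid X}(z)$, where $s_X$ is the score of the marginal path $\set{\bP_{t,X}}$, a regular parametric sub-model of $\marginalmodel$, and $s_{Y\mid X}$ is the score of the conditional path $\set{\bP_{t,Y\mid X}}$, a regular parametric sub-model of $\conditionalmodel$. Thus $s_X \in \tangentmarginal$ and $s_{Y\mid X} \in \tangentconditional$, so the entire tangent set lies in $\tangentmarginal \oplus \tangentconditional$; since the latter is closed, taking closed linear spans yields the inclusion.

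For the reverse inclusion, I would exploit the product structure of $\jointmodel$ to embed each factor's paths into the joint model. Given a score $s_X$ realized by a regular parametric sub-model $\set{\bP_{t,X}} \subset \marginalmodel$, the path $\set{\bP_{t,X}\conditional}$ lies in $\jointmodel$ (its conditional factor is held fixed at $\conditional$) and has joint score $s_X + 0 = s_X$, so $s_X \in \tangent$; symmetrically, holding the marginal fixed at $\marginal$ shows every such $s_{Y\mid X} \in \tangent$. For general elements of $\tangentmarginal$ and $\tangentconditional$ that are only limits of scores, I would approximate by sequences of scores, embed each as above, and invoke the closedness of $\tangent$ together with continuity of the inner product, exactly the limiting argument used in the proof of Theorem~\ref{thm:no_improvement_efficiency}. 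This gives $\tangentmarginal \subseteq \tangent$ and $\tangentconditional \subseteq \tangent$, and since $\tangent$ is a closed linear space, $\tangentmarginal \oplus \tangentconditional \subseteq \tangent$. Combining the two inclusions finishes the proof.

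The main obstacle is bookkeeping rather than conceptual: I must ensure that the marginal and conditional paths extracted from a joint sub-model are themselves genuine regular parametric sub-models of $\marginalmodel$ and $\conditionalmodel$ (so their scores land in the respective tangent sets), and, conversely, that perturbing one factor while freezing the other stays inside $\jointmodel$. Both hold precisely because \eqref{eq:separable_model} specifies the marginal and conditional models separately, so the two factors range freely and independently.
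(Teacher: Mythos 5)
The paper does not actually prove this lemma: its ``proof'' is a one-line citation to Lemma 1.6 of \cite{van2003unified}. Your argument is therefore, by necessity, a different route --- it is the standard direct proof of that cited result, and it is essentially correct. The three ingredients are all in place: the orthogonality of $\lpxone$ and $\lpyxone$ (hence of $\tangentmarginal$ and $\tangentconditional$), which also guarantees that the sum of these two closed subspaces is itself closed, so that taking closed linear spans is legitimate; the additive score decomposition $s_{t^*}=s_X+s_{Y\mid X}$ for the forward inclusion; and the one-factor-at-a-time perturbation for the reverse inclusion, which stays inside $\jointmodel$ precisely because \eqref{eq:separable_model} specifies $\marginalmodel$ and $\conditionalmodel$ variation-independently. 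The only place where genuine care (beyond bookkeeping) is needed is the point you flag yourself: regularity of a parametric sub-model is defined through differentiability in quadratic mean, not pointwise differentiability of $\log p_t$, so the claims that the marginal and conditional paths extracted from a QMD joint path are themselves QMD sub-models of $\marginalmodel$ and $\conditionalmodel$, and conversely that the product of a QMD marginal path with a fixed (or QMD) conditional factor is QMD with the asserted score, each require a short $L^2$ argument rather than the formal chain rule. That technical content is exactly what the cited Lemma 1.6 supplies; at the level of rigor the paper itself adopts, your sketch is acceptable, and filling in the quadratic-mean step would make it fully self-contained.
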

\begin{proof}
See Lemma 1.6 of \cite{van2003unified}.
\end{proof}

\subsection{Semiparametric efficiency theory under the OSS setting}
\label{sec:apdx_efficiency_noniid}
In this section, we extend the classical semiparametric efficiency theory to the OSS setting, building on ideas from \cite{bickel2001inference}. 
We first review some notation. Let $Z = (X, Y)$ be a random variable that takes value in the probability space $(\sZ, \sF, \joint)$, where $\sF$ is a sigma-algebra on $\sZ$. Let $(\sX, \sF_X, \marginal)$ be the corresponding probability space of $(\sZ, \sF, \joint)$ over $\sX$.
Consider the product probability space 
$$(\sZ \times \sX, \sF \otimes \sF_X, \ossjoint),$$
where $\sF \otimes \sF_X$ represents the product sigma-algebra of $\sF$ and $\sF_X$, and $\ossjoint = \joint \times \marginal$ represents the product measure of $\joint$ and $\marginal$. Note that the probability measure here depends only on $\joint$ as $\marginal$ is the corresponding marginal distribution of $\joint$ over $\sX$. Suppose $\joint$ belong to a model $\jointmodel$. As usual, we assume that there is some common dominating measure $\mu$ for $\jointmodel$ such that $\jointmodel$ can be equivalently represented by its density functions with respect to $\mu$. Our goal here is to estimate a Euclidean functional $\theta(\cdot): \jointmodel \to \Theta \subseteq \bR^p$ from the data $\labeled \cup \unlabeled$, where $\labeled \iidsim \joint$ and $\unlabeled \iidsim \marginal$. The full data is thus generated by 
$$\bQ^{n,N} = \left(\prod_{i=1}^n\joint\right)\times\left(\prod_{i=n+1}^{n+N}\marginal\right).$$
Let $n \to \infty$ and $\frac{N}{n+N} \to \gamma \in (0,1)$.  The empirical distribution of $\labeled \cup \unlabeled$ is $\ossempirical = \empiricalPn \times \empiricalPXN$, where $\empiricalPn = \frac{1}{n}\sum_{i=1}^n\delta_{Z_i}$ is the empirical distribution of $\labeled$ and $\empiricalPXN = \frac{1}{N}\sum_{i=n+1}^{n+N}\delta_{X_i}$ is the empirical distribution of $\unlabeled$. 

We first establish the local asymptotic normality property of $\ossempirical$. 
\begin{lemma}
\label{lem:apdx_LAN}
For any regular parametric sub-model 
$\sP_T = \set{\bP_t:t \in T \subset \bR^d} \subset \jointmodel$
such that $\bP_{t^*} = \joint$ for some $t^* \in T$, let $\bP_{t,X}$  be the marginal distribution of $\bP_t$ over $\sX$. Denote $\bQ_t^{n,N} = \left(\prod_{i=1}^n\bP_t\right)\times\left(\prod_{i=n+1}^{n+N}\bP_{t,X}\right).$ Then, for any $h \in \bR^d$, 
\begin{equation}
\label{eq:apdx_LAN}
\begin{aligned}
\log\left(\frac{d\bQ_{t^*+\frac{h}{\sqrt{n}}}^{n,N}}{d\bQ_{t^*}^{n,N}}\right)
&= h^\top\sqrt{n}\left[\ossempirical-\ossjoint\right]\left(s_{t^*}+\frac{\gamma}{1-\gamma}s_{t^*,X}\right)- \frac{1}{2}h^\top\*I_{t^*}h +\littleO_p(1)\\
&\indistribution N\left(- \frac{1}{2}h^\top\*I_{t^*}h, h^\top\*I_{t^*}h\right),
\end{aligned}
\end{equation} 
where $s_{t^*}(z)$ is the score function of $\bP_t$ at $t^*$, $s_{t^*,X}(x)$ is the score function of $\bP_{t,X}$ at $t^*$, and $\*I_{t^*} = \Var\left[s_{t^*}(Z)\right] + \frac{\gamma}{1-\gamma}\Var\left[s_{t^*,X}(X)\right]$. Moreover, $\bQ_{t^*+\frac{h}{\sqrt{n}}}^{n,N}$ and $\bQ_{t^*}^{n,N}$ are mutually contiguous.
\end{lemma}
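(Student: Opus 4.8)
The plan is to exploit the fact that, because $\labeled$ and $\unlabeled$ are independent and each consists of i.i.d.\ draws, the full-data likelihood ratio factorizes across the two blocks. Writing $p_t$ and $p_{t,X}$ for the $\mu$-densities of $\bP_t$ and $\bP_{t,X}$, I would begin from
\begin{equation}
\notag
\log\left(\frac{d\bQ_{t^*+h/\sqrt{n}}^{n,N}}{d\bQ_{t^*}^{n,N}}\right) = \sum_{i=1}^n \log\frac{p_{t^*+h/\sqrt{n}}(Z_i)}{p_{t^*}(Z_i)} + \sum_{i=n+1}^{n+N} \log\frac{p_{t^*+h/\sqrt{n},X}(X_i)}{p_{t^*,X}(X_i)},
\end{equation}
and then expand each sum separately using the smoothness (differentiability in quadratic mean) of the regular parametric sub-model $\sP_T$.

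For the labeled block the perturbation scale $h/\sqrt{n}$ matches the block size $n$, so the classical local asymptotic normality expansion for regular parametric models \citep{van2000asymptotic} gives directly $\sum_{i=1}^n \log(p_{t^*+h/\sqrt{n}}/p_{t^*})(Z_i) = h^\top \sqrt{n}\,[\empiricalPn-\joint](s_{t^*}) - \tfrac{1}{2} h^\top\Var[s_{t^*}(Z)]h + \littleO_p(1)$, using $\joint(s_{t^*})=0$. For the unlabeled block the perturbation $h/\sqrt{n}$ does \emph{not} match the block size $N$; the key manoeuvre is to rewrite it as $\tilde{h}_N/\sqrt{N}$ with $\tilde{h}_N := h\sqrt{N/n}$, which is bounded and converges to $h\sqrt{\gamma/(1-\gamma)}$ since $N/n\to\gamma/(1-\gamma)$. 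Applying the same quadratic-mean expansion along the convergent sequence of directions $\tilde{h}_N$ — legitimate because the expansion is locally uniform in the direction — yields $\sum_{i=n+1}^{n+N}\log(p_{t^*+h/\sqrt{n},X}/p_{t^*,X})(X_i) = \tilde{h}_N^\top\sqrt{N}\,[\empiricalPXN-\marginal](s_{t^*,X}) - \tfrac{1}{2}\tilde{h}_N^\top\Var[s_{t^*,X}(X)]\tilde{h}_N + \littleO_p(1)$, and substituting $\tilde{h}_N=h\sqrt{N/n}$ turns the linear term into $\tfrac{N}{n}h^\top\sqrt{n}\,[\empiricalPXN-\marginal](s_{t^*,X})$ and the quadratic term into $\tfrac{1}{2}\tfrac{N}{n}h^\top\Var[s_{t^*,X}(X)]h$.

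I would then replace $N/n$ by its limit $\gamma/(1-\gamma)$: in the quadratic term this is immediate, while in the linear term the error is $(N/n-\gamma/(1-\gamma))\cdot\bigO_p(1)=\littleO_p(1)$, since $\sqrt{n}\,[\empiricalPXN-\marginal](s_{t^*,X})=\bigO_p(\sqrt{n/N})=\bigO_p(1)$. Summing the two blocks and recognizing that $[\empiricalPn-\joint](s_{t^*})+\tfrac{\gamma}{1-\gamma}[\empiricalPXN-\marginal](s_{t^*,X})$ is exactly $[\ossempirical-\ossjoint](s_{t^*}+\tfrac{\gamma}{1-\gamma}s_{t^*,X})$ produces the stated first-order expansion with quadratic remainder $-\tfrac{1}{2} h^\top\*I_{t^*}h$. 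Convergence in distribution to $N(-\tfrac{1}{2} h^\top\*I_{t^*}h,\,h^\top\*I_{t^*}h)$ then follows from a central limit theorem applied to the two independent, mean-zero sums: independence of $\labeled$ and $\unlabeled$ makes the cross-covariance vanish, so the limiting variance is $\Var[s_{t^*}(Z)]+\tfrac{\gamma}{1-\gamma}\Var[s_{t^*,X}(X)]=\*I_{t^*}$.

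Finally, mutual contiguity is a consequence of the LAN structure via Le Cam's first lemma: the limit law $N(-\tfrac{1}{2}\sigma^2,\sigma^2)$ with $\sigma^2=h^\top\*I_{t^*}h$ has $\bE[\exp\{N(-\tfrac{1}{2}\sigma^2,\sigma^2)\}]=1$, so $\{\bQ^{n,N}_{t^*+h/\sqrt{n}}\}$ and $\{\bQ^{n,N}_{t^*}\}$ are contiguous in both directions. The main obstacle is the unlabeled block: because the local perturbation is indexed by the labeled sample size $n$ rather than the block's own size $N$, I must run the quadratic-mean expansion along the drifting direction $\tilde{h}_N$ and carefully verify that the discrepancy between $N/n$ and its limit contributes only $\littleO_p(1)$ to the linear term; once this bookkeeping is in place, the remaining steps are routine applications of the classical theory.
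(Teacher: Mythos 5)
Your proposal is correct and follows essentially the same route as the paper's proof: factorize the likelihood ratio across the two independent blocks, apply the classical quadratic-mean LAN expansion to the labeled block at scale $h/\sqrt{n}$, rewrite the unlabeled block's perturbation as $\tilde h_N/\sqrt{N}$ with $\tilde h_N=h\sqrt{N/n}$ so the standard expansion applies there too, pass $N/n\to\gamma/(1-\gamma)$, and conclude via the CLT for independent sums and Le Cam's first lemma. Your explicit remark that the expansion must be run along the drifting directions $\tilde h_N$ is a point of care the paper treats implicitly (Theorem 7.2 of van der Vaart already accommodates convergent sequences of local parameters), but it is the same argument.
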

Here and in the following text, a regular parametric model is interpreted in the sense of quadratic-mean differentiability of the density function at $t^*$. See Chapter 7 of \cite{van2000asymptotic} for details. 
\begin{proof}
By Theorem 7.2 of \cite{van2000asymptotic}, we have
\begin{equation}
\notag
\begin{aligned}
&\log\left(\prod_{i=1}^n\frac{d\bP_{t^*+\frac{h}{\sqrt{n}}}}{d\bP_{t^*}}\right) = h^\top\sqrt{n}(\empiricalPn-\joint)(s_{t^*}) - \frac{1}{2}h^\top \Var[s_{t^*}(Z)] h + \littleO_p(1),
\end{aligned}
\end{equation}
and
\begin{equation}
\notag
\begin{aligned}
&\log\left(\prod_{i=n+1}^{n+N}\frac{d\bP_{t^*+\frac{h}{\sqrt{n}},X}}{d\bP_{t^*,X}}\right) = \log\left(\prod_{i=n+1}^{n+N}\frac{d\bP_{t^*+\sqrt{\frac{N}{n}}\frac{h}{\sqrt{N}},X}}{d\bP_{t^*,X}}\right)\\
&= h^\top\sqrt{\frac{N}{n}}\sqrt{N}(\empiricalPXN-\marginal)(s_{t^*,X}) - \frac{N}{2n}h^\top \Var[s_{t^*,X}(X)] h + \littleO_p(1)\\
&= h^\top\sqrt{\frac{\gamma}{1-\gamma}}\sqrt{N}(\empiricalPXN-\marginal)(s_{t^*,X}) - \frac{\gamma}{2(1-\gamma)}h^\top \Var[s_{t^*,X}(X)] h + \littleO_p(1).
\end{aligned}
\end{equation}
Then, it follows that
\begin{equation}
\notag
\begin{aligned}
\log\left(\frac{d\bQ_{t^*+\frac{h}{\sqrt{n}}}^{n,N}}{d\bQ_{t^*}^{n,N}}\right)
&=\log\left(\prod_{i=1}^n\frac{d\bP_{t^*+\frac{h}{\sqrt{n}}}}{d\bP_{t^*}}\right) + \log\left(\prod_{i=n+1}^{n+N}\frac{d\bP_{t^*+\frac{h}{\sqrt{n}},X}}{d\bP_{t^*,X}}\right) \\
&= h^\top\sqrt{n}(\empiricalPn-\joint)(s_{t^*}) + h^\top\sqrt{\frac{\gamma}{1-\gamma}}\sqrt{N}(\empiricalPXN - \marginal)\left(s_{t^*,X}\right)\\
&\quad - \frac{1}{2}h^\top\left[\Var(s_{t^*}) + \frac{\gamma}{1-\gamma}\Var(s_{t^*,X})\right]h + \littleO_p(1).\\
\end{aligned}
\end{equation}
The convergence in distribution then follows from CLT and independence, and contiguity follows from Le Cam's first lemma.
\end{proof}
Define $l_{t^*}(z_1, x_2): \sZ \times \sX \to \bR^p$ as 
\begin{equation}
\label{eq:OSS_score}
l_{t^*}(z_1, x_2) = s_{t^*}(z_1)+\frac{\gamma}{1-\gamma}s_{t^*,X}(x_2).
\end{equation}
From Lemma~\ref{lem:apdx_LAN}, $l_{t^*}(z_1, x_2)$ can be viewed as the score function of the parametric model $\sP_T$ at $\ossjoint$ with information matrix $\*I_{t^*}$. It is clear that $l_{t^*}(z_1, x_2)$ is $\sF\otimes\sF_X$ measurable and is an element of the space
\begin{equation}
\label{eq:apdx_sH}
\sH = \set{f(z_1)+ \frac{\gamma}{1-\gamma}g(x_2), f(z) \in \lp, g(x):\in \lpx}.
\end{equation}
Consider any vector-valued functions $f_1(z), f_2(z) \in \lp$ and $g_1(x), g_2(z)\in \lpx$. Therefore, $f_1(z_1)+ \frac{\gamma}{1-\gamma}g_1(x_2) \in \sH$ and $f_2(z_1)+ \frac{\gamma}{1-\gamma}g_2(x_2) \in \sH$. Define
\begin{equation}
\label{eq:apdx_sH_norm}
\begin{aligned}
&\left\langle f_1(z_1)+\frac{\gamma}{1-\gamma}g_1(x_2),f_2(z_1)^\top+\frac{\gamma}{1-\gamma}g_2(x_2)^\top \right\rangle_\sH \\
&:= \left\langle f_1(z),f_2(z)^\top\right\rangle_{\lp} + \frac{\gamma}{1-\gamma}\left\langle g_1(x),g_2(x)^\top\right\rangle_{\lpx}\\
&= \bE\left[f_1(Z)f_2(Z)^\top\right] + \frac{\gamma}{1-\gamma}\bE\left[g_1(X)g_2(X)^\top\right].
\end{aligned}
\end{equation}
It is straightforward to verify that this is indeed an inner product, and $\sH$ is a Hilbert space. The information matrix $\*I_t$ can then be represented as
$$\*I_{t^*} = \left\langle l_{t^*}(z_1, x_2), l_{t^*}(z_1, x_2)^\top\right\rangle_\sH.$$

Consider an arbitrary model $\jointmodel$ of $\joint$. We now extend the notion of a tangent space to the OSS setting. Similar to the i.i.d. setting, the tangent set relative to $\jointmodel$ at $\ossjoint$ is defined as the set of all score functions at $\ossjoint$ of one-dimensional regular parametric sub-models of $\jointmodel$. The tangent space at $\ossjoint$ relative to $\jointmodel$, denoted as $\osstangent$, is then the closed linear span of the tangent set in $\sH$. Note that $\osstangent \subseteq \sH$. Next, we characterizes the tangent space at $\bQ(\joint)$ relative to model \eqref{eq:separable_model}. 

\begin{lemma}
\label{lem:apdx_tangent_space_OSS}
The tangent space $\bQ(\joint)$ relative to $\jointmodel$, denoted as $\osstangent$, can be expressed as
\begin{equation}
\notag
\begin{aligned}
\sM = \left\{f(x_1)+  g(z_1) + \frac{\gamma}{1-\gamma}f(x_2): f(z) \in \tangentmarginal, g(x)\in \tangentconditional\right\}.
\end{aligned}
\end{equation}
\end{lemma}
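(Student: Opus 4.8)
The plan is to prove the two inclusions $\osstangent \subseteq \sM$ and $\sM \subseteq \osstangent$ separately, following the template used in the proof of Proposition~\ref{prop:efficiency_missing}. The key tool throughout is the score formula \eqref{eq:OSS_score}: for any one-dimensional regular parametric sub-model $\sP_T = \set{\bP_t: t \in T} \subset \jointmodel$ with $\bP_{t^*} = \joint$, Lemma~\ref{lem:apdx_LAN} shows that the score at $\ossjoint$ is $l_{t^*}(z_1, x_2) = s_{t^*}(z_1) + \frac{\gamma}{1-\gamma}s_{t^*,X}(x_2)$, where $s_{t^*}$ is the full score of $\bP_t$ at $t^*$ and $s_{t^*,X}$ is the score of its marginal $\bP_{t,X}$.

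To prove $\osstangent \subseteq \sM$, I would decompose the full score. Since $\jointmodel$ has the separable form \eqref{eq:separable_model}, the density factors as $p_t(z) = p_{t,X}(x)\,p_{t,Y\mid X}(y\mid x)$, so the full score splits as $s_{t^*}(z) = s_{t^*,X}(x) + s_{t^*,Y\mid X}(z)$, where $s_{t^*,X} \in \tangentmarginal$ and $s_{t^*,Y\mid X} \in \tangentconditional$; this is precisely the decomposition underlying Lemma~\ref{lem:apdx_tangent_separable}. Substituting into the score formula gives $l_{t^*}(z_1, x_2) = s_{t^*,X}(x_1) + s_{t^*,Y\mid X}(z_1) + \frac{\gamma}{1-\gamma}s_{t^*,X}(x_2)$, which has the form $f(x_1) + g(z_1) + \frac{\gamma}{1-\gamma}f(x_2)$ with $f = s_{t^*,X} \in \tangentmarginal$ and $g = s_{t^*,Y\mid X} \in \tangentconditional$. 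Hence every score belongs to $\sM$, and since $\osstangent$ is the closed linear span of such scores, this inclusion follows once $\sM$ is shown to be closed in $\sH$.

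For $\sM \subseteq \osstangent$, I would take an arbitrary $f(x_1) + g(z_1) + \frac{\gamma}{1-\gamma}f(x_2)$ with $f \in \tangentmarginal$ and $g \in \tangentconditional$. When $f$ and $g$ are themselves scores --- $f$ the score at $\marginal$ of a sub-model $\set{p_{t,X}}$ of $\marginalmodel$ and $g$ the score at $\conditional$ of a sub-model $\set{p_{t,Y\mid X}}$ of $\conditionalmodel$ --- I would form the single-parameter product sub-model $\set{p_{t,X}(x)\,p_{t,Y\mid X}(y\mid x)}$ of $\jointmodel$, whose full score is $f + g$ and whose marginal score is $f$, so that by \eqref{eq:OSS_score} its score at $\ossjoint$ is exactly the chosen element. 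For general $f, g$ arising only as limits of scores, I would approximate by sequences and pass to the limit using continuity of the $\sH$-inner product, as in the parenthetical argument in the proof of Theorem~\ref{thm:no_improvement_efficiency}; closedness of $\osstangent$ then yields the inclusion.

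The main obstacle is establishing that $\sM$ is closed, which is exactly what upgrades the first inclusion from $\osstangent \subseteq \overline{\sM}$ to $\osstangent \subseteq \sM$. I would resolve this via a norm identity: using the orthogonality $s_{t^*,X} \perp s_{t^*,Y\mid X}$ in $\lpone$ together with \eqref{eq:apdx_sH_norm}, one obtains $\norm{f(x_1) + g(z_1) + \frac{\gamma}{1-\gamma}f(x_2)}_\sH^2 = \frac{1}{1-\gamma}\norm{f}_{\lpxone}^2 + \norm{g}_{\lpyxone}^2$. Because $\frac{1}{1-\gamma} \geq 1$ for $\gamma \in (0,1)$, the linear map $(f, g) \mapsto f(x_1) + g(z_1) + \frac{\gamma}{1-\gamma}f(x_2)$ is bounded below on the complete space $\tangentmarginal \times \tangentconditional$, so its range $\sM$ is closed, which completes the proof.
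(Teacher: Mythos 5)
Your proof is correct and follows essentially the same route as the paper's: both inclusions are established by decomposing the full score as $s_{t^*} = s_{t^*,X} + s_{t^*,Y\mid X}$ for the forward direction and, conversely, by assembling a product sub-model from scores of $\marginalmodel$ and $\conditionalmodel$ (passing to limits for general elements of the tangent spaces). The one place you go beyond the paper is the closedness of $\sM$, which the paper merely asserts from closedness of $\tangentmarginal$ and $\tangentconditional$; your norm identity $\norm{f(x_1)+g(z_1)+\frac{\gamma}{1-\gamma}f(x_2)}_\sH^2 = \frac{1}{1-\gamma}\norm{f}_{\lpxone}^2 + \norm{g}_{\lpyxone}^2$, showing the assembling map is bounded below on the complete space $\tangentmarginal\times\tangentconditional$, is the right way to justify this, since the image of a closed subspace under a bounded linear map need not be closed in general.
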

\begin{proof}
It is straightforward to see that $\sM$ is a closed linear space since both $\tangentconditional$ and $\tangentmarginal$ are closed linear spaces by the definition of tangent space.

First, we show that $\osstangent \subseteq \sM$. Consider any one-dimensional  regular parametric sub-model $\sP_T = \set{p_t(z) = p_{t,X}(x)p_{t,Y\mid X}(z): t \in T}$ of $\jointmodel$ such that $p_{t^*}(z)$ is the density of $\joint$ for some $t^* \in T$. Let $s_{t^*}(z)$, $s_{t^*,Y\mid X}(z)$ and $s_{t^*,X}(x)$ denote the score function of $p_t(z)$, $p_{t,X}(x)$, and $p_{t,Y\mid X}(z)$ at $t^*$, respectively. Then we have $s_{t^*}(z) = s_{t^*,X}(x) + s_{t^*,Y\mid X}(z)$. Since $\sP_T \subset \jointmodel$, and by the definition of $\jointmodel$ \eqref{eq:separable_model}, it holds that $\set{p_{t,X}(x):t\in T}$ is a one-dimensional regular parametric sub-model of $\conditionalmodel$, and $\set{p_{t,Y\mid X}(x):t\in T}$ is a one-dimensional regular parametric sub-model of $\conditionalmodel$. Therefore, $s_{t^*,X}(x) \in \tangentmarginal$ and $s_{t^*,Y\mid X}(z) \in \tangentconditional$. By Lemma~\ref{lem:apdx_LAN}, the score function of $\sQ$ at $\ossjoint$ can be written as
\begin{equation}
\notag
\begin{aligned}
l_{t^*}(z_1, x_2) &= s_{t^*}(z_1) + \frac{\gamma}{1-\gamma}s_{t^*,X}(x_2)\\
&= s_{t^*,X}(x_1) + s_{t^*,Y\mid X}(z_1)  + \frac{\gamma}{1-\gamma}s_{t^*,X}(x_2).
\end{aligned}
\end{equation} 
By Lemma~\ref{lem:apdx_tangent_separable}, $\tangent = \tangentmarginal \oplus \tangentconditional$. Therefore, $l_{t^*}(z_1, x_2) \in \sM$. Then, $\osstangent \subseteq \sM$ follows from the fact that $\sM$ is a closed linear space. 

For the other direction, consider arbitrary elements $g(z) \in \tangentconditional$ and $f(x) \in \tangentmarginal$. Then, without loss of generality, there must exists a one-dimensional regular parametric sub-model $\set{p_{t,X}(x):t \in T}$ of $\marginalmodel$ such that $p_{t^*,X}(x)$ is the density of $\marginal$, and $f(x)$ is the score function of $p_{t,X}(x)$ at $t^*$. Similarly, there must exists a one-dimensional regular parametric sub-model $\set{p_{t,Y\mid X}(z):t \in T}$ of $\conditionalmodel$ such that $p_{t^*,Y\mid X}(z)$ is the density of $\conditional$, and $g(z)$ is the score function of $p_{t,Y\mid X}(z)$ at $t^*$. Consider the model $\sP_T = \set{p_t(z) = p_{t,X}(x)p_{t,Y\mid X}(z): t \in T}$, which is a one-dimensional regular parametric sub-model of $\jointmodel$ by its definition, with score function $f(x)+g(z)$ at $t^*$. By Lemma~\ref{lem:apdx_tangent_separable}, $f(x)+g(z) \in \tangent$. Therefore, by Lemma~\ref{lem:apdx_LAN}, we see that the socre at $\ossjoint$ relative to this parametric sub-model is $f(x_1)+g(z_1)+\frac{\gamma}{1-\gamma}f(x_2) \in \osstangent$, and hence $\sM \subseteq \osstangent$.  
\end{proof}

Next, we consider the extention of pathwise differentiability to the OSS setting.
\begin{definition}[Pathwise differentiability]
\label{def:apdx_path}
A Euclidean functional $\theta:\jointmodel \to \Theta \subseteq \bR^p$ is \emph{pathwise differentiable} relative to $\jointmodel$ at $\ossjoint$ if, any one-dimensional regular parametric sub-model $\sP_T \subset \jointmodel$ where $\sP_T = \set{\bP_t:t \in T \subset \bR}$ such that $\bP_{t^*} = \joint$ for some $t^* \in T$, $\theta:\sP_T \to \Theta$ as a function $\theta(t)$ of $t$ satisfies
$$\frac{d\theta(t^*)}{dt} = \langle l_{t^*}(z_1, x_2), \ossgradient(z_1, x_2)\rangle_\sH,$$
for some $\ossgradient(z_1,x_2) \in \sH$, where $l_{t^*}$ is the score function of $\sP_T$ at $\ossjoint$. Then, $\ossgradient(z_1, x_2)$ is a \emph{gradient} of $\thetafunctional$ relative to $\jointmodel$ at $\ossjoint$. 
\end{definition}
If a gradient $\cossgradient(z_1, x_2)$ satisfies $a^\top\cossgradient(z_1, x_2) \in \osstangent$ for all $a \in \bR^p$, then it is a \emph{canonical gradient} or the \emph{efficient influence function} of $\thetafunctional$ relative to $\jointmodel$ at $\ossjoint$. The next lemma establishes the connection between pathwise differentiability under the OSS setting and pathwise differentiability in the usual sense.
\begin{lemma}
\label{lem:apdx_path}
If $\theta:\jointmodel \to \Theta \subseteq \bR^p$ is a pathwise differentiable relative to $\jointmodel$ at $\joint$, then it is pathwise differentiable relative to $\jointmodel$ at $\ossjoint$. Moreover, if $\gradient$ is a gradient of $\thetafunctional$ relative to $\jointmodel$ at $\joint$, then $$\gradient(z_1)-\Tilde{D}_\jointmodel(\joint)(x_1)+\Tilde{D}_\jointmodel(\joint)(x_2)$$
is a gradient of $\thetafunctional$ relative to $\jointmodel$ at $\ossjoint$, where $$\Tilde{D}_\jointmodel(\joint)(x) = \bE\left[D_\jointmodel(\joint)(Z)\mid X = x\right]$$
is the conditional gradient.
\end{lemma}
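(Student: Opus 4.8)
The plan is to exhibit, for the given gradient at $\joint$, an explicit element of the Hilbert space $\sH$ in \eqref{eq:apdx_sH} that satisfies the defining identity of a gradient at $\ossjoint$; pathwise differentiability at $\ossjoint$ then follows immediately from Definition~\ref{def:apdx_path}. Writing $\tilde D(x) = \bE[\gradient(Z)\mid X=x]$ for the conditional gradient, the candidate is
\[
\ossgradient(z_1,x_2) = \big[\gradient(z_1)-\tilde D(x_1)\big] + \frac{\gamma}{1-\gamma}\cdot\frac{1-\gamma}{\gamma}\,\tilde D(x_2) = \gradient(z_1)-\tilde D(x_1)+\tilde D(x_2).
\]
First I would check that this lies in $\sH$: the bracketed term $\gradient(z)-\tilde D(x)$ is a function of $z$ lying in $\lp$, and $\frac{1-\gamma}{\gamma}\tilde D(x)$ is of the required $\lpx$ type (both have mean zero since gradients are centered), so $\ossgradient$ has the form $f(z_1)+\frac{\gamma}{1-\gamma}g(x_2)$ prescribed by \eqref{eq:apdx_sH}. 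Here I use $\gamma\in(0,1)$, so the factors $\frac{\gamma}{1-\gamma}$ and its reciprocal are well defined.

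Next, I would fix an arbitrary one-dimensional regular parametric sub-model $\sP_T\subset\jointmodel$ with $\bP_{t^*}=\joint$ and write its score as $s_{t^*}=s_{t^*,X}+s_{t^*,Y\mid X}$, the sum of marginal and conditional components; by Lemma~\ref{lem:apdx_LAN} the corresponding score at $\ossjoint$ is $l_{t^*}(z_1,x_2)=s_{t^*}(z_1)+\frac{\gamma}{1-\gamma}s_{t^*,X}(x_2)$. Evaluating $\langle l_{t^*},\ossgradient\rangle_\sH$ with the inner product \eqref{eq:apdx_sH_norm}, the factor $\frac{\gamma}{1-\gamma}$ cancels against $\frac{1-\gamma}{\gamma}$ in the marginal block, leaving
\[
\langle l_{t^*},\ossgradient\rangle_\sH = \bE\big[s_{t^*}(Z)\gradient(Z)\big] - \bE\big[s_{t^*}(Z)\tilde D(X)\big] + \bE\big[s_{t^*,X}(X)\tilde D(X)\big].
\]
I would then substitute $s_{t^*}=s_{t^*,X}+s_{t^*,Y\mid X}$ into the middle term to reduce the last two terms to $-\bE[s_{t^*,Y\mid X}(Z)\tilde D(X)]$, which vanishes because $\tilde D(X)$ is $X$-measurable and the conditional score has conditional mean zero, $\bE[s_{t^*,Y\mid X}(Z)\mid X]=0$. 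Hence $\langle l_{t^*},\ossgradient\rangle_\sH = \bE[s_{t^*}(Z)\gradient(Z)] = \frac{d\theta(t^*)}{dt}$ by pathwise differentiability at $\joint$ with gradient $\gradient$, which is precisely the gradient identity of Definition~\ref{def:apdx_path}.

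Since $\sP_T$ was arbitrary, $\ossgradient$ is a gradient of $\thetafunctional$ at $\ossjoint$ and $\theta$ is pathwise differentiable there, giving both conclusions of the lemma at once. The only step requiring care is the cross-term cancellation: it rests on the additive score decomposition and on the conditional-mean-zero property of the conditional score. The former is exactly the separable structure of $\jointmodel$ in \eqref{eq:separable_model} already used in Lemma~\ref{lem:apdx_tangent_space_OSS}, and the latter follows from differentiating the normalization $\int p_{t,Y\mid X}(y\mid x)\,d\mu(y)\equiv 1$ in $t$. I expect no further obstacles, since everything else is a routine computation with the Hilbert-space inner product.
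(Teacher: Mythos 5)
Your proof is correct and follows essentially the same route as the paper's: the identical candidate $\gradient(z_1)-\tilde D(x_1)+\tilde D(x_2)$, the same decomposition into the form $f(z_1)+\frac{\gamma}{1-\gamma}g(x_2)$ with $g=\frac{1-\gamma}{\gamma}\tilde D$, and the same cross-term cancellation via $\bE[s_{t^*,Y\mid X}(Z)\mid X]=0$. The only cosmetic difference is that you verify the gradient identity directly through the $\sH$-inner product, whereas the paper writes the same computation inside the first-order expansion of $\theta(t)$.
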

\begin{proof}
Let $\gradient(z)$ be a gradient of $\thetafunctional$ relative to $\jointmodel$ at $\joint$. For simplicity, denote $D(z) = \gradient(z)$, and $\Tilde{D}(x) = \bE[D(Z)\mid X = x]$. For any one-dimensional regular parametric sub-model $\sP_T$ of $\jointmodel$, pathwise differentiability at $\joint$ implies
\begin{equation}
\notag
\begin{aligned}
\theta(t) &= \theta(t^*) + t\langle s_{t^*},  D  \rangle_{\lpxone} + \littleO(t)\\
&= \theta(t^*) + t\langle s_{t^*}, D\rangle_{\lpxone} + \littleO(t)\\
&=  \theta(t^*) + t\langle  s_{t^*}, D - \Tilde{D}\rangle_{\lpone} +t\frac{\gamma}{1-\gamma}\left\langle s_{t^*,X},  \frac{1-\gamma}{\gamma}\Tilde{D}\right\rangle_{\lpxone} + \littleO(t)\\
&= \theta(t^*) + t\left\langle s_{t^*}(z_1)+\frac{\gamma}{1-\gamma}s_{t^*,X}(x_2), D(z_1)-\Tilde{D}(x_1)+\Tilde{D}(x_2)\right\rangle_\sH + \littleO(t),
\end{aligned}
\end{equation}
where $D(z_1)-\Tilde{D}(x_1)+\Tilde{D}(x_2) \in \sH$ and $s_{t^*}(z_1)+\frac{\gamma}{1-\gamma}s_{t^*,X}(x_2)$ is the score function of $\sP_T$ at $\joint$.
\end{proof}

We now extend the definitions of regular and asymptotically linear estimators to the OSS setting as in Section~\ref{subsec:OSS_efficiency}. The definition of regularity remains unchanged as Definition~\ref{def:regular_OSS} in the main text. We present an equivalent definition of asymptotic linearity here for convenience. 
\begin{definition}[Asymptotic linearity]
\label{def:apdx_AL_OSS}
An estimator $\thetaestnN$ of $\theta^*$ is \emph{asymptotically linear} with influence function $\IF(z_1, x_2) \in \sH$
if
\begin{equation}
\label{eq:apdx_AL}
\begin{aligned}
&\sqrt{n}\left(\thetaestnN-\theta^*\right) = \sqrt{n}\left(\ossempirical - \ossjoint\right)\left[\IF(z_1, x_2)\right]+\littleO_p(1).
\end{aligned}
\end{equation}
\end{definition}
It is easy to verify, by expanding $\ossempirical - \ossjoint$ and using the fact that $\gamma \in (0,1)$, that definition~\ref{def:apdx_AL_OSS} is equivalent to Definition~\ref{def:AL_OSS}. Clearly, if $\thetaestnN$ is an asymptotically linear estimator of $\theta^*$ with influence function $\IF(z_1, x_2)$, its asymptotic variance is $\langle \IF(z_1, x_2), \IF(z_1, x_2)^\top\rangle_\sH$.

Similar to Lemma~\ref{lem:apdx_gradients_IF}, we first establish the equivalence between the set of gradients and the set of influence functions under the OSS setting. 
\begin{lemma}
\label{lem:apdx_gradients_IF_OSS}
Let $\thetaestnN$ be an asymptotically linear estimator of $\theta^*$ with influence function $\IF(z_1, x_2) = \IFa(z_1) + \frac{\gamma}{1-\gamma}\IFb(x_2)$ in the sense of Definition~\ref{def:apdx_AL_OSS}. Then, $\thetafunctional$ is pathwise differentiable at $\ossjoint$ relative to $\jointmodel$ with gradient $\IF(z_1, x_2)$ if and only if $\thetaest$ is a regular estimator of $\theta^*$ over $\jointmodel$ in the sense of Definition~\ref{def:regular_OSS}.
\end{lemma}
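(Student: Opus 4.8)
The plan is to transcribe the classical argument underlying Lemma~\ref{lem:apdx_gradients_IF} into the OSS Hilbert space $\sH$, with the i.i.d.\ empirical process replaced by $\sqrt{n}(\ossempirical - \ossjoint)$ and the $\lp$ inner product replaced by $\langle\cdot,\cdot\rangle_\sH$ defined in \eqref{eq:apdx_sH_norm}. Fix an arbitrary one-dimensional regular parametric sub-model $\sP_T = \set{\bP_t: t \in T}\subset\jointmodel$ with $\bP_{t^*} = \joint$, whose score at $\ossjoint$ is $l_{t^*}(z_1,x_2) = s_{t^*}(z_1) + \tfrac{\gamma}{1-\gamma}s_{t^*,X}(x_2)$ as in \eqref{eq:OSS_score}. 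First I would combine the asymptotic linearity of $\thetaestnN$ (Definition~\ref{def:apdx_AL_OSS}) with the LAN expansion of Lemma~\ref{lem:apdx_LAN}. Under $\bQ_{t^*}^{n,N}$ this yields the joint linearization
\[
\begin{pmatrix} \sqrt{n}(\thetaestnN - \theta^*) \\ \log\frac{d\bQ_{t^*+h/\sqrt{n}}^{n,N}}{d\bQ_{t^*}^{n,N}} \end{pmatrix} = \begin{pmatrix} \sqrt{n}(\ossempirical - \ossjoint)[\IF] \\ h\,\sqrt{n}(\ossempirical - \ossjoint)[l_{t^*}] - \tfrac12 h^2 \*I_{t^*}\end{pmatrix} + \littleO_p(1),
\]
in which both leading terms are empirical processes of fixed functions in $\sH$.

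The step I expect to carry the real content is the joint weak convergence of this vector under $\bQ_{t^*}^{n,N}$ together with the identification of the cross-covariance. Applying the multivariate CLT separately to the labeled sum $\sqrt{n}(\empiricalPn - \joint)$ and the unlabeled sum $\sqrt{n}(\empiricalPXN - \marginal)$—which are independent because $\labeled\indep\unlabeled$—the pair converges to a Gaussian. Writing $\IF = \IFa(z_1) + \tfrac{\gamma}{1-\gamma}\IFb(x_2)$, using $\tfrac{N}{n}\to\tfrac{\gamma}{1-\gamma}$ and $\bE[\IFa]=\bE[\IFb]=0$, the independence annihilates the labeled--unlabeled cross terms and the covariance between the two coordinates collapses to $\bE[\IFa s_{t^*}] + \tfrac{\gamma}{1-\gamma}\bE[\IFb s_{t^*,X}]$, which by \eqref{eq:apdx_sH_norm} is exactly $h\,\langle\IF,l_{t^*}\rangle_\sH$; the top-left block tends to $\langle\IF,\IF^\top\rangle_\sH$ and the bottom-right to $h^2\*I_{t^*}$. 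Le Cam's third lemma then transfers this to the local alternative, giving, under $\bQ_{t^*+h/\sqrt{n}}^{n,N}$,
\[
\sqrt{n}(\thetaestnN - \theta^*) \indistribution N\!\left(h\,\langle\IF,l_{t^*}\rangle_\sH,\; \langle\IF,\IF^\top\rangle_\sH\right),
\]
the contiguity needed to carry the $\littleO_p(1)$ remainder across measures being already supplied by Lemma~\ref{lem:apdx_LAN}.

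Finally I would connect this limit to regularity. Pathwise differentiability of $\thetafunctional$ along $\sP_T$ gives $\theta(\bP_{t^*+h/\sqrt{n}}) = \theta^* + \tfrac{h}{\sqrt{n}}\tfrac{d\theta(t^*)}{dt} + \littleO(n^{-1/2})$, so recentering shows that under $\bQ_{t^*+h/\sqrt{n}}^{n,N}$,
\[
\sqrt{n}\!\left[\thetaestnN - \theta(\bP_{t^*+h/\sqrt{n}})\right] \indistribution N\!\left(h\big(\langle\IF,l_{t^*}\rangle_\sH - \tfrac{d\theta(t^*)}{dt}\big),\; \langle\IF,\IF^\top\rangle_\sH\right).
\]
The limit is free of $h$—so that $\thetaestnN$ is regular per Definition~\ref{def:regular_OSS}, with the common variance $\langle\IF,\IF^\top\rangle_\sH$ not depending on the sub-model—if and only if the $h$-linear mean vanishes, i.e.\ $\tfrac{d\theta(t^*)}{dt} = \langle\IF,l_{t^*}\rangle_\sH$, which by Definition~\ref{def:apdx_path} is precisely the assertion that $\IF$ is a gradient of $\thetafunctional$ at $\ossjoint$. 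Since $\sP_T$ is arbitrary, this settles both implications at once: the gradient property forces the vanishing mean and hence regularity, while regularity forces the mean to be $h$-independent and, being linear in $h$, this can hold only if its slope is zero, recovering the gradient identity. The main obstacle is the covariance bookkeeping of the second paragraph—keeping the $\sqrt{N}/\sqrt{n}$ rescaling and the sample independence straight so that the cross-covariance lands exactly on $\langle\IF,l_{t^*}\rangle_\sH$.
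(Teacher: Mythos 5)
Your proposal is correct and follows essentially the same route as the paper's proof: joint asymptotic normality of $\sqrt{n}(\thetaestnN-\theta^*)$ and the log-likelihood ratio via Lemma~\ref{lem:apdx_LAN}, transfer to the local alternative by contiguity and Le Cam's third lemma, and then recentering to read off the equivalence between the vanishing of the $h$-linear drift and the gradient identity $\tfrac{d\theta(t^*)}{dt} = \langle\IF, l_{t^*}\rangle_\sH$. Your covariance bookkeeping (independence of the labeled and unlabeled sums, the $\sqrt{N/n}$ rescaling landing on the $\tfrac{\gamma}{1-\gamma}$ weight in $\langle\cdot,\cdot\rangle_\sH$) matches what the paper asserts in its displayed joint normal limit.
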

\begin{proof}
Following the notation of Lemma~\ref{lem:apdx_LAN}, consider a one-dimensional regular parametric sub-model $\sP_T$. By asymptotic linearity and Lemma~\ref{lem:apdx_LAN}, we have
$$\m{\sqrt{n}(\thetaestnN-\theta^*) \\ \log\left(\frac{d\bQ_{t^*+\frac{h}{\sqrt{n}}}^{n,N}}{d\bQ_{t^*}^{n,N}}\right)} \indistribution N\left( \m{0 \\ -\frac{1}{2}h^2\*I_{t^*}}, \m{\langle \IF, \IF^\top\rangle_\sH & h \langle l_{t^*}, \IF^\top\rangle_\sH \\ h \langle \IF, l_{t^*}\rangle_\sH & \*I_{t^*}} \right).$$
By contiguity and Le Cam's third lemma, it follows that
$$\sqrt{n}\left(\thetaestnN-\theta^*\right) \indistribution N\left(h \langle \IF, l_{t^*}\rangle_\sH, \langle \IF, \IF^\top\rangle_\sH\right)$$
under $\bQ_{t^*+\frac{h}{\sqrt{n}}}^{n,N}$. By regularity of $\thetaestnN$, it follows that
$$\sqrt{n}\left[\thetaestnN-\theta\left(\bP_{t^*+\frac{h}{\sqrt{n}}}\right)\right] \indistribution N\left(h \langle \IF, l_{t^*}\rangle_\sH, \langle \IF, \IF^\top\rangle_\sH\right)$$
under $\bQ_{t^*+\frac{h}{\sqrt{n}}}^{n,N}$. This implies that
$$\sqrt{n}\left[\theta\left(\bP_{t^*+\frac{h}{\sqrt{n}}}\right)-\theta^*\right] \to h \langle \IF, l_{t^*}\rangle_\sH.$$
Pathwise differentiability then follows by the definition of a derivative. 
\end{proof}

Finally, we extend Lemma~\ref{lem:classic_efficiency} to the OSS setting. 
\begin{lemma}
\label{lem:oss_efficiency}
Suppose $\thetafunctional$ is pathwise differentiable at $\ossjoint$ relative to $\jointmodel$ with efficient influence function $\EIF(z_1, x_2)$. Let $\thetaestnN$ be a regular and asymptotically linear estimator of $\theta^*$ in the sense of Definitions~\ref{def:regular_OSS} and ~\ref{def:apdx_AL_OSS} with influence function $\gradient(z_1,x_2)$. Then,
$$\left\langle \gradient(z_1,x_2), \gradient(z_1,x_2)^\top \right\rangle_\sH \succeq \left\langle \EIF(z_1,x_2), \EIF(z_1,x_2)^\top \right\rangle_\sH.$$
\end{lemma}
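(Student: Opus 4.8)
The plan is to reproduce the classical projection proof of the semiparametric efficiency bound (the analogue of Lemma~\ref{lem:classic_efficiency}), carried out inside the Hilbert space $(\sH, \langle\cdot,\cdot\rangle_\sH)$ constructed in Section~\ref{sec:apdx_efficiency_noniid}. First I would invoke Lemma~\ref{lem:apdx_gradients_IF_OSS}: since $\thetaestnN$ is regular and asymptotically linear with influence function $\gradient(z_1,x_2)$, that influence function is a gradient of $\thetafunctional$ at $\ossjoint$ relative to $\jointmodel$. Thus both $\gradient$ and the efficient (canonical) gradient $\EIF$ represent the \emph{same} pathwise derivative $t\mapsto \frac{d\theta(t^*)}{dt}$ on the tangent set, and the whole argument reduces to comparing two gradients that agree as linear functionals on $\osstangent$.

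The key step is to show that $a^\top[\gradient(z_1,x_2)-\EIF(z_1,x_2)] \perp \osstangent$ for every $a\in\bR^p$. For any one-dimensional regular parametric sub-model $\sP_T\subset\jointmodel$ with score $l_{t^*}(z_1,x_2)$ at $\ossjoint$, pathwise differentiability (Definition~\ref{def:apdx_path}) applied coordinatewise gives $\langle l_{t^*}, [\gradient]_j\rangle_\sH = \frac{d[\theta]_j(t^*)}{dt} = \langle l_{t^*}, [\EIF]_j\rangle_\sH$, hence $\langle l_{t^*}, [\gradient]_j-[\EIF]_j\rangle_\sH = 0$ for each $j\in[p]$. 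Since this holds for every score in the tangent set and $\osstangent$ is by definition the \emph{closed linear span} of the tangent set, continuity of $\langle\cdot,\cdot\rangle_\sH$ extends the orthogonality from individual scores to all of $\osstangent$; taking linear combinations over the $p$ coordinates yields $a^\top(\gradient-\EIF)\perp\osstangent$.

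I would then conclude by the Pythagorean theorem in $\sH$. Because $\EIF$ is the canonical gradient, $a^\top\EIF\in\osstangent$, whereas $a^\top(\gradient-\EIF)\perp\osstangent$; the two summands in $a^\top\gradient = a^\top\EIF + a^\top(\gradient-\EIF)$ are therefore $\sH$-orthogonal, so
$$\langle a^\top\gradient, a^\top\gradient\rangle_\sH = \langle a^\top\EIF, a^\top\EIF\rangle_\sH + \langle a^\top(\gradient-\EIF), a^\top(\gradient-\EIF)\rangle_\sH \ge \langle a^\top\EIF, a^\top\EIF\rangle_\sH.$$
By bilinearity of $\langle\cdot,\cdot\rangle_\sH$ the scalar quantity $\langle a^\top\gradient, a^\top\gradient\rangle_\sH$ equals $a^\top\langle\gradient,\gradient^\top\rangle_\sH a$, and similarly for $\EIF$, so $a^\top\langle\gradient,\gradient^\top\rangle_\sH a \ge a^\top\langle\EIF,\EIF^\top\rangle_\sH a$ for all $a\in\bR^p$, which is exactly the asserted positive semidefinite ordering $\langle\gradient,\gradient^\top\rangle_\sH \succeq \langle\EIF,\EIF^\top\rangle_\sH$.

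The main obstacle I anticipate is the orthogonality step: one must ensure $\gradient-\EIF$ is orthogonal not merely to each individual score but to the entire closed linear span $\osstangent$, which rests on the continuity of the $\sH$-inner product and on $\sH$ being a genuine Hilbert space (both established in Section~\ref{sec:apdx_efficiency_noniid}). A secondary, routine point is checking that the coordinatewise pathwise-differentiability identities assemble correctly into the vector statement $a^\top(\gradient-\EIF)\perp\osstangent$; this follows immediately once the scalar case is in hand, since $a^\top(\gradient-\EIF)=\sum_{j}a_j([\gradient]_j-[\EIF]_j)$ is a finite linear combination of functions orthogonal to $\osstangent$.
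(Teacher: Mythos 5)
Your proposal is correct and follows essentially the same route as the paper's own proof: invoke Lemma~\ref{lem:apdx_gradients_IF_OSS} to identify the influence function as a gradient, use pathwise differentiability to show $\gradient-\EIF$ is $\sH$-orthogonal to every score and hence (by continuity of the inner product) to all of $\osstangent$, note that $a^\top\EIF\in\osstangent$, and conclude by the Pythagorean decomposition. The only cosmetic difference is that you argue coordinatewise and reassemble the matrix ordering via $a^\top(\cdot)a$, whereas the paper writes the same cancellation of cross terms directly in matrix form.
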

Hence, the asymptotic variance of any regular and asymptotically linear estimator of $\theta^*$ is lower bounded by $\left\langle \EIF(z_1,x_2), \EIF(z_1,x_2)^\top \right\rangle_\sH$.
\begin{proof}
By Lemma~\ref{lem:apdx_gradients_IF_OSS}, $\gradient(z_1,x_2)$ is a gradient of $\thetaest$ relative to $\jointmodel$.
For any $s(z_1, x_2) \in \osstangent$, without loss of generality, let $\sP_T$ be any one-dimensional regular parametric sub-model of $\jointmodel$ with score function  $s(z_1, x_2)$ at $\bP_{t^*} = \joint$. (Otherwise, $s(z_1, x_2)$ is the limit of a sequence of score functions of parametric sub-models, and the following holds by the continuity of the inner product.) Write $\theta: \sP_T \to \Theta$ as a function $\theta(t)$ of $t$. By pathwise differentiability, 
$$\frac{d \theta(t^*)}{dt} = \langle s(z_1, x_2), \gradient(z_1, x_2) \rangle_\sH = \langle s(z_1, x_2), \EIF(z_1, x_2)\rangle_\sH,$$
which shows that
$\langle s(z_1, x_2), \EIF(z_1, x_2)-\gradient(z_1, x_2)\rangle_\sH = 0$. Since this holds for arbitrary $s(z_1,x_2) \in \osstangent$, we have $\EIF(z_1, x_2)-\gradient(z_1, x_2) \perp \osstangent$ in $\sH$. By definition, $e_i^\top\EIF(z_1, x_2) \in \osstangent$ for all $i \in [p]$, where $e_i \in \bR^p$ is the vector whose $i$-th coordinate is $1$ and the rest are zeros. Therefore, $\langle\EIF(z_1, x_2), \EIF(z_1, x_2)-\gradient(z_1, x_2) \rangle_\sH = 0$. It follows that
\begin{equation}
\notag
\begin{aligned}
\langle\gradient, \gradient^\top \rangle_\sH &= \langle\gradient-\EIF+\EIF, \gradient^\top-\EIF^\top+\EIF^\top \rangle_\sH\\
&= \langle\gradient-\EIF, \gradient^\top-\EIF^\top\rangle_\sH+\langle\EIF, \EIF^\top \rangle_\sH\\
&\succeq \langle\EIF, \EIF^\top \rangle_\sH.\\
\end{aligned}
\end{equation}
\end{proof}

\section{Additional numerical results}

\subsection{Extended results for mean estimation and generalized linear model}
\label{subsec:apdx_simu_mean_glm}

Tables~\ref{tab:apdx_mean_CI}, ~\ref{tab:apdx_glm1_CI}, and~\ref{tab:apdx_glm2_CI} report the coverage of 95\% confidence intervals for methods of mean estimation and generalized linear models presented in Sections~\ref{subsec:simu_mean} and \ref{subsec:simu_glm}. All methods achieve the nominal coverage. 


\begin{table}[ht]
\centering
\begin{tabular}{c|l|rrrrr}
  \hline
Setting & Method & $\gamma = 0.1$ & $\gamma = 0.3$ & $\gamma = 0.5$ & $\gamma = 0.7$ & $\gamma = 0.9$  \\ 
  \hline
 & $\thetaest$ & 0.967 & 0.944 & 0.946 & 0.938 & 0.954  \\ 
 & $\ossthetaest$ & 0.949 & 0.933 & 0.947 & 0.942 & 0.951 \\ 
 & $\ossthetanpest, K_n = 4$ & 0.944 & 0.944 & 0.944 & 0.953 & 0.942  \\ 
 & $\ossthetanpest, K_n = 9$ & 0.960 & 0.957 & 0.948 & 0.949 & 0.945 \\ 
1 & $\ossthetanpest, K_n = 16$ & 0.950 & 0.939 & 0.941 & 0.951 & 0.948  \\ 
 & $\ossthetaestppi$ (Random Forest) & 0.948 & 0.953 & 0.936 & 0.956 & 0.947  \\ 
 & $\ossthetaestppi$ (Noisy Predictor) & 0.946 & 0.952 & 0.950 & 0.951 & 0.948 \\ 
 & PPI++ (Random Forest) & 0.962 & 0.951 & 0.954 & 0.940 & 0.938  \\ 
 & PPI++ (Noisy Predictor) & 0.950 & 0.949 & 0.946 & 0.942 & 0.946 \\ 
 \hline
    & $\thetaest$ & 0.954 & 0.966 & 0.941 & 0.954 & 0.960  \\ 
    & $\ossthetaest$ & 0.948 & 0.949 & 0.964 & 0.942 & 0.948 \\ 
   & $\ossthetanpest, K_n = 4$ & 0.952 & 0.950 & 0.961 & 0.941 & 0.947 \\ 
   & $\ossthetanpest, K_n = 9$ & 0.946 & 0.955 & 0.955 & 0.938 & 0.942 \\ 
 2  & $\ossthetanpest, K_n = 16$ & 0.944 & 0.945 & 0.957 & 0.941 & 0.937 \\ 
   & $\ossthetaestppi$ (Random Forest)  & 0.956 & 0.968 & 0.953 & 0.945 & 0.944  \\ 
   & $\ossthetaestppi$ (Noisy Predictor) & 0.939 & 0.961 & 0.951 & 0.956 & 0.948 \\ 
   & PPI++ (Random Forest) & 0.953 & 0.950 & 0.971 & 0.948 & 0.940 \\ 
   & PPI++ (Noisy Predictor) & 0.947 & 0.953 & 0.956 & 0.944 & 0.938 \\ 
   \hline
   
    & $\thetaest$ & 0.946 & 0.956 & 0.950 & 0.954 & 0.941   \\ 
& $\ossthetaest$ & 0.948 & 0.952 & 0.958 & 0.932 & 0.949 \\ 
   & $\ossthetanpest, K_n = 4$ & 0.948 & 0.958 & 0.963 & 0.943 & 0.950 \\ 
   & $\ossthetanpest, K_n = 9$ & 0.936 & 0.967 & 0.941 & 0.939 & 0.946 \\ 
 3  & $\ossthetanpest, K_n = 16$ & 0.951 & 0.954 & 0.961 & 0.942 & 0.944 \\ 
   & $\ossthetaestppi$ (Random Forest)  & 0.957 & 0.963 & 0.962 & 0.946 & 0.941 \\ 
   & $\ossthetaestppi$ (Noisy Predictor) & 0.957 & 0.952 & 0.956 & 0.937 & 0.949 \\ 
   & PPI++ (Random Forest) & 0.939 & 0.959 & 0.960 & 0.939 & 0.946 \\ 
   & PPI++ (Noisy Predictor)& 0.940 & 0.948 & 0.948 & 0.960 & 0.934 \\ 
   \hline
\end{tabular}
\caption{95\% coverage of confidence intervals for methods of mean estimation in three settings, averaged over 1,000 simulated datasets. For details of numerical experiments, see Section~\ref{subsec:simu_mean}. }
\label{tab:apdx_mean_CI}
\end{table}

\begin{table}[ht]
\centering
\begin{tabular}{c|l|rrrrr}
  \hline
  Setting & Method & $\gamma = 0.1$ & $\gamma = 0.3$ & $\gamma = 0.5$ & $\gamma = 0.7$ & $\gamma = 0.9$ \\ 
  \hline
 & $\thetaest$ & 0.933 & 0.950 & 0.944 & 0.950 & 0.938  \\ 
 & $\ossthetaest$ & 0.950 & 0.950 & 0.949 & 0.943 & 0.940 \\ 
& $\ossthetanpest, K_n = 4$ & 0.950 & 0.945 & 0.945 & 0.938 & 0.934 \\ 
 & $\ossthetanpest, K_n = 9$ & 0.949 & 0.940 & 0.944 & 0.935 & 0.935 \\ 
 1 & $\ossthetanpest, K_n = 16$ & 0.940 & 0.936 & 0.946 & 0.931 & 0.931 \\ 
 & $\ossthetaestppi$ (Random Forest) & 0.948 & 0.942 & 0.938 & 0.939 & 0.939 \\ 
 & $\ossthetaestppi$ (Noisy Predictor) & 0.942 & 0.941 & 0.949 & 0.934 & 0.939  \\ 
 & PPI++ (Random Forest) & 0.952 & 0.952 & 0.951 & 0.947 & 0.941 \\ 
 & PPI++ (Noisy Predictor) & 0.952 & 0.943 & 0.945 & 0.946 & 0.934 \\ 
 \hline
   & $\thetaest$ & 0.948 & 0.943 & 0.949 & 0.948 & 0.940 \\ 
   & $\ossthetaest$ & 0.944 & 0.951 & 0.944 & 0.953 & 0.945 \\ 
   & $\ossthetanpest, K_n = 4$ & 0.947 & 0.940 & 0.936 & 0.942 & 0.950 \\ 
   & $\ossthetanpest, K_n = 9$ & 0.947 & 0.945 & 0.930 & 0.942 & 0.944 \\ 
   2 & $\ossthetanpest, K_n = 16$ & 0.952 & 0.932 & 0.940 & 0.946 & 0.944 \\ 
   & $\ossthetaestppi$ (Random Forest) & 0.952 & 0.943 & 0.955 & 0.947 & 0.956 \\ 
   & $\ossthetaestppi$ (Noisy Predictor) & 0.954 & 0.952 & 0.923 & 0.937 & 0.953 \\ 
   & PPI++ (Random Forest) & 0.941 & 0.949 & 0.943 & 0.955 & 0.945 \\ 
   & PPI++ (Noisy Predictor) & 0.942 & 0.951 & 0.944 & 0.953 & 0.947 \\ 
   \hline
\end{tabular}
\caption{95\% coverage of confidence intervals of $\theta_1^*$ for methods of Poisson GLM in two settings, averaged over 1,000 simulated datasets. For details of numerical experiments, see Section~\ref{subsec:simu_glm}.}
\label{tab:apdx_glm1_CI}
\end{table}




\begin{table}[ht]
\centering
\begin{tabular}{c|l|rrrrr}
  \hline
  Setting & Method & $\gamma = 0.05$ & $\gamma = 0.25$ & $\gamma = 0.5$ & $\gamma = 0.75$ & $\gamma = 0.95$ \\  
  \hline
  & $\thetaest$ & 0.947 & 0.950 & 0.946 & 0.937 & 0.952  \\ 
  & $\ossthetaest$ & 0.943 & 0.949 & 0.949 & 0.941 & 0.945  \\ 
  & $\ossthetanpest, K_n = 4$ & 0.936 & 0.942 & 0.945 & 0.943 & 0.950 \\ 
  & $\ossthetanpest, K_n = 9$ & 0.944 & 0.944 & 0.956 & 0.936 & 0.953 \\ 
  1 & $\ossthetanpest, K_n = 16$ & 0.938 & 0.937 & 0.950 & 0.937 & 0.939 \\ 
  & $\ossthetaestppi$ (Random Forest) & 0.942 & 0.954 & 0.948 & 0.943 & 0.953 \\ 
  & $\ossthetaestppi$ (Noisy Predictor)& 0.947 & 0.958 & 0.947 & 0.944 & 0.947 \\ 
  & PPI++ (Random Forest) & 0.939 & 0.948 & 0.947 & 0.956 & 0.953 \\ 
  & PPI++ (Noisy Predictor) & 0.940 & 0.946 & 0.950 & 0.946 & 0.940 \\ 
  \hline
   & $\thetaest$ & 0.960 & 0.942 & 0.949 & 0.939 & 0.950 \\ 
   & $\ossthetaest$ & 0.960 & 0.953 & 0.949 & 0.952 & 0.943 \\ 
   & $\ossthetanpest, K_n = 4$ & 0.952 & 0.944 & 0.947 & 0.939 & 0.950 \\ 
   & $\ossthetanpest, K_n = 9$ & 0.955 & 0.945 & 0.942 & 0.946 & 0.943 \\ 
   2 & $\ossthetanpest, K_n = 16$ & 0.950 & 0.945 & 0.943 & 0.935 & 0.941 \\ 
  & $\ossthetaestppi$ (Random Forest) & 0.948 & 0.949 & 0.950 & 0.937 & 0.954  \\ 
  & $\ossthetaestppi$ (Noisy Predictor) & 0.958 & 0.959 & 0.943 & 0.944 & 0.955 \\ 
   & PPI++ (Random Forest) & 0.960 & 0.954 & 0.949 & 0.952 & 0.943 \\ 
   & PPI++ (Noisy Predictor) &  0.960 & 0.952 & 0.947 & 0.951 & 0.943 \\ 
   \hline
\end{tabular}
\caption{95\% coverage of confidence intervals of $\theta_2^*$ for methods of Poisson GLM in two settings, averaged over 1,000 simulated datasets. For details of numerical experiments, see Section~\ref{subsec:simu_glm}.}
\label{tab:apdx_glm2_CI}
\end{table}

In the GLM setting of Section~\ref{subsec:simu_glm}, Figure~\ref{fig:apdx_glm2} displays the standard error of estimators of the second parameter. The results are similar to those described in Section~\ref{subsec:simu_glm}.


\begin{figure}
    \centering
    \includegraphics[width=0.8\linewidth]{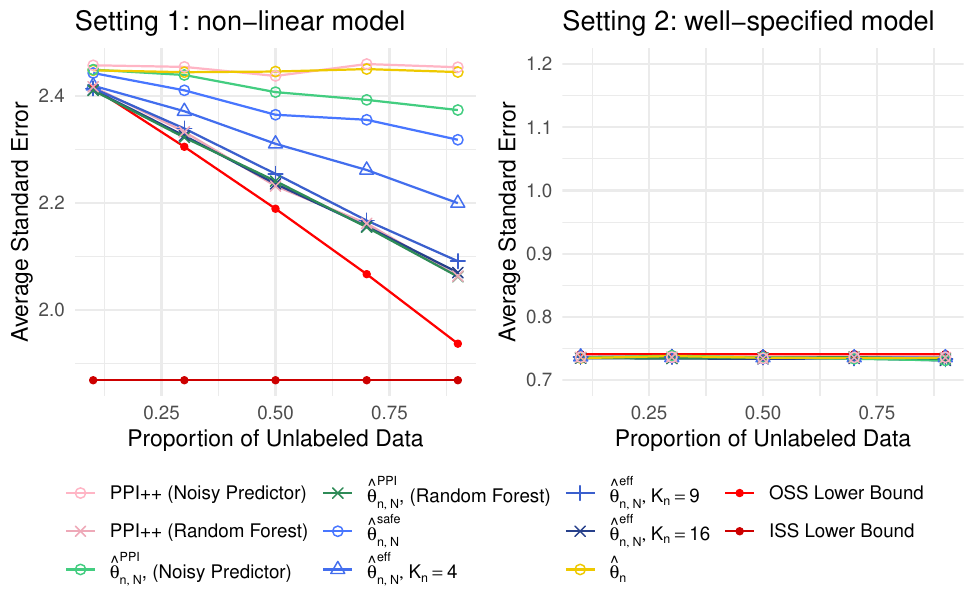}
    \caption{Estimated standard error  for of $\theta_2^*$ in the Poisson GLM setting, detailed in  Section~\ref{subsec:simu_glm}. The results are similar to Figure~\ref{fig:glm}.}
    \label{fig:apdx_glm2}
\end{figure}

\subsection{Variance estimation}
\label{subsec:apdx_simu_var}
We consider Example ~\ref{exmp:var}. The supervised estimator is the sample variance with influence function
$\IF(z) = (y-\bE[Y])^2-\theta^*$.
The conditional influence function is 
$$\cIF(x) = \bE\left[(Y-\bE[Y])^2\mid X=x\right]-\theta^*.$$
We generate the response $Y$ as $Y = \bE[Y\mid X] + \epsilon$, where $\epsilon \sim N(0,1)$. We consider two settings for $\bE[Y\mid X]$:
\begin{enumerate}
    \item \textit{Setting 1 (non-linear model):} $\bE[Y\mid X=x] = -1.70x_1x_2 -6.94x_1x_2^2-1.35x_1^2x_2+2.28x_1^2x_2^2$.
    \item \textit{Setting 2 (well-specified model, in the sense of Definition~\ref{def:well_specification}):} $\bE[Y\mid X=x] = 0$.
\end{enumerate}
For each model, we set $n = 1,000$, and vary the proportion of unlabeled observations, $\gamma =\frac{N}{n+N} \in \set{0.1,0.3,0.5,0.7,0.9}$.
The semiparametric efficiency lower bound in the ISS setting \eqref{eq:EIF_var_ISS} and in the OSS setting \eqref{eq:EIF_var_OSS} are estimated  separately on a sample of  $100,000$ observations. 

Table~\ref{tab:apdx_var_CI}  reports the coverage of 95\% confidence intervals for each method. All methods achieve the nominal coverage.

\begin{table}[ht]
\centering
\begin{tabular}{c|l|rrrrr}
  \hline
  Setting & Method & $\gamma = 0.05$ & $\gamma = 0.25$ & $\gamma = 0.5$ & $\gamma = 0.75$ & $\gamma = 0.95$\\ 
  \hline
   & $\thetaest$ & 0.943 & 0.943 & 0.947 & 0.946 & 0.955 \\ 
  & $\ossthetaest$ & 0.950 & 0.949 & 0.957 & 0.949 & 0.952 \\ 
  & $\ossthetanpest, K_n = 4$ & 0.945 & 0.937 & 0.956 & 0.956 & 0.951 \\ 
  1 & $\ossthetanpest, K_n = 9$ & 0.946 & 0.943 & 0.953 & 0.949 & 0.941 \\ 
  & $\ossthetanpest, K_n = 16$ & 0.945 & 0.941 & 0.944 & 0.959 & 0.948 \\ 
  & $\ossthetaestppi$ (Random Forest) & 0.948 & 0.948 & 0.951 & 0.951 & 0.943 \\ 
  & $\ossthetaestppi$ (Noisy Predictor) & 0.948 & 0.961 & 0.942 & 0.957 & 0.956 \\
  \hline
  & $\thetaest$ & 0.945 & 0.937 & 0.951 & 0.937 & 0.944 \\  
   & $\ossthetaest$ & 0.945 & 0.954 & 0.949 & 0.954 & 0.955 \\ 
   & $\ossthetanpest, K_n = 4$ & 0.940 & 0.947 & 0.958 & 0.944 & 0.942 \\ 
   2 & $\ossthetanpest, K_n = 9$ & 0.945 & 0.964 & 0.940 & 0.948 & 0.938 \\ 
   & $\ossthetanpest, K_n = 16$ & 0.943 & 0.939 & 0.945 & 0.946 & 0.942 \\
   & $\ossthetaestppi$ (Random Forest) & 0.949 & 0.957 & 0.948 & 0.948 & 0.946 \\ 
   & $\ossthetaestppi$ (Noisy Predictor) & 0.950 & 0.946 & 0.940 & 0.950 & 0.954 \\ 
   \hline
\end{tabular}
\caption{95\% coverage of confidence intervals for methods of variance estimation in two settings, averaged over 1,000 simulated datasets. For details of numerical experiments, see Section~\ref{subsec:apdx_simu_var}.}
\label{tab:apdx_var_CI}
\end{table}

\begin{figure}
    \centering
    \includegraphics[width=0.8\linewidth]{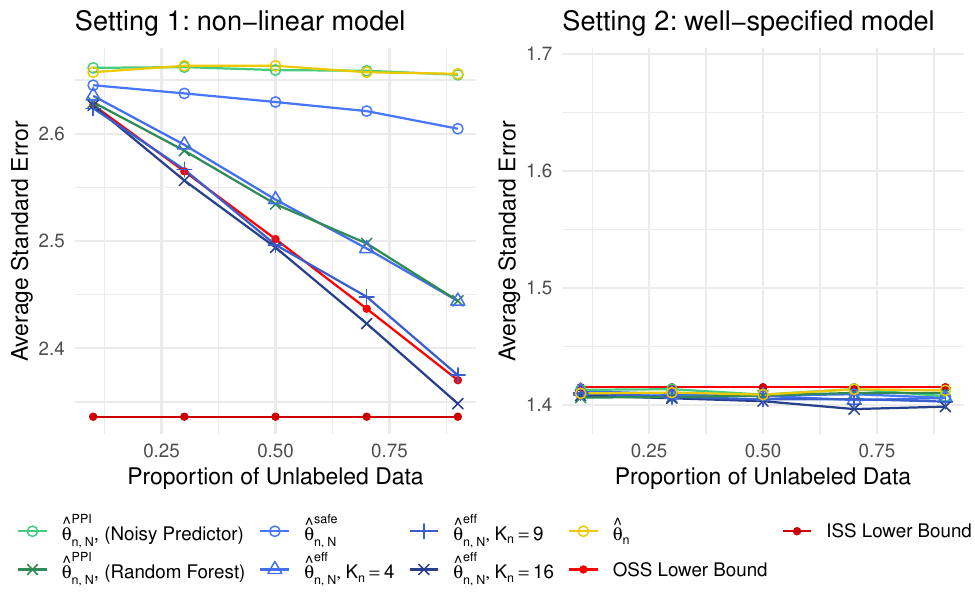}
    \caption{Comparison of standard error for methods of variance estimation, averaged over 1,000 simulated datasets.  \emph{Left:} When the conditional influence function is non-linear, $\ossthetanpest$ is efficient with a sufficient number of basis functions. \emph{Right:} For a well-specified estimation problem in the sense of Definition~\ref{def:well_specification}, no semi-supervised method improves upon the supervised estimator, which aligns with Corollary~\ref{cor:well_specified_OSS}.}
    \label{fig:var}
\end{figure}

Figure~\ref{fig:var} displays the standard error of each method, averaged over 1,000 simulated datasets. 

The results are similar to those of Section~\ref{sec:simu}. In Setting 1 (non-linear model), $\ossthetanpest$ achieves the efficiency lower bound in the OSS setting with a sufficient number of basis functions. On the other hand, $\ossthetaest$ with $g(x) = x$ improves upon the supervised estimator, but is not efficient as it cannot accurately approximate the non-linear conditional influence function. $\ossthetaestppi$ with random forest predicted responses also performs well, while the version of those methods with a pure-noise prediction model is no worse than the supervised estimator.  For Setting 2 (well-specified model), no method improves upon the OSS lower bound, as indicated by Corollary~\ref{cor:well_specified_OSS}.

\subsection{Kendall's $\tau$}
\label{subsec:apdx_simu_kendall}
We consider Example~\ref{exmp:kendall}. The supervised estimator is $$\thetaest = \frac{2}{n(n-1)}\sum_{i<j}I\set{(U_i-U_j)(V_i-V_j)>0}$$ with influence function
$$\IF(z) = 2\bP^*_Y\set{(U-u)(V-v)>0} - 2\theta^*.$$
The conditional influence function is 
$$\cIF(x) = 2\bP^*\set{(U^\prime-u)(V^\prime-V)>0\mid X=x} - 2\theta^*,$$
where $Y^\prime = (U^\prime,V^\prime)$ is an independent copy of $Y$. We generate the response $Y = (U,V)$ as $U = \bE[U\mid X] + \epsilon$ and $V = \bE[V\mid X] + \epsilon^\prime$, where $(\epsilon, \epsilon^\prime)^\top \sim N(0,\*I_2)$. We set $\bE[U\mid X = x] = \bE[V\mid X = x] := h(x)$, and consider two settings for $h(x)$:
\begin{enumerate}
    \item \textit{Setting 1 (non-linear model):} $h(x) = -1.70x_1x_2 -6.94x_1x_2^2-1.35x_1^2x_2+2.28x_1^2x_2^2$.
    \item \textit{Setting 2 (well-specified model, in the sense of Definition~\ref{def:well_specification}):} $h(x) = 0$.
\end{enumerate}
For each model, we set $n = 1,000$, and vary the proportion of unlabeled observations, $\gamma =\frac{N}{n+N} \in \set{0.1,0.3,0.5,0.7,0.9}$. Due the complexity of the data-generating model, we did not estimate the ISS and OSS lower bound.

Table~\ref{tab:apdx_kendall_CI} reports the coverage of 95\% confidence intervals for each method. All methods achieve the nominal coverage.

\begin{table}[ht]
\centering
\begin{tabular}{c|l|rrrrr}
  \hline
 Setting & Method & $\gamma = 0.05$ & $\gamma = 0.25$ & $\gamma = 0.5$ & $\gamma = 0.75$ & $\gamma = 0.95$\\ 
  \hline
  &$\thetaest$ & 0.943 & 0.950 & 0.928 & 0.942 & 0.939 \\ 
  &  $\ossthetaest$ & 0.952 & 0.938 & 0.942 & 0.944 & 0.947 \\ 
  & $\ossthetanpest, K_n = 4$ & 0.955 & 0.948 & 0.947 & 0.944 & 0.942 \\ 
  1 & $\ossthetanpest, K_n = 9$ & 0.937 & 0.951 & 0.937 & 0.941 & 0.929 \\ 
  & $\ossthetanpest, K_n = 16$ & 0.954 & 0.955 & 0.933 & 0.947 & 0.942 \\ 
   & $\ossthetaestppi$ (Random Forest)& 0.946 & 0.946 & 0.942 & 0.930 & 0.942 \\ 
 & $\ossthetaestppi$ (Noisy Predictor) & 0.937 & 0.948 & 0.945 & 0.937 & 0.936 \\ 
 \hline
   & $\thetaest$  & 0.956 & 0.966 & 0.933 & 0.951 & 0.942 \\ 
   & $\ossthetaest$ & 0.946 & 0.934 & 0.944 & 0.949 & 0.936 \\ 
   & $\ossthetanpest, K_n = 4$ & 0.948 & 0.954 & 0.944 & 0.940 & 0.941  \\ 
   2 & $\ossthetanpest, K_n = 9$ & 0.947 & 0.950 & 0.933 & 0.938 & 0.938  \\ 
   & $\ossthetanpest, K_n = 16$ & 0.935 & 0.953 & 0.937 & 0.942 & 0.947 \\ 
     & $\ossthetaestppi$ (Random Forest) & 0.949 & 0.950 & 0.944 & 0.940 & 0.944 \\ 
  & $\ossthetaestppi$ (Noisy Predictor) &  0.949 & 0.951 & 0.942 & 0.943 & 0.949\\ 
   \hline
\end{tabular}
\caption{95\% coverage of confidence intervals for methods of Kendall's $\tau$ in two settings, averaged over 1,000 simulated datasets. For details of numerical experiments, see Section~\ref{subsec:apdx_simu_kendall}.}
\label{tab:apdx_kendall_CI}
\end{table}

\begin{figure}
    \centering
    \includegraphics[width=0.8\linewidth]{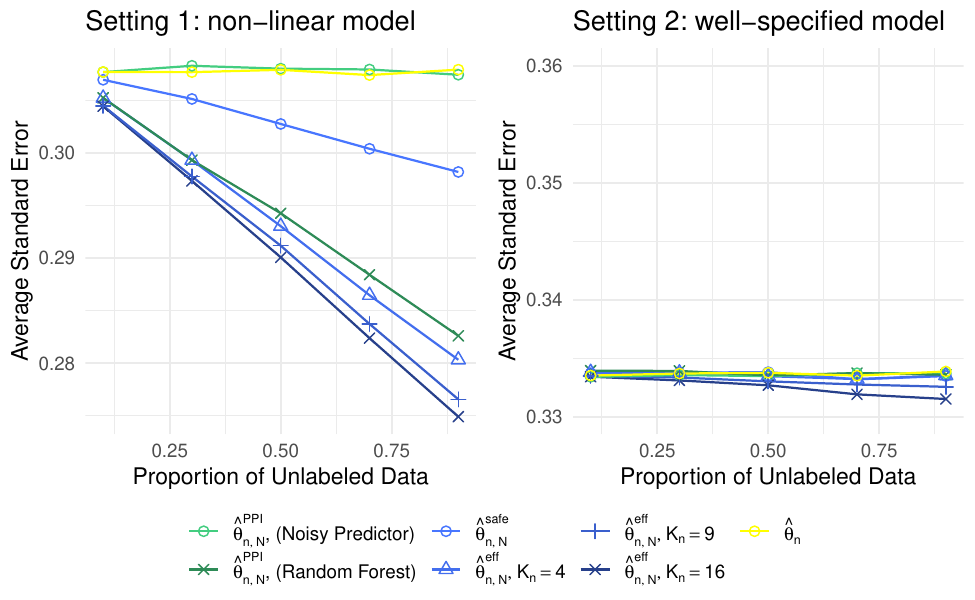}
    \caption{Comparison of standard error for methods that estimate Kendall's $\tau$, averaged over 1,000 simulations.  \emph{Left:} When the conditional influence function is non-linear, $\ossthetanpest$ is efficient with a sufficient number of basis functions. \emph{Right:} For a well-specified estimation problem in the sense of Definition~\ref{def:well_specification}, no semi-supervised methods improves upon the supervised estimator, as shown in   Corollary~\ref{cor:well_specified_OSS}.}
    \label{fig:kendall}
\end{figure}
The results are similar to those of Section~\ref{sec:simu}. In Setting 1 (non-linear model), $\ossthetanpest$ with $K_n = 16$ has the lowest standard error. $\ossthetaest$ with $g(x) = x$ improves upon the supervised estimator, but is not as efficient as $\ossthetanpest$. $\ossthetaestppi$ with random forest predicted responses also performs well, while its counterpart with pure-noise prediction models has comparable standard error to the supervised estimator.  For Setting 2 (well-specified model), no methods improves upon the OSS lower bound as indicated by Corollary~\ref{cor:well_specified_OSS}. We note that the  apparent slight improvement of standard error in the second panel by  $\ossthetanpest$ is the result of a relatively small sample size. 
\end{appendix}
%
%

%


\end{document}